\newtheorem{theorem}{Theorem}[section]
\newtheorem{lemma}[theorem]{Lemma}
\newtheorem{proposition}[theorem]{Proposition}
\newtheorem{conjecture}[theorem]{Conjecture}
\theoremstyle{definition}
\newtheorem{definition}[theorem]{Definition}
\newtheorem{remark}[theorem]{Remark}
\numberwithin{equation}{section}
\newtheorem{example}[theorem]{Example}
\newtheorem{assumption}[theorem]{Assumption}
\newtheorem{setting}[theorem]{Setting}
\begin{document}

\normalfont

\title{Topologization and Functional Analytification II: $\infty$-Categorical Motivic Constructions for Homotopical Contexts}
\author{Xin Tong}
\date{}

\maketitle

%%\begin{abstract}
\subsection*{Abstract}
This is our second scope of the consideration on the corresponding topologization and the corresponding functional analytification. We will focus on the corresponding functorial and motivic constructions in our current consideration. We consider topological motivic derived $I$-adic and derived $(p,I)$-adic cohomologies through derived de Rham complexes of Bhatt, Guo, Illusie, Morrow, Scholze, Frobenius sheaves over Robba rings of Kedlaya-Liu in certain derived $I$-adic and derived $(p,I)$-adic geometric context as what we defined for Bambozzi-Ben-Bassat-Kremnizer $\infty$-prestacks in our previous work in this series. The foundation we will work on will be based on the work of Bambozzi-Ben-Bassat-Kremnizer, Ben-Bassat-Mukherjee, Clausen-Scholze and Kelly-Kremnizer-Mukherjee, in order to promote the construction to even more general homotopical and $\infty$-categorical contexts. This gives us the chance to construct the functional analytic derived prismatic cohomology and derived preperfectoidizations, as well as the functional analytic derived logarithmic prismatic cohomology and derived logarithmic preperfectoidizations after Bhatt-Scholze and Koshikawa, in the framework of Bambozzi-Ben-Bassat-Kremnizer, Ben-Bassat-Mukherjee, Clausen-Scholze and Kelly-Kremnizer-Mukherjee.

%\end{abstract}

\newpage

\footnote{\textit{Keywords and Phrases}: $E_\infty$-de Rham Cohomologies, Topological $(\infty,1)$-Rings, $(\infty,1)$-Bornological Rings, $(\infty,1)$-Ind-Fr\'echet Rings, $\infty$-Categorical Motivic Constructions for $\infty$-Categorical Homotopical Contexts.}

\newpage

\tableofcontents

\newpage

\chapter{Introduction}

\section{Introduction}

\indent In our previous paper \cite{12XT1} on the intrinsic morphisms of the corresponding topological, bornological and ind-Fr\'echet rings, we discussed some generalization of the corresponding contexts of \cite{12Huber1} and \cite{12Huber2}, along two directions. One is along some answer to a question of Kedlaya in \cite[Appendix 5]{12Ked1} namely the corresponding affinoid morphisms, and the other is along some stacky understanding of these such as in \cite{12Dr1}, \cite{12Dr2}, \cite{12R}.\\

\indent Under some certain foundation, \cite{12Pau1} actually studied many parallel considerations of our project. The parallel  considerations are the corresponding ind-Banach analytic spaces, the corresponding \'etale and pro-\'etale sites of ind-Banach analytic spaces, the corresponding de Rham stacks of such analytic spaces, global de Rham cohomology and comparison over ind-Banach spaces, and algebraic derived de Rham complex over the corresponding ind-Banach spaces. And the corresponding descent over the corresponding derived ind-Banach spaces. We will consider the corresponding topologization and functional analytification of the cohomology theories (topological derived de Rham after \cite{12B1}, \cite{12Bei},  \cite{12GL}, \cite{12Ill1}, \cite{12Ill2} topological logarithmic derived de Rham after Gabber such as in \cite{12B1} and \cite{12O}, certainly functional analytification construction after \cite{12KL1} and \cite{12KL2} and etc).  We would like to mention that our project is also largely inspired by the corresponding development in \cite{12CS1}, \cite{12CS2}, \cite{12FS}, \cite{12Sch2} where the corresponding $v$-stacky consideration is extensively developed. Note that $v$-stacks are very significant analytic spaces, especially in the corresponding geometrization of local Langlands correspondence in \cite{12FS}, where complicated derived categories are constructed through deep foundations in \cite{12Sch2} and \cite{12CS1}, \cite{12CS2} on condensed sets. One certainly believes that the corresponding foundations by using normed sets in \cite{12BBBK}, \cite{12BBK}, \cite{BBM}, \cite{12BK} and \cite{KKM} will also have potential applications to certainly motivic and functorial constructions in analytic geometry which are parallel to those given in \cite{M} by using the foundation from \cite{12CS1}, \cite{12CS2}. Note that the programs in \cite{12B1}, \cite{12BMS} and \cite{12BS} have already indicated that working with the corresponding correct simplicial spaces in the analytic setting is not only a pure generalization but also a very significant point around even the non simplicial analytic spaces especially in the corresponding singular situations.\\

\indent Then we follow \cite{12An1}, \cite{12An2}, \cite{12B1}, \cite{12Bei}, \cite{12BS}, \cite{12G1}, \cite{12GL}, \cite{12Ill1}, \cite{12Ill2}, \cite{12Qui}  to extend the corresponding discussion to certainly spaces, we mainly have focused on the corresponding rigid analytic spaces, pseudorigid spaces in some explicit way. And we then extend the corresponding discussion to the corresponding derived setting. The corresponding derived setting is literally following \cite{12An1}, \cite{12An2}, \cite{12B1}, \cite{12B2}, \cite{12Bei}, \cite{12BMS}, \cite{12BS}, \cite{12G1}, \cite{12GL}, \cite{12Ill1}, \cite{12Ill2}, \cite{12Qui} and the corresponding derived logarithmic setting is literally following \cite{12B1}, \cite{12Ko1} and \cite{12O}\footnote{The corresponding \cite{12Ko1} has already mentioned the corresponding derived logarithmic prismatic cohomology.}. \\

\indent We promote the construction from Bhatt, Illusie, Guo, Morrow, Scholze, Gabber \cite{12B1}, \cite{12G1}, \cite{12Ill1}, \cite{12Ill2}, \cite{12BMS} \cite{12O} in the context of topological derived de Rham complexes and topological derived logarithmic de Rham complexes, the construction from Nicolaus-Scholze \cite{12NS} in the context of derived THH, TP and TC on the level of $\mathbb{E}_1$-rings , the construction from Kedlaya-Liu in the context of the derived Robba rings and derived Frobenius sheaves \cite{12KL1} and \cite{12KL2}, the construction from Bhatt-Scholze, Koshikawa and in the context of derived prismatic cohomology and derived logarithmic prismatic cohomology \cite{12BS} and \cite{12Ko1} to the level of $\infty$-categorical functional analytic level after Lurie \cite{12Lu1}, \cite{12Lu2} in the stable $\infty$-cateogory of derived $J$-complete simplicial commutative algebras, Bambozzi-Ben-Bassat-Kremnizer \cite{12BBBK}, Ben-Bassat-Mukherjee \cite{BBM}, Bambozzi-Kremnizer \cite{12BK}, Clausen-Scholze \cite{12CS1} \cite{12CS2} and Kelly-Kremnizer-Mukherjee \cite{KKM} in the stable $\infty$-cateogory of simplicial functional analytic commutative algebras.\\

\indent Furthermore we promote the construction from Bhatt, Illusie, Guo, Morrow, Scholze, Gabber \cite{12B1}, \cite{12G1}, \cite{12Ill1}, \cite{12Ill2}, \cite{BMS2} \cite{12O} in the context of topological derived de Rham complexes and topological derived logarithmic de Rham complexes, the construction from Nicolaus-Scholze \cite{12NS} in the context of derived THH, TP and TC on the level of $\mathbb{E}_1$-rings, the construction from Kedlaya-Liu in the context of the derived Robba rings and derived Frobenius sheaves \cite{12KL1} and \cite{12KL2}, the construction from Bhatt-Scholze, Koshikawa and in the context of derived prismatic cohomology and derived logarithmic prismatic cohomology \cite{12BS} and \cite{12Ko1} not only to the level of $\infty$-categorical functional analytic level after \cite{12Lu1}, \cite{12Lu2}, but also to the corresponding $(\infty,1)$-ringed toposes level after Lurie \cite{12Lu1}, \cite{12Lu2} in the $\infty$-category of $\infty$-ringed toposes, Bambozzi-Ben-Bassat-Kremnizer \cite{12BBBK}, Ben-Bassat-Mukherjee \cite{BBM}, Bambozzi-Kremnizer \cite{12BK}, Clausen-Scholze \cite{12CS1} \cite{12CS2} and Kelly-Kremnizer-Mukherjee \cite{KKM} in the stable $\infty$-cateogory of $\infty$-functional analytic ringed toposes.\\

\indent We then work in the noncommutative setting after \cite{Kon1}, \cite{Ta}, \cite{KR1} and \cite{KR2}, with some philosophy rooted in some noncommtative motives and the corresponding nonabelian applications in noncommutative analytic geometry in the derived sense, and the noncommutative analogues of the corresponding Riemann hypothesis and the corresponding Tamagawa number conjectures, and so on. The issue is certainly that the usual Frobenius map looks somehow strange, which reminds us of the fact that actually we need to consider really large objects such as the corresponding Topological Hochschild Homologies and the corresponding nearby objects, in order to define the corresponding analogues of the corresponding prismatic cohomology through THH and nearby objects, the corresponding noncommutative 'perfectoidizations'. Here we choose to consider \cite{12NS} in order to apply the constructions to certain $\infty$-rings, which we will call them Fukaya-Kato analytifications from \cite{12FK} as a noncommutative analog of the constructions in \cite{BBM}.\\

\indent We first consider the following list of motivic constructions in this paper which we hope to establish in very general topological and Banach setting, where one can actually believe that the constructions are somehow parallel and in some sense equivalent even on the $\infty$-categorical level\footnote{We are actually talking about the spaces which might not be smooth.}:

\begin{setting}\mbox{}\\
A. Derived Topological de Rham complexes and Derived Topological Logarithmic de Rham complexes of simplicial derived $I$-complete rings and over pro-\'etale site, after \cite{12B1}, \cite{12B2}, \cite{12Bei}, \cite{12BMS}, \cite{12BS}, \cite{12G1}, \cite{12GL}, \cite{12Ill1}, \cite{12Ill2}, \cite{12O};\\
B. $\mathcal{O}\mathrm{B}_\mathrm{dR}$-sheaves and $\mathcal{O}\mathrm{B}_\mathrm{dR,log}$-sheaves over general analytic adic spaces\footnote{These are not necessarily $p$-adic Tate. But as long as $p$ is topological nilpotent we do have a basis consisting of perfectoid subdomains as in \cite[Theorem 2.9.9, Remark 2.9.10]{12Ked1}.} after \cite{12DLLZ2}, \cite{12Sch2};\\
C. $\varphi$-$\widetilde{C}_X$-sheaves, relative $B$-pairs over general analytic adic spaces after \cite{12KL1} and \cite{12KL2};\\
D. Derived Prismatic Cohomology and Derived Logarithmic Prismatic Cohomology, after \cite{12BS}, \cite{12Ko1}.\\
\end{setting}

\begin{remark}
One could also consider the corresponding derived $I$-complete version\footnote{In fact we are taking derived $I$-completion of the spectra instead of the $I$-completion of the spectra in $\infty$-category as in  \cite[Chapter 7.3]{12Lu2}.} of the corresponding left Kan extended THH and HH $\mathbb{E}_1$-ring spectra of derived $I$-topological $\mathbb{E}_1$-ring spectra as in \cite{12NS} and \cite{KKM}. In certain situation, this should be able to be compared to the constructions above.
\end{remark}

\newpage

\section{Preliminary}

\subsection{The Rings in Algebraic Topology}

We consider some very general topological rings. We will consider the corresponding Huber's adic rings, namely we allow the corresponding rings to contain some adic open subrings which have then the corresponding linear topology. And we consider more general simplicial topological rings which are more general adic in some derived sense.

\begin{setting}
We consider the corresponding category $C_{\infty,\mathrm{E}_\infty}$ of all the $\mathbb{E}_\infty$ rings\footnote{These are the corresponding ring objects in the algebraic topology, see \cite[Chapter 7]{12Lu1}. So we want to consider the corresponding interesting objects in classical algebraic topology such as in \cite{12MP} and \cite{12N}.}. We then consider the corresponding category $C_{\infty,\mathrm{E}_\infty,\mathrm{rat},I}$ of all the $\mathbb{E}_\infty$ rings which have subrings being derived $I$-adically complete\footnote{In fact we are taking derived $I$-completion of the spectra instead of the $I$-completion of the spectra in $\infty$-category as in  \cite[Chapter 7.3]{12Lu2}.}, where $I$ is finite generating set coming from elements in $\pi_0$. And we have the smaller category $C_{\infty,\mathrm{E}_\infty,\mathrm{int},I}$ of all the all the $\mathbb{E}_\infty$ rings being derived $I$-adically complete.	
\end{setting}

\begin{example}
We have many interesting ring spectra from classical algebraic topology. One can further takes the corresponding derived $p$-completion to achieve many interesting objects\footnote{In fact we are taking derived $I$-completion of the spectra instead of the $I$-completion of the spectra in $\infty$-category as in \cite[Chapter 7.3]{12Lu2}. This should be different in more general setting from the consideration in algebraic topology and more general $\infty$-category theory.}. For more discussion see \cite[Chapter 5-13]{12MP} and \cite{12N}.	
\end{example}

\begin{example}
As also discussed in \cite[Chapter 14-19]{12MP} on the level of just model categories or more general construction in \cite[Chapter 1.3]{12Lu1} one considers the stable homotopy category $D(R)$ of some ring spectra, namely the corresponding $\infty$-enhancement of the corresponding classical triangulated categories. One considers the corresponding derived complete objects in this $\infty$-category, then one will have some interesting $\mathbb{E}_\infty$-spectra. 	
\end{example}

\begin{example}
We then have the example that is the corresponding rigid analytic affinoids in \cite[Definition 4.1]{12Ta}. They have open subrings which are actually derived $p$-adically complete.	
\end{example}

\begin{example}
We then have the example that is the corresponding pseudorigid analytic affinoids as in \cite[Definition 3.1]{12Bel1},  \cite{12Bel2} and \cite[Definition 4.1]{12L}. They have open subrings which are actually derived $t$-adically complete for some specific pseudouniformizer $t$.
\end{example}

\begin{example}
We now fix a bounded morphism of simplicial adic rings $A\rightarrow B$ over $A^*$ where $A^*$ contains a corresponding ring of definition $A^*_0$ which is complete with respect to the $(p,I)$-topology and we assume that $A$ is adic and we assume that $(A^*_0,I)$ is a prism in \cite{12BS} namely we at least require that the corresponding $\delta$-structure on the corresponding ring will induce the map $\varphi(.):=.^p+p\delta(.)$ such that we have the situation where $p\in (I,\varphi(I))$. For $A$ or $B$ respectively we assume this contains a subring $A_0$ or $B_0$ (over $A_0^*$) respectively such that we have $A_0$ or $B_0$ respectively is derived complete with respect to the corresponding derived $(p,I)$-topology and we assume that $B=B_0[1/f,f\in I]$ (same for $A$). All the adic rings are assumed to be open mapping. We use the notation $d$ to denote a corresponding primitive element as in \cite[Section 2.3]{12BS} for $A^*$. 
\end{example}

\indent We also consider the corresponding simplicial prelog rings as in \cite[Chapter 6]{12B1}, and also consider the corresponding application of such algebraic derived constructions to some derived $I$-adic rings carrying the corresponding prelog structures such as in \cite[Chapter 2]{12DLLZ1}.

\begin{setting}
We consider the corresponding category $C_{\infty,\mathrm{E}_\infty,\mathrm{prelog}}$ of all the $\mathbb{E}_\infty$ rings carrying the corresponding prelog structures. We then consider the corresponding category $C_{\infty,\mathrm{E}_\infty,\mathrm{rat},I,\mathrm{prelog}}$ of all the $\mathbb{E}_\infty$ rings carrying prelog structures which have subrings being derived $I$-adically complete\footnote{In fact again we are taking derived $I$-completion of the spectra instead of the $I$-completion of the spectra in $\infty$-category as in  \cite[Chapter 7.3]{12Lu2}.}, where $I$ is finite generating set coming from elements in $\pi_0$. And we have the smaller category $C_{\infty,\mathrm{E}_\infty,\mathrm{int},I,\mathrm{prelog}}$ of all the all the $\mathbb{E}_\infty$ rings carrying prelog structures being derived $I$-adically complete.	
\end{setting}

\begin{example}
The first example is the corresponding rigid analytic affinoids in \cite[Definition 4.1]{12Ta}. They have open subrings which are actually derived $p$-adically complete. Then one adds the corresponding prelog structures from \cite[Chapter 2]{12DLLZ1}. 
\end{example}

\begin{example}
The second example is the corresponding pseudorigid analytic affinoids as in \cite[Definition 3.1]{12Bel1}, \cite{12Bel2} and \cite[Definition 4.1]{12L}. They have open subrings which are actually derived $t$-adically complete for some specific pseudouniformizer $t$. Then one adds the corresponding prelog structures from \cite[Chapter 2]{12DLLZ1}. 
\end{example}

\begin{example}
We now fix a bounded morphism of logarithmic simplicial adic rings $(A,M)\rightarrow (B,N)$ over $A^*$ where $A^*$ contains a corresponding ring of definition $A^*_0$ which is complete with respect to the $(p,I)$-topology and we assume that $A$ is adic and we assume that $(A^*_0,I)$ is a prism namely we at least require that the corresponding $\delta$-structure on the corresponding ring will induce the map $\varphi(.):=.^p+p\delta(.)$ such that we have the situation where $p\in (I,\varphi(I))$. For $(A,M)$ or $(B,N_0)$ respectively we assume this contains a subring $(A_0,M_0)$ or $(B_0,N_0)$ (over $A_0^*$) respectively such that we have $(A_0,M_0)$ or $(B_0,N_0)$ respectively is derived complete with respect to the corresponding derived $(p,I)$-topology and we assume that $B=B_0[1/f,f\in I]$ (same for $A$). All the adic rings are assumed to be open mapping. We use the notation $d$ to denote a corresponding primitive element as in \cite[Section 2.3]{12BS} for $A^*$. \\
\end{example}

\newpage

\section{Notations on $\infty$-Categories of $\infty$-Rings}

\begin{center}
\begin{tabularx}{\linewidth}{lX}
Notation & Description\\
\hline
$C_{\infty,\mathrm{E}_\infty}$ &$(\infty,1)$-category of $\mathbb{E}_\infty$ rings.\\
$C_{\infty,\mathrm{E}_\infty,\mathrm{rat},I}$ &$(\infty,1)$-category of $\mathbb{E}_\infty$ rings in the sense that the rings contain some derived rings rationally.\\
$C_{\infty,\mathrm{E}_\infty,\mathrm{int},I}$ &$(\infty,1)$-category of $\mathbb{E}_\infty$ rings which are derived adic rings.\\
$C_{\infty,\mathrm{E}_\infty,\text{prelog}}$ &$(\infty,1)$-category of $\mathbb{E}_\infty$ logarithmic rings.\\
$C_{\infty,\mathrm{E}_\infty,\mathrm{rat},I,\text{prelog}}$ &    $(\infty,1)$-category of $\mathbb{E}_\infty$ logarithmic rings in the sense that the rings contain some derived rings rationally.\\
$C_{\infty,\mathrm{E}_\infty,\mathrm{int},I,\text{prelog}}$ &    $(\infty,1)$-category of $\mathbb{E}_\infty$ logarithmic rings which are derived adic rings.\\

$\mathrm{Object}_{\mathrm{E}_\infty\mathrm{commutativealgebra},\mathrm{Simplicial}}(\mathrm{IndSNorm}_R)$& $(\infty,1)$-category of $\mathbb{E}_\infty$ commutative algebra objects in the $(\infty,1)$-category of $\mathbb{E}_\infty$ ind-seminormed modules over $R$. \\
$\mathrm{Object}_{\mathrm{E}_\infty\mathrm{commutativealgebra},\mathrm{Simplicial}}(\mathrm{Ind}^m\mathrm{SNorm}_R)$& $(\infty,1)$-category of $\mathbb{E}_\infty$ commutative algebra objects in the $(\infty,1)$-category of $\mathbb{E}_\infty$ monomorphic ind-seminormed modules over $R$. \\
$\mathrm{Object}_{\mathrm{E}_\infty\mathrm{commutativealgebra},\mathrm{Simplicial}}(\mathrm{IndNorm}_R)$& $(\infty,1)$-category of $\mathbb{E}_\infty$ commutative algebra objects in the $(\infty,1)$-category of $\mathbb{E}_\infty$ ind-normed modules over $R$. \\
$\mathrm{Object}_{\mathrm{E}_\infty\mathrm{commutativealgebra},\mathrm{Simplicial}}(\mathrm{Ind}^m\mathrm{Norm}_R)$& $(\infty,1)$-category of $\mathbb{E}_\infty$ commutative algebra objects in the $(\infty,1)$-category of $\mathbb{E}_\infty$ monomorphic ind-normed modules over $R$. \\
$\mathrm{Object}_{\mathrm{E}_\infty\mathrm{commutativealgebra},\mathrm{Simplicial}}(\mathrm{IndBan}_R)$& $(\infty,1)$-category of $\mathbb{E}_\infty$ commutative algebra objects in the $(\infty,1)$-category of $\mathbb{E}_\infty$ ind-Banach modules over $R$. \\
$\mathrm{Object}_{\mathrm{E}_\infty\mathrm{commutativealgebra},\mathrm{Simplicial}}(\mathrm{Ind}^m\mathrm{Ban}_R)$& $(\infty,1)$-category of $\mathbb{E}_\infty$ commutative algebra objects in the $(\infty,1)$-category of $\mathbb{E}_\infty$ monomorphic ind-Banach modules over $R$. \\
$\mathrm{Object}_{\mathrm{E}_\infty\mathrm{commutativealgebra},\mathrm{Simplicial}}(\mathrm{IndSNorm}_{\mathbb{F}_1})$& $(\infty,1)$-category of $\mathbb{E}_\infty$ commutative algebra objects in the $(\infty,1)$-category of $\mathbb{E}_\infty$ ind-seminormed sets over $\mathbb{F}_1$. \\
$\mathrm{Object}_{\mathrm{E}_\infty\mathrm{commutativealgebra},\mathrm{Simplicial}}(\mathrm{Ind}^m\mathrm{SNorm}_{\mathbb{F}_1})$& $(\infty,1)$-category of $\mathbb{E}_\infty$ commutative algebra objects in the $(\infty,1)$-category of $\mathbb{E}_\infty$ monomorphic ind-seminormed sets over $\mathbb{F}_1$. \\
$\mathrm{Object}_{\mathrm{E}_\infty\mathrm{commutativealgebra},\mathrm{Simplicial}}(\mathrm{IndNorm}_{\mathbb{F}_1})$& $(\infty,1)$-category of $\mathbb{E}_\infty$ commutative algebra objects in the $(\infty,1)$-category of $\mathbb{E}_\infty$ ind-normed sets over $\mathbb{F}_1$. \\
$\mathrm{Object}_{\mathrm{E}_\infty\mathrm{commutativealgebra},\mathrm{Simplicial}}(\mathrm{Ind}^m\mathrm{Norm}_{\mathbb{F}_1})$ & $(\infty,1)$-category of $\mathbb{E}_\infty$ commutative algebra objects in the $(\infty,1)$-category of $\mathbb{E}_\infty$ monomorphic ind-normed sets over $\mathbb{F}_1$. \\
$\mathrm{Object}_{\mathrm{E}_\infty\mathrm{commutativealgebra},\mathrm{Simplicial}}(\mathrm{IndBan}_{\mathbb{F}_1})$& $(\infty,1)$-category of $\mathbb{E}_\infty$ commutative algebra objects in the $(\infty,1)$-category of $\mathbb{E}_\infty$ ind-Banach sets over $\mathbb{F}_1$. \\
$\mathrm{Object}_{\mathrm{E}_\infty\mathrm{commutativealgebra},\mathrm{Simplicial}}(\mathrm{Ind}^m\mathrm{Ban}_{\mathbb{F}_1})$& $(\infty,1)$-category of $\mathbb{E}_\infty$ commutative algebra objects in the $(\infty,1)$-category of $\mathbb{E}_\infty$ monomorphic ind-Banach sets over $\mathbb{F}_1$. \\

 $\mathrm{Object}_{\mathrm{E}_\infty\mathrm{commutativealgebra},\mathrm{Simplicial}}(\mathrm{IndSNorm}_R)^\text{prelog}$& $(\infty,1)$-category of $\mathbb{E}_\infty$ commutative algebra objects in the $(\infty,1)$-category of $\mathbb{E}_\infty$ ind-seminormed modules over $R$ carrying the corresponding logarithmic structures. \\
$\mathrm{Object}_{\mathrm{E}_\infty\mathrm{commutativealgebra},\mathrm{Simplicial}}(\mathrm{Ind}^m\mathrm{SNorm}_R)^\text{prelog}$& $(\infty,1)$-category of $\mathbb{E}_\infty$ commutative algebra objects in the $(\infty,1)$-category of $\mathbb{E}_\infty$ monomorphic ind-seminormed modules over $R$ carrying the corresponding logarithmic structures. \\
$\mathrm{Object}_{\mathrm{E}_\infty\mathrm{commutativealgebra},\mathrm{Simplicial}}(\mathrm{IndNorm}_R)^\text{prelog}$& $(\infty,1)$-category of $\mathbb{E}_\infty$ commutative algebra objects in the $(\infty,1)$-category of $\mathbb{E}_\infty$ ind-normed modules over $R$ carrying the corresponding logarithmic structures. \\
$\mathrm{Object}_{\mathrm{E}_\infty\mathrm{commutativealgebra},\mathrm{Simplicial}}(\mathrm{Ind}^m\mathrm{Norm}_R)^\text{prelog}$& $(\infty,1)$-category of $\mathbb{E}_\infty$ commutative algebra objects in the $(\infty,1)$-category of $\mathbb{E}_\infty$ monomorphic ind-normed modules over $R$ carrying the corresponding logarithmic structures. \\
$\mathrm{Object}_{\mathrm{E}_\infty\mathrm{commutativealgebra},\mathrm{Simplicial}}(\mathrm{IndBan}_R)^\text{prelog}$& $(\infty,1)$-category of $\mathbb{E}_\infty$ commutative algebra objects in the $(\infty,1)$-category of $\mathbb{E}_\infty$ ind-Banach modules over $R$ carrying the corresponding logarithmic structures. \\
$\mathrm{Object}_{\mathrm{E}_\infty\mathrm{commutativealgebra},\mathrm{Simplicial}}(\mathrm{Ind}^m\mathrm{Ban}_R)^\text{prelog}$& $(\infty,1)$-category of $\mathbb{E}_\infty$ commutative algebra objects in the $(\infty,1)$-category of $\mathbb{E}_\infty$ monomorphic ind-Banach modules over $R$ carrying the corresponding logarithmic structures. \\
$\mathrm{Object}_{\mathrm{E}_\infty\mathrm{commutativealgebra},\mathrm{Simplicial}}(\mathrm{IndSNorm}_{\mathbb{F}_1})^\text{prelog}$& $(\infty,1)$-category of $\mathbb{E}_\infty$ commutative algebra objects in the $(\infty,1)$-category of $\mathbb{E}_\infty$ ind-seminormed sets over $\mathbb{F}_1$ carrying the corresponding logarithmic structures. \\
$\mathrm{Object}_{\mathrm{E}_\infty\mathrm{commutativealgebra},\mathrm{Simplicial}}(\mathrm{Ind}^m\mathrm{SNorm}_{\mathbb{F}_1})^\text{prelog}$& $(\infty,1)$-category of $\mathbb{E}_\infty$ commutative algebra objects in the $(\infty,1)$-category of $\mathbb{E}_\infty$ monomorphic ind-seminormed sets over $\mathbb{F}_1$ carrying the corresponding logarithmic structures. \\
$\mathrm{Object}_{\mathrm{E}_\infty\mathrm{commutativealgebra},\mathrm{Simplicial}}(\mathrm{IndNorm}_{\mathbb{F}_1})^\text{prelog}$& $(\infty,1)$-category of $\mathbb{E}_\infty$ commutative algebra objects in the $(\infty,1)$-category of $\mathbb{E}_\infty$ ind-normed sets over $\mathbb{F}_1$ carrying the corresponding logarithmic structures. \\
$\mathrm{Object}_{\mathrm{E}_\infty\mathrm{commutativealgebra},\mathrm{Simplicial}}(\mathrm{Ind}^m\mathrm{Norm}_{\mathbb{F}_1})^\text{prelog}$& $(\infty,1)$-category of $\mathbb{E}_\infty$ commutative algebra objects in the $(\infty,1)$-category of $\mathbb{E}_\infty$ monomorphic ind-normed sets over $\mathbb{F}_1$ carrying the corresponding logarithmic structures. \\
$\mathrm{Object}_{\mathrm{E}_\infty\mathrm{commutativealgebra},\mathrm{Simplicial}}(\mathrm{IndBan}_{\mathbb{F}_1})^\text{prelog}$& $(\infty,1)$-category of $\mathbb{E}_\infty$ commutative algebra objects in the $(\infty,1)$-category of $\mathbb{E}_\infty$ ind-Banach sets over $\mathbb{F}_1$ carrying the corresponding logarithmic structures. \\
$\mathrm{Object}_{\mathrm{E}_\infty\mathrm{commutativealgebra},\mathrm{Simplicial}}(\mathrm{Ind}^m\mathrm{Ban}_{\mathbb{F}_1})^\text{prelog}$& $(\infty,1)$-category of $\mathbb{E}_\infty$ commutative algebra objects in the $(\infty,1)$-category of $\mathbb{E}_\infty$ monomorphic ind-Banach sets over $\mathbb{F}_1$ carrying the corresponding logarithmic structures. \\

 $\mathrm{Object}_{\mathrm{E}_\infty\mathrm{commutativealgebra},\mathrm{Simplicial}}(\mathrm{IndSNorm}_R)^{\square}$ & $(\infty,1)$-category of $\mathbb{E}_\infty$ commutative algebra objects in the $(\infty,1)$-category of $\mathbb{E}_\infty$ ind-seminormed modules over $R$, generated by the corresponding formal series rings over $R$. Here $\square$ denotes \text{smoothformalseriesclosure}.  \\
$\mathrm{Object}_{\mathrm{E}_\infty\mathrm{commutativealgebra},\mathrm{Simplicial}}(\mathrm{Ind}^m\mathrm{SNorm}_R)^{\square}$& $(\infty,1)$-category of $\mathbb{E}_\infty$ commutative algebra objects in the $(\infty,1)$-category of $\mathbb{E}_\infty$ monomorphic ind-seminormed modules over $R$, generated by the corresponding formal series rings over $R$. \\
$\mathrm{Object}_{\mathrm{E}_\infty\mathrm{commutativealgebra},\mathrm{Simplicial}}(\mathrm{IndNorm}_R)^{\square}$& $(\infty,1)$-category of $\mathbb{E}_\infty$ commutative algebra objects in the $(\infty,1)$-category of $\mathbb{E}_\infty$ ind-normed modules over $R$, generated by the corresponding formal series rings over $R$, generated by the corresponding formal series rings over $R$. \\
$\mathrm{Object}_{\mathrm{E}_\infty\mathrm{commutativealgebra},\mathrm{Simplicial}}(\mathrm{Ind}^m\mathrm{Norm}_R)^{\square}$& $(\infty,1)$-category of $\mathbb{E}_\infty$ commutative algebra objects in the $(\infty,1)$-category of $\mathbb{E}_\infty$ monomorphic ind-normed modules over $R$, generated by the corresponding formal series rings over $R$. \\
$\mathrm{Object}_{\mathrm{E}_\infty\mathrm{commutativealgebra},\mathrm{Simplicial}}(\mathrm{IndBan}_R)^{\square}$& $(\infty,1)$-category of $\mathbb{E}_\infty$ commutative algebra objects in the $(\infty,1)$-category of $\mathbb{E}_\infty$ ind-Banach modules over $R$, generated by the corresponding formal series rings over $R$. \\
$\mathrm{Object}_{\mathrm{E}_\infty\mathrm{commutativealgebra},\mathrm{Simplicial}}(\mathrm{Ind}^m\mathrm{Ban}_R)^{\square}$& $(\infty,1)$-category of $\mathbb{E}_\infty$ commutative algebra objects in the $(\infty,1)$-category of $\mathbb{E}_\infty$ monomorphic ind-Banach modules over $R$, generated by the corresponding formal series rings over $R$. \\
$\mathrm{Object}_{\mathrm{E}_\infty\mathrm{commutativealgebra},\mathrm{Simplicial}}(\mathrm{IndSNorm}_{\mathbb{F}_1})^{\square}$& $(\infty,1)$-category of $\mathbb{E}_\infty$ commutative algebra objects in the $(\infty,1)$-category of $\mathbb{E}_\infty$ ind-seminormed sets over $\mathbb{F}_1$, generated by the corresponding formal series rings over $\mathbb{F}_1$. \\
$\mathrm{Object}_{\mathrm{E}_\infty\mathrm{commutativealgebra},\mathrm{Simplicial}}(\mathrm{Ind}^m\mathrm{SNorm}_{\mathbb{F}_1})^{\square}$& $(\infty,1)$-category of $\mathbb{E}_\infty$ commutative algebra objects in the $(\infty,1)$-category of $\mathbb{E}_\infty$ monomorphic ind-seminormed sets over $\mathbb{F}_1$, generated by the corresponding formal series rings over $\mathbb{F}_1$. \\
$\mathrm{Object}_{\mathrm{E}_\infty\mathrm{commutativealgebra},\mathrm{Simplicial}}(\mathrm{IndNorm}_{\mathbb{F}_1})^{\square}$& $(\infty,1)$-category of $\mathbb{E}_\infty$ commutative algebra objects in the $(\infty,1)$-category of $\mathbb{E}_\infty$ ind-normed sets over $\mathbb{F}_1$, generated by the corresponding formal series rings over $\mathbb{F}_1$. \\
$\mathrm{Object}_{\mathrm{E}_\infty\mathrm{commutativealgebra},\mathrm{Simplicial}}(\mathrm{Ind}^m\mathrm{Norm}_{\mathbb{F}_1})^{\square}$& $(\infty,1)$-category of $\mathbb{E}_\infty$ commutative algebra objects in the $(\infty,1)$-category of $\mathbb{E}_\infty$ monomorphic ind-normed sets over $\mathbb{F}_1$, generated by the corresponding formal series rings over $\mathbb{F}_1$. \\
$\mathrm{Object}_{\mathrm{E}_\infty\mathrm{commutativealgebra},\mathrm{Simplicial}}(\mathrm{IndBan}_{\mathbb{F}_1})^{\square}$& $(\infty,1)$-category of $\mathbb{E}_\infty$ commutative algebra objects in the $(\infty,1)$-category of $\mathbb{E}_\infty$ ind-Banach sets over $\mathbb{F}_1$, generated by the corresponding formal series rings over $\mathbb{F}_1$. \\
$\mathrm{Object}_{\mathrm{E}_\infty\mathrm{commutativealgebra},\mathrm{Simplicial}}(\mathrm{Ind}^m\mathrm{Ban}_{\mathbb{F}_1})^{\square}$& $(\infty,1)$-category of $\mathbb{E}_\infty$ commutative algebra objects in the $(\infty,1)$-category of $\mathbb{E}_\infty$ monomorphic ind-Banach sets over $\mathbb{F}_1$, generated by the corresponding formal series rings over $\mathbb{F}_1$. \\

$\mathrm{Object}_{\mathrm{E}_\infty\mathrm{commutativealgebra},\mathrm{Simplicial}}(\mathrm{IndSNorm}_R)^{\square,\text{prelog}}$& $(\infty,1)$-category of $\mathbb{E}_\infty$ commutative algebra objects in the $(\infty,1)$-category of $\mathbb{E}_\infty$ ind-seminormed modules over $R$ carrying the corresponding logarithmic structures, generated by the corresponding formal series rings over $R$. \\
$\mathrm{Object}_{\mathrm{E}_\infty\mathrm{commutativealgebra},\mathrm{Simplicial}}(\mathrm{Ind}^m\mathrm{SNorm}_R)^{\square,\text{prelog}}$& $(\infty,1)$-category of $\mathbb{E}_\infty$ commutative algebra objects in the $(\infty,1)$-category of $\mathbb{E}_\infty$ monomorphic ind-seminormed modules over $R$ carrying the corresponding logarithmic structures, generated by the corresponding formal series rings over $R$. \\
$\mathrm{Object}_{\mathrm{E}_\infty\mathrm{commutativealgebra},\mathrm{Simplicial}}(\mathrm{IndNorm}_R)^{\square,\text{prelog}}$& $(\infty,1)$-category of $\mathbb{E}_\infty$ commutative algebra objects in the $(\infty,1)$-category of $\mathbb{E}_\infty$ ind-normed modules over $R$ carrying the corresponding logarithmic structures, generated by the corresponding formal series rings over $R$. \\
$\mathrm{Object}_{\mathrm{E}_\infty\mathrm{commutativealgebra},\mathrm{Simplicial}}(\mathrm{Ind}^m\mathrm{Norm}_R)^{\square,\text{prelog}}$& $(\infty,1)$-category of $\mathbb{E}_\infty$ commutative algebra objects in the $(\infty,1)$-category of $\mathbb{E}_\infty$ monomorphic ind-normed modules over $R$ carrying the corresponding logarithmic structures, generated by the corresponding formal series rings over $R$. \\
$\mathrm{Object}_{\mathrm{E}_\infty\mathrm{commutativealgebra},\mathrm{Simplicial}}(\mathrm{IndBan}_R)^{\square,\text{prelog}}$& $(\infty,1)$-category of $\mathbb{E}_\infty$ commutative algebra objects in the $(\infty,1)$-category of $\mathbb{E}_\infty$ ind-Banach modules over $R$ carrying the corresponding logarithmic structures, generated by the corresponding formal series rings over $R$. \\
$\mathrm{Object}_{\mathrm{E}_\infty\mathrm{commutativealgebra},\mathrm{Simplicial}}(\mathrm{Ind}^m\mathrm{Ban}_R)^{\square,\text{prelog}}$& $(\infty,1)$-category of $\mathbb{E}_\infty$ commutative algebra objects in the $(\infty,1)$-category of $\mathbb{E}_\infty$ monomorphic ind-Banach modules over $R$ carrying the corresponding logarithmic structures, generated by the corresponding formal series rings over $R$. \\
$\mathrm{Object}_{\mathrm{E}_\infty\mathrm{commutativealgebra},\mathrm{Simplicial}}(\mathrm{IndSNorm}_{\mathbb{F}_1})^{\square,\text{prelog}}$& $(\infty,1)$-category of $\mathbb{E}_\infty$ commutative algebra objects in the $(\infty,1)$-category of $\mathbb{E}_\infty$ ind-seminormed sets over $\mathbb{F}_1$ carrying the corresponding logarithmic structures, generated by the corresponding formal series rings over ${\mathbb{F}_1}$. \\
$\mathrm{Object}_{\mathrm{E}_\infty\mathrm{commutativealgebra},\mathrm{Simplicial}}(\mathrm{Ind}^m\mathrm{SNorm}_{\mathbb{F}_1})^{\square,\text{prelog}}$& $(\infty,1)$-category of $\mathbb{E}_\infty$ commutative algebra objects in the $(\infty,1)$-category of $\mathbb{E}_\infty$ monomorphic ind-seminormed sets over $\mathbb{F}_1$ carrying the corresponding logarithmic structures, generated by the corresponding formal series rings over ${\mathbb{F}_1}$. \\
$\mathrm{Object}_{\mathrm{E}_\infty\mathrm{commutativealgebra},\mathrm{Simplicial}}(\mathrm{IndNorm}_{\mathbb{F}_1})^{\square,\text{prelog}}$& $(\infty,1)$-category of $\mathbb{E}_\infty$ commutative algebra objects in the $(\infty,1)$-category of $\mathbb{E}_\infty$ ind-normed sets over $\mathbb{F}_1$ carrying the corresponding logarithmic structures, generated by the corresponding formal series rings over ${\mathbb{F}_1}$. \\
$\mathrm{Object}_{\mathrm{E}_\infty\mathrm{commutativealgebra},\mathrm{Simplicial}}(\mathrm{Ind}^m\mathrm{Norm}_{\mathbb{F}_1})^{\square,\text{prelog}}$& $(\infty,1)$-category of $\mathbb{E}_\infty$ commutative algebra objects in the $(\infty,1)$-category of $\mathbb{E}_\infty$ monomorphic ind-normed sets over $\mathbb{F}_1$ carrying the corresponding logarithmic structures, generated by the corresponding formal series rings over ${\mathbb{F}_1}$. \\
$\mathrm{Object}_{\mathrm{E}_\infty\mathrm{commutativealgebra},\mathrm{Simplicial}}(\mathrm{IndBan}_{\mathbb{F}_1})^{\square,\text{prelog}}$& $(\infty,1)$-category of $\mathbb{E}_\infty$ commutative algebra objects in the $(\infty,1)$-category of $\mathbb{E}_\infty$ ind-Banach sets over $\mathbb{F}_1$ carrying the corresponding logarithmic structures, generated by the corresponding formal series rings over ${\mathbb{F}_1}$. \\
$\mathrm{Object}_{\mathrm{E}_\infty\mathrm{commutativealgebra},\mathrm{Simplicial}}(\mathrm{Ind}^m\mathrm{Ban}_{\mathbb{F}_1})^{\square,\text{prelog}}$& $(\infty,1)$-category of $\mathbb{E}_\infty$ commutative algebra objects in the $(\infty,1)$-category of $\mathbb{E}_\infty$ monomorphic ind-Banach sets over $\mathbb{F}_1$ carrying the corresponding logarithmic structures, generated by the corresponding formal series rings over ${\mathbb{F}_1}$. \\

\end{tabularx}
\end{center}

\begin{remark}
The generating process above is by adding all the colimits in the homotopy sense which are assumed to be sifted. $R$ will be Banach and carrying the corresponding $p$-adic topology.
\end{remark}

\newpage
\section{Notations on $\infty$-Categories of Commutative $\infty$-Ringed Toposes}

\indent We change the notations for $\infty$-ringed toposes slightly, therefore let us start from the rings.\\

\noindent Rings:\\

\noindent $\mathrm{Ind}^\text{smoothformalseriesclosure}\mathrm{Commutativealgebra}_{\mathrm{simplicial}}(\mathrm{Ind}\mathrm{Seminormed}_R)$: $(\infty,1)$-category of simplicial commutative algebra objects in the $\infty$-category of colimit completion of seminormed modules over a general Banach ring $R$, generated by the corresponding power series rings by taking colimits in the homotopical sense. \\ 
\noindent $\mathrm{Ind}^\text{smoothformalseriesclosure}\mathrm{Commutativealgebra}_{\mathrm{simplicial}}(\mathrm{Ind}^m\mathrm{Seminormed}_R)$: $(\infty,1)$-category of simplicial commutative algebra objects in the $\infty$-category of monomorphic colimit completion of seminormed modules over a general Banach ring $R$, generated by the corresponding power series rings by taking colimits in the homotopical sense. \\ 
\noindent $\mathrm{Ind}^\text{smoothformalseriesclosure}\mathrm{Commutativealgebra}_{\mathrm{simplicial}}(\mathrm{Ind}\mathrm{Normed}_R)$: $(\infty,1)$-category of simplicial commutative algebra objects in the $\infty$-category of colimit completion of normed modules over a general Banach ring $R$, generated by the corresponding power series rings by taking colimits in the homotopical sense. \\ 
\noindent $\mathrm{Ind}^\text{smoothformalseriesclosure}\mathrm{Commutativealgebra}_{\mathrm{simplicial}}(\mathrm{Ind}^m\mathrm{Normed}_R)$: $(\infty,1)$-category of simplicial commutative algebra objects in the $\infty$-category of monomorphic colimit completion of normed modules over a general Banach ring $R$, generated by the corresponding power series rings by taking colimits in the homotopical sense. \\ 
\noindent $\mathrm{Ind}^\text{smoothformalseriesclosure}\mathrm{Commutativealgebra}_{\mathrm{simplicial}}(\mathrm{Ind}\mathrm{Banach}_R)$: $(\infty,1)$-category of simplicial commutative algebra objects in the $\infty$-category of colimit completion of Banach modules over a general Banach ring $R$, generated by the corresponding power series rings by taking colimits in the homotopical sense. \\ 
\noindent $\mathrm{Ind}^\text{smoothformalseriesclosure}\mathrm{Commutativealgebra}_{\mathrm{simplicial}}(\mathrm{Ind}^m\mathrm{Banach}_R)$: $(\infty,1)$-category of simplicial commutative algebra objects in the $\infty$-category of monomorphic colimit completion of Banach modules over a general Banach ring $R$, generated by the corresponding power series rings by taking colimits in the homotopical sense. \\

\noindent $\mathrm{Ind}^\text{smoothformalseriesclosure}\mathrm{Commutativealgebra}_{\mathrm{simplicial}}(\mathrm{Ind}\mathrm{Seminormed}_{\mathbb{F}_1})$: $(\infty,1)$-category of simplicial commutative algebra objects in the $\infty$-category of colimit completion of seminormed sets over a general Banach ring $\mathbb{F}_1$, generated by the corresponding power series rings by taking colimits in the homotopical sense. \\ 
\noindent $\mathrm{Ind}^\text{smoothformalseriesclosure}\mathrm{Commutativealgebra}_{\mathrm{simplicial}}(\mathrm{Ind}^m\mathrm{Seminormed}_{\mathbb{F}_1})$: $(\infty,1)$-category of simplicial commutative algebra objects in the $\infty$-category of monomorphic colimit completion of seminormed sets over a general Banach ring $\mathbb{F}_1$, generated by the corresponding power series rings by taking colimits in the homotopical sense. \\ 
\noindent $\mathrm{Ind}^\text{smoothformalseriesclosure}\mathrm{Commutativealgebra}_{\mathrm{simplicial}}(\mathrm{Ind}\mathrm{Normed}_{\mathbb{F}_1})$: $(\infty,1)$-category of simplicial commutative algebra objects in the $\infty$-category of colimit completion of normed sets over a general Banach ring $\mathbb{F}_1$, generated by the corresponding power series rings by taking colimits in the homotopical sense. \\ 
\noindent $\mathrm{Ind}^\text{smoothformalseriesclosure}\mathrm{Commutativealgebra}_{\mathrm{simplicial}}(\mathrm{Ind}^m\mathrm{Normed}_{\mathbb{F}_1})$: $(\infty,1)$-category of simplicial commutative algebra objects in the $\infty$-category of monomorphic colimit completion of normed sets over a general Banach ring $\mathbb{F}_1$, generated by the corresponding power series rings by taking colimits in the homotopical sense. \\ 
\noindent $\mathrm{Ind}^\text{smoothformalseriesclosure}\mathrm{Commutativealgebra}_{\mathrm{simplicial}}(\mathrm{Ind}\mathrm{Banach}_{\mathbb{F}_1})$: $(\infty,1)$-category of simplicial commutative algebra objects in the $\infty$-category of colimit completion of Banach sets over a general Banach ring $\mathbb{F}_1$, generated by the corresponding power series rings by taking colimits in the homotopical sense. \\ 
\noindent $\mathrm{Ind}^\text{smoothformalseriesclosure}\mathrm{Commutativealgebra}_{\mathrm{simplicial}}(\mathrm{Ind}^m\mathrm{Banach}_{\mathbb{F}_1})$: $(\infty,1)$-category of simplicial commutative algebra objects in the $\infty$-category of monomorphic colimit completion of Banach sets over a general Banach ring $\mathbb{F}_1$, generated by the corresponding power series rings by taking colimits in the homotopical sense. \\

\noindent Prestacks:\\

\noindent $\infty-\mathrm{Prestack}_{\mathrm{Commutativealgebra}_{\mathrm{simplicial}}(\mathrm{Ind}\mathrm{Seminormed}_?)^\mathrm{opposite},\mathrm{Grotopology,homotopyepimorphism}}$: $\infty$-presheaves into $\infty$-groupoid over corresponding opposite category carrying the corresponding Grothendieck topology, and we will mainly consider the corresponding homotopy epimorphisms. $?=R,\mathbb{F}_1$.\\
\noindent $\infty-\mathrm{Prestack}_{\mathrm{Commutativealgebra}_{\mathrm{simplicial}}(\mathrm{Ind}^m\mathrm{Seminormed}_?)^\mathrm{opposite},\mathrm{Grotopology,homotopyepimorphism}}$: $\infty$-presheaves into $\infty$-groupoid over corresponding opposite category carrying the corresponding Grothendieck topology, and we will mainly consider the corresponding homotopy epimorphisms. $?=R,\mathbb{F}_1$.\\
\noindent $\infty-\mathrm{Prestack}_{\mathrm{Commutativealgebra}_{\mathrm{simplicial}}(\mathrm{Ind}\mathrm{Normed}_?)^\mathrm{opposite},\mathrm{Grotopology,homotopyepimorphism}}$: $\infty$-presheaves into $\infty$-groupoid over corresponding opposite category carrying the corresponding Grothendieck topology, and we will mainly consider the corresponding homotopy epimorphisms. $?=R,\mathbb{F}_1$.\\
\noindent $\infty-\mathrm{Prestack}_{\mathrm{Commutativealgebra}_{\mathrm{simplicial}}(\mathrm{Ind}^m\mathrm{Normed}_?)^\mathrm{opposite},\mathrm{Grotopology,homotopyepimorphism}}$: $\infty$-presheaves into $\infty$-groupoid over corresponding opposite category carrying the corresponding Grothendieck topology, and we will mainly consider the corresponding homotopy epimorphisms. $?=R,\mathbb{F}_1$.\\
\noindent $\infty-\mathrm{Prestack}_{\mathrm{Commutativealgebra}_{\mathrm{simplicial}}(\mathrm{Ind}\mathrm{Banach}_?)^\mathrm{opposite},\mathrm{Grotopology,homotopyepimorphism}}$: $\infty$-presheaves into $\infty$-groupoid over corresponding opposite category carrying the corresponding Grothendieck topology, and we will mainly consider the corresponding homotopy epimorphisms. $?=R,\mathbb{F}_1$.\\
\noindent $\infty-\mathrm{Prestack}_{\mathrm{Commutativealgebra}_{\mathrm{simplicial}}(\mathrm{Ind}^m\mathrm{Banach}_?)^\mathrm{opposite},\mathrm{Grotopology,homotopyepimorphism}}$: $\infty$-presheaves into $\infty$-groupoid over corresponding opposite category carrying the corresponding Grothendieck topology, and we will mainly consider the corresponding homotopy epimorphisms. $?=R,\mathbb{F}_1$.\\

\noindent Stacks:\\
 
 \noindent $\infty-\mathrm{Stack}_{\mathrm{Commutativealgebra}_{\mathrm{simplicial}}(\mathrm{Ind}\mathrm{Seminormed}_?)^\mathrm{opposite},\mathrm{Grotopology,homotopyepimorphism}}$: $\infty$-sheaves into $\infty$-groupoid over corresponding opposite category carrying the corresponding Grothendieck topology, and we will mainly consider the corresponding homotopy epimorphisms. $?=R,\mathbb{F}_1$.\\
\noindent $\infty-\mathrm{Stack}_{\mathrm{Commutativealgebra}_{\mathrm{simplicial}}(\mathrm{Ind}^m\mathrm{Seminormed}_?)^\mathrm{opposite},\mathrm{Grotopology,homotopyepimorphism}}$: $\infty$-sheaves into $\infty$-groupoid over corresponding opposite category carrying the corresponding Grothendieck topology, and we will mainly consider the corresponding homotopy epimorphisms. $?=R,\mathbb{F}_1$.\\
\noindent $\infty-\mathrm{Stack}_{\mathrm{Commutativealgebra}_{\mathrm{simplicial}}(\mathrm{Ind}\mathrm{Normed}_?)^\mathrm{opposite},\mathrm{Grotopology,homotopyepimorphism}}$: $\infty$-sheaves into $\infty$-groupoid over corresponding opposite category carrying the corresponding Grothendieck topology, and we will mainly consider the corresponding homotopy epimorphisms. $?=R,\mathbb{F}_1$.\\
\noindent $\infty-\mathrm{Stack}_{\mathrm{Commutativealgebra}_{\mathrm{simplicial}}(\mathrm{Ind}^m\mathrm{Normed}_?)^\mathrm{opposite},\mathrm{Grotopology,homotopyepimorphism}}$: $\infty$-sheaves into $\infty$-groupoid over corresponding opposite category carrying the corresponding Grothendieck topology, and we will mainly consider the corresponding homotopy epimorphisms. $?=R,\mathbb{F}_1$.\\
\noindent $\infty-\mathrm{Stack}_{\mathrm{Commutativealgebra}_{\mathrm{simplicial}}(\mathrm{Ind}\mathrm{Banach}_?)^\mathrm{opposite},\mathrm{Grotopology,homotopyepimorphism}}$: $\infty$-sheaves into $\infty$-groupoid over corresponding opposite category carrying the corresponding Grothendieck topology, and we will mainly consider the corresponding homotopy epimorphisms. $?=R,\mathbb{F}_1$.\\
\noindent $\infty-\mathrm{Stack}_{\mathrm{Commutativealgebra}_{\mathrm{simplicial}}(\mathrm{Ind}^m\mathrm{Banach}_?)^\mathrm{opposite},\mathrm{Grotopology,homotopyepimorphism}}$: $\infty$-sheaves into $\infty$-groupoid over corresponding opposite category carrying the corresponding Grothendieck topology, and we will mainly consider the corresponding homotopy epimorphisms. $?=R,\mathbb{F}_1$.\\

\noindent Ringed Toposes: \\
 
 \noindent $\infty-\mathrm{Toposes}^{\mathrm{ringed},\mathrm{commutativealgebra}_{\mathrm{simplicial}}(\mathrm{Ind}\mathrm{Seminormed}_?)}_{\mathrm{Commutativealgebra}_{\mathrm{simplicial}}(\mathrm{Ind}\mathrm{Seminormed}_?)^\mathrm{opposite},\mathrm{Grotopology,homotopyepimorphism}}$: $\infty$-sheaves into $\infty$-groupoid over corresponding opposite category carrying the corresponding Grothendieck topology, and we will mainly consider the corresponding homotopy epimorphisms. $?=R,\mathbb{F}_1$. And we assume the stack carries $\infty$-ringed toposes structure. \\
\noindent $\infty-\mathrm{Toposes}^{\mathrm{ringed},\mathrm{Commutativealgebra}_{\mathrm{simplicial}}(\mathrm{Ind}^m\mathrm{Seminormed}_?)}_{\mathrm{Commutativealgebra}_{\mathrm{simplicial}}(\mathrm{Ind}^m\mathrm{Seminormed}_?)^\mathrm{opposite},\mathrm{Grotopology,homotopyepimorphism}}$: $\infty$-sheaves into $\infty$-groupoid over corresponding opposite category carrying the corresponding Grothendieck topology, and we will mainly consider the corresponding homotopy epimorphisms. $?=R,\mathbb{F}_1$. And we assume the stack carries $\infty$-ringed toposes structure.\\
\noindent $\infty-\mathrm{Toposes}^{\mathrm{ringed},\mathrm{Commutativealgebra}_{\mathrm{simplicial}}(\mathrm{Ind}\mathrm{Normed}_?)}_{\mathrm{Commutativealgebra}_{\mathrm{simplicial}}(\mathrm{Ind}\mathrm{Normed}_?)^\mathrm{opposite},\mathrm{Grotopology,homotopyepimorphism}}$: $\infty$-sheaves into $\infty$-groupoid over corresponding opposite category carrying the corresponding Grothendieck topology, and we will mainly consider the corresponding homotopy epimorphisms. $?=R,\mathbb{F}_1$. And we assume the stack carries $\infty$-ringed toposes structure.\\
\noindent $\infty-\mathrm{Toposes}^{\mathrm{ringed},\mathrm{Commutativealgebra}_{\mathrm{simplicial}}(\mathrm{Ind}^m\mathrm{Normed}_?)}_{\mathrm{Commutativealgebra}_{\mathrm{simplicial}}(\mathrm{Ind}^m\mathrm{Normed}_?)^\mathrm{opposite},\mathrm{Grotopology,homotopyepimorphism}}$: $\infty$-sheaves into $\infty$-groupoid over corresponding opposite category carrying the corresponding Grothendieck topology, and we will mainly consider the corresponding homotopy epimorphisms. $?=R,\mathbb{F}_1$. And we assume the stack carries $\infty$-ringed toposes structure.\\
\noindent $\infty-\mathrm{Toposes}^{\mathrm{ringed},\mathrm{Commutativealgebra}_{\mathrm{simplicial}}(\mathrm{Ind}\mathrm{Banach}_?)}_{\mathrm{Commutativealgebra}_{\mathrm{simplicial}}(\mathrm{Ind}\mathrm{Banach}_?)^\mathrm{opposite},\mathrm{Grotopology,homotopyepimorphism}}$: $\infty$-sheaves into $\infty$-groupoid over corresponding opposite category carrying the corresponding Grothendieck topology, and we will mainly consider the corresponding homotopy epimorphisms. $?=R,\mathbb{F}_1$. And we assume the stack carries $\infty$-ringed toposes structure.\\
\noindent $\infty-\mathrm{Toposes}^{\mathrm{ringed},\mathrm{Commutativealgebra}_{\mathrm{simplicial}}(\mathrm{Ind}^m\mathrm{Banach}_?)}_{\mathrm{Commutativealgebra}_{\mathrm{simplicial}}(\mathrm{Ind}^m\mathrm{Banach}_?)^\mathrm{opposite},\mathrm{Grotopology,homotopyepimorphism}}$: $\infty$-sheaves into $\infty$-groupoid over corresponding opposite category carrying the corresponding Grothendieck topology, and we will mainly consider the corresponding homotopy epimorphisms. $?=R,\mathbb{F}_1$. And we assume the stack carries $\infty$-ringed toposes structure.\\

 \noindent $\mathrm{Proj}^\text{smoothformalseriesclosure}\infty-\mathrm{Toposes}^{\mathrm{ringed},\mathrm{commutativealgebra}_{\mathrm{simplicial}}(\mathrm{Ind}\mathrm{Seminormed}_?)}_{\mathrm{Commutativealgebra}_{\mathrm{simplicial}}(\mathrm{Ind}\mathrm{Seminormed}_?)^\mathrm{opposite},\mathrm{Grotopology,homotopyepimorphism}}$: $\infty$-sheaves into $\infty$-groupoid over corresponding opposite category carrying the corresponding Grothendieck topology, and we will mainly consider the corresponding homotopy epimorphisms. $?=R,\mathbb{F}_1$. And we assume the stack carries $\infty$-ringed toposes structure by specializing a ring object $\mathcal{O}$. \\
\noindent $\mathrm{Proj}^\text{smoothformalseriesclosure}\infty-\mathrm{Toposes}^{\mathrm{ringed},\mathrm{Commutativealgebra}_{\mathrm{simplicial}}(\mathrm{Ind}^m\mathrm{Seminormed}_?)}_{\mathrm{Commutativealgebra}_{\mathrm{simplicial}}(\mathrm{Ind}^m\mathrm{Seminormed}_?)^\mathrm{opposite},\mathrm{Grotopology,homotopyepimorphism}}$: $\infty$-sheaves into $\infty$-groupoid over corresponding opposite category carrying the corresponding Grothendieck topology, and we will mainly consider the corresponding homotopy epimorphisms. $?=R,\mathbb{F}_1$. And we assume the stack carries $\infty$-ringed toposes structure by specializing a ring object $\mathcal{O}$.\\
\noindent $\mathrm{Proj}^\text{smoothformalseriesclosure}\infty-\mathrm{Toposes}^{\mathrm{ringed},\mathrm{Commutativealgebra}_{\mathrm{simplicial}}(\mathrm{Ind}\mathrm{Normed}_?)}_{\mathrm{Commutativealgebra}_{\mathrm{simplicial}}(\mathrm{Ind}\mathrm{Normed}_?)^\mathrm{opposite},\mathrm{Grotopology,homotopyepimorphism}}$: $\infty$-sheaves into $\infty$-groupoid over corresponding opposite category carrying the corresponding Grothendieck topology, and we will mainly consider the corresponding homotopy epimorphisms. $?=R,\mathbb{F}_1$. And we assume the stack carries $\infty$-ringed toposes structure by specializing a ring object $\mathcal{O}$.\\
\noindent $\mathrm{Proj}^\text{smoothformalseriesclosure}\infty-\mathrm{Toposes}^{\mathrm{ringed},\mathrm{Commutativealgebra}_{\mathrm{simplicial}}(\mathrm{Ind}^m\mathrm{Normed}_?)}_{\mathrm{Commutativealgebra}_{\mathrm{simplicial}}(\mathrm{Ind}^m\mathrm{Normed}_?)^\mathrm{opposite},\mathrm{Grotopology,homotopyepimorphism}}$: $\infty$-sheaves into $\infty$-groupoid over corresponding opposite category carrying the corresponding Grothendieck topology, and we will mainly consider the corresponding homotopy epimorphisms. $?=R,\mathbb{F}_1$. And we assume the stack carries $\infty$-ringed toposes structure by specializing a ring object $\mathcal{O}$.\\
\noindent $\mathrm{Proj}^\text{smoothformalseriesclosure}\infty-\mathrm{Toposes}^{\mathrm{ringed},\mathrm{Commutativealgebra}_{\mathrm{simplicial}}(\mathrm{Ind}\mathrm{Banach}_?)}_{\mathrm{Commutativealgebra}_{\mathrm{simplicial}}(\mathrm{Ind}\mathrm{Banach}_?)^\mathrm{opposite},\mathrm{Grotopology,homotopyepimorphism}}$: $\infty$-sheaves into $\infty$-groupoid over corresponding opposite category carrying the corresponding Grothendieck topology, and we will mainly consider the corresponding homotopy epimorphisms. $?=R,\mathbb{F}_1$. And we assume the stack carries $\infty$-ringed toposes structure by specializing a ring object $\mathcal{O}$.\\
\noindent $\mathrm{Proj}^\text{smoothformalseriesclosure}\infty-\mathrm{Toposes}^{\mathrm{ringed},\mathrm{Commutativealgebra}_{\mathrm{simplicial}}(\mathrm{Ind}^m\mathrm{Banach}_?)}_{\mathrm{Commutativealgebra}_{\mathrm{simplicial}}(\mathrm{Ind}^m\mathrm{Banach}_?)^\mathrm{opposite},\mathrm{Grotopology,homotopyepimorphism}}$: $\infty$-sheaves into $\infty$-groupoid over corresponding opposite category carrying the corresponding Grothendieck topology, and we will mainly consider the corresponding homotopy epimorphisms. $?=R,\mathbb{F}_1$. And we assume the stack carries $\infty$-ringed toposes structure by specializing a ring object $\mathcal{O}$.\\

\begin{remark}
In $p$-adic Hodge theory, one usually will need to construct presheaves $M$ out of from the $\infty$-ring object $\mathcal{O}$.	Also one can define the corresponding pre-$\infty$-ringed Toposes, we will not continue provide the corresponding group of notations in the parallel way. 
\end{remark}

\noindent $\infty$-Quasicoherent Sheaves of Functional Analytic Modules over Ringed Toposes $\sharp=\mathrm{Seminormed},\mathrm{Normed},\mathrm{Banach}$: \\
 
 \noindent $\mathrm{Ind}\mathrm{\sharp Quasicoherent}_{\infty-\mathrm{Toposes}^{\mathrm{ringed},\mathrm{commutativealgebra}_{\mathrm{simplicial}}(\mathrm{Ind}\mathrm{Seminormed}_?)}_{\mathrm{Commutativealgebra}_{\mathrm{simplicial}}(\mathrm{Ind}\mathrm{Seminormed}_?)^\mathrm{opposite},\mathrm{Grotopology,homotopyepimorphism}}}$: Colimits completion of $\infty$-Quasicoherent Sheaves of Functional Analytic Modules over $\infty$-sheaves into $\infty$-groupoid over corresponding opposite category carrying the corresponding Grothendieck topology, and we will mainly consider the corresponding homotopy epimorphisms. $?=R,\mathbb{F}_1$. And we assume the stack carries $\infty$-ringed toposes structure. \\
\noindent $\mathrm{Ind}\mathrm{\sharp Quasicoherent}_{\infty-\mathrm{Toposes}^{\mathrm{ringed},\mathrm{Commutativealgebra}_{\mathrm{simplicial}}(\mathrm{Ind}^m\mathrm{Seminormed}_?)}_{\mathrm{Commutativealgebra}_{\mathrm{simplicial}}(\mathrm{Ind}^m\mathrm{Seminormed}_?)^\mathrm{opposite},\mathrm{Grotopology,homotopyepimorphism}}}$: Colimits completion of $\infty$-Quasicoherent Sheaves of Functional Analytic Modules over $\infty$-sheaves into $\infty$-groupoid over corresponding opposite category carrying the corresponding Grothendieck topology, and we will mainly consider the corresponding homotopy epimorphisms. $?=R,\mathbb{F}_1$. And we assume the stack carries $\infty$-ringed toposes structure.\\
\noindent $\mathrm{Ind}\mathrm{\sharp Quasicoherent}_{\infty-\mathrm{Toposes}^{\mathrm{ringed},\mathrm{Commutativealgebra}_{\mathrm{simplicial}}(\mathrm{Ind}\mathrm{Normed}_?)}_{\mathrm{Commutativealgebra}_{\mathrm{simplicial}}(\mathrm{Ind}\mathrm{Normed}_?)^\mathrm{opposite},\mathrm{Grotopology,homotopyepimorphism}}}$: Colimits completion of $\infty$-Quasicoherent Sheaves of Functional Analytic Modules over $\infty$-sheaves into $\infty$-groupoid over corresponding opposite category carrying the corresponding Grothendieck topology, and we will mainly consider the corresponding homotopy epimorphisms. $?=R,\mathbb{F}_1$. And we assume the stack carries $\infty$-ringed toposes structure.\\
\noindent $\mathrm{Ind}\mathrm{\sharp Quasicoherent}_{\infty-\mathrm{Toposes}^{\mathrm{ringed},\mathrm{Commutativealgebra}_{\mathrm{simplicial}}(\mathrm{Ind}^m\mathrm{Normed}_?)}_{\mathrm{Commutativealgebra}_{\mathrm{simplicial}}(\mathrm{Ind}^m\mathrm{Normed}_?)^\mathrm{opposite},\mathrm{Grotopology,homotopyepimorphism}}}$: Colimits completion of $\infty$-Quasicoherent Sheaves of Functional Analytic Modules over $\infty$-sheaves into $\infty$-groupoid over corresponding opposite category carrying the corresponding Grothendieck topology, and we will mainly consider the corresponding homotopy epimorphisms. $?=R,\mathbb{F}_1$. And we assume the stack carries $\infty$-ringed toposes structure.\\
\noindent $\mathrm{Ind}\mathrm{\sharp Quasicoherent}_{\infty-\mathrm{Toposes}^{\mathrm{ringed},\mathrm{Commutativealgebra}_{\mathrm{simplicial}}(\mathrm{Ind}\mathrm{Banach}_?)}_{\mathrm{Commutativealgebra}_{\mathrm{simplicial}}(\mathrm{Ind}\mathrm{Banach}_?)^\mathrm{opposite},\mathrm{Grotopology,homotopyepimorphism}}}$: Colimits completion of $\infty$-Quasicoherent Sheaves of Functional Analytic Modules over $\infty$-sheaves into $\infty$-groupoid over corresponding opposite category carrying the corresponding Grothendieck topology, and we will mainly consider the corresponding homotopy epimorphisms. $?=R,\mathbb{F}_1$. And we assume the stack carries $\infty$-ringed toposes structure.\\
\noindent $\mathrm{Ind}\mathrm{\sharp Quasicoherent}_{\infty-\mathrm{Toposes}^{\mathrm{ringed},\mathrm{Commutativealgebra}_{\mathrm{simplicial}}(\mathrm{Ind}^m\mathrm{Banach}_?)}_{\mathrm{Commutativealgebra}_{\mathrm{simplicial}}(\mathrm{Ind}^m\mathrm{Banach}_?)^\mathrm{opposite},\mathrm{Grotopology,homotopyepimorphism}}}$: Colimits completion of $\infty$-Quasicoherent Sheaves of Functional Analytic Modules over $\infty$-sheaves into $\infty$-groupoid over corresponding opposite category carrying the corresponding Grothendieck topology, and we will mainly consider the corresponding homotopy epimorphisms. $?=R,\mathbb{F}_1$. And we assume the stack carries $\infty$-ringed toposes structure.\\

 \noindent $\mathrm{Ind}\mathrm{\sharp Quasicoherent}_{\mathrm{Ind}^\text{smoothformalseriesclosure}\infty-\mathrm{Toposes}^{\mathrm{ringed},\mathrm{commutativealgebra}_{\mathrm{simplicial}}(\mathrm{Ind}\mathrm{Seminormed}_?)}_{\mathrm{Commutativealgebra}_{\mathrm{simplicial}}(\mathrm{Ind}\mathrm{Seminormed}_?)^\mathrm{opposite},\mathrm{Grotopology,homotopyepimorphism}}}$: Colimits completion of $\infty$-Quasicoherent Sheaves of Functional Analytic Modules over $\infty$-sheaves into $\infty$-groupoid over corresponding opposite category carrying the corresponding Grothendieck topology, and we will mainly consider the corresponding homotopy epimorphisms. $?=R,\mathbb{F}_1$. And we assume the stack carries $\infty$-ringed toposes structure by specializing a ring object $\mathcal{O}$. The difference is that we consider the corresponding $\mathcal{O}$ living in the colimit completion closure. \\
\noindent $\mathrm{Ind}\mathrm{\sharp Quasicoherent}_{\mathrm{Ind}^\text{smoothformalseriesclosure}\infty-\mathrm{Toposes}^{\mathrm{ringed},\mathrm{Commutativealgebra}_{\mathrm{simplicial}}(\mathrm{Ind}^m\mathrm{Seminormed}_?)}_{\mathrm{Commutativealgebra}_{\mathrm{simplicial}}(\mathrm{Ind}^m\mathrm{Seminormed}_?)^\mathrm{opposite},\mathrm{Grotopology,homotopyepimorphism}}}$: Colimits completion of $\infty$-Quasicoherent Sheaves of Functional Analytic Modules over $\infty$-sheaves into $\infty$-groupoid over corresponding opposite category carrying the corresponding Grothendieck topology, and we will mainly consider the corresponding homotopy epimorphisms. $?=R,\mathbb{F}_1$. And we assume the stack carries $\infty$-ringed toposes structure by specializing a ring object $\mathcal{O}$. The difference is that we consider the corresponding $\mathcal{O}$ living in the colimit completion closure.\\
\noindent $\mathrm{Ind}\mathrm{\sharp Quasicoherent}_{\mathrm{Ind}^\text{smoothformalseriesclosure}\infty-\mathrm{Toposes}^{\mathrm{ringed},\mathrm{Commutativealgebra}_{\mathrm{simplicial}}(\mathrm{Ind}\mathrm{Normed}_?)}_{\mathrm{Commutativealgebra}_{\mathrm{simplicial}}(\mathrm{Ind}\mathrm{Normed}_?)^\mathrm{opposite},\mathrm{Grotopology,homotopyepimorphism}}}$: Colimits completion of $\infty$-Quasicoherent Sheaves of Functional Analytic Modules over $\infty$-sheaves into $\infty$-groupoid over corresponding opposite category carrying the corresponding Grothendieck topology, and we will mainly consider the corresponding homotopy epimorphisms. $?=R,\mathbb{F}_1$. And we assume the stack carries $\infty$-ringed toposes structure by specializing a ring object $\mathcal{O}$. The difference is that we consider the corresponding $\mathcal{O}$ living in the colimit completion closure.\\
\noindent $\mathrm{Ind}\mathrm{\sharp Quasicoherent}_{\mathrm{Ind}^\text{smoothformalseriesclosure}\infty-\mathrm{Toposes}^{\mathrm{ringed},\mathrm{Commutativealgebra}_{\mathrm{simplicial}}(\mathrm{Ind}^m\mathrm{Normed}_?)}_{\mathrm{Commutativealgebra}_{\mathrm{simplicial}}(\mathrm{Ind}^m\mathrm{Normed}_?)^\mathrm{opposite},\mathrm{Grotopology,homotopyepimorphism}}}$: Colimits completion of $\infty$-Quasicoherent Sheaves of Functional Analytic Modules over $\infty$-sheaves into $\infty$-groupoid over corresponding opposite category carrying the corresponding Grothendieck topology, and we will mainly consider the corresponding homotopy epimorphisms. $?=R,\mathbb{F}_1$. And we assume the stack carries $\infty$-ringed toposes structure by specializing a ring object $\mathcal{O}$. The difference is that we consider the corresponding $\mathcal{O}$ living in the colimit completion closure.\\
\noindent $\mathrm{Ind}\mathrm{\sharp Quasicoherent}_{\mathrm{Ind}^\text{smoothformalseriesclosure}\infty-\mathrm{Toposes}^{\mathrm{ringed},\mathrm{Commutativealgebra}_{\mathrm{simplicial}}(\mathrm{Ind}\mathrm{Banach}_?)}_{\mathrm{Commutativealgebra}_{\mathrm{simplicial}}(\mathrm{Ind}\mathrm{Banach}_?)^\mathrm{opposite},\mathrm{Grotopology,homotopyepimorphism}}}$: Colimits completion of $\infty$-Quasicoherent Sheaves of Functional Analytic Modules over $\infty$-sheaves into $\infty$-groupoid over corresponding opposite category carrying the corresponding Grothendieck topology, and we will mainly consider the corresponding homotopy epimorphisms. $?=R,\mathbb{F}_1$. And we assume the stack carries $\infty$-ringed toposes structure by specializing a ring object $\mathcal{O}$. The difference is that we consider the corresponding $\mathcal{O}$ living in the colimit completion closure.\\
\noindent $\mathrm{Ind}\mathrm{\sharp Quasicoherent}_{\mathrm{Ind}^\text{smoothformalseriesclosure}\infty-\mathrm{Toposes}^{\mathrm{ringed},\mathrm{Commutativealgebra}_{\mathrm{simplicial}}(\mathrm{Ind}^m\mathrm{Banach}_?)}_{\mathrm{Commutativealgebra}_{\mathrm{simplicial}}(\mathrm{Ind}^m\mathrm{Banach}_?)^\mathrm{opposite},\mathrm{Grotopology,homotopyepimorphism}}}$: Colimits completion of $\infty$-Quasicoherent Sheaves of Functional Analytic Modules over $\infty$-sheaves into $\infty$-groupoid over corresponding opposite category carrying the corresponding Grothendieck topology, and we will mainly consider the corresponding homotopy epimorphisms. $?=R,\mathbb{F}_1$. And we assume the stack carries $\infty$-ringed toposes structure by specializing a ring object $\mathcal{O}$. The difference is that we consider the corresponding $\mathcal{O}$ living in the colimit completion closure.\\

\newpage
\section{Notations on $\infty$-Categories of Noncommutative $\infty$-Ringed Toposes}

\

\indent We change the notations for $\infty$-ringed noncommutative toposes slightly, therefore let us start from the rings. The corresponding generators we will choose in order to take the corresponding homotopy colimit completion and the corresponding homotopy limit completion are Fukaya-Kato rings in \cite{12FK}:
\begin{align}
R\left<Z_1,...,Z_n\right>,n\geq 1,\\
R\left[[Z_1,...,Z_n\right]],n\geq 1,	
\end{align}
with $Z_1,...,Z_n$ are noncommuting free variables. This would be the specific completions of the polynomials:
\begin{align}
R\left[Z_1,...,Z_n\right],n\geq 1.\\	
\end{align}

\noindent The corresponding analogs of analytification functors from Ben-Bassat-Mukherjee \cite[Section 4.2]{BBM} are given in the following. First we consider the $\infty$-category of $\mathbb{E}_1$-rings from \cite[Proposition 7.1.4.18, as well as the discussion above Proposition 7.1.4.18 on page 1225]{12Lu2} which we denote it by $\mathrm{Noncommutative}_{\mathbb{E}_1,\mathrm{Simplicial}}$, then we consider the corresponding category of all the polynomial rings with free variables over $R$, which we denote it by $\mathrm{Polynomial}^\mathrm{free}_R$, then we have the corresponding fully faithful embedding:
\begin{align}
\mathrm{Polynomial}^\mathrm{free}_R\rightarrow \mathrm{Noncommutative}_{\mathbb{E}_1,\mathrm{Simplicial}}.	
\end{align}
Then take the corresponding completion for each:
\begin{align}
R\left[Z_1,...,Z_n\right],n\geq 1,\\	
\end{align}
we have the Fukaya-Kato adic ring:
\begin{align}
R\left<Z_1,...,Z_n\right>,n\geq 1,\\
R\left[[Z_1,...,Z_n\right]],n\geq 1.	
\end{align}
As in \cite[Section 4.2]{BBM}, this will give the process what we call smooth formal series analytification by taking into account the corresponding homotopy colimit completion:\\
\begin{align}
\mathrm{Ind}^{\mathrm{smoothformalseriesclosure}}\mathrm{Polynomial}^\mathrm{free}_R\rightarrow \mathrm{Noncommutativealgebra}_{\mathrm{simplicial}}(\mathrm{Ind}\mathrm{Seminormed}_R),\\
\mathrm{Ind}^{\mathrm{smoothformalseriesclosure}}\mathrm{Polynomial}^\mathrm{free}_R\rightarrow \mathrm{Noncommutativealgebra}_{\mathrm{simplicial}}(\mathrm{Ind}\mathrm{Normed}_R),\\
\mathrm{Ind}^{\mathrm{smoothformalseriesclosure}}\mathrm{Polynomial}^\mathrm{free}_R\rightarrow \mathrm{Noncommutativealgebra}_{\mathrm{simplicial}}(\mathrm{Ind}\mathrm{Banach}_R).\\	
\end{align}
\begin{remark}
\indent The left actually spans all the $\mathbb{E}_1$-algebra in our setting by regarding the free variable polynomials as tensor algebras over $R$ as explained in \cite[Proposition 7.1.4.18, as well as the discussion above Proposition 7.1.4.18 on page 1225]{12Lu1} in analogy of \cite[Proposition 7.1.4.20, as well as the discussion above Proposition 7.1.4.20]{12Lu1} in the commutative situation.
\end{remark}

\

\newpage

\noindent Noncommutative Rings:\\

\noindent $\mathrm{Ind}^\text{smoothformalseriesclosure}\mathrm{Noncommutativealgebra}_{\mathrm{simplicial}}(\mathrm{Ind}\mathrm{Seminormed}_R)$: $(\infty,1)$-category of simplicial noncommutative algebra objects in the $\infty$-category of colimit completion of seminormed modules over a general Banach ring $R$, generated by the corresponding power series rings by taking colimits in the homotopical sense. \\ 
\noindent $\mathrm{Ind}^\text{smoothformalseriesclosure}\mathrm{Noncommutativealgebra}_{\mathrm{simplicial}}(\mathrm{Ind}^m\mathrm{Seminormed}_R)$: $(\infty,1)$-category of simplicial noncommutative algebra objects in the $\infty$-category of monomorphic colimit completion of seminormed modules over a general Banach ring $R$, generated by the corresponding power series rings by taking colimits in the homotopical sense. \\ 
\noindent $\mathrm{Ind}^\text{smoothformalseriesclosure}\mathrm{Noncommutativealgebra}_{\mathrm{simplicial}}(\mathrm{Ind}\mathrm{Normed}_R)$: $(\infty,1)$-category of simplicial noncommutative algebra objects in the $\infty$-category of colimit completion of normed modules over a general Banach ring $R$, generated by the corresponding power series rings by taking colimits in the homotopical sense. \\ 
\noindent $\mathrm{Ind}^\text{smoothformalseriesclosure}\mathrm{Noncommutativealgebra}_{\mathrm{simplicial}}(\mathrm{Ind}^m\mathrm{Normed}_R)$: $(\infty,1)$-category of simplicial noncommutative algebra objects in the $\infty$-category of monomorphic colimit completion of normed modules over a general Banach ring $R$, generated by the corresponding power series rings by taking colimits in the homotopical sense. \\ 
\noindent $\mathrm{Ind}^\text{smoothformalseriesclosure}\mathrm{Noncommutativealgebra}_{\mathrm{simplicial}}(\mathrm{Ind}\mathrm{Banach}_R)$: $(\infty,1)$-category of simplicial noncommutative algebra objects in the $\infty$-category of colimit completion of Banach modules over a general Banach ring $R$, generated by the corresponding power series rings by taking colimits in the homotopical sense. \\ 
\noindent $\mathrm{Ind}^\text{smoothformalseriesclosure}\mathrm{Noncommutativealgebra}_{\mathrm{simplicial}}(\mathrm{Ind}^m\mathrm{Banach}_R)$: $(\infty,1)$-category of simplicial noncommutative algebra objects in the $\infty$-category of monomorphic colimit completion of Banach modules over a general Banach ring $R$, generated by the corresponding power series rings by taking colimits in the homotopical sense. \\

\noindent $\mathrm{Ind}^\text{smoothformalseriesclosure}\mathrm{Noncommutativealgebra}_{\mathrm{simplicial}}(\mathrm{Ind}\mathrm{Seminormed}_{\mathbb{F}_1})$: $(\infty,1)$-category of simplicial noncommutative algebra objects in the $\infty$-category of colimit completion of seminormed sets over a general Banach ring $\mathbb{F}_1$, generated by the corresponding power series rings by taking colimits in the homotopical sense. \\ 
\noindent $\mathrm{Ind}^\text{smoothformalseriesclosure}\mathrm{Noncommutativealgebra}_{\mathrm{simplicial}}(\mathrm{Ind}^m\mathrm{Seminormed}_{\mathbb{F}_1})$: $(\infty,1)$-category of simplicial noncommutative algebra objects in the $\infty$-category of monomorphic colimit completion of seminormed sets over a general Banach ring $\mathbb{F}_1$, generated by the corresponding power series rings by taking colimits in the homotopical sense. \\ 
\noindent $\mathrm{Ind}^\text{smoothformalseriesclosure}\mathrm{Noncommutativealgebra}_{\mathrm{simplicial}}(\mathrm{Ind}\mathrm{Normed}_{\mathbb{F}_1})$: $(\infty,1)$-category of simplicial noncommutative algebra objects in the $\infty$-category of colimit completion of normed sets over a general Banach ring $\mathbb{F}_1$, generated by the corresponding power series rings by taking colimits in the homotopical sense. \\ 
\noindent $\mathrm{Ind}^\text{smoothformalseriesclosure}\mathrm{Noncommutativealgebra}_{\mathrm{simplicial}}(\mathrm{Ind}^m\mathrm{Normed}_{\mathbb{F}_1})$: $(\infty,1)$-category of simplicial noncommutative algebra objects in the $\infty$-category of monomorphic colimit completion of normed sets over a general Banach ring $\mathbb{F}_1$, generated by the corresponding power series rings by taking colimits in the homotopical sense. \\ 
\noindent $\mathrm{Ind}^\text{smoothformalseriesclosure}\mathrm{Noncommutativealgebra}_{\mathrm{simplicial}}(\mathrm{Ind}\mathrm{Banach}_{\mathbb{F}_1})$: $(\infty,1)$-category of simplicial noncommutative algebra objects in the $\infty$-category of colimit completion of Banach sets over a general Banach ring $\mathbb{F}_1$, generated by the corresponding power series rings by taking colimits in the homotopical sense. \\ 
\noindent $\mathrm{Ind}^\text{smoothformalseriesclosure}\mathrm{Noncommutativealgebra}_{\mathrm{simplicial}}(\mathrm{Ind}^m\mathrm{Banach}_{\mathbb{F}_1})$: $(\infty,1)$-category of simplicial noncommutative algebra objects in the $\infty$-category of monomorphic colimit completion of Banach sets over a general Banach ring $\mathbb{F}_1$, generated by the corresponding power series rings by taking colimits in the homotopical sense. \\

\noindent Prestacks:\\

\noindent $\infty-\mathrm{Prestack}_{\mathrm{Noncommutativealgebra}_{\mathrm{simplicial}}(\mathrm{Ind}\mathrm{Seminormed}_?)^\mathrm{opposite},\mathrm{Grotopology,homotopyepimorphism}}$: $\infty$-presheaves into $\infty$-groupoid over corresponding opposite category carrying the corresponding Grothendieck topology, and we will mainly consider the corresponding homotopy epimorphisms. $?=R,\mathbb{F}_1$.\\
\noindent $\infty-\mathrm{Prestack}_{\mathrm{Noncommutativealgebra}_{\mathrm{simplicial}}(\mathrm{Ind}^m\mathrm{Seminormed}_?)^\mathrm{opposite},\mathrm{Grotopology,homotopyepimorphism}}$: $\infty$-presheaves into $\infty$-groupoid over corresponding opposite category carrying the corresponding Grothendieck topology, and we will mainly consider the corresponding homotopy epimorphisms. $?=R,\mathbb{F}_1$.\\
\noindent $\infty-\mathrm{Prestack}_{\mathrm{Noncommutativealgebra}_{\mathrm{simplicial}}(\mathrm{Ind}\mathrm{Normed}_?)^\mathrm{opposite},\mathrm{Grotopology,homotopyepimorphism}}$: $\infty$-presheaves into $\infty$-groupoid over corresponding opposite category carrying the corresponding Grothendieck topology, and we will mainly consider the corresponding homotopy epimorphisms. $?=R,\mathbb{F}_1$.\\
\noindent $\infty-\mathrm{Prestack}_{\mathrm{Noncommutativealgebra}_{\mathrm{simplicial}}(\mathrm{Ind}^m\mathrm{Normed}_?)^\mathrm{opposite},\mathrm{Grotopology,homotopyepimorphism}}$: $\infty$-presheaves into $\infty$-groupoid over corresponding opposite category carrying the corresponding Grothendieck topology, and we will mainly consider the corresponding homotopy epimorphisms. $?=R,\mathbb{F}_1$.\\
\noindent $\infty-\mathrm{Prestack}_{\mathrm{Noncommutativealgebra}_{\mathrm{simplicial}}(\mathrm{Ind}\mathrm{Banach}_?)^\mathrm{opposite},\mathrm{Grotopology,homotopyepimorphism}}$: $\infty$-presheaves into $\infty$-groupoid over corresponding opposite category carrying the corresponding Grothendieck topology, and we will mainly consider the corresponding homotopy epimorphisms. $?=R,\mathbb{F}_1$.\\
\noindent $\infty-\mathrm{Prestack}_{\mathrm{Noncommutativealgebra}_{\mathrm{simplicial}}(\mathrm{Ind}^m\mathrm{Banach}_?)^\mathrm{opposite},\mathrm{Grotopology,homotopyepimorphism}}$: $\infty$-presheaves into $\infty$-groupoid over corresponding opposite category carrying the corresponding Grothendieck topology, and we will mainly consider the corresponding homotopy epimorphisms. $?=R,\mathbb{F}_1$.\\

\noindent Stacks:\\
 
 \noindent $\infty-\mathrm{Stack}_{\mathrm{Noncommutativealgebra}_{\mathrm{simplicial}}(\mathrm{Ind}\mathrm{Seminormed}_?)^\mathrm{opposite},\mathrm{Grotopology,homotopyepimorphism}}$: $\infty$-sheaves into $\infty$-groupoid over corresponding opposite category carrying the corresponding Grothendieck topology, and we will mainly consider the corresponding homotopy epimorphisms. $?=R,\mathbb{F}_1$.\\
\noindent $\infty-\mathrm{Stack}_{\mathrm{Noncommutativealgebra}_{\mathrm{simplicial}}(\mathrm{Ind}^m\mathrm{Seminormed}_?)^\mathrm{opposite},\mathrm{Grotopology,homotopyepimorphism}}$: $\infty$-sheaves into $\infty$-groupoid over corresponding opposite category carrying the corresponding Grothendieck topology, and we will mainly consider the corresponding homotopy epimorphisms. $?=R,\mathbb{F}_1$.\\
\noindent $\infty-\mathrm{Stack}_{\mathrm{Noncommutativealgebra}_{\mathrm{simplicial}}(\mathrm{Ind}\mathrm{Normed}_?)^\mathrm{opposite},\mathrm{Grotopology,homotopyepimorphism}}$: $\infty$-sheaves into $\infty$-groupoid over corresponding opposite category carrying the corresponding Grothendieck topology, and we will mainly consider the corresponding homotopy epimorphisms. $?=R,\mathbb{F}_1$.\\
\noindent $\infty-\mathrm{Stack}_{\mathrm{Noncommutativealgebra}_{\mathrm{simplicial}}(\mathrm{Ind}^m\mathrm{Normed}_?)^\mathrm{opposite},\mathrm{Grotopology,homotopyepimorphism}}$: $\infty$-sheaves into $\infty$-groupoid over corresponding opposite category carrying the corresponding Grothendieck topology, and we will mainly consider the corresponding homotopy epimorphisms. $?=R,\mathbb{F}_1$.\\
\noindent $\infty-\mathrm{Stack}_{\mathrm{Noncommutativealgebra}_{\mathrm{simplicial}}(\mathrm{Ind}\mathrm{Banach}_?)^\mathrm{opposite},\mathrm{Grotopology,homotopyepimorphism}}$: $\infty$-sheaves into $\infty$-groupoid over corresponding opposite category carrying the corresponding Grothendieck topology, and we will mainly consider the corresponding homotopy epimorphisms. $?=R,\mathbb{F}_1$.\\
\noindent $\infty-\mathrm{Stack}_{\mathrm{Noncommutativealgebra}_{\mathrm{simplicial}}(\mathrm{Ind}^m\mathrm{Banach}_?)^\mathrm{opposite},\mathrm{Grotopology,homotopyepimorphism}}$: $\infty$-sheaves into $\infty$-groupoid over corresponding opposite category carrying the corresponding Grothendieck topology, and we will mainly consider the corresponding homotopy epimorphisms. $?=R,\mathbb{F}_1$.\\

\indent The following is a noncommutative analog of \cite[Definition 5.6]{12BBBK}:

\begin{definition}
Here a homotopy epimorphism is defined to be such a morphism $A\rightarrow B$ such that $B\overline{\otimes}_A B^\mathrm{opp}\rightarrow B$ reflects isomorphism in the homotopy category.
\end{definition}

\noindent Ringed Toposes: \\
 
 \noindent $\infty-\mathrm{Toposes}^{\mathrm{ringed},\mathrm{Noncommutativealgebra}_{\mathrm{simplicial}}(\mathrm{Ind}\mathrm{Seminormed}_?)}_{\mathrm{Noncommutativealgebra}_{\mathrm{simplicial}}(\mathrm{Ind}\mathrm{Seminormed}_?)^\mathrm{opposite},\mathrm{Grotopology,homotopyepimorphism}}$: $\infty$-sheaves into $\infty$-groupoid over corresponding opposite category carrying the corresponding Grothendieck topology, and we will mainly consider the corresponding homotopy epimorphisms. $?=R,\mathbb{F}_1$. And we assume the stack carries $\infty$-ringed toposes structure. \\
\noindent $\infty-\mathrm{Toposes}^{\mathrm{ringed},\mathrm{Noncommutativealgebra}_{\mathrm{simplicial}}(\mathrm{Ind}^m\mathrm{Seminormed}_?)}_{\mathrm{Noncommutativealgebra}_{\mathrm{simplicial}}(\mathrm{Ind}^m\mathrm{Seminormed}_?)^\mathrm{opposite},\mathrm{Grotopology,homotopyepimorphism}}$: $\infty$-sheaves into $\infty$-groupoid over corresponding opposite category carrying the corresponding Grothendieck topology, and we will mainly consider the corresponding homotopy epimorphisms. $?=R,\mathbb{F}_1$. And we assume the stack carries $\infty$-ringed toposes structure.\\
\noindent $\infty-\mathrm{Toposes}^{\mathrm{ringed},\mathrm{Noncommutativealgebra}_{\mathrm{simplicial}}(\mathrm{Ind}\mathrm{Normed}_?)}_{\mathrm{Noncommutativealgebra}_{\mathrm{simplicial}}(\mathrm{Ind}\mathrm{Normed}_?)^\mathrm{opposite},\mathrm{Grotopology,homotopyepimorphism}}$: $\infty$-sheaves into $\infty$-groupoid over corresponding opposite category carrying the corresponding Grothendieck topology, and we will mainly consider the corresponding homotopy epimorphisms. $?=R,\mathbb{F}_1$. And we assume the stack carries $\infty$-ringed toposes structure.\\
\noindent $\infty-\mathrm{Toposes}^{\mathrm{ringed},\mathrm{Noncommutativealgebra}_{\mathrm{simplicial}}(\mathrm{Ind}^m\mathrm{Normed}_?)}_{\mathrm{Noncommutativealgebra}_{\mathrm{simplicial}}(\mathrm{Ind}^m\mathrm{Normed}_?)^\mathrm{opposite},\mathrm{Grotopology,homotopyepimorphism}}$: $\infty$-sheaves into $\infty$-groupoid over corresponding opposite category carrying the corresponding Grothendieck topology, and we will mainly consider the corresponding homotopy epimorphisms. $?=R,\mathbb{F}_1$. And we assume the stack carries $\infty$-ringed toposes structure.\\
\noindent $\infty-\mathrm{Toposes}^{\mathrm{ringed},\mathrm{Noncommutativealgebra}_{\mathrm{simplicial}}(\mathrm{Ind}\mathrm{Banach}_?)}_{\mathrm{Noncommutativealgebra}_{\mathrm{simplicial}}(\mathrm{Ind}\mathrm{Banach}_?)^\mathrm{opposite},\mathrm{Grotopology,homotopyepimorphism}}$: $\infty$-sheaves into $\infty$-groupoid over corresponding opposite category carrying the corresponding Grothendieck topology, and we will mainly consider the corresponding homotopy epimorphisms. $?=R,\mathbb{F}_1$. And we assume the stack carries $\infty$-ringed toposes structure.\\
\noindent $\infty-\mathrm{Toposes}^{\mathrm{ringed},\mathrm{Noncommutativealgebra}_{\mathrm{simplicial}}(\mathrm{Ind}^m\mathrm{Banach}_?)}_{\mathrm{Noncommutativealgebra}_{\mathrm{simplicial}}(\mathrm{Ind}^m\mathrm{Banach}_?)^\mathrm{opposite},\mathrm{Grotopology,homotopyepimorphism}}$: $\infty$-sheaves into $\infty$-groupoid over corresponding opposite category carrying the corresponding Grothendieck topology, and we will mainly consider the corresponding homotopy epimorphisms. $?=R,\mathbb{F}_1$. And we assume the stack carries $\infty$-ringed toposes structure.\\

 \noindent $\mathrm{Proj}^\text{smoothformalseriesclosure}\infty-\mathrm{Toposes}^{\mathrm{ringed},\mathrm{Noncommutativealgebra}_{\mathrm{simplicial}}(\mathrm{Ind}\mathrm{Seminormed}_?)}_{\mathrm{Noncommutativealgebra}_{\mathrm{simplicial}}(\mathrm{Ind}\mathrm{Seminormed}_?)^\mathrm{opposite},\mathrm{Grotopology,homotopyepimorphism}}$: $\infty$-sheaves into $\infty$-groupoid over corresponding opposite category carrying the corresponding Grothendieck topology, and we will mainly consider the corresponding homotopy epimorphisms. $?=R,\mathbb{F}_1$. And we assume the stack carries $\infty$-ringed toposes structure by specializing a ring object $\mathcal{O}$. \\
\noindent $\mathrm{Proj}^\text{smoothformalseriesclosure}\infty-\mathrm{Toposes}^{\mathrm{ringed},\mathrm{Noncommutativealgebra}_{\mathrm{simplicial}}(\mathrm{Ind}^m\mathrm{Seminormed}_?)}_{\mathrm{Noncommutativealgebra}_{\mathrm{simplicial}}(\mathrm{Ind}^m\mathrm{Seminormed}_?)^\mathrm{opposite},\mathrm{Grotopology,homotopyepimorphism}}$: $\infty$-sheaves into $\infty$-groupoid over corresponding opposite category carrying the corresponding Grothendieck topology, and we will mainly consider the corresponding homotopy epimorphisms. $?=R,\mathbb{F}_1$. And we assume the stack carries $\infty$-ringed toposes structure by specializing a ring object $\mathcal{O}$.\\
\noindent $\mathrm{Proj}^\text{smoothformalseriesclosure}\infty-\mathrm{Toposes}^{\mathrm{ringed},\mathrm{Noncommutativealgebra}_{\mathrm{simplicial}}(\mathrm{Ind}\mathrm{Normed}_?)}_{\mathrm{Noncommutativealgebra}_{\mathrm{simplicial}}(\mathrm{Ind}\mathrm{Normed}_?)^\mathrm{opposite},\mathrm{Grotopology,homotopyepimorphism}}$: $\infty$-sheaves into $\infty$-groupoid over corresponding opposite category carrying the corresponding Grothendieck topology, and we will mainly consider the corresponding homotopy epimorphisms. $?=R,\mathbb{F}_1$. And we assume the stack carries $\infty$-ringed toposes structure by specializing a ring object $\mathcal{O}$.\\
\noindent $\mathrm{Proj}^\text{smoothformalseriesclosure}\infty-\mathrm{Toposes}^{\mathrm{ringed},\mathrm{Noncommutativealgebra}_{\mathrm{simplicial}}(\mathrm{Ind}^m\mathrm{Normed}_?)}_{\mathrm{Noncommutativealgebra}_{\mathrm{simplicial}}(\mathrm{Ind}^m\mathrm{Normed}_?)^\mathrm{opposite},\mathrm{Grotopology,homotopyepimorphism}}$: $\infty$-sheaves into $\infty$-groupoid over corresponding opposite category carrying the corresponding Grothendieck topology, and we will mainly consider the corresponding homotopy epimorphisms. $?=R,\mathbb{F}_1$. And we assume the stack carries $\infty$-ringed toposes structure by specializing a ring object $\mathcal{O}$.\\
\noindent $\mathrm{Proj}^\text{smoothformalseriesclosure}\infty-\mathrm{Toposes}^{\mathrm{ringed},\mathrm{Noncommutativealgebra}_{\mathrm{simplicial}}(\mathrm{Ind}\mathrm{Banach}_?)}_{\mathrm{Noncommutativealgebra}_{\mathrm{simplicial}}(\mathrm{Ind}\mathrm{Banach}_?)^\mathrm{opposite},\mathrm{Grotopology,homotopyepimorphism}}$: $\infty$-sheaves into $\infty$-groupoid over corresponding opposite category carrying the corresponding Grothendieck topology, and we will mainly consider the corresponding homotopy epimorphisms. $?=R,\mathbb{F}_1$. And we assume the stack carries $\infty$-ringed toposes structure by specializing a ring object $\mathcal{O}$.\\
\noindent $\mathrm{Proj}^\text{smoothformalseriesclosure}\infty-\mathrm{Toposes}^{\mathrm{ringed},\mathrm{Noncommutativealgebra}_{\mathrm{simplicial}}(\mathrm{Ind}^m\mathrm{Banach}_?)}_{\mathrm{Noncommutativealgebra}_{\mathrm{simplicial}}(\mathrm{Ind}^m\mathrm{Banach}_?)^\mathrm{opposite},\mathrm{Grotopology,homotopyepimorphism}}$: $\infty$-sheaves into $\infty$-groupoid over corresponding opposite category carrying the corresponding Grothendieck topology, and we will mainly consider the corresponding homotopy epimorphisms. $?=R,\mathbb{F}_1$. And we assume the stack carries $\infty$-ringed toposes structure by specializing a ring object $\mathcal{O}$.\\

\noindent $\infty$-Quasicoherent Sheaves of Functional Analytic Modules over Ringed Toposes $\sharp=\mathrm{Seminormed},\mathrm{Normed},\mathrm{Banach}$: \\
 
 \noindent $\mathrm{Ind}\mathrm{\sharp Quasicoherent}_{\infty-\mathrm{Toposes}^{\mathrm{ringed},\mathrm{Noncommutativealgebra}_{\mathrm{simplicial}}(\mathrm{Ind}\mathrm{Seminormed}_?)}_{\mathrm{Noncommutativealgebra}_{\mathrm{simplicial}}(\mathrm{Ind}\mathrm{Seminormed}_?)^\mathrm{opposite},\mathrm{Grotopology,homotopyepimorphism}}}$: Colimits completion of $\infty$-Quasicoherent Sheaves of Functional Analytic Modules over $\infty$-sheaves into $\infty$-groupoid over corresponding opposite category carrying the corresponding Grothendieck topology, and we will mainly consider the corresponding homotopy epimorphisms. $?=R,\mathbb{F}_1$. And we assume the stack carries $\infty$-ringed toposes structure. \\
\noindent $\mathrm{Ind}\mathrm{\sharp Quasicoherent}_{\infty-\mathrm{Toposes}^{\mathrm{ringed},\mathrm{Noncommutativealgebra}_{\mathrm{simplicial}}(\mathrm{Ind}^m\mathrm{Seminormed}_?)}_{\mathrm{Noncommutativealgebra}_{\mathrm{simplicial}}(\mathrm{Ind}^m\mathrm{Seminormed}_?)^\mathrm{opposite},\mathrm{Grotopology,homotopyepimorphism}}}$: Colimits completion of $\infty$-Quasicoherent Sheaves of Functional Analytic Modules over $\infty$-sheaves into $\infty$-groupoid over corresponding opposite category carrying the corresponding Grothendieck topology, and we will mainly consider the corresponding homotopy epimorphisms. $?=R,\mathbb{F}_1$. And we assume the stack carries $\infty$-ringed toposes structure.\\
\noindent $\mathrm{Ind}\mathrm{\sharp Quasicoherent}_{\infty-\mathrm{Toposes}^{\mathrm{ringed},\mathrm{Noncommutativealgebra}_{\mathrm{simplicial}}(\mathrm{Ind}\mathrm{Normed}_?)}_{\mathrm{Noncommutativealgebra}_{\mathrm{simplicial}}(\mathrm{Ind}\mathrm{Normed}_?)^\mathrm{opposite},\mathrm{Grotopology,homotopyepimorphism}}}$: Colimits completion of $\infty$-Quasicoherent Sheaves of Functional Analytic Modules over $\infty$-sheaves into $\infty$-groupoid over corresponding opposite category carrying the corresponding Grothendieck topology, and we will mainly consider the corresponding homotopy epimorphisms. $?=R,\mathbb{F}_1$. And we assume the stack carries $\infty$-ringed toposes structure.\\
\noindent $\mathrm{Ind}\mathrm{\sharp Quasicoherent}_{\infty-\mathrm{Toposes}^{\mathrm{ringed},\mathrm{Noncommutativealgebra}_{\mathrm{simplicial}}(\mathrm{Ind}^m\mathrm{Normed}_?)}_{\mathrm{Noncommutativealgebra}_{\mathrm{simplicial}}(\mathrm{Ind}^m\mathrm{Normed}_?)^\mathrm{opposite},\mathrm{Grotopology,homotopyepimorphism}}}$: Colimits completion of $\infty$-Quasicoherent Sheaves of Functional Analytic Modules over $\infty$-sheaves into $\infty$-groupoid over corresponding opposite category carrying the corresponding Grothendieck topology, and we will mainly consider the corresponding homotopy epimorphisms. $?=R,\mathbb{F}_1$. And we assume the stack carries $\infty$-ringed toposes structure.\\
\noindent $\mathrm{Ind}\mathrm{\sharp Quasicoherent}_{\infty-\mathrm{Toposes}^{\mathrm{ringed},\mathrm{Noncommutativealgebra}_{\mathrm{simplicial}}(\mathrm{Ind}\mathrm{Banach}_?)}_{\mathrm{Noncommutativealgebra}_{\mathrm{simplicial}}(\mathrm{Ind}\mathrm{Banach}_?)^\mathrm{opposite},\mathrm{Grotopology,homotopyepimorphism}}}$: Colimits completion of $\infty$-Quasicoherent Sheaves of Functional Analytic Modules over $\infty$-sheaves into $\infty$-groupoid over corresponding opposite category carrying the corresponding Grothendieck topology, and we will mainly consider the corresponding homotopy epimorphisms. $?=R,\mathbb{F}_1$. And we assume the stack carries $\infty$-ringed toposes structure.\\
\noindent $\mathrm{Ind}\mathrm{\sharp Quasicoherent}_{\infty-\mathrm{Toposes}^{\mathrm{ringed},\mathrm{Noncommutativealgebra}_{\mathrm{simplicial}}(\mathrm{Ind}^m\mathrm{Banach}_?)}_{\mathrm{Noncommutativealgebra}_{\mathrm{simplicial}}(\mathrm{Ind}^m\mathrm{Banach}_?)^\mathrm{opposite},\mathrm{Grotopology,homotopyepimorphism}}}$: Colimits completion of $\infty$-Quasicoherent Sheaves of Functional Analytic Modules over $\infty$-sheaves into $\infty$-groupoid over corresponding opposite category carrying the corresponding Grothendieck topology, and we will mainly consider the corresponding homotopy epimorphisms. $?=R,\mathbb{F}_1$. And we assume the stack carries $\infty$-ringed toposes structure.\\

 \noindent $\mathrm{Ind}\mathrm{\sharp Quasicoherent}_{\mathrm{Ind}^\text{smoothformalseriesclosure}\infty-\mathrm{Toposes}^{\mathrm{ringed},\mathrm{Noncommutativealgebra}_{\mathrm{simplicial}}(\mathrm{Ind}\mathrm{Seminormed}_?)}_{\mathrm{Noncommutativealgebra}_{\mathrm{simplicial}}(\mathrm{Ind}\mathrm{Seminormed}_?)^\mathrm{opposite},\mathrm{Grotopology,homotopyepimorphism}}}$: Colimits completion of $\infty$-Quasicoherent Sheaves of Functional Analytic Modules over $\infty$-sheaves into $\infty$-groupoid over corresponding opposite category carrying the corresponding Grothendieck topology, and we will mainly consider the corresponding homotopy epimorphisms. $?=R,\mathbb{F}_1$. And we assume the stack carries $\infty$-ringed toposes structure by specializing a ring object $\mathcal{O}$. The difference is that we consider the corresponding $\mathcal{O}$ living in the colimit completion closure. \\
\noindent $\mathrm{Ind}\mathrm{\sharp Quasicoherent}_{\mathrm{Ind}^\text{smoothformalseriesclosure}\infty-\mathrm{Toposes}^{\mathrm{ringed},\mathrm{Noncommutativealgebra}_{\mathrm{simplicial}}(\mathrm{Ind}^m\mathrm{Seminormed}_?)}_{\mathrm{Noncommutativealgebra}_{\mathrm{simplicial}}(\mathrm{Ind}^m\mathrm{Seminormed}_?)^\mathrm{opposite},\mathrm{Grotopology,homotopyepimorphism}}}$: Colimits completion of $\infty$-Quasicoherent Sheaves of Functional Analytic Modules over $\infty$-sheaves into $\infty$-groupoid over corresponding opposite category carrying the corresponding Grothendieck topology, and we will mainly consider the corresponding homotopy epimorphisms. $?=R,\mathbb{F}_1$. And we assume the stack carries $\infty$-ringed toposes structure by specializing a ring object $\mathcal{O}$. The difference is that we consider the corresponding $\mathcal{O}$ living in the colimit completion closure.\\
\noindent $\mathrm{Ind}\mathrm{\sharp Quasicoherent}_{\mathrm{Ind}^\text{smoothformalseriesclosure}\infty-\mathrm{Toposes}^{\mathrm{ringed},\mathrm{Noncommutativealgebra}_{\mathrm{simplicial}}(\mathrm{Ind}\mathrm{Normed}_?)}_{\mathrm{Noncommutativealgebra}_{\mathrm{simplicial}}(\mathrm{Ind}\mathrm{Normed}_?)^\mathrm{opposite},\mathrm{Grotopology,homotopyepimorphism}}}$: Colimits completion of $\infty$-Quasicoherent Sheaves of Functional Analytic Modules over $\infty$-sheaves into $\infty$-groupoid over corresponding opposite category carrying the corresponding Grothendieck topology, and we will mainly consider the corresponding homotopy epimorphisms. $?=R,\mathbb{F}_1$. And we assume the stack carries $\infty$-ringed toposes structure by specializing a ring object $\mathcal{O}$. The difference is that we consider the corresponding $\mathcal{O}$ living in the colimit completion closure.\\
\noindent $\mathrm{Ind}\mathrm{\sharp Quasicoherent}_{\mathrm{Ind}^\text{smoothformalseriesclosure}\infty-\mathrm{Toposes}^{\mathrm{ringed},\mathrm{Noncommutativealgebra}_{\mathrm{simplicial}}(\mathrm{Ind}^m\mathrm{Normed}_?)}_{\mathrm{Noncommutativealgebra}_{\mathrm{simplicial}}(\mathrm{Ind}^m\mathrm{Normed}_?)^\mathrm{opposite},\mathrm{Grotopology,homotopyepimorphism}}}$: Colimits completion of $\infty$-Quasicoherent Sheaves of Functional Analytic Modules over $\infty$-sheaves into $\infty$-groupoid over corresponding opposite category carrying the corresponding Grothendieck topology, and we will mainly consider the corresponding homotopy epimorphisms. $?=R,\mathbb{F}_1$. And we assume the stack carries $\infty$-ringed toposes structure by specializing a ring object $\mathcal{O}$. The difference is that we consider the corresponding $\mathcal{O}$ living in the colimit completion closure.\\
\noindent $\mathrm{Ind}\mathrm{\sharp Quasicoherent}_{\mathrm{Ind}^\text{smoothformalseriesclosure}\infty-\mathrm{Toposes}^{\mathrm{ringed},\mathrm{Noncommutativealgebra}_{\mathrm{simplicial}}(\mathrm{Ind}\mathrm{Banach}_?)}_{\mathrm{Noncommutativealgebra}_{\mathrm{simplicial}}(\mathrm{Ind}\mathrm{Banach}_?)^\mathrm{opposite},\mathrm{Grotopology,homotopyepimorphism}}}$: Colimits completion of $\infty$-Quasicoherent Sheaves of Functional Analytic Modules over $\infty$-sheaves into $\infty$-groupoid over corresponding opposite category carrying the corresponding Grothendieck topology, and we will mainly consider the corresponding homotopy epimorphisms. $?=R,\mathbb{F}_1$. And we assume the stack carries $\infty$-ringed toposes structure by specializing a ring object $\mathcal{O}$. The difference is that we consider the corresponding $\mathcal{O}$ living in the colimit completion closure.\\
\noindent $\mathrm{Ind}\mathrm{\sharp Quasicoherent}_{\mathrm{Ind}^\text{smoothformalseriesclosure}\infty-\mathrm{Toposes}^{\mathrm{ringed},\mathrm{Noncommutativealgebra}_{\mathrm{simplicial}}(\mathrm{Ind}^m\mathrm{Banach}_?)}_{\mathrm{Noncommutativealgebra}_{\mathrm{simplicial}}(\mathrm{Ind}^m\mathrm{Banach}_?)^\mathrm{opposite},\mathrm{Grotopology,homotopyepimorphism}}}$: Colimits completion of $\infty$-Quasicoherent Sheaves of Functional Analytic Modules over $\infty$-sheaves into $\infty$-groupoid over corresponding opposite category carrying the corresponding Grothendieck topology, and we will mainly consider the corresponding homotopy epimorphisms. $?=R,\mathbb{F}_1$. And we assume the stack carries $\infty$-ringed toposes structure by specializing a ring object $\mathcal{O}$. The difference is that we consider the corresponding $\mathcal{O}$ living in the colimit completion closure.\\

Let us explain slightly what is happening here, the base $\infty$-rings are noncommutative in certain sense, which is not the same as in the foundation of \cite{12BBK}, \cite{BBM}, \cite{KKM}, \cite{12BK}. Certainly for instance one considers the corresponding $\infty$-category $\mathrm{Simpicial}(\mathrm{Ind}\mathrm{Banach}_{\mathbb{F}_1})$, then takes the corresponding fibrations over the corresponding noncommutative rings to achieve so.

\newpage

\chapter{Topological Theory}

\section{Topological Andr\'e-Quillen Homology and Topological Derived de Rham Complexes}

\subsection{Derived $p$-Complete Derived de Rham Complex}

\indent We now first discuss the corresponding Banach version of  Andr\'e-Quillen Homology and the corresponding Banach version of  Derived de Rham complex parallel to \cite[Chapitre 3]{12An1}, \cite{12An2}, \cite[Chapter 2, Chapter 8]{12B1}, \cite[Chapter 1]{12Bei}, \cite[Chapter 5]{12G1}, \cite[Chapter 3, Chapter 4]{12GL}, \cite[Chapitre II, Chapitre III]{12Ill1}, \cite[Chapitre VIII]{12Ill2}, \cite[Section 4]{12Qui}. We would like to start from the corresponding context of \cite[Chapter 3, Chapter 4]{12GL}, and represent the construction for the convenience of the readers. We start from the corresponding construction of the algebraic $p$-adic derived de Rham complex for a map $A\rightarrow B$ of $p$-complete rings. This is the corresponding derived differential complex attached to the polynomial resolution of $B$:
\begin{align}
A[A[B]]...,	
\end{align}
which is now denoted by $\mathrm{Kan}_\mathrm{Left}\mathrm{deRham}^\text{degreenumber}_{B/A,,\mathrm{alg}}:=\mathrm{Kan}_\mathrm{Left}\mathrm{deRham}^\text{degreenumber}_{-/A,,\mathrm{alg}}(B)$ after taking the corresponding left Kan extension which will be the same for all the following constructions \footnote{We have already considered the corresponding left Kan extension to all the rings which are not concentrated at degree zero after \cite[Example 5.11, Example 5.12]{12BMS}, which is also discussed in \cite[Lecture 7]{12B2}.}. The corresponding cotangent complex associated is defined to be just:
\begin{align}
\mathbb{L}_{B/A,\mathrm{alg}}:=	\mathrm{deRham}^1_{A[B]^\text{degreenumber}/A,,\mathrm{alg}}\otimes_{A[B]^\text{degreenumber}} B.
\end{align}
The corresponding algebraic Andr\'e-Quillen homologies are defined to be:
\begin{align}
H_{\text{degreenumber},{\mathrm{AQ}},\mathrm{alg}}:=\pi_\text{degreenumber} (\mathbb{L}_{B/A,\mathrm{alg}}). 	
\end{align}

\indent The corresponding topological Andr\'e-Quillen complex is actually the completed version of the corresponding algebraic ones above by considering the corresponding certain $p$-completion over the simplicial module structure.

\indent Then we consider the corresponding derived algebraic de Rham complex which is just defined to be:
\begin{align}
\mathrm{Kan}_\mathrm{Left}\mathrm{deRham}^\text{degreenumber}_{B/A,\mathrm{alg}},\mathrm{Kan}_\mathrm{Left}\mathrm{Fil}^*_{\mathrm{deRham}^\text{degreenumber}_{B/A,\mathrm{alg}}}.	
\end{align}
We then take the corresponding Banach completion and we denote that by:
\begin{align}
\mathrm{Kan}_\mathrm{Left}\mathrm{deRham}^\text{degreenumber}_{B/A,\mathrm{topo}},\mathrm{Kan}_\mathrm{Left}\mathrm{Fil}^*_{\mathrm{deRham}^\text{degreenumber}_{B/A,\mathrm{topo}}}.	
\end{align}

\indent Then we need to take the corresponding Hodge-Filtered completion by using the corresponding filtration associated as above:
\begin{align}
\mathrm{Kan}_\mathrm{Left}\widehat{\mathrm{deRham}}^\text{degreenumber}_{B/A,\mathrm{topo}},\mathrm{Kan}_\mathrm{Left}\mathrm{Fil}^*_{\widehat{\mathrm{deRham}}^\text{degreenumber}_{B/A,\mathrm{topo}}}.	
\end{align}

This is basically the corresponding analytic and complete version the corresponding algebraic de Rham complex. Furthermore we allow large coefficients with rigid affinoid algebra $Z$ over $\mathbb{Q}_p$. Therefore we take the corresponding completed tensor product in the following. 

\begin{definition}
We define the following $Z$ deformed version of the corresponding complete version of the corresponding Andr\'e-Quillen homology and the corresponding complete version of the corresponding derived de Rham complex. We start from the corresponding construction of the algebraic $p$-adic derived de Rham complex for a map $A\rightarrow B$. Fix a pair of ring of definitions $A_0,B_0$ in $A,B$ respectively. Then this is the corresponding derived differential complex attached to the polynomial resolution of $B_0$:
\begin{align}
A_0[A_0[B_0]]...,	
\end{align}
which is now denoted by $\mathrm{Kan}_\mathrm{Left}\mathrm{deRham}^\text{degreenumber}_{B_0/A_0,\mathrm{alg}}$. The corresponding cotangent complex associated is defined to be just:
\begin{align}
\mathbb{L}_{B_0/A_0,\mathrm{alg}}:=	\mathrm{deRham}^1_{A_0[B_0]^\text{degreenumber}/A_0,\mathrm{alg}}\otimes_{A_0[B_0]^\text{degreenumber}} B_0.
\end{align}
The corresponding algebraic Andr\'e-Quillen homologies are defined to be:
\begin{align}
H_{\text{degreenumber},{\mathrm{AQ}},\mathrm{alg}}:=\pi_\text{degreenumber} (\mathbb{L}_{B_0/A_0,\mathrm{alg}}). 	
\end{align}
The corresponding topological Andr\'e-Quillen complex is actually the complete version of the corresponding algebraic ones above by considering the corresponding derived $p$-completion over the simplicial module structure:
\begin{align}
\mathbb{L}_{B_0/A_0,\mathrm{topo}}:=R\varprojlim_k	\mathrm{Kos}_{p^k}\left((\mathrm{deRham}^1_{A_0[B_0]^\text{degreenumber}/A_0,\mathrm{alg}}\otimes_{A_0[B_0]^\text{degreenumber}} B_0)\right).
\end{align}
Taking the product with $\mathcal{O}_Z$ we have the corresponding integral version of the topological Andr\'e-Quillen complex:
\begin{align}
\mathbb{L}_{B_0/A_0,\mathrm{topo},Z}:=\mathbb{L}_{B_0/A_0,\mathrm{topo}}\widehat{\otimes}_{\mathbb{Z}_p}\mathcal{O}_Z
\end{align}
Then we consider the corresponding derived algebraic de Rham complex which is just defined to be:
\begin{align}
\mathrm{Kan}_\mathrm{Left}\mathrm{deRham}^\text{degreenumber}_{B_0/A_0,\mathrm{alg}},\mathrm{Kan}_\mathrm{Left}\mathrm{Fil}^*_{\mathrm{deRham}^1_{B_0/A_0,\mathrm{alg}}}.	
\end{align}
We then take the corresponding derived $p$-completion and we denote that by:
\begin{align}
\mathrm{Kan}_\mathrm{Left}\mathrm{deRham}^\text{degreenumber}_{B_0/A_0,\mathrm{topo}}:=R\varprojlim_k\mathrm{Kos}_{p^k}\left(\mathrm{Kan}_\mathrm{Left}\mathrm{deRham}^\text{degreenumber}_{B_0/A_0,\mathrm{alg}},\mathrm{Kan}_\mathrm{Left}\mathrm{Fil}^*_{\mathrm{deRham}^\text{degreenumber}_{B_0/A_0,\mathrm{alg}}}\right),\\
\mathrm{Kan}_\mathrm{Left}\mathrm{Fil}^*_{\mathrm{deRham}^\text{degreenumber}_{B_0/A_0,\mathrm{topo}}}:=R\varprojlim_k\mathrm{Kos}_{p^k}\left(\mathrm{Kan}_\mathrm{Left}\mathrm{Fil}^*_{\mathrm{deRham}^\text{degreenumber}_{B_0/A_0,\mathrm{alg}}}\right).	
\end{align}
Before considering the corresponding integral version we just consider the corresponding product of these $\mathbb{E}_\infty$-rings with $\mathcal{O}_Z$ to get:
\begin{align}
\mathrm{Kan}_\mathrm{Left}\mathrm{deRham}^\text{degreenumber}_{B_0/A_0,\mathrm{topo},Z}:=\mathrm{Kan}_\mathrm{Left}\mathrm{deRham}^\text{degreenumber}_{B_0/A_0,\mathrm{topo}}\widehat{\otimes}^\mathbb{L}_{\mathbb{Z}_p}\mathcal{O}_Z,\\
\mathrm{Kan}_\mathrm{Left}\mathrm{Fil}^*_{\mathrm{deRham}^\text{degreenumber}_{B_0/A_0,\mathrm{topo}},Z}:=\mathrm{Kan}_\mathrm{Left}\mathrm{Fil}^*_{\mathrm{deRham}^\text{degreenumber}_{B_0/A_0,\mathrm{topo}}}\widehat{\otimes}^\mathbb{L}_{\mathbb{Z}_p}\mathcal{O}_Z.	
\end{align}
Then we consider the following construction for the map $A\rightarrow B$ by putting:
\begin{align}
\mathbb{L}_{B_0/A_0,\mathrm{topo},Z}:= \mathrm{Colim}_{A_0\rightarrow B_0}\mathbb{L}_{B_0/A_0,\mathrm{topo},Z}[1/p],\\
H_{\text{degreenumber},{\mathrm{AQ}},\mathrm{topo},Z}:=\pi_\text{degreenumber} (\mathbb{L}_{B/A,\mathrm{topo},Z}),	\\
\mathrm{Kan}_\mathrm{Left}\mathrm{deRham}^\text{degreenumber}_{B/A,\mathrm{topo},Z}:=\mathrm{Colim}_{A_0\rightarrow B_0}\mathrm{Kan}_\mathrm{Left}\mathrm{deRham}^\text{degreenumber}_{B_0/A_0,\mathrm{topo},Z}[1/p],\\
\mathrm{Kan}_\mathrm{Left}\mathrm{Fil}^*_{\mathrm{deRham}^\text{degreenumber}_{B/A,\mathrm{topo}},Z}:=\mathrm{Colim}_{A_0\rightarrow B_0}\mathrm{Kan}_\mathrm{Left}\mathrm{Fil}^*_{\mathrm{deRham}^1_{B_0/A_0,\mathrm{topo}},Z}[1/p].
\end{align}
Then we need to take the corresponding Hodge-Filtered completion by using the corresponding filtration associated as above to achieve the corresponding Hodge-complete objects in the corresponding filtered $\infty$-categories:
\begin{align}
\mathrm{Kan}_\mathrm{Left}{\mathrm{deRham}}^\text{degreenumber}_{B/A,\mathrm{topo,Hodge},Z},\mathrm{Kan}_\mathrm{Left}\mathrm{Fil}^*_{{\mathrm{deRham}}^\text{degreenumber}_{B/A,\mathrm{topo,Hodge}},Z}.	
\end{align} 	
\end{definition}

\begin{definition}
We define the corresponding finite projective filtered crystals to be the corresponding finite projective module spectra over the topological filtered $E_\infty$-ring $\mathrm{Kan}_\mathrm{Left}\mathrm{deRham}^\text{degreenumber}_{B/A,\mathrm{topo},Z}$ with the corresponding induced filtrations.	
\end{definition}

\begin{definition}
We define the corresponding almost perfect \footnote{This is the corresponding derived version of pseudocoherence from \cite{12Lu1}, \cite{12Lu2}.} filtered crystals to be the corresponding almost perfect module spectra over the topological filtered $E_\infty$-ring $\mathrm{Kan}_\mathrm{Left}\mathrm{deRham}^\text{degreenumber}_{B/A,\mathrm{topo},Z}$ with the corresponding induced filtrations.	
\end{definition}

\indent The following is derived from the main Poincar\'e Lemma from \cite[Theorem 1.2]{12GL} in the non-deformed situation. Consider a corresponding smooth rigid analytic space $X$ over $k/\mathbb{Q}_p$ (where $k$ is a corresponding unramified analytic field which is discretely-valued and the corresponding residue field is finite). Then we have the following:

\newpage

\begin{landscape}
\begin{proposition}
Consider the corresponding projective map $g:X_{\text{pro-\'etale}}\rightarrow X_{\text{\'et}}$ and the the projective map $f:X_{\text{pro-\'etale}}\rightarrow X$. Then we have the following two strictly exact long exact sequences 
\footnote{We should have the corresponding naturality taking into the following form:

\[\tiny
\xymatrix@C+0.4pc@R+0pc{
0\ar[r]\ar[r]\ar[r] \ar[r] &\mathrm{Kan}_\mathrm{Left}\mathrm{deRham}^\text{degreenumber}_{k[\widehat{\mathcal{O}}]^\text{degreenumber}/k,\mathrm{topo},Z}\ar[d]\ar[d]\ar[d] \ar[d] \ar[r]^\partial\ar[r]\ar[r] \ar[r] &\mathrm{Kan}_\mathrm{Left}\mathrm{deRham}^\text{degreenumber}_{X[\widehat{\mathcal{O}}]^\text{degreenumber}/X,\mathrm{topo},Z} \ar[d]\ar[d]\ar[d] \ar[d]\ar[r]^\partial\ar[r]\ar[r] \ar[r] &\mathrm{Kan}_\mathrm{Left}\mathrm{deRham}^\text{degreenumber}_{X[\widehat{\mathcal{O}}]^\text{degreenumber}/X,\mathrm{topo},Z}{\otimes}f^{-1} \mathrm{deRham}^1_{X,\mathrm{topo}} \ar[d]\ar[d]\ar[d] \ar[d]\ar[r]^\partial\ar[r]\ar[r] \ar[r]&\\
0\ar[r]\ar[r]\ar[r] \ar[r] &\mathrm{Kan}_\mathrm{Left}\mathrm{deRham}^\text{degreenumber}_{k[\widehat{\mathcal{O}}]^\text{degreenumber}/k,\mathrm{topo},Z} \ar[r]^\partial\ar[r]\ar[r] \ar[r] &\mathrm{Kan}_\mathrm{Left}\mathrm{deRham}^\text{degreenumber}_{X[\widehat{\mathcal{O}}]^\text{degreenumber}/X_{\text{\'et}},\mathrm{topo},Z} \ar[r]^\partial\ar[r]\ar[r] \ar[r] &\mathrm{Kan}_\mathrm{Left}\mathrm{deRham}^\text{degreenumber}_{X[\widehat{\mathcal{O}}]^\text{degreenumber}/X,\mathrm{topo},Z}{\otimes}f^{-1} \mathrm{deRham}^1_{X_{\text{\'et}},\mathrm{topo}} \ar[r]^\partial\ar[r]\ar[r] \ar[r]&.
}
\]
}:
\[\tiny
\xymatrix@C+0.4pc@R+0pc{
0\ar[r]\ar[r]\ar[r] \ar[r] &\mathrm{Kan}_\mathrm{Left}\mathrm{deRham}^\text{degreenumber}_{k[\widehat{\mathcal{O}}]^\text{degreenumber}/k,\mathrm{topo},Z} \ar[r]^\partial\ar[r]\ar[r] \ar[r] &\mathrm{Kan}_\mathrm{Left}\mathrm{deRham}^\text{degreenumber}_{X[\widehat{\mathcal{O}}]^\text{degreenumber}/X,\mathrm{topo},Z} \ar[r]^\partial\ar[r]\ar[r] \ar[r] &\mathrm{Kan}_\mathrm{Left}\mathrm{deRham}^\text{degreenumber}_{X[\widehat{\mathcal{O}}]^\text{degreenumber}/X,\mathrm{topo},Z}{\otimes}f^{-1} \mathrm{deRham}^1_{X,\mathrm{topo}} \ar[r]^\partial\ar[r]\ar[r] \ar[r]&...,
}
\]
and
\[\tiny
\xymatrix@C+0.4pc@R+0pc{
0\ar[r]\ar[r]\ar[r] \ar[r] &\mathrm{Kan}_\mathrm{Left}\mathrm{deRham}^\text{degreenumber}_{k[\widehat{\mathcal{O}}]^\text{degreenumber}/k,\mathrm{topo},Z} \ar[r]^\partial\ar[r]\ar[r] \ar[r] &\mathrm{Kan}_\mathrm{Left}\mathrm{deRham}^\text{degreenumber}_{X[\widehat{\mathcal{O}}]^\text{degreenumber}/X_{\text{\'et}},\mathrm{topo},Z} \ar[r]^\partial\ar[r]\ar[r] \ar[r] &\mathrm{Kan}_\mathrm{Left}\mathrm{deRham}^\text{degreenumber}_{X[\widehat{\mathcal{O}}]^\text{degreenumber}/X,\mathrm{topo},Z}{\otimes}f^{-1} \mathrm{deRham}^1_{X_{\text{\'et}},\mathrm{topo}} \ar[r]^\partial\ar[r]\ar[r] \ar[r] &....
}
\]
\end{proposition}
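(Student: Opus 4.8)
The plan is to reduce the statement to the non-deformed, non-topologized Poincar\'e Lemma of Guo--Li \cite[Theorem 1.2]{12GL} and then to propagate that statement through the operations that build $\mathrm{Kan}_\mathrm{Left}\mathrm{deRham}^\text{degreenumber}_{-/-,\mathrm{topo},Z}$ out of $\mathrm{Kan}_\mathrm{Left}\mathrm{deRham}^\text{degreenumber}_{-/-,\mathrm{alg}}$: derived $p$-completion $R\varprojlim_k\mathrm{Kos}_{p^k}(-)$, completed base change $\widehat{\otimes}^{\mathbb{L}}_{\mathbb{Z}_p}\mathcal{O}_Z$ and inverting $p$, the filtered colimit over choices of rings of definition $A_0\to B_0$, and finally Hodge-completion. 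First I would record that over $k$ and without coefficients, \cite[Theorem 1.2]{12GL} equips the relative derived de Rham complex of $\widehat{\mathcal{O}}$ over $\mathcal{O}_X$ on the pro-\'etale site with the connection $\partial$ valued in $\mathrm{Kan}_\mathrm{Left}\mathrm{deRham}^\text{degreenumber}_{X[\widehat{\mathcal{O}}]^\text{degreenumber}/X,\mathrm{alg}}\otimes f^{-1}\mathrm{deRham}^1_{X}$, identifies $\ker\partial$ with $\mathrm{Kan}_\mathrm{Left}\mathrm{deRham}^\text{degreenumber}_{k[\widehat{\mathcal{O}}]^\text{degreenumber}/k,\mathrm{alg}}$, and asserts exactness of the associated de Rham complex; conceptually this is transitivity of derived de Rham cohomology along $k\to\mathcal{O}_X\to\widehat{\mathcal{O}}$ together with smoothness of $X$, which on the graded pieces of the Hodge filtration becomes the cotangent-complex transitivity triangle $\mathbb{L}_{\mathcal{O}_X/k}\otimes\widehat{\mathcal{O}}\to\mathbb{L}_{\widehat{\mathcal{O}}/k}\to\mathbb{L}_{\widehat{\mathcal{O}}/\mathcal{O}_X}$, e.g.\ as in \cite[Chapter 2]{12B1} and \cite[Example 5.11, Example 5.12]{12BMS}.

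Next I would topologize. Derived $p$-completion is exact, so it preserves the cofiber sequences assembling the long exact sequence; each finite Koszul stage $\mathrm{Kos}_{p^k}$ of the differential is strict because multiplication by $p^k$ is a strict map of the $p$-torsion-free simplicial resolutions in play, and the homotopy limit $R\varprojlim_k$ of this tower stays strictly exact since the tower is Mittag--Leffler on $\pi_0$ with vanishing $\lim^1$ in the relevant degrees, where I would invoke the degreewise finiteness coming from smoothness of $X$. Then $\widehat{\otimes}^{\mathbb{L}}_{\mathbb{Z}_p}\mathcal{O}_Z$ is exact because $\mathcal{O}_Z$ is a $p$-complete flat $\mathbb{Z}_p$-algebra (a ring of definition of a reduced $\mathbb{Q}_p$-affinoid), and it carries strict maps of $p$-complete modules to strict maps of ind-Banach modules; inverting $p$ and passing to the filtered colimit $\mathrm{Colim}_{A_0\to B_0}$ are each exact and strictness-preserving in the ind-Banach category over $\mathbb{Q}_p$; and Hodge-completion is a limit along a filtration that is bounded in each weight, hence preserves strict exactness of the Hodge-compatible sequence. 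Assembling these, the first displayed sequence is strictly exact.

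For the second sequence I would run the same argument with $\mathcal{O}_X$ replaced by $\mathcal{O}_{X_{\text{\'et}}}$ pulled back along $g\colon X_{\text{pro-\'et}}\to X_{\text{\'et}}$: smoothness of $X$ identifies $\mathrm{deRham}^1_{X_{\text{\'et}}}$ with $\mathrm{deRham}^1_X$ and, by \'etale descent as in \cite{12DLLZ2} and \cite{12Sch2}, the derived de Rham complex of $\widehat{\mathcal{O}}$ over $\mathcal{O}_{X_{\text{\'et}}}$ with the one over $\mathcal{O}_X$, while $f$ factors as $g$ followed by the projection $X_{\text{pro-\'et}}\to X$; transporting $\partial$ through these identifications produces the second strictly exact sequence, and the two fit into the commuting naturality square of the footnote via the morphism of sites $X_{\text{pro-\'et}}\to X_{\text{\'et}}\to X$.

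The hard part will be the propagation of \emph{strictness}, not the underlying exactness: exactness is essentially \cite[Theorem 1.2]{12GL} plus formal $\infty$-categorical bookkeeping, but a homotopy limit of strict maps need not be strict, so the crux is to exploit the finiteness features of the smooth rigid-analytic setting --- finite local freeness of $\mathrm{deRham}^1_X$, boundedness of the Hodge filtration in each weight, and Mittag--Leffler behaviour of the $p^k$-Koszul tower --- to guarantee that $R\varprojlim_k\mathrm{Kos}_{p^k}(-)$, $\widehat{\otimes}^{\mathbb{L}}_{\mathbb{Z}_p}\mathcal{O}_Z$, $[1/p]$, $\mathrm{Colim}_{A_0\to B_0}$ and Hodge-completion each preserve strict exactness at every stage.
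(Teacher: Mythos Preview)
Your approach is correct and essentially the same as the paper's: both reduce to \cite[Theorem 1.2]{12GL}. The paper's proof is literally the single sentence ``This is actually a direct consequence [of] \cite[Theorem 1.2]{12GL}'', whereas you spell out how the defining operations (derived $p$-completion, $\widehat{\otimes}^{\mathbb{L}}_{\mathbb{Z}_p}\mathcal{O}_Z$, inverting $p$, the colimit over $A_0\to B_0$) propagate the Guo--Li strict exactness; your attention to strictness is more than the paper supplies, not a different route. One minor point: the statement as written uses the subscript $\mathrm{topo},Z$ rather than $\mathrm{topo,Hodge},Z$, so the final Hodge-completion step in your outline is not actually needed here.
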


\begin{proof}
This is actually a direct consequence \cite[Theorem 1.2]{12GL}.	
\end{proof}

\begin{proposition}
Consider the corresponding projective map $g:X_{\text{pro-\'etale}}\rightarrow X_{\text{\'et}}$ and the the projective map $f:X_{\text{pro-\'etale}}\rightarrow X$. Let $M$ be a corresponding $Z$-projective differential crystal spectrum. Then we have the following two strictly exact long exact sequences \footnote{Here we drop the notation $\mathrm{Kan}_\mathrm{Left}$.}:
\[\tiny
\xymatrix@C+0.4pc@R+0pc{
M{\otimes}^\mathbb{L}(0\ar[r]\ar[r]\ar[r] \ar[r] &\mathrm{deRham}^\text{degreenumber}_{k[\widehat{\mathcal{O}}]^\text{degreenumber}/k,\mathrm{topo},Z} \ar[r]^\partial\ar[r]\ar[r] \ar[r] &\mathrm{deRham}^\text{degreenumber}_{X[\widehat{\mathcal{O}}]^\text{degreenumber}/X,\mathrm{topo},Z} \ar[r]^\partial\ar[r]\ar[r] \ar[r] &\mathrm{deRham}^\text{degreenumber}_{X[\widehat{\mathcal{O}}]^\text{degreenumber}/X,\mathrm{topo},Z}{\otimes}f^{-1} \mathrm{deRham}^1_{X,\mathrm{topo}} \ar[r]^\partial\ar[r]\ar[r] \ar[r] &...),
}
\]
and
\[\tiny
\xymatrix@C+0.4pc@R+0pc{
M{\otimes}^\mathbb{L}(0\ar[r]\ar[r]\ar[r] \ar[r] &\mathrm{deRham}^\text{degreenumber}_{k[\widehat{\mathcal{O}}]^\text{degreenumber}/k,\mathrm{topo},Z} \ar[r]^\partial\ar[r]\ar[r] \ar[r] &\mathrm{deRham}^\text{degreenumber}_{X[\widehat{\mathcal{O}}]^\text{degreenumber}/X_{\text{\'et}},\mathrm{topo},Z} \ar[r]^\partial\ar[r]\ar[r] \ar[r] &\mathrm{deRham}^\text{degreenumber}_{X[\widehat{\mathcal{O}}]^\text{degreenumber}/X,\mathrm{topo},Z}{\otimes}f^{-1} \mathrm{deRham}^1_{X_{\text{\'et}},\mathrm{topo}} \ar[r]^\partial\ar[r]\ar[r] \ar[r] &...).
}
\]
\end{proposition}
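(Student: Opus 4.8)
The plan is to deduce this statement from the previous Proposition by a flat base change argument that uses only the finite projectivity of $M$. First I would recall that the preceding Proposition already establishes the two strictly exact long exact sequences \emph{before} tensoring with $M$: these are the Poincar\'e lemma sequences attached to the maps $f$ and $g$ respectively, regarded as complexes in the relevant stable $\infty$-category of $\mathrm{Ind}$-Banach modules over the coefficient ring $\mathrm{Kan}_\mathrm{Left}\mathrm{deRham}^\text{degreenumber}_{X[\widehat{\mathcal{O}}]^\text{degreenumber}/X,\mathrm{topo},Z}$ (and over its $\text{\'et}$-variant). Every term of these sequences is a module over $\mathrm{Kan}_\mathrm{Left}\mathrm{deRham}^\text{degreenumber}_{B/A,\mathrm{topo},Z}$ and the boundary maps $\partial$ are module maps, so $M \otimes^{\mathbb{L}} (-)$ can be applied termwise and commutes with all the $\partial$'s; this is what produces the two diagrams written in the statement.

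The heart of the argument is to check that $M \otimes^{\mathbb{L}} (-)$ preserves strict exactness. Since $M$ is a $Z$-projective differential crystal spectrum, i.e. a finite projective module spectrum over the topological filtered $E_\infty$-ring in the sense of the earlier Definition, the derived completed tensor product agrees with the underived one. Rather than argue from abstract flatness, which in the functional-analytic context does not by itself guarantee that strictness is preserved, I would reduce to the finite free case: locally on the relevant site $M$ is a direct summand of $\mathcal{O}^{\oplus n}$; a finite direct sum of copies of a strictly exact sequence is strictly exact, and a retract of a strictly exact complex in a quasi-abelian category is again strictly exact. Feeding the two sequences of the previous Proposition through this retract argument yields their strict exactness after tensoring with $M$.

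Finally I would verify compatibility with the remaining pieces of the construction, namely the Hodge filtrations $\mathrm{Fil}^*$, the left Kan extension $\mathrm{Kan}_\mathrm{Left}$, the derived $p$-completion $R\varprojlim_k \mathrm{Kos}_{p^k}(-)$ and the inversion of $p$; all of these commute with $M \otimes^{\mathbb{L}}(-)$ because tensoring with a perfect module commutes with the limits and filtered colimits already present in the definitions, and because finite projective filtered crystals are filtered direct summands of finite free filtered modules, so the retract argument applies filtered piece by filtered piece.

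The step I expect to be the main obstacle is precisely the interaction of strict exactness with the completed tensor product in the $\mathrm{Ind}$-Banach setting: one must be sure that no spurious non-strictness is created by completion. The cleanest way around this is the reduction to $M = \mathcal{O}$, where the assertion is exactly the previous Proposition, combined with the stability of strict exactness under finite direct sums and retracts. A secondary, purely bookkeeping, difficulty is carrying both base sites ($X$ and $X_{\text{\'et}}$) through the argument in parallel, but since the tensor functor is identical in both cases this is automatic once the non-filtered, non-Kan-extended case over $X$ is settled.
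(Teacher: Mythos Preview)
Your proposal is correct and follows the same approach as the paper: deduce the result from the preceding Proposition by using the projectivity (flatness) of $M$. The paper's own proof is a single line --- ``This is actually a direct consequence of the previous proposition due to the corresponding flatness'' --- so your write-up is a considerable expansion of what the paper leaves implicit. Your retract-of-finite-free argument and the explicit check that strict exactness survives completed tensor products in the Ind-Banach setting are exactly the details one would want to see spelled out behind the word ``flatness,'' and they do not deviate from the intended strategy.
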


\begin{proof}
This is actually a direct consequence of the previous proposition due to the corresponding flatness.	
\end{proof}

\end{landscape}

\newpage

\subsection{Analytic Andr\'e-Quillen Homology and Analytic Derived de Rham complex of Pseudorigid Spaces}

%\subsection{The Definitions}

%%%%%%%%%%%%%%%%%%%%%%%%%%%%%%%-------------------------------

\indent We now consider the analytic Andr\'e-Quillen Homology and analytic Derived de Rham complex of pseudorigid space, which are very crucial in some development in \cite{12Bel1} in the arithmetic family. Therefore we just investigate the corresponding picture in the corresponding geometric family.

\begin{setting}
We consider now a corresponding morphism taking the corresponding form of $A\rightarrow B$ where $A$ is going to be a pseudorigid affinoid algebra over $\mathbb{Z}_p$, and $B$ is going to be a perfectoid chart of $A$ in the corresponding pro-\'etale site of the pseudorigid affinoid space attached to $A$. As in \cite[Definition 3.1, and below Definition 3.1]{12Bel1} in our situation $A$ is of topologically finite type over $\mathcal{O}_K[[t]]\left<\pi^a/t^b\right>[1/t]$, where $K$ is a discrete valued field containing $\mathbb{Q}_p$ and $(a,b)=1$ \footnote{Certainly one can also consider the characteristic $p$ situation.}.	
\end{setting}

\indent From \cite[Definition 3.1, and below Definition 3.1]{12Bel1} we have the following:

\begin{lemma} We have the following statements:\\
A. $A$ as above is Tate, complete over $\mathcal{O}_K$;\\
B. The ring $A$ has a ring of definition $A_0$ which is of $\mathcal{O}_K$-formally finite type;\\
C. The ring $A_0$ is of topologically finite type over $\mathcal{O}_K[[t]]\left<\pi^a/t^b\right>$.
\end{lemma}

\begin{proof}
It is very easy to see that the corresponding results hold in our current situation over $\mathcal{O}_K$.	
\end{proof}

As in the corresponding construction in the rigid situation in the previous section following \cite[Chapitre 3]{12An1}, \cite{12An2}, \cite[Chapter 2, Chapter 8]{12B1}, \cite[Chapter 1]{12Bei}, \cite[Chapter 5]{12G1}, \cite[Chapter 3, Chapter 4]{12GL}, \cite[Chapitre II, Chapitre III]{12Ill1}, \cite[Chapitre VIII]{12Ill2}, \cite[Section 4]{12Qui} we give the following definitions. First we consider the following new setting:

\begin{setting}
Now as in the above notion we will consider a general map of rings $A\rightarrow B$ over $\mathcal{O}_K[[t]]\left<\pi^a/t^b\right>[1/t]$ where we have the corresponding map of the associated ring of definitions $A_0\rightarrow B_0$ over $\mathcal{O}_K[[t]]\left<\pi^a/t^b\right>$ such that $A_0,B_0$ are basically $I$-adic (where $\mathcal{O}_K[[t]]\left<\pi^a/t^b\right>$ is $I$-adic).	
\end{setting}

 %%%%%%%%%%%%%%%%%%%%%%%%%%%%%%%%%%%%%%%-------------------------

\begin{definition}
We start from the corresponding construction of the algebraic $p$-adic derived de Rham complex for a map $A\rightarrow B$. Fix a pair of ring of definitions $A_0,B_0$ in $A,B$ respectively. Then this is the corresponding derived differential complex attached to the polynomial resolution of $B_0$:
\begin{align}
A_0[A_0[B_0]]...,	
\end{align}
which is now denoted by $\mathrm{Kan}_\mathrm{Left}\mathrm{deRham}^\text{degreenumber}_{B_0/A_0,\mathrm{alg}}$ after taking the corresponding left Kan extension. The corresponding cotangent complex associated is defined to be just:
\begin{align}
\mathbb{L}_{B_0/A_0,\mathrm{alg}}:=	\mathrm{deRham}^1_{A_0[B_0]^\text{degreenumber}/A_0,\mathrm{alg}}\otimes_{A_0[B_0]^\text{degreenumber}} B_0.
\end{align}
The corresponding algebraic Andr\'e-Quillen homologies are defined to be:
\begin{align}
H_{\text{degreenumber},{\mathrm{AQ}},\mathrm{alg}}:=\pi_\text{degreenumber} (\mathbb{L}_{B_0/A_0,\mathrm{alg}}). 	
\end{align}
The corresponding topological Andr\'e-Quillen complex is actually the complete version of the corresponding algebraic ones above by considering the corresponding derived $I$-completion over the simplicial module structure:
\begin{align}
\mathbb{L}_{B_0/A_0,\mathrm{topo}}:=R\varprojlim_I \mathrm{Kos}_{I}	\left((\mathrm{deRham}^1_{A_0[B_0]^\text{degreenumber}/A_0,\mathrm{alg}}\otimes_{A_0[B_0]^\text{degreenumber}} B_0)\right).
\end{align}
Then we consider the corresponding derived algebraic de Rham complex which is just defined to be:
\begin{align}
\mathrm{Kan}_\mathrm{Left}\mathrm{deRham}^\text{degreenumber}_{B_0/A_0,\mathrm{alg}},\mathrm{Kan}_\mathrm{Left}\mathrm{Fil}^*_{\mathrm{Kan}_\mathrm{Left}\mathrm{deRham}^1_{B_0/A_0,\mathrm{alg}}}.	
\end{align}
We then take the corresponding derived $I$-completion and we denote that by:
\begin{align}
\mathrm{Kan}_\mathrm{Left}\mathrm{deRham}^\text{degreenumber}_{B_0/A_0,\mathrm{topo}}:=R\varprojlim_I\mathrm{Kos}_I\left(\mathrm{Kan}_\mathrm{Left}\mathrm{deRham}^\text{degreenumber}_{B_0/A_0,\mathrm{alg}},\mathrm{Kan}_\mathrm{Left}\mathrm{Fil}^*_{\mathrm{Kan}_\mathrm{Left}\mathrm{deRham}^\text{degreenumber}_{B_0/A_0,\mathrm{alg}}}\right),\\
\mathrm{Fil}^*_{\mathrm{deRham}^\text{degreenumber}_{B_0/A_0,\mathrm{topo}}}:=R\varprojlim_I\mathrm{Kos}_I\left(\mathrm{Kan}_\mathrm{Left}\mathrm{Fil}^*_{\mathrm{Kan}_\mathrm{Left}\mathrm{deRham}^\text{degreenumber}_{B_0/A_0,\mathrm{alg}}}\right).	
\end{align}
Then we consider the following construction for the map $A\rightarrow B$ by putting:
\begin{align}
\mathbb{L}_{B_0/A_0,\mathrm{topo}}:= \mathrm{Colim}_{A_0\rightarrow B_0}\mathbb{L}_{B_0/A_0,\mathrm{topo}}[1/t],\\
H_{\text{degreenumber},{\mathrm{AQ}},\mathrm{topo}}:=\pi_\text{degreenumber} (\mathbb{L}_{B/A,\mathrm{topo}}),	\\
\mathrm{Kan}_\mathrm{Left}\mathrm{deRham}^\text{degreenumber}_{B/A,\mathrm{topo}}:=\mathrm{Colim}_{A_0\rightarrow B_0}\mathrm{Kan}_\mathrm{Left}\mathrm{deRham}^\text{degreenumber}_{B_0/A_0,\mathrm{topo}}[1/t],\\
\mathrm{Kan}_\mathrm{Left}\mathrm{Fil}^*_{\mathrm{deRham}^\text{degreenumber}_{B/A,\mathrm{topo}}}:=\mathrm{Colim}_{A_0\rightarrow B_0}\mathrm{Fil}^*_{\mathrm{Kan}_\mathrm{Left}\mathrm{deRham}^\text{degreenumber}_{B_0/A_0,\mathrm{topo}}}[1/t].
\end{align}
Then we need to take the corresponding Hodge-Filtered completion by using the corresponding filtration associated as above to achieve the corresponding Hodge-complete objects in the corresponding filtered $\infty$-categories:
\begin{align}
\mathrm{Kan}_\mathrm{Left}{\mathrm{deRham}}^\text{degreenumber}_{B/A,\mathrm{topo,Hodge}},\mathrm{Fil}^*_{\mathrm{Kan}_\mathrm{Left}{\mathrm{deRham}}^\text{degreenumber}_{B/A,\mathrm{topo,Hodge}}}.	
\end{align}
\end{definition}

\begin{definition}
We define the corresponding finite projective filtered crystals to be the corresponding finite projective module spectra over the topological filtered $E_\infty$-ring $\mathrm{Kan}_\mathrm{Left}\mathrm{deRham}^\text{degreenumber}_{B/A,\mathrm{topo}}$ with the corresponding induced filtrations.	
\end{definition}

\begin{definition}
We define the corresponding almost perfect \footnote{This is the corresponding derived version of pseudocoherence from \cite{12Lu1}, \cite{12Lu2}.} filtered crystals to be the corresponding almost perfect module spectra over the topological filtered $E_\infty$-ring $\mathrm{Kan}_\mathrm{Left}\mathrm{deRham}^\text{degreenumber}_{B /A,\mathrm{topo}}$ with the corresponding induced filtrations.	
\end{definition}

\indent We now consider the corresponding large coefficient local systems over pseudorigid spaces where $Z$ now is a topological algebra over $\mathbb{Z}_p$\footnote{It is better to assume that it is basically completely flat.}.

%%%%%%%%%%%%%%%%%%%%%%%%%%%%%%%%%%%-------------------------

\begin{definition}
We define the following $Z$ deformed version of the corresponding complete version of the corresponding Andr\'e-Quillen homology and the corresponding complete version of the corresponding derived de Rham complex. We start from the corresponding construction of the algebraic $p$-adic derived de Rham complex for a map $A\rightarrow B$. Fix a pair of ring of definitions $A_0,B_0$ in $A,B$ respectively. Then this is the corresponding derived differential complex attached to the polynomial resolution of $B_0$:
\begin{align}
A_0[A_0[B_0]]...,	
\end{align}
which is now denoted by $\mathrm{deRham}^\text{degreenumber}_{A_0[B_0]^\text{degreenumber}/A_0,\mathrm{alg}}$. The corresponding cotangent complex associated is defined to be just:
\begin{align}
\mathbb{L}_{B_0/A_0,\mathrm{alg}}:=	\mathrm{deRham}^1_{A_0[B_0]^\text{degreenumber}/A_0,\mathrm{alg}}\otimes_{A_0[B_0]^\text{degreenumber}} B_0.
\end{align}
The corresponding algebraic Andr\'e-Quillen homologies are defined to be:
\begin{align}
H_{\text{degreenumber},{\mathrm{AQ}},\mathrm{alg}}:=\pi_\text{degreenumber} (\mathbb{L}_{B_0/A_0,\mathrm{alg}}). 	
\end{align}
The corresponding topological Andr\'e-Quillen complex is actually the complete version of the corresponding algebraic ones above by considering the corresponding derived $I$-completion over the simplicial module structure:
\begin{align}
\mathbb{L}_{B_0/A_0,\mathrm{topo}}:=R\varprojlim_k	\mathrm{Kos}_I\left((\mathrm{deRham}^1_{A_0[B_0]^\text{degreenumber}/A_0,\mathrm{alg}}\otimes_{A_0[B_0]^\text{degreenumber}} B_0)\right).
\end{align}
Taking the product with $Z$ we have the corresponding integral version of the topological Andr\'e-Quillen complex\footnote{Here we did not take the corresponding derived completion, but in some situation this is achievable.}:
\begin{align}
\mathbb{L}_{B_0/A_0,\mathrm{topo},Z}:=\mathbb{L}_{B_0/A_0,\mathrm{topo}}{\otimes}^\mathbb{L}_{\mathbb{Z}_p}Z
\end{align}
Then we consider the corresponding derived algebraic de Rham complex which is just defined to be:
\begin{align}
\mathrm{Kan}_\mathrm{Left}\mathrm{deRham}^\text{degreenumber}_{B_0/A_0,\mathrm{alg}},\mathrm{Fil}^*_{\mathrm{Kan}_\mathrm{Left}\mathrm{deRham}^\text{degreenumber}_{B_0/A_0,\mathrm{alg}}}.	
\end{align}
We then take the corresponding derived $I$-completion and we denote that by:
\begin{align}
\mathrm{Kan}_\mathrm{Left}\mathrm{deRham}^\text{degreenumber}_{B_0/A_0,\mathrm{topo}}:=R\varprojlim_k\mathrm{Kos}_I\left(\mathrm{Kan}_\mathrm{Left}\mathrm{deRham}^\text{degreenumber}_{B_0/A_0,\mathrm{alg}},\mathrm{Fil}^*_{\mathrm{Kan}_\mathrm{Left}\mathrm{deRham}^\text{degreenumber}_{B_0/A_0,\mathrm{alg}}}\right),\\
\mathrm{Kan}_\mathrm{Left}\mathrm{Fil}^*_{\mathrm{deRham}^\text{degreenumber}_{B_0/A_0,\mathrm{topo}}}:=R\varprojlim_k\mathrm{Kos}_I\left(\mathrm{Kan}_\mathrm{Left}\mathrm{Fil}^*_{\mathrm{Kan}_\mathrm{Left}\mathrm{deRham}^\text{degreenumber}_{B_0/A_0,\mathrm{alg}}}\right).	
\end{align}
Before considering the corresponding integral version we just consider the corresponding product of these $\mathbb{E}_\infty$-rings with $Z$ to get\footnote{Here again we did not take the corresponding derived completion, but in some situation this is achievable, for instance when the ring $Z$ is endowed with derived $J$-complete topology.}:
\begin{align}
\mathrm{Kan}_\mathrm{Left}\mathrm{deRham}^\text{degreenumber}_{B_0/A_0,\mathrm{topo},Z}:=\mathrm{Kan}_\mathrm{Left}\mathrm{deRham}^\text{degreenumber}_{B_0/A_0,\mathrm{topo}}{\otimes}^\mathbb{L}_{\mathbb{Z}_p}Z,\\
\mathrm{Kan}_\mathrm{Left}\mathrm{Fil}^*_{\mathrm{deRham}^\text{degreenumber}_{B_0/A_0,\mathrm{topo}},Z}:=\mathrm{Fil}^*_{\mathrm{Kan}_\mathrm{Left}\mathrm{deRham}^\text{degreenumber}_{B_0/A_0,\mathrm{topo}}}{\otimes}^\mathbb{L}_{\mathbb{Z}_p}Z.	
\end{align}
Then we consider the following construction for the map $A\rightarrow B$ by putting:
\begin{align}
\mathbb{L}_{B_0/A_0,\mathrm{topo},Z}:= \mathrm{Colim}_{A_0\rightarrow B_0}\mathbb{L}_{B_0/A_0,\mathrm{topo},Z}[1/t],\\
H_{\text{degreenumber},{\mathrm{AQ}},\mathrm{topo},Z}:=\pi_\text{degreenumber} (\mathbb{L}_{B/A,\mathrm{topo},Z}),	\\
\mathrm{Kan}_\mathrm{Left}\mathrm{deRham}^\text{degreenumber}_{B/A,\mathrm{topo},Z}:=\mathrm{Colim}_{A_0\rightarrow B_0}\mathrm{Kan}_\mathrm{Left}\mathrm{deRham}^\text{degreenumber}_{B_0/A_0,\mathrm{topo},Z}[1/t],\\
\mathrm{Kan}_\mathrm{Left}\mathrm{Fil}^*_{\mathrm{Kan}_\mathrm{Left}\mathrm{deRham}^\text{degreenumber}_{B/A,\mathrm{topo}},Z}:=\mathrm{Colim}_{A_0\rightarrow B_0}\mathrm{Kan}_\mathrm{Left}\mathrm{Fil}^*_{\mathrm{Kan}_\mathrm{Left}\mathrm{deRham}^1_{B_0/A_0,\mathrm{topo}},Z}[1/t].
\end{align}
Then we need to take the corresponding Hodge-Filtered completion by using the corresponding filtration associated as above to achieve the corresponding Hodge-complete objects in the corresponding filtered $\infty$-categories:
\begin{align}
\mathrm{Kan}_\mathrm{Left}{\mathrm{deRham}}^\text{degreenumber}_{B/A,\mathrm{topo,Hodge},Z},\mathrm{Kan}_\mathrm{Left}\mathrm{Fil}^*_{\mathrm{Kan}_\mathrm{Left}{\mathrm{deRham}}^1_{B/A,\mathrm{topo,Hodge}},Z}.	
\end{align} 	
\end{definition}

\begin{example}
Now we construct the corresponding pseudorigid analog of the corresponding example in \cite[Example 4.7]{12GL}. Now we consider the corresponding the map:
\begin{align}
\mathbb{Z}_p[[u]]\left<\frac{p^a}{u^b}\right>[1/u]\left<X^\pm_1,X^\pm_2,...,X^\pm_d\right>\longrightarrow \mathbb{Z}_p[[u]]\left<\frac{p^a}{u^b}\right>[1/u]\left<X^{\pm/p^\infty}_1,X^{\pm/p^\infty}_2,...,X^{\pm/p^\infty}_d\right>	
\end{align}
which could be written as:	
\begin{align}
&\mathbb{Z}_p[[u]]\left<\frac{p^a}{u^b}\right>[1/u]\left<X^\pm_1,X^\pm_2,...,X^\pm_d\right>\longrightarrow \\
&\mathbb{Z}_p[[u]]\left<\frac{p^a}{u^b}\right>[1/u]\left<X^\pm_1,X^\pm_2,...,X^\pm_d\right>\left<Y^{\pm/p^\infty}_1,Y^{\pm/p^\infty}_2,...,Y^{\pm/p^\infty}_d\right>/(X_i-Y_i,i=1,...,d).	
\end{align}
So in our situation the corresponding Hodge complete topological derived de Rham complex will be just:
\begin{align}
\mathbb{Z}_p[[u]]\left<\frac{p^a}{u^b}\right>[1/u]\left<Y^{\pm/p^\infty}_1,Y^{\pm/p^\infty}_2,...,Y^{\pm/p^\infty}_d\right>[[Z_1,...,Z_d,Z_i=X_i-Y_i,i=1,...,d]].	
\end{align}

\end{example}

%\subsection{The Functoriality}

\indent We now follow the corresponding previous works \cite[Chapter 2, Chapter 8]{12B1}, \cite[Chapter 1]{12Bei}, \cite[Chapter 5]{12G1}, \cite[Chapter 3, Chapter 4]{12GL}, \cite{12Ill1}, \cite[Chapitre VIII]{12Ill2} to study the corresponding functoriality for some triple $A\rightarrow B \rightarrow C$ where we have first in the algebraic setting the following result. Here we first consider more general setting where we will consider those rings over $\mathcal{O}_K[[t]]\left<\frac{\pi^a}{t^b}\right>$ which is $t$-adically complete.

%%%%%%%%%%%%%%%%%%%%%%%%%%%%%%%%%%%%-----------------------

\begin{proposition}
The corresponding functoriality for the corresponding algebraic derived de Rham complex holds in our situation for the corresponding integral adic rings $A,B,C$ in the context of this section, namely we have the following corresponding commutative diagram:
\[
\xymatrix@C+0pc@R+3pc{
\mathrm{Kan}_\mathrm{Left}\mathrm{deRham}^\text{degreenumber}_{B/A,\mathrm{alg}} \ar[r] \ar[r] \ar[r]\ar[d] \ar[d] \ar[d] &\mathrm{Kan}_\mathrm{Left}\mathrm{deRham}^\text{degreenumber}_{B/A,\mathrm{alg}} \ar[d] \ar[d] \ar[d]\\
B \ar[r] \ar[r] \ar[r] &\mathrm{Kan}_\mathrm{Left}\mathrm{deRham}^\text{degreenumber}_{B/A,\mathrm{alg}}.
}
\]	
\end{proposition}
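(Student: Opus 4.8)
The plan is to deduce the statement from the case of polynomial algebras, where it is classical, and transport it along the defining left Kan extension. Recall that $\mathrm{Kan}_\mathrm{Left}\mathrm{deRham}^\text{degreenumber}_{-/A,\mathrm{alg}}$ together with its Hodge filtration $\mathrm{Kan}_\mathrm{Left}\mathrm{Fil}^*$ is, by construction, the left Kan extension along the inclusion of polynomial $A$-algebras into all (simplicial commutative) $A$-algebras of the functor sending a polynomial $A$-algebra $P$ to its honest algebraic de Rham complex $\mathrm{deRham}^\text{degreenumber}_{P/A,\mathrm{alg}}$ with its stupid filtration; the relative complexes over $B$ and over $C$ are defined the same way. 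First I would write down, on the level of polynomial $A$-algebras, the square whose horizontal arrows are the functoriality maps induced by a morphism of polynomial presentations and whose vertical arrows are the canonical augmentations $P \to \mathrm{deRham}^\text{degreenumber}_{P/A,\mathrm{alg}}$ (inclusion of the degree-zero term, i.e.\ the bottom piece of the Hodge filtration, under $\mathrm{deRham}^0 = P$). On polynomial algebras all of this is strictly functorial in the pair, so this square commutes on the nose, compatibly with the Hodge filtrations.

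Second, since left Kan extension along a subcategory that generates the target under sifted colimits is itself a functor on functor categories, every commuting square of functors on polynomial $A$-algebras produces a commuting square after left Kan extension to all $A$-algebras; applied to the square above this yields the commutative diagram in the statement, carried out throughout inside the filtered $\infty$-category of $\mathbb{E}_\infty$-rings so that the Hodge filtration is respected automatically. For the triple $A \to B \to C$ the extra input is the transitivity (Katz--Oda) filtration on $\mathrm{deRham}^\text{degreenumber}_{C/A,\mathrm{alg}}$ for a tower of polynomial algebras, whose associated graded involves $\mathrm{deRham}^\text{degreenumber}_{B/A,\mathrm{alg}}$ tensored with $\mathrm{deRham}^\text{degreenumber}_{C/B,\mathrm{alg}}$; this too is strictly functorial on polynomial presentations and hence descends through the left Kan extension, gluing the individual squares into the asserted diagram relating the three relative derived de Rham complexes. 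This is exactly the algebraic content of \cite[Chapter 3, Chapter 4]{12GL}, \cite{12Ill1} and \cite[Chapitre VIII]{12Ill2}, which we are entitled to invoke.

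The main obstacle is coherence rather than commutativity-up-to-homotopy: one must arrange that the augmentation and functoriality maps, together with the transitivity comparison, assemble into a genuinely homotopy-coherent diagram of filtered $\mathbb{E}_\infty$-rings, so that after the derived completions $R\varprojlim_k \mathrm{Kos}_{p^k}(-)$ (resp.\ $R\varprojlim_I \mathrm{Kos}_I(-)$) and the base change $(-)\widehat{\otimes}^\mathbb{L}_{\mathbb{Z}_p}\mathcal{O}_Z$ the diagram still commutes --- which is precisely what the subsequent topological and $Z$-deformed constructions of this section need. This is handled by performing the entire left Kan extension inside the relevant stable $\infty$-category (of filtered $\mathbb{E}_\infty$-rings) rather than on homotopy categories, using that polynomial algebras generate under sifted colimits and that a left Kan extension commutes with them; then the completion and base-change functors, being (derived) limits and a flat derived tensor, preserve the coherent square termwise, and the topological and deformed versions of the functoriality follow formally.
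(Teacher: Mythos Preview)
Your proposal is correct and, at bottom, takes the same route as the paper: the paper's own proof is simply the one-line citation ``See \cite[Lemma 3.3]{12GL}'', and you have effectively unpacked what that citation contains (left Kan extension from the polynomial case, transitivity/Katz--Oda, coherence in the filtered $\mathbb{E}_\infty$ setting). Your final paragraph on homotopy-coherence and compatibility with the derived completions and $Z$-deformation is more than the paper asks for here---the paper defers the topological statement to the next proposition, where it is obtained by applying the derived $I$-completion to the algebraic diagram---but it is not wrong to include it.
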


\begin{proof}
See \cite[Lemma 3.3]{12GL}.	
\end{proof}

\begin{proposition}
The corresponding functoriality for the corresponding algebraic derived de Rham complex holds in our situation for the corresponding integral adic rings $A,B,C$ in the context of this section, namely we have the following corresponding commutative diagram:
\[
\xymatrix@C+0pc@R+3pc{
\mathrm{Kan}_\mathrm{Left}\mathrm{deRham}^\text{degreenumber}_{B/A,\mathrm{topo,Hodge}} \ar[r] \ar[r] \ar[r]\ar[d] \ar[d] \ar[d] &\mathrm{Kan}_\mathrm{Left}\mathrm{deRham}^\text{degreenumber}_{B/A,\mathrm{topo,Hodge}} \ar[d] \ar[d] \ar[d]\\
B \ar[r] \ar[r] \ar[r] &\mathrm{Kan}_\mathrm{Left}\mathrm{deRham}^\text{degreenumber}_{B/A,\mathrm{topo,Hodge}}.
}
\]	
\end{proposition}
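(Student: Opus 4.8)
The plan is to deduce this from the previous proposition by transporting its commutative square along the chain of functorial operations that produce the topological Hodge-complete de Rham complex out of the algebraic one. Recall that $\mathrm{Kan}_\mathrm{Left}\mathrm{deRham}^\text{degreenumber}_{B/A,\mathrm{topo,Hodge}}$ is obtained from $\mathrm{Kan}_\mathrm{Left}\mathrm{deRham}^\text{degreenumber}_{B/A,\mathrm{alg}}$ by: (i) derived $I$-completion, i.e. applying $R\varprojlim_k\mathrm{Kos}_I(-)$ levelwise; (ii) the filtered colimit over the system of pairs of rings of definition $A_0\rightarrow B_0$, followed by inversion of $t$; and (iii) Hodge-filtered completion inside the filtered $\infty$-category (together, in the deformed version, with the base change $-\,{\otimes}^\mathbb{L}_{\mathbb{Z}_p}Z$). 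Each of (i)--(iii) is either a functor between (filtered) stable $\infty$-categories or a (co)limit thereof, hence each carries a commutative square to a commutative square. Applying them in turn to the algebraic square of the previous proposition yields the asserted one.

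Carrying this out: first I would invoke the previous proposition, whose content — via \cite[Lemma 3.3]{12GL} — is ultimately the classical de Rham functoriality on polynomial $A_0$-algebras, left Kan extended to all simplicial commutative algebras. Next I would apply the derived $I$-completion functor; since it is an exact functor of stable $\infty$-categories (a limit over the Koszul tower), the completed square commutes. Then I would pass to the filtered colimit over the pairs $(A_0,B_0)$ and invert $t$; colimits of commutative diagrams are commutative, and inverting $t$ is symmetric monoidal. Finally I would apply Hodge-filtered completion, a localization of the filtered $\infty$-category, which is again functorial; the resulting square is the one claimed, now decorated with $\mathrm{topo,Hodge}$. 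The $Z$-deformed variant is handled identically after the extra base change $-\,{\otimes}^\mathbb{L}_{\mathbb{Z}_p}Z$, which is colimit-preserving.

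The main obstacle is the compatibility of derived $I$-completion with the left Kan extension and with the filtered colimit over rings of definition: derived completion does not commute with arbitrary colimits, so one must check that the particular colimit used here — over the filtered poset of pairs $(A_0,B_0)$, followed by inverting $t$ — causes no trouble. The cleanest way around this is to observe that the relevant transition maps become isomorphisms after inverting $t$, so that the colimit is essentially constant on the level of the generic fibre, or, alternatively, to work throughout in the $\infty$-category of derived $I$-complete objects, where the ambient functors are exact and the bookkeeping trivializes. Once this point is settled, the proposition reduces to the formal assertion that a composite of functors and (co)limits preserves commutative diagrams.
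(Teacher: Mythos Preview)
Your proposal is correct and follows the same strategy as the paper: deduce the statement from the previous (algebraic) proposition by pushing the commutative square through the functorial operations that build the topological Hodge-complete de Rham complex. The paper's proof is the single sentence ``Just take the corresponding derived $I$-completion of the diagram in the previous proposition,'' whereas you spell out all three steps (derived $I$-completion, colimit over rings of definition with inversion of $t$, Hodge completion) and even flag the potential interaction between completion and the filtered colimit; this is more than the paper does, but it is the same argument at heart.
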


\begin{proof}
Just take the corresponding derived $I$-completion of the diagram in the previous proposition.	
\end{proof}

%\indent Now we discuss how to put the corresponding objects over the corresponding pro-\'etale site after \cite{12Sch1}, \cite{12Sch2}, \cite{12Ked1} and \cite{12GL}. The first consideration is we first put $A$ in to a family. Now let $A$ be smooth pseudorigid affinoid, we use the notation $X$ to denote a general smooth pseudorigid space. 

\indent The following conjectures are literally inspired by the rigid analytic situation in \cite[Theorem 1.2]{12GL}. We use the notation $*$ to denote the ring $\mathcal{O}_K[[t]]\left<\pi^a/t^b\right>[1/t]$. We assume the spaces in the following two conjectures are smooth pseudo-rigid analytic spaces over $\mathcal{O}_K[[t]]\left<\pi^a/t^b\right>[1/t]$.

\newpage

\begin{landscape}
\begin{conjecture}
The construction in this section could be carried over the pro-\'etale sites \footnote{For more on the foundations here for the pro-\'etale sites see \cite[Theorem 2.9.9, Remark 2.9.10]{12Ked1}}. Consider the corresponding projective map $g:X_{\text{pro-\'etale}}\rightarrow X_{\text{\'et}}$ and the the projective map $f:X_{\text{pro-\'etale}}\rightarrow X$. Then we have the following two strictly exact long exact sequences 
\footnote{We should have the corresponding naturality taking into the following form:
\[\tiny
\xymatrix@C+0.4pc@R+0pc{
0\ar[r]\ar[r]\ar[r] \ar[r] &\mathrm{deRham}^\text{degreenumber}_{*[\widehat{\mathcal{O}}]^\text{degreenumber}/*,\mathrm{topo},Z}\ar[d]\ar[d]\ar[d] \ar[d] \ar[r]^\partial\ar[r]\ar[r] \ar[r] &\mathrm{deRham}^\text{degreenumber}_{X[\widehat{\mathcal{O}}]^\text{degreenumber}/X,\mathrm{topo},Z} \ar[d]\ar[d]\ar[d] \ar[d]\ar[r]^\partial\ar[r]\ar[r] \ar[r] &\mathrm{deRham}^\text{degreenumber}_{X[\widehat{\mathcal{O}}]^\text{degreenumber}/X,\mathrm{topo},Z}{\otimes}f^{-1} \mathrm{deRham}^1_{X,\mathrm{topo}} \ar[d]\ar[d]\ar[d] \ar[d]\ar[r]^\partial\ar[r]\ar[r] \ar[r]&...\\
0\ar[r]\ar[r]\ar[r] \ar[r] &\mathrm{deRham}^\text{degreenumber}_{*[\widehat{\mathcal{O}}]^\text{degreenumber}/*,\mathrm{topo},Z} \ar[r]^\partial\ar[r]\ar[r] \ar[r] &\mathrm{deRham}^\text{degreenumber}_{X[\widehat{\mathcal{O}}]^\text{degreenumber}/X_{\text{\'et}},\mathrm{topo},Z} \ar[r]^\partial\ar[r]\ar[r] \ar[r] &\mathrm{deRham}^\text{degreenumber}_{X[\widehat{\mathcal{O}}]^\text{degreenumber}/X,\mathrm{topo},Z}{\otimes}f^{-1} \mathrm{deRham}^1_{X_{\text{\'et}},\mathrm{topo}} \ar[r]^\partial\ar[r]\ar[r] \ar[r] &....
}
\]
}:
\[\tiny
\xymatrix@C+0.4pc@R+0pc{
0\ar[r]\ar[r]\ar[r] \ar[r] &\mathrm{deRham}^\text{degreenumber}_{*[\widehat{\mathcal{O}}]^\text{degreenumber}/*,\mathrm{topo},Z} \ar[r]^\partial\ar[r]\ar[r] \ar[r] &\mathrm{deRham}^\text{degreenumber}_{X[\widehat{\mathcal{O}}]^\text{degreenumber}/X,\mathrm{topo},Z} \ar[r]^\partial\ar[r]\ar[r] \ar[r] &\mathrm{deRham}^\text{degreenumber}_{X[\widehat{\mathcal{O}}]^\text{degreenumber}/X,\mathrm{topo},Z}{\otimes}f^{-1} \mathrm{deRham}^1_{X,\mathrm{topo}} \ar[r]^\partial\ar[r]\ar[r] \ar[r]&...,
}
\]
and
\[\tiny
\xymatrix@C+0.4pc@R+0pc{
0\ar[r]\ar[r]\ar[r] \ar[r] &\mathrm{deRham}^\text{degreenumber}_{*[\widehat{\mathcal{O}}]^\text{degreenumber}/*,\mathrm{topo},Z} \ar[r]^\partial\ar[r]\ar[r] \ar[r] &\mathrm{deRham}^\text{degreenumber}_{X[\widehat{\mathcal{O}}]^\text{degreenumber}/X_{\text{\'et}},\mathrm{topo},Z} \ar[r]^\partial\ar[r]\ar[r] \ar[r] &\mathrm{deRham}^\text{degreenumber}_{X[\widehat{\mathcal{O}}]^\text{degreenumber}/X,\mathrm{topo},Z}{\otimes}f^{-1} \mathrm{deRham}^1_{X_{\text{\'et}},\mathrm{topo}} \ar[r]^\partial\ar[r]\ar[r] \ar[r]&....
}
\]
\end{conjecture}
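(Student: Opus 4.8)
The plan is to prove the conjecture by transporting the argument of Guo--Li \cite[Theorem 1.2]{12GL} to the pseudorigid setting: reduce the global statement to a local Poincar\'e Lemma over a perfectoid pro-\'etale chart, compute that local situation explicitly, and then sheafify. First I would reduce to the affinoid case. Since $X$ is a smooth pseudo-rigid analytic space over $* = \mathcal{O}_K[[t]]\langle \pi^a/t^b\rangle[1/t]$, by \cite[Theorem 2.9.9, Remark 2.9.10]{12Ked1} the pro-\'etale site $X_{\text{pro-\'etale}}$ has a basis of affinoid perfectoid subdomains; on such a chart $\mathrm{Spa}(S,S^+)$ the completed structure sheaf $\widehat{\mathcal{O}}$ restricts to $S$, and each of the two claimed sequences becomes an explicit complex built from $\mathrm{deRham}^\bullet_{S/*,\mathrm{topo},Z}$, from $\mathrm{deRham}^\bullet_{S/\mathcal{O}(U),\mathrm{topo},Z}$ for $U\subseteq X$ an affinoid over which the chart is pro-\'etale, and from $f^{-1}\mathrm{deRham}^1_{X,\mathrm{topo}}$. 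Since strict exactness of a complex of ind-Banach sheaves can be tested on a basis, it then suffices to prove strict exactness of these complexes pointwise; the connecting maps $\partial$ and the naturality squares of the footnote are produced by functoriality of $\mathrm{Kan}_\mathrm{Left}\mathrm{deRham}$ in the structure map, i.e.\ the two Propositions above applied to the chain $* \to \mathcal{O}(X) \to \mathcal{O}(X)_{\text{\'et}} \to \widehat{\mathcal{O}}$.

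The core is the local computation. A smooth pseudo-rigid space is, \'etale-locally in the sense of \cite[Definition 3.1]{12Bel1}, standard-\'etale over a relative torus $* \langle X_1^{\pm},\dots,X_d^{\pm}\rangle$, and the natural perfectoid pro-\'etale cover adjoins $p$-power roots of the coordinates. For this model the Example above --- the pseudorigid analog of \cite[Example 4.7]{12GL} --- identifies the Hodge-complete topological derived de Rham complex with the power series ring $S[[Z_1,\dots,Z_d]]$, $Z_i = X_i - Y_i$, over which the de Rham differential makes $\mathrm{deRham}^\bullet_{S/*,\mathrm{topo},Z}$ a completed Koszul complex on the $Z_i$; this is strictly exact in positive cohomological degree with $H^0 = S$, yielding the first long exact sequence, and base-changing along the \'etale map $\mathcal{O}(U)\to S$ and tensoring with $f^{-1}\mathrm{deRham}^1_X$ (locally free of rank $d$ by smoothness of $X/*$) yields the second. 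The $Z$-deformation is harmless under the flatness hypothesis on $Z$: $-\,\widehat{\otimes}^{\mathbb{L}}_{\mathbb{Z}_p} Z$ preserves strict exactness of complexes of flat modules, and the passage from the rings of definition $A_0, B_0$ to $A, B$ via $\mathrm{Colim}_{A_0\to B_0}(-)[1/t]$ does too, filtered colimits and the localization $[1/t]$ both being exact.

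To globalize, one assembles these pointwise statements into exactness of complexes of sheaves on $X_{\text{pro-\'etale}}$: $Rg_*$ and $Rf_*$ of the relevant sheaves are computed by \v{C}ech nerves of perfectoid covers (again by \cite[Theorem 2.9.9]{12Ked1}), and strict exactness on the perfectoid basis propagates through the \v{C}ech complex because the local models are power series rings with an orthonormalizable monomial basis over the base, so every differential is strict.

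The hardest part will be twofold. First, one needs the \'etale-local torus structure theorem for smooth pseudo-rigid spaces together with compatibility of the associated perfectoid pro-\'etale cover with derived $I$-completion: since one completes along an ideal $I\subset \mathcal{O}_K[[t]]\langle \pi^a/t^b\rangle$ while simultaneously inverting $t$ and tensoring with $Z$, one must check that derived $I$-completion commutes with the colimit over rings of definition and with $\widehat{\otimes}_{\mathbb{Z}_p} Z$ on the nose rather than up to a controlled error --- exactly the subtlety flagged in the footnotes of the preceding Definition. Second, upgrading exactness to \emph{strict} exactness requires tight control of the bornologies on $\mathrm{Kan}_\mathrm{Left}\mathrm{deRham}^\bullet_{X[\widehat{\mathcal{O}}]^\bullet/X,\mathrm{topo},Z}$ after $[1/t]$-localization and the colimit over rings of definition; producing an explicit orthonormalizable basis over the base, in the spirit of \cite[Section 4]{12GL}, is the real technical heart and the reason the statement is posed as a conjecture rather than a proposition.
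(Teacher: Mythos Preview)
The paper does not prove this statement: it is explicitly labeled a \emph{Conjecture} and no argument is offered beyond the remark that it is ``literally inspired by the rigid analytic situation in \cite[Theorem 1.2]{12GL}.'' So there is no proof in the paper to compare your proposal against.

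Your outline is exactly the strategy one would expect --- transport the Guo--Li Poincar\'e Lemma to the pseudorigid setting via the explicit toric computation (the Example you cite), check strictness on a perfectoid basis using \cite[Theorem 2.9.9]{12Ked1}, and globalize --- and you have correctly located the two genuine obstructions that keep this a conjecture rather than a proposition: (i) the interaction of derived $I$-completion with the colimit over rings of definition, the localization $[1/t]$, and the $Z$-tensor, and (ii) the upgrade from algebraic exactness to strict exactness after these operations. Neither of these is handled in the paper, and your sketch does not close them either; in particular, the claim that ``$-\widehat{\otimes}^{\mathbb{L}}_{\mathbb{Z}_p} Z$ preserves strict exactness of complexes of flat modules'' and that the filtered colimit over rings of definition preserves strictness are precisely the points that would require a real argument in the pseudorigid bornological setting, and the paper's own footnotes flag them as unresolved. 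Your proposal is therefore a reasonable roadmap, not a proof, and the paper agrees with you on that assessment.
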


\begin{conjecture}
Consider the corresponding projective map $g:X_{\text{pro-\'etale}}\rightarrow X_{\text{\'et}}$ and the the projective map $f:X_{\text{pro-\'etale}}\rightarrow X$. Let $M$ be a corresponding $Z$-projective differential crystal spectrum. Then we have the following two strictly exact long exact sequences:
\[\tiny
\xymatrix@C+0.4pc@R+0pc{
M{\otimes}^\mathbb{L}(0\ar[r]\ar[r]\ar[r] \ar[r] &\mathrm{deRham}^\text{degreenumber}_{\mathcal{O}_k[\widehat{\mathcal{O}}]^\text{degreenumber}/\mathcal{O}_k,\mathrm{topo},Z} \ar[r]^\partial\ar[r]\ar[r] \ar[r] &\mathrm{deRham}^\text{degreenumber}_{X[\widehat{\mathcal{O}}]^\text{degreenumber}/X,\mathrm{topo},Z} \ar[r]^\partial\ar[r]\ar[r] \ar[r] &\mathrm{deRham}^\text{degreenumber}_{X[\widehat{\mathcal{O}}]^\text{degreenumber}/X,\mathrm{topo},Z}{\otimes}f^{-1} \mathrm{deRham}^1_{X,\mathrm{topo}} \ar[r]^\partial\ar[r]\ar[r] \ar[r] &...),
}
\]
and
\[\tiny
\xymatrix@C+0.4pc@R+0pc{
M{\otimes}^\mathbb{L}(0\ar[r]\ar[r]\ar[r] \ar[r] &\mathrm{deRham}^\text{degreenumber}_{\mathcal{O}_k[\widehat{\mathcal{O}}]^\text{degreenumber}/\mathcal{O}_k,\mathrm{topo},Z} \ar[r]^\partial\ar[r]\ar[r] \ar[r] &\mathrm{deRham}^\text{degreenumber}_{X[\widehat{\mathcal{O}}]^\text{degreenumber}/X_{\text{\'et}},\mathrm{topo},Z} \ar[r]^\partial\ar[r]\ar[r] \ar[r] &\mathrm{deRham}^\text{degreenumber}_{X[\widehat{\mathcal{O}}]^\text{degreenumber}/X,\mathrm{topo},Z}{\otimes}f^{-1} \mathrm{deRham}^1_{X_{\text{\'et}},\mathrm{topo}} \ar[r]^\partial\ar[r]\ar[r] \ar[r]&...).
}
\]
\end{conjecture}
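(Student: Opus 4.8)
The plan is to deduce this from the preceding conjecture — the coefficient-free Poincar\'e lemma for smooth pseudorigid analytic spaces over $\mathcal{O}_K[[t]]\left<\pi^a/t^b\right>[1/t]$ — in exactly the way the flatness proposition was deduced from its Poincar\'e lemma in the rigid setting; the one extra input is the flatness of a $Z$-projective differential crystal spectrum. First I would grant the two strictly exact sequences of the preceding conjecture. These live in the quasi-abelian $\infty$-category of $\mathbb{E}_\infty$ ind-Banach modules over the topological filtered $E_\infty$-ring $\mathrm{deRham}^\text{degreenumber}_{X[\widehat{\mathcal{O}}]^\text{degreenumber}/X,\mathrm{topo},Z}$ (and over its \'etale variant $\mathrm{deRham}^\text{degreenumber}_{X[\widehat{\mathcal{O}}]^\text{degreenumber}/X_{\text{\'et}},\mathrm{topo},Z}$), and by hypothesis they are assembled from strictly exact short exact pieces.

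Next I would record that a $Z$-projective differential crystal spectrum $M$ is, by definition, a finite projective module spectrum over the relevant topological filtered $E_\infty$-ring, hence a retract of a finite free module, and in particular flat for the completed derived tensor product $\widehat{\otimes}^\mathbb{L}$. Using the homological algebra of these quasi-abelian and derived categories after \cite{12BBBK}, \cite{12BK} and \cite{KKM}, a flat object preserves strictness: tensoring a strictly exact short exact sequence with $M$ again yields a strictly exact short exact sequence, and there are no higher contributions to the derived tensor product. Splicing these pieces back together and passing to the filtered colimit over the maps $A_0 \to B_0$ and to the localization at $t$ — both of which are exact and commute with $M \otimes^\mathbb{L} -$ — produces the claimed long exact sequences with $M$-coefficients.

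Then I would verify compatibility with the two completions in play, namely the derived $I$-completion $R\varprojlim_k \mathrm{Kos}_I(-)$ and the Hodge completion. This is where care is needed: derived $I$-completion does not commute with arbitrary colimits or tensor products, but it does for finite projective, and more generally almost perfect, module spectra, by the finiteness behaviour of derived completion recalled in \cite[Chapter 7.3]{12Lu2}; likewise the Hodge filtration on $\mathrm{deRham}^\text{degreenumber}_{-/-,\mathrm{topo},Z}$ is complete and $M$ is filtered-projective, so $M \otimes^\mathbb{L} -$ preserves Hodge-completeness. Assembling these observations gives the two strictly exact sequences in the statement, with $\mathcal{O}_k$ in place of $k$ as written.

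The hard part will be the functional-analytic bookkeeping rather than any new geometric input: one must simultaneously control strictness in the ind-Banach category, the behaviour of $\widehat{\otimes}^\mathbb{L}$, the derived $I$-completion, the Hodge completion, the colimit over $A_0 \to B_0$ and the localization at $t$, and check that none of these destroys the strict exactness granted by the preceding conjecture. In particular the boundary terms, where inverting $t$ meets the Hodge completion and the perfectoid chart $\widehat{\mathcal{O}}$, are where a delicate strictness and convergence argument is required; everything else should be a formal consequence of the flatness of $M$ together with the preceding conjecture.
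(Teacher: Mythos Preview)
The statement you are addressing is labeled a \emph{conjecture} in the paper and carries no proof there; the paper only records it as an expectation inspired by the rigid analytic situation. Your conditional reduction to the preceding conjecture via flatness of a $Z$-projective crystal spectrum is exactly the mechanism the paper uses in the proved rigid analytic analogue, where the entire argument reads ``This is actually a direct consequence of the previous proposition due to the corresponding flatness.'' So your approach is the intended one, and you have supplied considerably more detail on the functional-analytic bookkeeping (derived $I$-completion, Hodge completion, colimits, localization) than the paper offers anywhere; there is nothing further to compare.
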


\end{landscape}

%%%!!!

\newpage

\subsection{Derived $(p,I)$-Complete Derived de Rham complex of Derived Adic Rings}

%\subsection{The Definitions}

\indent The corresponding construction of the derived de Rham complex could be defined for general derived spaces. Along our discussion in the situations of rigid analytic spaces and the corresponding pseudorigid spaces we now focus on the corresponding derived adic rings. 

\begin{setting}
We now fix a bounded morphism of simplicial adic rings $A\rightarrow B$ over $A^*$ where $A^*$ contains a corresponding ring of definition $A^*_0$ which is derived complete with respect to the $(p,I)$-topology and we assume that $A$ is adic and we assume that $(A^*_0,I)$ is a prism in \cite{12BS} namely we at least require that the corresponding $\delta$-structure on the corresponding ring will induce the map $\varphi(.):=.^p+p\delta(.)$ such that we have the situation where $p\in (I,\varphi(I))$. For $A$ or $B$ respectively we assume this contains a subring $A_0$ or $B_0$ (over $A_0^*$) respectively such that we have $A_0$ or $B_0$ respectively is derived complete with respect to the corresponding derived $(p,I)$-topology and we assume that $B=B_0[1/f,f\in I]$ (same for $A$). All the adic rings are assumed to be open mapping. We use the notation $d$ to denote a corresponding primitive element as in \cite[Section 2.3]{12BS} for $A^*$. We are going to assume that $p$ is a topologically nilpotent element.
\end{setting}

\indent As in the corresponding construction in the rigid and pseudocoherent situation in the previous section following \cite[Chapitre 3]{12An1}, \cite{12An2}, \cite[Chapter 2, Chapter 8]{12B1}, \cite[Chapter 1]{12Bei}, \cite[Chapter 5]{12G1}, \cite[Chapter 3, Chapter 4]{12GL}, \cite[Chapitre II, Chapitre III]{12Ill1}, \cite[Chapitre VIII]{12Ill2}, \cite[Section 4]{12Qui}, we have the following: 

\begin{definition}
We start from the corresponding construction of the algebraic $p$-adic derived de Rham complex for a map $A\rightarrow B$. Fix a pair of ring of definitions $A_0,B_0$ in $A,B$ respectively. Then this is the corresponding left Kan extended derived differential complex attached to $B_0/A_0$ which is now denoted by $\mathrm{Kan}_\mathrm{Left}\mathrm{deRham}^\text{degreenumber}_{{B_0}/A_0,\mathrm{alg}}$\footnote{Namely over $A_0$ we take the suitable corresponding left Kan extension, then apply to $B_0$.} from the corresponding $(p,I)$-complete commutative rings. The corresponding cotangent complex associated is defined to be just:
\begin{align}
\mathbb{L}_{B_0/A_0,\mathrm{alg}}:=	\mathrm{deRham}^1_{P^\text{degreenumber}_{B_0}/A_0,\mathrm{alg}}\otimes_{P^\text{degreenumber}_{B_0}} B_0.
\end{align}
The corresponding algebraic Andr\'e-Quillen homologies are defined to be:
\begin{align}
H_{\text{degreenumber},{\mathrm{AQ}},\mathrm{alg}}:=\pi_\text{degreenumber} (\mathbb{L}_{B_0/A_0,\mathrm{alg}}). 	
\end{align}
The corresponding topological Andr\'e-Quillen complex is actually the complete version of the corresponding algebraic ones above by considering the corresponding derived $(p,I)$-completion over the simplicial module structure:
\begin{align}
\mathbb{L}_{B_0/A_0,\mathrm{topo}}:=R\varprojlim \mathrm{Kos}_{(p,I)}	\left((\mathrm{deRham}^1_{P^\text{degreenumber}_{B_0}/A_0,\mathrm{alg}}\otimes_{P^\text{degreenumber}_{B_0}} B_0)\right).
\end{align}
Then we consider the corresponding derived algebraic de Rham complex which is just defined to be:
\begin{align}
\mathrm{Kan}_\mathrm{Left}\mathrm{deRham}^\text{degreenumber}_{-/A_0,\mathrm{alg}}(B_0),\mathrm{Fil}^*_{\mathrm{Kan}_\mathrm{Left}\mathrm{deRham}^\text{degreenumber}_{B_0/A_0,\mathrm{alg}}}.	
\end{align}
We then take the corresponding derived $(p,I)$-completion and we denote that by:
\begin{align}
\mathrm{Kan}_\mathrm{Left}\mathrm{deRham}^\text{degreenumber}_{B_0/A_0,\mathrm{topo}}:=R\varprojlim \mathrm{Kos}_{(p,I)}\left(\mathrm{Kan}_\mathrm{Left}\mathrm{deRham}^\text{degreenumber}_{B_0/A_0,\mathrm{alg}},\mathrm{Fil}^*_{\mathrm{Kan}_\mathrm{Left}\mathrm{deRham}^\text{degreenumber}_{B_0/A_0,\mathrm{alg}}}\right),\\
\mathrm{Kan}_\mathrm{Left}\mathrm{Fil}^*_{\mathrm{Kan}_\mathrm{Left}\mathrm{deRham}^\text{degreenumber}_{B_0/A_0,\mathrm{topo}}}:=R\varprojlim \mathrm{Kos}_{(p,I)}\left(\mathrm{Fil}^*_{\mathrm{Kan}_\mathrm{Left}\mathrm{deRham}^\text{degreenumber}_{B_0/A_0,\mathrm{alg}}}\right).	
\end{align}
Then in the situation that all the rings are classical adic rings we consider the following construction for the map $A\rightarrow B$ by putting:
\begin{align}
\mathbb{L}_{B/A,\mathrm{topo}}:= \mathrm{Colim}_{A_0\rightarrow B_0}\mathbb{L}_{B_0/A_0,\mathrm{topo}}[1/(d)],\\
H_{\text{degreenumber},{\mathrm{AQ}},\mathrm{topo}}:=\pi_\text{degreenumber} (\mathbb{L}_{B/A,\mathrm{topo}}),	\\
\mathrm{Kan}_\mathrm{Left}\mathrm{deRham}^\text{degreenumber}_{B/A,\mathrm{topo}}:=\mathrm{Colim}_{A_0\rightarrow B_0}\mathrm{Kan}_\mathrm{Left}\mathrm{deRham}^\text{degreenumber}_{B_0/A_0,\mathrm{topo}}[1/(d)],\\
\mathrm{Kan}_\mathrm{Left}\mathrm{Fil}^*_{\mathrm{Kan}_\mathrm{Left}\mathrm{deRham}^\text{degreenumber}_{B/A,\mathrm{topo}}}:=\mathrm{Colim}_{A_0\rightarrow B_0}\mathrm{Fil}^*_{\mathrm{Kan}_\mathrm{Left}\mathrm{deRham}^\text{degreenumber}_{B_0/A_0,\mathrm{topo}}}[1/(d)].
\end{align}
Then we need to take the corresponding Hodge-Filtered completion by using the corresponding filtration associated as above to achieve the corresponding Hodge-complete objects in the corresponding filtered $\infty$-categories:
\begin{align}
\mathrm{Kan}_\mathrm{Left}{\mathrm{deRham}}^\text{degreenumber}_{B/A,\mathrm{topo,Hodge}},\mathrm{Fil}^*_{\mathrm{Kan}_\mathrm{Left}{\mathrm{deRham}}^\text{degreenumber}_{B/A,\mathrm{topo,Hodge}}}.	
\end{align}
\end{definition}

\begin{definition}
We define the corresponding finite projective filtered crystals to be the corresponding finite projective module spectra over the topological filtered $E_\infty$-ring $\mathrm{Kan}_\mathrm{Left}\mathrm{deRham}^\text{degreenumber}_{B/A,\mathrm{topo}}$ with the corresponding induced filtrations.	
\end{definition}

\begin{definition}
We define the corresponding almost perfect \footnote{This is the corresponding derived version of pseudocoherence from \cite{12Lu1}, \cite{12Lu2}.} filtered crystals to be the corresponding almost perfect module spectra over the topological filtered $E_\infty$-ring $\mathrm{Kan}_\mathrm{Left}\mathrm{deRham}^\text{degreenumber}_{B/A,\mathrm{topo}}$ with the corresponding induced filtrations.	
\end{definition}

\indent We now consider the corresponding large coefficient local systems over pseudorigid spaces where $Z$ now is a simplicial topological algebra over $\mathbb{Z}_p$\footnote{It is better to assume that it is basically derived completely flat.}.

%%%!!!

\begin{definition}
We define the following $Z$ deformed version of the corresponding complete version of the corresponding Andr\'e-Quillen homology and the corresponding complete version of the corresponding derived de Rham complex. We start from the corresponding construction of the algebraic $p$-adic derived de Rham complex for a map $A\rightarrow B$. Fix a pair of ring of definitions $A_0,B_0$ in $A,B$ respectively. Then this is the corresponding left Kan extended derived differential complex attached to $B_0$ from the corresponding derived $(p,I)$-complete commutative rings which is now denoted by $\mathrm{Kan}_\mathrm{Left}\mathrm{deRham}^\text{degreenumber}_{B_0/A_0,\mathrm{alg}}$. The corresponding cotangent complex associated is defined to be just:
\begin{align}
\mathbb{L}_{B_0/A_0,\mathrm{alg}}:=	\mathrm{deRham}^1_{P^\text{degreenumber}_{B_0}/A_0,\mathrm{alg}}\otimes_{P^\text{degreenumber}_{B_0}} B_0.
\end{align}
The corresponding algebraic Andr\'e-Quillen homologies are defined to be:
\begin{align}
H_{\text{degreenumber},{\mathrm{AQ}},\mathrm{alg}}:=\pi_\text{degreenumber} (\mathbb{L}_{B_0/A_0,\mathrm{alg}}). 	
\end{align}
The corresponding topological Andr\'e-Quillen complex is actually the complete version of the corresponding algebraic ones above by considering the corresponding derived $(p,I)$-completion over the simplicial module structure:
\begin{align}
\mathbb{L}_{B_0/A_0,\mathrm{topo}}:=R\varprojlim \mathrm{Kos}_{(p,I)}	\left((\mathrm{deRham}^1_{P^\text{degreenumber}_{B_0}/A_0,\mathrm{alg}}\otimes_{P^\text{degreenumber}_{B_0}} B_0)\right).
\end{align}
Taking the product with $Z$ we have the corresponding integral version of the topological Andr\'e-Quillen complex:
\begin{align}
\mathbb{L}_{B_0/A_0,\mathrm{topo},Z}:=\mathbb{L}_{B_0/A_0,\mathrm{topo}}{\otimes}^\mathbb{L}_{\mathbb{Z}_p}Z
\end{align}
Then we consider the corresponding derived algebraic de Rham complex which is just defined to be:
\begin{align}
\mathrm{Kan}_\mathrm{Left}\mathrm{deRham}^\text{degreenumber}_{B_0/A_0,\mathrm{alg}},\mathrm{Fil}^*_{\mathrm{Kan}_\mathrm{Left}\mathrm{deRham}^\text{degreenumber}_{B_0/A_0,\mathrm{alg}}}.	
\end{align}
We then take the corresponding derived $(p,I)$-completion and we denote that by:
\begin{align}
\mathrm{Kan}_\mathrm{Left}\mathrm{deRham}^\text{degreenumber}_{B_0/A_0,\mathrm{topo}}:=R\varprojlim_k \mathrm{Kos}_{(p,I)}\left(\mathrm{Kan}_\mathrm{Left}\mathrm{deRham}^\text{degreenumber}_{B_0/A_0,\mathrm{alg}},\mathrm{Fil}^*_{\mathrm{deRham}^\text{degreenumber}_{B_0/A_0,\mathrm{alg}}}\right),\\
\mathrm{Fil}^*_{\mathrm{Kan}_\mathrm{Left}\mathrm{deRham}^\text{degreenumber}_{B_0/A_0,\mathrm{topo}}}:=R\varprojlim_k\left(\mathrm{Fil}^*_{\mathrm{Kan}_\mathrm{Left}\mathrm{deRham}^\text{degreenumber}_{B_0/A_0,\mathrm{alg}}}\right).	
\end{align}
Before considering the corresponding integral version we just consider the corresponding product of these $\mathbb{E}_\infty$-rings with $Z$ to get:
\begin{align}
\mathrm{Kan}_\mathrm{Left}\mathrm{deRham}^\text{degreenumber}_{B_0/A_0,\mathrm{topo},Z}:=\mathrm{Kan}_\mathrm{Left}\mathrm{deRham}^\text{degreenumber}_{B_0/A_0,\mathrm{topo}}{\otimes}^\mathbb{L}_{\mathbb{Z}_p}Z,\\
\mathrm{Fil}^*_{\mathrm{Kan}_\mathrm{Left}\mathrm{deRham}^\text{degreenumber}_{B_0/A_0,\mathrm{topo}},Z}:=\mathrm{Fil}^*_{\mathrm{Kan}_\mathrm{Left}\mathrm{deRham}^\text{degreenumber}_{B_0/A_0,\mathrm{topo}}}{\otimes}^\mathbb{L}_{\mathbb{Z}_p}Z.	
\end{align}
When we have that the corresponding ring $Z$ is also derived $(p,I)$-topologized and commutative, then we can further take the correspding derived $(p,I)$-completion to achieve the corresponding derived completed version:
\begin{align}
\mathrm{Kan}_\mathrm{Left}\mathrm{deRham}^\text{degreenumber}_{B_0/A_0,\mathrm{topo},Z}:=\mathrm{Kan}_\mathrm{Left}\mathrm{deRham}^\text{degreenumber}_{B_0/A_0,\mathrm{topo}}\widehat{\otimes}^\mathbb{L}_{\mathbb{Z}_p}Z,\\
\mathrm{Fil}^*_{\mathrm{Kan}_\mathrm{Left}\mathrm{deRham}^\text{degreenumber}_{B_0/A_0,\mathrm{topo}},Z}:=\mathrm{Fil}^*_{\mathrm{Kan}_\mathrm{Left}\mathrm{deRham}^\text{degreenumber}_{B_0/A_0,\mathrm{topo}}}\widehat{\otimes}^\mathbb{L}_{\mathbb{Z}_p}Z.	
\end{align}
Then in the situation that all the rings are classical adic rings we consider the following construction for the map $A\rightarrow B$ by putting:
\begin{align}
\mathbb{L}_{B/A,\mathrm{topo},Z}:= \mathrm{Colim}_{A_0\rightarrow B_0}\mathbb{L}_{B_0/A_0,\mathrm{topo},Z}[1/(d)],\\
H_{\text{degreenumber},{\mathrm{AQ}},\mathrm{topo},Z}:=\pi_\text{degreenumber} (\mathbb{L}_{B/A,\mathrm{topo},Z}),	\\
\mathrm{Kan}_\mathrm{Left}\mathrm{deRham}^\text{degreenumber}_{B/A,\mathrm{topo},Z}:=\mathrm{Colim}_{A_0\rightarrow B_0}\mathrm{Kan}_\mathrm{Left}\mathrm{deRham}^\text{degreenumber}_{B_0/A_0,\mathrm{topo},Z}[1/(d)],\\
\mathrm{Fil}^*_{\mathrm{Kan}_\mathrm{Left}\mathrm{deRham}^\text{degreenumber}_{B/A,\mathrm{topo}},Z}:=\mathrm{Colim}_{A_0\rightarrow B_0}\mathrm{Fil}^*_{\mathrm{Kan}_\mathrm{Left}\mathrm{deRham}^\text{degreenumber}_{B_0/A_0,\mathrm{topo}},Z}[1/(d)].
\end{align}
Then we need to take the corresponding Hodge-Filtered completion by using the corresponding filtration associated as above to achieve the corresponding Hodge-complete objects in the corresponding filtered $\infty$-categories:
\begin{align}
\mathrm{Kan}_\mathrm{Left}{\mathrm{deRham}}^\text{degreenumber}_{B/A,\mathrm{topo,Hodge},Z},\mathrm{Fil}^*_{\mathrm{Kan}_\mathrm{Left}{\mathrm{deRham}}^\text{degreenumber}_{B/A,\mathrm{topo,Hodge}},Z}.	
\end{align} 	
\end{definition}

%%%!!!

\newpage

\section{Topological Logarithmic Derived De Rham Complexes}

\subsection{Logarithmic Setting for Rigid Analytic Spaces}

%\subsection{The Logarithmic topological Derived de Rham Complex}

\indent In this section we are now going to follow Gabber's construction as in \cite[Chapter 8]{12O} and the extension by \cite[Chapter 5, Chapter 6, Chapter 7]{12B1} to consider the corresponding construction of the topological complete logarithmic cotangent complexes and the corresponding topological logarithmic derived de Rham complex following the corresponding construction for rigid spaces. Certainly the corresponding construction will also follow closely the corresponding \cite[Chapitre 3]{12An1}, \cite{12An2}, \cite[Chapter 2, Chapter 8]{12B1}, \cite[Chapter 1]{12Bei}, \cite[Chapter 5]{12G1}, \cite[Chapter 3, Chapter 4]{12GL}, \cite[Chapitre II, Chapitre III]{12Ill1}, \cite[Chapitre VIII]{12Ill2}, \cite[Section 4]{12Qui} as if we do not have the corresponding log structures, in particular the geometry context underlying is just as in \cite[Chapter 3, Chapter 4]{12GL}. For the geometric foundation of log adic spaces, see \cite{12DLLZ1}. We start from the corresponding construction of the algebraic $p$-adic logarithmic derived de Rham complex for a map $(A,M)\rightarrow (B,N)$ of $p$-complete rings carrying the corresponding log structures, here we use the corresponding notation $(*,?),*=A,B$ to denote the corresponding admissible log rings where $?$ represents the corresponding monoids in the consideration. This is the corresponding derived differential complex attached to the canonical resolution (which we will denote it by the same notation as in the non logarithmic setting) of $(B,N)$:
\begin{align}
(A,M)[(B,N)]^\text{degreenumber},	
\end{align}
which is now denoted by $\mathrm{Kan}_\mathrm{Left}\mathrm{deRham}^\text{degreenumber}_{(B,N)/(A,M),\mathrm{alg}}:=\mathrm{Kan}_\mathrm{Left}\mathrm{deRham}^\text{degreenumber}_{-/(A,M),\mathrm{alg}}((B,N))$ after taking the left Kan extension as in \cite[Chapter 6]{12B1} and applying to the ring $(B,N)$. The corresponding cotangent complex associated is defined to be just:
\begin{align}
\mathbb{L}_{(B,N)/(A,M),\mathrm{alg}}:=	\mathrm{Kan}_\mathrm{Left}\mathrm{deRham}^1_{(A,M)[(B,N)]^\text{degreenumber}/(A,M),\mathrm{alg}}\otimes_{(A,M)[(B,N)]^\text{degreenumber}} (B,N).
\end{align}
The corresponding algebraic Andr\'e-Quillen homologies are defined to be:
\begin{align}
H_{\text{degreenumber},{\mathrm{AQ}},\mathrm{alg}}:=\pi_\text{degreenumber} (\mathbb{L}_{(B,N)/(A,M),\mathrm{alg}}). 	
\end{align}

\indent The corresponding topological Andr\'e-Quillen complex is actually the complete version of the corresponding algebraic ones above by considering the corresponding certain $p$-completion over the simplicial module structure.

\indent Then we consider the corresponding derived algebraic de Rham complex which is just defined to be:
\begin{align}
\mathrm{Kan}_\mathrm{Left}\mathrm{deRham}^\text{degreenumber}_{(B,N)/(A,M),\mathrm{alg}},\mathrm{Fil}^*_{\mathrm{Kan}_\mathrm{Left}\mathrm{deRham}^1_{(B,N)/(A,M),\mathrm{alg}}}.	
\end{align}
We then take the corresponding Banach completion and we denote that by:
\begin{align}
\mathrm{Kan}_\mathrm{Left}\mathrm{deRham}^\text{degreenumber}_{(B,N)/(A,M),\mathrm{topo}},\mathrm{Fil}^*_{\mathrm{Kan}_\mathrm{Left}\mathrm{deRham}^1_{(B,N)/(A,M),\mathrm{topo}}}.	
\end{align}

\indent Then we need to take the corresponding Hodge-Filtered completion by using the corresponding filtration associated as above:
\begin{align}
\mathrm{Kan}_\mathrm{Left}\widehat{\mathrm{deRham}}^\text{degreenumber}_{(B,N)/(A,M),\mathrm{topo}},\mathrm{Fil}^*_{\mathrm{Kan}_\mathrm{Left}\widehat{\mathrm{deRham}}^1_{(B,N)/(A,M),\mathrm{topo}}}.	
\end{align}

This is basically the corresponding analytic and complete version the corresponding algebraic log de Rham complex. Furthermore we allow large coefficients with rigid affinoid algebra $Z$ over $\mathbb{Q}_p$. Therefore we take the corresponding completed tensor product in the following.

\begin{definition}
We define the following $Z$ deformed version of the corresponding complete version of the corresponding logarithmic Andr\'e-Quillen homology and the corresponding complete version of the corresponding logarithmic derived de Rham complex. We start from the corresponding construction of the algebraic $p$-adic logarithmic derived de Rham complex for a map $(A,M)\rightarrow (B,N)$. Fix a pair of ring of definitions $(A_0,M_0),(B_0,N_0)$ in $(A,M),(B,N)$ respectively. Then this is the corresponding derived differential complex attached to the canonical resolution of $(B_0,N_0)$:
\begin{align}
(A_0,M_0)[(B_0,N_0)]^\text{degreenumber},	
\end{align}
which is now denoted by $\mathrm{Kan}_\mathrm{Left}\mathrm{deRham}^\text{degreenumber}_{(A_0,M_0)[(B_0,N_0)]^\text{degreenumber}/(A_0,M_0),\mathrm{alg}}$ after taking the corresponding left Kan extension as in \cite[Chapter 6]{12B1}. The corresponding cotangent complex associated is defined to be just:
\begin{align}
\mathbb{L}_{B_0/(A_0,M_0),\mathrm{alg}}:=	\mathrm{Kan}_\mathrm{Left}\mathrm{deRham}^1_{(A_0,M_0)[B_0,N_0]^\text{degreenumber}/(A_0,M_0),\mathrm{alg}}\otimes_{(A_0,M_0)[B_0,N_0]^\text{degreenumber}} (B_0,N_0).
\end{align}
The corresponding algebraic logarithmic Andr\'e-Quillen homologies are defined to be:
\begin{align}
H_{\text{degreenumber},{\mathrm{AQ}},\mathrm{alg,log}}:=\pi_\text{degreenumber} (\mathbb{L}_{(B_0,N_0)/(A_0,M_0),\mathrm{alg}}). 	
\end{align}
The corresponding topological logarithmic Andr\'e-Quillen complex is actually the complete version of the corresponding algebraic ones above by considering the corresponding derived $p$-completion over the simplicial module structure:
%%%%%%%%%%%%%%%%%%%%%
\begin{align}
&\mathbb{L}_{(B_0,N_0)/(A_0,M_0),\mathrm{topo}}\\
&:=R\varprojlim_k\mathrm{Kos}_{p^k}	\left((\mathrm{Kan}_\mathrm{Left}\mathrm{deRham}^1_{(A_0,M_0)[(B_0,N_0)]^\text{degreenumber}/(A_0,M_0),\mathrm{alg}}\otimes_{(A_0,N_0)[(B_0,N_0)]^\text{degreenumber}} (B_0,N_0))\right).
\end{align}
Taking the product with $\mathcal{O}_Z$ we have the corresponding integral version of the topological logarithmic Andr\'e-Quillen complex:
\begin{align}
\mathbb{L}_{(B_0,N_0)/(A_0,M_0),\mathrm{topo},Z}:=\mathbb{L}_{(B_0,N_0)/(A_0,M_0),\mathrm{topo}}\widehat{\otimes}_{\mathbb{Z}_p}\mathcal{O}_Z
\end{align}
Then we consider the corresponding derived algebraic de Rham complex which is just defined to be:
\begin{align}
\mathrm{Kan}_\mathrm{Left}\mathrm{deRham}^\text{degreenumber}_{(B_0,N_0)/(A_0,M_0),\mathrm{alg}},\mathrm{Fil}^*_{\mathrm{Kan}_\mathrm{Left}\mathrm{deRham}^1_{(B_0,N_0)/(A_0,M_0),\mathrm{alg}}}.	
\end{align}
We then take the corresponding derived $p$-completion and we denote that by:
%%%%%%%%%%%%%%%%%%%%%%%
\begin{align}
&\mathrm{Kan}_\mathrm{Left}\mathrm{deRham}^\text{degreenumber}_{(B_0,N_0)/(A_0,M_0),\mathrm{topo}}\\
&:=R\varprojlim_k\mathrm{Kos}_{p^k}\left(\mathrm{Kan}_\mathrm{Left}\mathrm{deRham}^\text{degreenumber}_{(B_0,N_0)/(A_0,M_0),\mathrm{alg}},\mathrm{Fil}^*_{\mathrm{Kan}_\mathrm{Left}\mathrm{deRham}^\text{degreenumber}_{(B_0,N_0)/(A_0,M_0),\mathrm{alg}}}\right),\\
&\mathrm{Fil}^*_{\mathrm{Kan}_\mathrm{Left}\mathrm{deRham}^\text{degreenumber}_{(B_0,N_0)/(A_0,M_0),\mathrm{topo}}}:=R\varprojlim_k\mathrm{Kos}_{p^k}\left(\mathrm{Fil}^*_{\mathrm{Kan}_\mathrm{Left}\mathrm{deRham}^\text{degreenumber}_{(B_0,N_0)/(A_0,M_0),\mathrm{alg}}}\right).	
\end{align}
Before considering the corresponding integral version we just consider the corresponding product of these $\mathbb{E}_\infty$-rings with $\mathcal{O}_Z$ to get:
\begin{align}
\mathrm{Kan}_\mathrm{Left}\mathrm{deRham}^\text{degreenumber}_{(B_0,N_0)/(A_0,M_0),\mathrm{topo},Z}:=\mathrm{Kan}_\mathrm{Left}\mathrm{deRham}^\text{degreenumber}_{(B_0,N_0)/(A_0,M_0),\mathrm{topo}}\widehat{\otimes}^\mathbb{L}_{\mathbb{Z}_p}\mathcal{O}_Z,\\
\mathrm{Fil}^*_{\mathrm{Kan}_\mathrm{Left}\mathrm{deRham}^\text{degreenumber}_{(B_0,N_0)/(A_0,M_0),\mathrm{topo}},Z}:=\mathrm{Fil}^*_{\mathrm{Kan}_\mathrm{Left}\mathrm{deRham}^\text{degreenumber}_{(B_0,N_0)/(A_0,M_0),\mathrm{topo}}}\widehat{\otimes}^\mathbb{L}_{\mathbb{Z}_p}\mathcal{O}_Z.	
\end{align}
Then we consider the following construction for the map $(A,M)\rightarrow (B,N)$ by putting:

\begin{align}
&\mathbb{L}_{(B_0,N_0)/(A_0,M_0),\mathrm{topo},Z}:= \mathrm{Colim}_{(A_0,M_0)\rightarrow (B_0,N_0)}\mathbb{L}_{(B_0,N_0)/(A_0,M_0),\mathrm{topo},Z}[1/p],\\
&H_{\text{degreenumber},{\mathrm{AQ}},\mathrm{topo},Z}:=\pi_\text{degreenumber} (\mathbb{L}_{(B,N)/(A,M),\mathrm{topo},Z}),	\\
&\mathrm{Kan}_\mathrm{Left}\mathrm{deRham}^\text{degreenumber}_{(B,N)/(A,M),\mathrm{topo},Z}:=\mathrm{Colim}_{(A_0,M_0)\rightarrow (B_0,N_0)}\mathrm{Kan}_\mathrm{Left}\mathrm{deRham}^\text{degreenumber}_{(B_0,N_0)/(A_0,M_0),\mathrm{topo},Z}[1/p],\\
&\mathrm{Fil}^*_{\mathrm{Kan}_\mathrm{Left}\mathrm{deRham}^\text{degreenumber}_{(B,N)/(A,M),\mathrm{topo}},Z}:=\mathrm{Colim}\mathrm{Fil}^*_{\mathrm{Kan}_\mathrm{Left}\mathrm{deRham}^\text{degreenumber}_{(B_0,N_0)/(A_0,M_0),\mathrm{topo}},Z}[1/p].
\end{align}
Then we need to take the corresponding Hodge-Filtered completion by using the corresponding filtration associated as above to achieve the corresponding Hodge-complete objects in the corresponding filtered $\infty$-categories:
\begin{align}
{\mathrm{Kan}_\mathrm{Left}\mathrm{deRham}}^\text{degreenumber}_{(B,N)/(A,M),\mathrm{topo,Hodge},Z},\mathrm{Fil}^*_{{\mathrm{Kan}_\mathrm{Left}\mathrm{deRham}}^1_{(B,N)/(A,M),\mathrm{topo,Hodge}},Z}.	
\end{align} 	
\end{definition}

\begin{definition}
We define the corresponding finite projective filtered crystals to be the corresponding finite projective module spectra over the topological filtered $E_\infty$-ring $\mathrm{Kan}_\mathrm{Left}\mathrm{deRham}^\text{degreenumber}_{(B,N)/(A,M),\mathrm{topo},Z}$ with the corresponding induced filtrations.	
\end{definition}

\begin{definition}
We define the corresponding almost perfect \footnote{This is the corresponding derived version of pseudocoherence from \cite{12Lu1}, \cite{12Lu2}.} filtered crystals to be the corresponding almost perfect module spectra over the topological filtered $E_\infty$-ring $\mathrm{Kan}_\mathrm{Left}\mathrm{deRham}^\text{degreenumber}_{(B,N)/(A,N),\mathrm{topo},Z}$ with the corresponding induced filtrations.	
\end{definition}

\indent The following is inspired by the main Poincar\'e Lemma from \cite[Theorem 1.2]{12GL} in the non-deformed situation. Consider a corresponding smooth log rigid analytic space $X$ over $k/\mathbb{Q}_p$ (where $k$ is a corresponding unramified analytic field which is discretely-valued and the corresponding residue field is finite). Then we have the following:

\begin{conjecture}
Consider the corresponding projective map $g:X_{\text{Kummer-pro-\'etale}}\rightarrow X_{\text{Kummer-\'et}}$ and the the projective map $f:X_{\text{Kummer-pro-\'etale}}\rightarrow X$. Then we have the logarithmic versions of the strictly exact long exact sequences as in the Poincar\'e lemma in the rigid analytic situation. 
\end{conjecture}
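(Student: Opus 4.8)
The plan is to deduce the logarithmic Poincaré lemma by combining three ingredients: the logarithmic derived de Rham formalism of Gabber and Bhatt, the $p$-adic Hodge-theoretic foundations for log adic spaces of Diao--Lan--Liu--Zhu, and the topologization/functional-analytification procedure applied above to the $Z$-deformed complete logarithmic derived de Rham complex. First I would set up the geometric side: following \cite{12DLLZ1} the smooth log rigid analytic space $X$ over $k$ carries its Kummer-\'etale site $X_{\text{Kummer-\'et}}$ and pro-Kummer-\'etale site $X_{\text{Kummer-pro-\'et}}$, with the completed structure sheaf $\widehat{\mathcal{O}}$ and the structure morphisms $g$ and $f$; a basis of affinoid log perfectoid objects (after \cite[Theorem 2.9.9, Remark 2.9.10]{12Ked1} and \cite{12DLLZ2}, \cite{12Sch2}) plays the role of the perfectoid basis used in \cite[Theorem 1.2]{12GL}. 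On each such object the canonical pre-log resolution $(A,M)[(B,N)]^{\text{degreenumber}}$ of the relevant $\widehat{\mathcal{O}}$-log-algebra is formed in the category of pre-log rings generated by the pairs $(\text{polynomial},\mathbb{N}^r)$, exactly as in \cite[Chapter 6]{12B1}, so that the left Kan extension defining $\mathrm{Kan}_\mathrm{Left}\mathrm{deRham}^{\text{degreenumber}}_{(B,N)/(A,M),\mathrm{alg}}$ together with its Hodge filtration is available, and similarly for the $Z$-deformed and derived $p$-completed versions constructed in the Definition above.

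Next I would prove the logarithmic analogues of the two strictly exact sequences appearing in the non-logarithmic Poincar\'e-lemma Propositions above (including the twisted version by a $Z$-projective differential crystal spectrum $M$). The mechanism is the same as in the non-log case: the log Poincar\'e lemma of \cite[Chapter 8]{12O} and \cite[Chapters 5--7]{12B1}, extending Gabber and compatible with \cite{12Ko1}, identifies the log derived de Rham complex of $\widehat{\mathcal{O}}$ relative to $X$ (resp. relative to $X_{\text{Kummer-\'et}}$) with a resolution whose associated graded pieces are exterior powers of the log cotangent bundle twisted by $\widehat{\mathcal{O}}$, and the transition maps $\partial$ are the induced log connections. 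Applying in order the derived $p$-completion $R\varprojlim_k \mathrm{Kos}_{p^k}(-)$, the completed tensor with $\mathcal{O}_Z$, the colimit over $(A_0,M_0)\rightarrow(B_0,N_0)$ together with inverting $p$, and finally the Hodge completion --- all of which are precisely the operations of the Definition above and all of which commute with the left Kan extension and with the formation of the long exact sequences in play --- yields the logarithmic versions of the two displayed complexes, with $k[\widehat{\mathcal{O}}]^{\text{degreenumber}}$ and $X[\widehat{\mathcal{O}}]^{\text{degreenumber}}$ replaced by their log counterparts. Strict exactness should be inherited because, on the log perfectoid basis, the relevant resolutions become --- after completion --- assembled from admissibly split short exact sequences of ind-Banach modules, so the comparison is an isomorphism of filtered ind-Banach $E_\infty$-algebras, not merely a quasi-isomorphism in a derived $\infty$-category.

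The main obstacle I expect is the log Poincar\'e lemma itself at the level of the pro-Kummer-\'etale site with the functional-analytic structure retained: one must show that $\mathbb{R}g_\ast$ and $\mathbb{R}f_\ast$ of the $\widehat{\mathcal{O}}$-log de Rham complex compute what the statement claims \emph{while preserving strict exactness}. Two points are delicate. First, the Kummer-\'etale structure forces work with monoids into which $p$-power roots have been adjoined, and one must check that the left Kan extension over the pre-log polynomial generators is compatible both with this root-extraction and with the $\mathrm{Kos}_{p^k}$-completion; this is exactly where \cite[Chapter 6]{12B1} and \cite{12Ko1} are essential and where the (mixed- or equal-characteristic) bookkeeping of the monoid $N$ and its saturation enters. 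Second, strict exactness in the ind-Banach setting does not follow formally from exactness in a stable $\infty$-category, so one must verify the bornological openness of the relevant maps, presumably by reducing to the explicit split models of the type in \cite[Example 4.7]{12GL} and its pseudorigid analogue given above. Once these are settled, the conjecture follows in the same way the non-logarithmic Proposition followed from \cite[Theorem 1.2]{12GL}.
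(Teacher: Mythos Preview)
The statement you are attempting to prove is labeled a \emph{Conjecture} in the paper, and the paper provides no proof whatsoever: it is simply recorded as an expectation, preceded only by the remark that it is ``inspired by the main Poincar\'e Lemma from \cite[Theorem 1.2]{12GL} in the non-deformed situation.'' There is therefore no proof in the paper to compare your proposal against.

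That said, your outline is a reasonable strategy for \emph{attacking} the conjecture, and it is essentially the one the paper implicitly has in mind: transport the argument of \cite[Theorem 1.2]{12GL} to the logarithmic setting by replacing the pro-\'etale site and perfectoid basis with the pro-Kummer-\'etale site and log-perfectoid basis of \cite{12DLLZ1}, \cite{12DLLZ2}, and replacing the cotangent complex and derived de Rham complex by their Gabber--Bhatt logarithmic versions from \cite[Chapters 5--7]{12B1} and \cite[Chapter 8]{12O}. You have correctly identified the two genuine obstructions that keep this a conjecture rather than a proposition: (i) the compatibility of the pre-log left Kan extension with Kummer root extraction and with the derived $p$-completion via $\mathrm{Kos}_{p^k}$, and (ii) the passage from exactness in the stable $\infty$-category to \emph{strict} exactness in the ind-Banach setting. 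Neither of these is settled in the paper (nor, to my knowledge, in the cited literature in the generality needed here), which is precisely why the statement remains conjectural. Your proposal should therefore be read as a credible proof \emph{sketch} contingent on resolving those two points, not as a completed argument.
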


%%%!!!

\newpage

\subsection{Logarithmic Setting for Pseudorigid Spaces}

%\subsection{The Logarithmic topological Derived de Rham Complex}

\indent We now consider the analytic logarithmic Andr\'e-Quillen Homology and analytic logarithmic Derived de Rham complex of pseudorigid space, which are very crucial in some development in \cite{12Bel1} in the arithmetic family. Therefore we just investigate the corresponding picture in the corresponding geometric family.

\begin{setting}
We consider now a corresponding morphism taking the corresponding form of $(A,M)\rightarrow (B,N)$ where $(A,M)$ is going to be a log pseudorigid affinoid algebra over $\mathbb{Z}_p$ (where we consider the corresponding foundation in \cite[Chapter 2]{12DLLZ1} for the corresponding log adic rings), and $(B,N)$ is going to be a Kummer perfectoid chart of $A$ in the corresponding Kummer pro-\'etale site of the log pseudorigid affinoid space attached to $A$ (where we consider the corresponding foundation in \cite[Chapter 5]{12DLLZ1} for the corresponding Kummer pro-\'etale sites). As in \cite[Definition 3.1, and below Definition 3.1]{12Bel1} in our situation $A$ is of topologically finite type over $\mathcal{O}_K[[t]]\left<\pi^a/t^b\right>[1/t]$, where $K$ is a discrete valued field containing $\mathbb{Q}_p$ and $(a,b)=1$ \footnote{Certainly one can also consider the characteristic $p$ situation.}.	
\end{setting}

\indent From \cite[Definition 3.1, and below Definition 3.1]{12Bel1} we have the following:

\begin{lemma} We have the following statements:\\
A. $A$ as above is Tate, complete over $\mathcal{O}_K$;\\
B. The ring $A$ has a ring of definition $A_0$ which is of $\mathcal{O}_K$-formally finite type;\\
C. The ring $A_0$ is of topologically finite type over $\mathcal{O}_K[[t]]\left<\pi^a/t^b\right>$.
\end{lemma}

\begin{proof}
Since we did not change the corresponding assumption on the corresponding ring theoretic consideration.
\end{proof}

As in the corresponding construction in the rigid situation in the previous section following \cite[Chapitre 3]{12An1}, \cite{12An2}, \cite[Chapter 5, Chapter 6, Chapter 7]{12B1}, \cite[Chapter 1]{12Bei}, \cite[Chapter 5]{12G1}, \cite[Chapter 3, Chapter 4]{12GL}, \cite[Chapitre II, Chapitre III]{12Ill1}, \cite[Chapitre VIII]{12Ill2}, \cite[Chapter 8]{12O}, \cite[Section 4]{12Qui}. We keep now the following setting:

\begin{setting}
Now as in the above notion we will consider a general map of logarithmic rings $(A,M)\rightarrow (B,N)$ over $\mathcal{O}_K[[t]]\left<\pi^a/t^b\right>[1/t]$ where we have the corresponding map of the associated ring of definitions $(A_0,M_0)\rightarrow (B_0,N_0)$ over $\mathcal{O}_K[[t]]\left<\pi^a/t^b\right>$ such that $A_0,B_0$ are basically $I$-adic (where $\mathcal{O}_K[[t]]\left<\pi^a/t^b\right>$ is $I$-adic).	
\end{setting}

\begin{definition}
We start from the corresponding construction of the algebraic $p$-adic log derived de Rham complex for a map $(A,M)\rightarrow (B,N)$. Fix a pair of ring of definitions $(A_0,M_0),(B_0,N_0)$ in $(A,M),(B,N)$ respectively. Then this is the corresponding derived differential complex attached to the canonical resolution of $(B_0,N_0)$:
\begin{align}
(A_0,M_0)[(B_0,N_0)]^\text{degreenumber},	
\end{align}
which is now denoted by $\mathrm{Kan}_\mathrm{Left}\mathrm{deRham}^\text{degreenumber}_{(B_0,N_0)/(A_0,M_0),\mathrm{alg}}$. This is after taking the left Kan extension as in \cite[Chapter 6]{12B1}. The corresponding logarithmic cotangent complex associated is defined to be just:
\begin{align}
\mathbb{L}_{(B_0,N_0)/(A_0,M_0),\mathrm{alg}}:=	\mathrm{Kan}_\mathrm{Left}\mathrm{deRham}^1_{(A_0,M_0)[(B_0,N_0)]^\text{degreenumber}/(A_0,M_0),\mathrm{alg}}\otimes_{(A_0,M_0)[(B_0,N_0)]^\text{degreenumber}} (B_0,N_0).
\end{align}
The corresponding algebraic logarithmic Andr\'e-Quillen homologies are defined to be:
\begin{align}
H_{\text{degreenumber},{\mathrm{AQ}},\mathrm{alg}}:=\pi_\text{degreenumber} (\mathbb{L}_{(B_0,N_0)/(A_0,M_0),\mathrm{alg}}). 	
\end{align}
The corresponding topological logarithmic Andr\'e-Quillen complex is actually the complete version of the corresponding algebraic ones above by considering the corresponding derived $I$-completion over the simplicial module structure:

\begin{align}
&\mathbb{L}_{(B_0,N_0)/(A_0,M_0),\mathrm{topo}}\\
&:=R\varprojlim_I\mathrm{Kos}_I	\left((\mathrm{Kan}_\mathrm{Left}\mathrm{deRham}^1_{(A_0,M_0)[(B_0,N_0)]^\text{degreenumber}/(A_0,M_0),\mathrm{alg}}\otimes_{(A_0,M_0)[(B_0,N_0)]^\text{degreenumber}} (B_0,N_0))\right).
\end{align}
Then we consider the corresponding logarithmic derived algebraic de Rham complex which is just defined to be:
\begin{align}
\mathrm{Kan}_\mathrm{Left}\mathrm{deRham}^\text{degreenumber}_{(B_0,N_0)/(A_0,M_0),\mathrm{alg}},\mathrm{Fil}^*_{\mathrm{Kan}_\mathrm{Left}\mathrm{deRham}^1_{(B_0,N_0)/(A_0,M_0),\mathrm{alg}}}.	
\end{align}
We then take the corresponding derived $I$-completion and we denote that by:
\begin{align}
&\mathrm{Kan}_\mathrm{Left}\mathrm{deRham}^\text{degreenumber}_{(B_0,N_0)/(A_0,M_0),\mathrm{topo}}:=\\
&R\varprojlim_I\mathrm{Kos}_I\left(\mathrm{Kan}_\mathrm{Left}\mathrm{deRham}^\text{degreenumber}_{(A_0,M_0)[(B_0,N_0)]^\text{degreenumber}/(A_0,M_0),\mathrm{alg}},\mathrm{Fil}^*_{\mathrm{Kan}_\mathrm{Left}\mathrm{deRham}^\text{degreenumber}_{(B_0,N_0)/(A_0,M_0),\mathrm{alg}}}\right),\\
&\mathrm{Fil}^*_{\mathrm{Kan}_\mathrm{Left}\mathrm{deRham}^\text{degreenumber}_{(B_0,N_0)/(A_0,M_0),\mathrm{topo}}}:=R\varprojlim_I\mathrm{Kos}_I\left(\mathrm{Fil}^*_{\mathrm{Kan}_\mathrm{Left}\mathrm{deRham}^\text{degreenumber}_{(B_0,N_0)/(A_0,M_0),\mathrm{alg}}}\right).	
\end{align}
Then we consider the following construction for the map $(A,M)\rightarrow (B,N)$ by putting:
\begin{align}
&\mathbb{L}_{(B_0,N_0)/(A_0,M_0),\mathrm{topo}}:= \mathrm{Colim}_{(A_0,M_0)\rightarrow (B_0,N_0)}\mathbb{L}_{(B_0,N_0)/(A_0,M_0),\mathrm{topo}}[1/t],\\
&H_{\text{degreenumber},{\mathrm{AQ}},\mathrm{topo}}:=\pi_\text{degreenumber} (\mathbb{L}_{(B,M)/(A,M),\mathrm{topo}}),	\\
&\mathrm{Kan}_\mathrm{Left}\mathrm{deRham}^\text{degreenumber}_{(B,N)/(A,M),\mathrm{topo}}:=\mathrm{Colim}_{(A_0,M_0)\rightarrow (B_0,N_0)}\mathrm{Kan}_\mathrm{Left}\mathrm{deRham}^\text{degreenumber}_{(B_0,N_0)/(A_0,M_0),\mathrm{topo}}[1/t],\\
&\mathrm{Fil}^*_{\mathrm{Kan}_\mathrm{Left}\mathrm{deRham}^\text{degreenumber}_{(B,N)/(A,M),\mathrm{topo}}}:=\mathrm{Colim}_{(A_0,M_0)\rightarrow (B_0,N_0)}\mathrm{Fil}^*_{\mathrm{Kan}_\mathrm{Left}\mathrm{deRham}^\text{degreenumber}_{(B_0,N_0)/(A_0,M_0),\mathrm{topo}}}[1/t].
\end{align}
Then we need to take the corresponding Hodge-Filtered completion by using the corresponding filtration associated as above to achieve the corresponding Hodge-complete objects in the corresponding filtered $\infty$-categories:
\begin{align}
{\mathrm{Kan}_\mathrm{Left}\mathrm{deRham}}^\text{degreenumber}_{(B,N)/(A,M),\mathrm{topo,Hodge}},\mathrm{Fil}^*_{{\mathrm{Kan}_\mathrm{Left}\mathrm{deRham}}^\text{degreenumber}_{(B,N)/(A,M),\mathrm{topo,Hodge}}}.	
\end{align}
\end{definition}

\begin{definition}
We define the corresponding finite projective filtered crystals to be the corresponding finite projective module spectra over the topological filtered $E_\infty$-ring $\mathrm{Kan}_\mathrm{Left}\mathrm{deRham}^\text{degreenumber}_{(B,N)/(A,M),\mathrm{topo}}$ with the corresponding induced filtrations.	
\end{definition}

\begin{definition}
We define the corresponding almost perfect \footnote{This is the corresponding derived version of pseudocoherence from \cite{12Lu1}, \cite{12Lu2}.} filtered crystals to be the corresponding almost perfect module spectra over the topological filtered $E_\infty$-ring $\mathrm{Kan}_\mathrm{Left}\mathrm{deRham}^\text{degreenumber}_{(B,N)/(A,M),\mathrm{topo}}$ with the corresponding induced filtrations.	
\end{definition}

\indent We now consider the corresponding large coefficient local systems over logarithmic pseudorigid spaces where $Z$ now is a topological algebra over $\mathbb{Z}_p$\footnote{It is better to assume that it is basically completely flat.}.

\begin{definition}
We define the following $Z$ deformed version of the corresponding complete version of the corresponding logarithmic Andr\'e-Quillen homology and the corresponding complete version of the corresponding logarithmic derived de Rham complex. We start from the corresponding construction of the logarithmic algebraic $p$-adic derived de Rham complex for a map $(A,M)\rightarrow (B,N)$. Fix a pair of ring of definitions $(A_0,M_0),(B_0,N_0)$ in $(A,M),(B,N)$ respectively. Then this is the corresponding derived differential complex attached to the canonical resolution of $(B_0,N_0)$:
\begin{align}
(A_0,M_0)[(B_0,N_0)]^\text{degreenumber},	
\end{align}
which is now denoted by $\mathrm{Kan}_\mathrm{Left}\mathrm{deRham}^\text{degreenumber}_{(B_0,N_0)/(A_0,M_0),\mathrm{alg}}$. The corresponding logarithmic cotangent complex associated is defined to be just:
\begin{align}
\mathbb{L}_{(B_0,N_0)/(A_0,M_0),\mathrm{alg}}:=	\mathrm{Kan}_\mathrm{Left}\mathrm{deRham}^1_{(A_0,M_0)[(B_0,N_0)]^\text{degreenumber}/(A_0,M_0),\mathrm{alg}}\otimes_{(A_0,M_0)[(B_0,N_0)]^\text{degreenumber}} (B_0,N_0).
\end{align}
The corresponding algebraic logarithmic Andr\'e-Quillen homologies are defined to be:
\begin{align}
H_{\text{degreenumber},{\mathrm{AQ}},\mathrm{alg}}:=\pi_\text{degreenumber} (\mathbb{L}_{(B_0,N_0)/(A_0,M_0),\mathrm{alg}}). 	
\end{align}
The corresponding topological logarithmic Andr\'e-Quillen complex is actually the complete version of the corresponding algebraic ones above by considering the corresponding derived $I$-completion over the simplicial module structure:
\begin{align}
&\mathbb{L}_{(B_0,N_0)/(A_0,M_0),\mathrm{topo}}\\
&:=R\varprojlim_k\mathrm{Kos}_I	\left((\mathrm{Kan}_\mathrm{Left}\mathrm{deRham}^1_{(A_0,M_0)[(B_0,N_0)]^\text{degreenumber}/(A_0,M_0),\mathrm{alg}}\otimes_{(A_0,M_0)[(B_0,N_0)]^\text{degreenumber}} (B_0,N_0))\right).
\end{align}
Taking the product with $Z$ we have the corresponding integral version of the topological logarithmic version of the corresponding Andr\'e-Quillen complex:
\begin{align}
\mathbb{L}_{(B_0,N_0)/(A_0,M_0),\mathrm{topo},Z}:=\mathbb{L}_{(B_0,N_0)/(A_0,M_0),\mathrm{topo}}{\otimes}_{\mathbb{Z}_p}\mathcal{O}_Z
\end{align}
Then we consider the corresponding logarithmic derived algebraic de Rham complex which is just defined to be:
\begin{align}
\mathrm{Kan}_\mathrm{Left}\mathrm{deRham}^\text{degreenumber}_{(B_0,N_0)/(A_0,M_0),\mathrm{alg}},\mathrm{Fil}^*_{\mathrm{Kan}_\mathrm{Left}\mathrm{deRham}^1_{(B_0,N_0)/(A_0,M_0),\mathrm{alg}}}.	
\end{align}
We then take the corresponding derived $I$-completion and we denote that by:
\begin{align}
&\mathrm{Kan}_\mathrm{Left}\mathrm{deRham}^\text{degreenumber}_{(B_0,N_0)/(A_0,M_0),\mathrm{topo}}:=\\
&R\varprojlim_k\mathrm{Kos}_I\left(\mathrm{Kan}_\mathrm{Left}\mathrm{deRham}^\text{degreenumber}_{(B_0,N_0)/(A_0,M_0),\mathrm{alg}},\mathrm{Fil}^*_{\mathrm{Kan}_\mathrm{Left}\mathrm{deRham}^\text{degreenumber}_{(B_0,N_0)/(A_0,M_0),\mathrm{alg}}}\right),\\
&\mathrm{Fil}^*_{\mathrm{Kan}_\mathrm{Left}\mathrm{deRham}^\text{degreenumber}_{(B_0,N_0)/(A_0,M_0),\mathrm{topo}}}:=R\varprojlim_k\mathrm{Kos}_I\left(\mathrm{Fil}^*_{\mathrm{Kan}_\mathrm{Left}\mathrm{deRham}^\text{degreenumber}_{(B_0,N_0)/(A_0,M_0),\mathrm{alg}}}\right).	
\end{align}
Before considering the corresponding integral version we just consider the corresponding product of these $\mathbb{E}_\infty$-rings with $Z$ to get:
\begin{align}
&\mathrm{Kan}_\mathrm{Left}\mathrm{deRham}^\text{degreenumber}_{(B_0,N_0)/(A_0,M_0),\mathrm{topo},Z}:=\mathrm{Kan}_\mathrm{Left}\mathrm{deRham}^\text{degreenumber}_{(B_0,N_0)/(A_0,M_0),\mathrm{topo}}{\otimes}^\mathbb{L}_{\mathbb{Z}_p}Z,\\
&\mathrm{Fil}^*_{\mathrm{Kan}_\mathrm{Left}\mathrm{deRham}^\text{degreenumber}_{(B_0,N_0)/(A_0,M_0),\mathrm{topo}},Z}:=\mathrm{Fil}^*_{\mathrm{Kan}_\mathrm{Left}\mathrm{deRham}^\text{degreenumber}_{(B_0,N_0)/(A_0,M_0),\mathrm{topo}}}{\otimes}^\mathbb{L}_{\mathbb{Z}_p}Z.	
\end{align}
Then we consider the following construction for the map $(A,M)\rightarrow (B,N)$ by putting:
\begin{align}
&\mathbb{L}_{(B_0,N_0)/(A_0,M_0),\mathrm{topo},Z}:= \mathrm{Colim}_{(A_0,M_0)\rightarrow (B_0,N_0)}\mathbb{L}_{(B_0,N_0)/(A_0,M_0),\mathrm{topo},Z}[1/t],\\
&H_{\text{degreenumber},{\mathrm{AQ}},\mathrm{topo},Z}:=\pi_\text{degreenumber} (\mathbb{L}_{(B,N)/(A,M),\mathrm{topo},Z}),	\\
&\mathrm{Kan}_\mathrm{Left}\mathrm{deRham}^\text{degreenumber}_{(B,N)/(A,M),\mathrm{topo},Z}:=\mathrm{Colim}_{(A_0,M_0)\rightarrow (B_0,N_0)}\mathrm{Kan}_\mathrm{Left}\mathrm{deRham}^\text{degreenumber}_{(B_0,N_0)/(A_0,M_0),\mathrm{topo},Z}[1/t],\\
&\mathrm{Fil}^*_{\mathrm{Kan}_\mathrm{Left}\mathrm{deRham}^\text{degreenumber}_{(B,N)/(A,M),\mathrm{topo}},Z}:=\mathrm{Colim}_{(A_0,M_0)\rightarrow (B_0,N_0)}\mathrm{Fil}^*_{\mathrm{Kan}_\mathrm{Left}\mathrm{deRham}^\text{degreenumber}_{(B_0,N_0)/(A_0,M_0),\mathrm{topo}},Z}[1/t].
\end{align}
Then we need to take the corresponding Hodge-Filtered completion by using the corresponding filtration associated as above to achieve the corresponding Hodge-complete objects in the corresponding filtered $\infty$-categories:
\begin{align}
{\mathrm{Kan}_\mathrm{Left}\mathrm{deRham}}^\text{degreenumber}_{(B,N)/(A,M),\mathrm{topo,Hodge},Z},\mathrm{Fil}^*_{{\mathrm{Kan}_\mathrm{Left}\mathrm{deRham}}^1_{(B,N)/(A,M),\mathrm{topo,Hodge}},Z}.	
\end{align} 	
\end{definition}

\begin{example}
Now we construct the corresponding pseudorigid analog of the corresponding example in \cite[Example 4.7]{12GL}. Now we consider the corresponding the map:
\begin{align}
\mathbb{Z}_p[[u]]\left<\frac{p^a}{u^b}\right>[1/u]\left<X^\pm_1,X^\pm_2,...,X^\pm_d,X_{d+1},...,X_e\right>\\
\longrightarrow \mathbb{Z}_p[[u]]\left<\frac{p^a}{u^b}\right>[1/u]\left<X^{\pm/p^\infty}_1,X^{\pm/p^\infty}_2,...,X^{\pm/p^\infty}_d,X^{1/p^\infty}_{d+1},...,X^{1/p^\infty}_e\right>	
\end{align}
which could be written as:	
\begin{align}
&\mathbb{Z}_p[[u]]\left<\frac{p^a}{u^b}\right>[1/u]\left<X^\pm_1,X^\pm_2,...,X^\pm_d,X_{d+1},...,X_e\right>\longrightarrow \\
&\mathbb{Z}_p[[u]]\left<\frac{p^a}{u^b}\right>[1/u]\left<X^\pm_1,X^\pm_2,...,X^\pm_d,X_{d+1},...,X_e\right>\left<Y^{\pm/p^\infty}_1,...,Y^{\pm/p^\infty}_d,Y^{1/p^\infty}_{d+1},...,Y^{1/p^\infty}_e\right>\\
&/(X_i-Y_i,i=1,...,e).		
\end{align}
So in our situation the corresponding Hodge complete topological derived de Rham complex will be just:
\begin{align}
\mathbb{Z}_p[[u]]\left<\frac{p^a}{u^b}\right>[1/u]&\left<Y^{\pm/p^\infty}_1,Y^{\pm/p^\infty}_2,...,Y^{\pm/p^\infty}_d,Y^{1/p^\infty}_{d+1},...,Y^{1/p^\infty}_e\right>\\
&[[Z_1,...,Z_e,Z_i=X_i-Y_i,i=1,...,e]].	
\end{align}

\end{example}

\

\indent The following conjectures are literally inspired by the rigid analytic situation in \cite[Theorem 1.2]{12GL}. We assume the spaces are smooth.

\begin{conjecture}
After the whole foundation of \cite{12DLLZ1} and \cite[Theorem 2.9.9, Remark 2.9.10]{12Ked1}, the construction in this section could be carried over the corresponding certain Kummer-pro-\'etale sites where logarithmic perfectoid subdomains form a corresponding basis of neighbourhood of the topology.	
\end{conjecture}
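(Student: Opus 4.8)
The plan is to promote the affinoid-local construction of $\mathrm{Kan}_\mathrm{Left}\mathrm{deRham}^\text{degreenumber}_{(B,N)/(A,M),\mathrm{topo,Hodge},Z}$ given in the preceding Definition to an $\infty$-stack of filtered $E_\infty$-rings on the Kummer-pro-\'etale site $X_{\text{Kummer-pro-\'etale}}$ of a smooth log pseudorigid space $X$ over $\mathcal{O}_K[[t]]\left<\pi^a/t^b\right>[1/t]$. First I would recall, from \cite[Chapter 5]{12DLLZ1} together with \cite[Theorem 2.9.9, Remark 2.9.10]{12Ked1} and the standing hypothesis that $p$ is topologically nilpotent, that this site admits a basis of neighbourhoods consisting of logarithmic perfectoid subdomains; on such a basis object $U=\mathrm{Spa}(B,B^+)$ with its induced log structure $N$ one assigns the value $\mathrm{Kan}_\mathrm{Left}\mathrm{deRham}^\text{degreenumber}_{(B,N)/(A,M),\mathrm{topo,Hodge},Z}$, the transition maps along Kummer-pro-\'etale morphisms $U'\to U$ being supplied by the logarithmic functoriality established in the Proposition above (the derived $I$-completed analogue of \cite[Lemma 3.3]{12GL} adapted to the prelog setting of \cite[Chapter 6]{12B1}).

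Next I would verify the descent property. Because the construction is a left Kan extension from prelog polynomial algebras, hence commutes with sifted colimits, and because the values on the basis are computed as derived $I$-completed, Hodge-filtered colimits $\mathrm{Colim}_{(A_0,M_0)\rightarrow(B_0,N_0)}(-)[1/t]$ of the algebraic prelog de Rham complex, it suffices to check the sheaf condition on the basis of log perfectoid subdomains. There the comparison reduces --- exactly as in \cite[Theorem 1.2]{12GL} and its logarithmic Poincar\'e-lemma analogue conjectured in the previous subsection --- to the strictly exact long exact sequences relating $\mathrm{deRham}^\text{degreenumber}_{X[\widehat{\mathcal{O}}]^\text{degreenumber}/X,\mathrm{topo},Z}$ to $f^{-1}\mathrm{deRham}^1_{X,\mathrm{topo}}$, combined with Kummer-pro-\'etale descent for the structure sheaf of a log perfectoid space in the sense of \cite{12DLLZ1}, \cite{12DLLZ2}, \cite{12Sch2} (a logarithmic, pseudorigid version of almost purity). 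Granting the Poincar\'e lemma, one then deduces descent for the finite projective and almost perfect filtered crystals over the resulting $\infty$-stack by the usual argument in the filtered $\infty$-categorical framework set up in the Notations sections.

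The main obstacle --- and the reason the statement is phrased as a conjecture --- is twofold. First, one must actually prove the logarithmic Poincar\'e lemma in the pseudorigid context: the strict exactness of the long exact sequences over $\mathcal{O}_K[[t]]\left<\pi^a/t^b\right>[1/t]$, where the discretely-valued unramified residue-field hypotheses of \cite{12GL} are no longer literally available and one works with the pseudouniformizer $t$ in place of $p$. Second, one must control the interaction of the derived $I$-completion $R\varprojlim_I\mathrm{Kos}_I(-)$ with Kummer-pro-\'etale cohomological descent, ensuring that the relevant $R\varprojlim$, the sifted colimits and the localization $[1/t]$ (respectively $[1/(d)]$) commute so that strict exactness survives completion and sheafification. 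I expect the second point --- the homotopy-coherent bookkeeping of completion against descent inside the filtered $\infty$-categories of $\infty$-quasicoherent sheaves --- to be the genuinely hard part; the perfectoid-chart descent itself should follow the template already in place in \cite{12DLLZ2} and \cite{12Sch2}.
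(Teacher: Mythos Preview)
The paper does not supply a proof of this statement: it is stated as a \emph{conjecture} and is immediately followed by two further conjectures (the log Poincar\'e lemma and its crystal version) that are explicitly predicated on it. There is therefore no paper-proof to compare your proposal against.

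Your write-up is not a proof either, and you say so yourself: you correctly isolate the two genuine gaps --- the pseudorigid logarithmic Poincar\'e lemma over $\mathcal{O}_K[[t]]\left<\pi^a/t^b\right>[1/t]$, and the compatibility of derived $I$-completion with Kummer-pro-\'etale descent --- and you note that these are exactly why the author left the statement conjectural. As a \emph{strategy} your outline is reasonable and in line with how the paper sets things up (left Kan extension from prelog polynomials, functoriality from the analogue of \cite[Lemma 3.3]{12GL}, basis of log perfectoid subdomains from \cite{12DLLZ1} and \cite[Theorem 2.9.9, Remark 2.9.10]{12Ked1}); but it is a strategy, not a proof, and the paper offers nothing beyond the same strategic pointers. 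In short: your proposal matches the paper in that neither claims to resolve the conjecture.
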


\indent Based on this conjecture one can conjecture the following:

\begin{conjecture}
Consider the corresponding projective map $g:X_{\text{Kummer-pro-\'etale}}\rightarrow X_{\text{Kummer-\'et}}$ and the the projective map $f:X_{\text{Kummer-pro-\'etale}}\rightarrow X$. Then we have the log versions of the strictly exact Poincar\'e long exact sequences as in the pseudorigid analytic situation.
\end{conjecture}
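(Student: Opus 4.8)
The plan is to reduce the assertion to the chart-wise statement already verified in the worked Example above, and then to globalize via the transitivity and functoriality properties of the logarithmic derived de Rham complex. First I would note that both long exact sequences are claimed on the Kummer-pro-\'etale site of $X$, so it suffices to establish strict exactness after restriction to a basis of the topology; granting the preceding conjecture, such a basis is furnished by logarithmic perfectoid subdomains, and one reduces to the situation where $(A,M)$ is a log pseudorigid affinoid over $\mathcal{O}_K[[t]]\left<\pi^a/t^b\right>[1/t]$ admitting a toric chart, i.e. an \'etale (or Kummer-\'etale) map to a standard log polytorus $\mathbb{Z}_p[[u]]\left<\frac{p^a}{u^b}\right>[1/u]\left<X^{\pm}_1,\dots,X^{\pm}_d,X_{d+1},\dots,X_e\right>$, with $(B,N)$ the associated Kummer perfectoid chart obtained by adjoining $p$-power roots as in the Example.

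In that local situation the computation of the Example identifies $\mathrm{Kan}_\mathrm{Left}\mathrm{deRham}^\text{degreenumber}_{(B,N)/(A,M),\mathrm{topo,Hodge},Z}$, up to the Hodge filtration, with a power series ring $(B,N)[[Z_1,\dots,Z_e]]$ in the variables $Z_i=X_i-Y_i$, whose associated graded for the Hodge filtration is a symmetric algebra on a finite free $(B,N)$-module; in particular it is filtered-flat over $(A,M)$ and over $*=\mathcal{O}_K[[t]]\left<\pi^a/t^b\right>[1/t]$. I would then assemble the two sequences exactly as in the non-logarithmic pseudorigid conjecture: they are the long exact sequences attached to the transitivity filtration of the three-term tower $*\to X\to X[\widehat{\mathcal{O}}]$ (respectively with $X_{\text{\'et}}$ replaced by $X_{\text{Kummer-\'et}}$), using the functoriality square for $\mathrm{Kan}_\mathrm{Left}\mathrm{deRham}^\text{degreenumber}_{-/-,\mathrm{topo,Hodge}}$ established above together with the logarithmic cotangent sequence after Gabber and Bhatt \cite{12O}, \cite{12B1}. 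Strict exactness, as opposed to mere exactness in the homotopy category, follows, as in the $M$-coefficient version of the rigid Poincar\'e lemma above, by tensoring with the $Z$-projective differential crystal spectrum $M$ and invoking filtered-flatness of all the terms, so that the sequences survive the base change, the inversion $[1/t]$, and the sifted homotopy colimit over the choices of rings of definition $(A_0,M_0)\to(B_0,N_0)$.

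The step I expect to be the main obstacle is precisely the foundational input that legitimizes the reduction to charts, namely that the Kummer-pro-\'etale site of a smooth log pseudorigid space is well-behaved with logarithmic perfectoid subdomains forming a basis of neighbourhoods; this is the content of the preceding conjecture and is not available in the literature outside the Tate case \cite{12DLLZ1}. A secondary difficulty is homological: one must check that derived $I$-completion, the inversion $[1/t]$, and the sifted homotopy colimit over rings of definition commute with the formation of the log de Rham complex well enough to preserve \emph{strict} exactness in the ind-Banach category over $Z$, which in the rigid case is \cite[Theorem 1.2]{12GL} but in the pseudorigid case is complicated by the non-Noetherianity of $\mathcal{O}_K[[t]]\left<\pi^a/t^b\right>$ and the subtler nature of the pseudouniformizer; here one would fall back on the explicit power-series description of the charts to verify the needed almost-perfectness and flatness by hand.
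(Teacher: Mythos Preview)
The statement you are addressing is labeled a \emph{Conjecture} in the paper, and the paper offers no proof or even a proof sketch for it; it is preceded by another conjecture (that logarithmic perfectoid subdomains form a basis for the Kummer-pro-\'etale topology on smooth log pseudorigid spaces) which is likewise left open. So there is no ``paper's own proof'' to compare against: the authors explicitly record this as an open problem inspired by \cite[Theorem 1.2]{12GL}, not as an established result.

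Your write-up is therefore not a proof but a plausible programme, and you yourself flag this correctly: the reduction to toric charts depends entirely on the preceding foundational conjecture, which is unavailable outside the Tate case treated in \cite{12DLLZ1}, and the preservation of strict exactness through derived $I$-completion, inversion of $t$, and the sifted colimit over rings of definition is genuinely delicate in the pseudorigid setting. Both obstacles you name are real and are precisely why the paper states this as a conjecture rather than a proposition. Your outline is a reasonable roadmap for how one would attack the problem if the foundational input became available, but it should not be presented as a proof; at best it is a heuristic argument conditional on two substantial unproved inputs.
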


and

\begin{conjecture}
Consider the corresponding projective map $g:X_{\text{Kummer-pro-\'etale}}\rightarrow X_{\text{Kummer-\'et}}$ and the the projective map $f:X_{\text{Kummer-pro-\'etale}}\rightarrow X$. Let $M$ be a corresponding $Z$-projective differential crystal spectrum. Then we have the logarithmic versions of the strictly exact Poincar\'e long exact sequences for $M$ as in the pseudorigid analytic siatution.
\end{conjecture}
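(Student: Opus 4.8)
The plan is to deduce this conjecture from the preceding (non-coefficient) logarithmic Poincar\'e long exact sequence conjecture by a flat base change, exactly in the spirit of the proof of the $M$-coefficient proposition in the rigid analytic and non-logarithmic pseudorigid cases above, where one simply invokes flatness. Concretely, granting the two strictly exact sequences of filtered $\mathbb{E}_\infty$-objects for the structure sheaf over the Kummer-pro-\'etale and Kummer-\'etale sites, one applies $M{\otimes}^{\mathbb{L}}(-)$ termwise and checks that strict exactness is preserved. The essential point is that $M$ is a $Z$-projective differential crystal spectrum, hence flat over the relevant topological filtered $E_\infty$-ring in the ind-Banach/bornological sense, so that $M{\otimes}^{\mathbb{L}}(-)$ is $t$-exact and sends strictly exact complexes to strictly exact complexes in the quasi-abelian $\infty$-category of ind-Banach modules after Schneiders and \cite{12BBBK}, \cite{KKM}. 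One must be a little careful that ``strictly exact'' here is the quasi-abelian notion rather than mere exactness of underlying complexes; this is precisely where the projectivity (not just flatness) of $M$ enters, together with the fact that filtered colimits of strict monomorphisms remain strict in the ind-categories in play, so that passing to the generic fibre via $[1/t]$ and the colimit over charts is harmless.

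The real work is therefore pushed into the previous conjecture, the logarithmic Poincar\'e lemma over the Kummer-pro-\'etale site of a smooth pseudorigid space, which I would attack in three steps. First, establish the Kummer-pro-\'etale analog of \cite[Theorem 2.9.9, Remark 2.9.10]{12Ked1}: since $p$ is topologically nilpotent on the pseudorigid base $\mathcal{O}_K[[t]]\left<\pi^a/t^b\right>[1/t]$, logarithmic perfectoid subdomains form a basis of the Kummer-pro-\'etale topology, combining the foundations of \cite[Chapter 5]{12DLLZ1} with pseudorigid perfectoidness. Second, on such a logarithmic (Kummer) perfectoid chart $(B,N)$ compute the completed logarithmic cotangent complex $\mathbb{L}_{(B,N)/(A,M),\mathrm{topo}}$ and show it is concentrated in degree one and flat, the Gabber-type computation following \cite[Chapter 8]{12O} and \cite[Chapters 5--7]{12B1}, so that the Hodge-filtered completed logarithmic derived de Rham complex degenerates to a divided-power polynomial algebra on the shift of the logarithmic cotangent complex, exactly as in the explicit Example computed above. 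Third, sheafify and assemble: the log de Rham complexes glue on $X_{\text{Kummer-pro-\'etale}}$, and the two sequences encode the transitivity triangles for $X_{\text{Kummer-pro-\'etale}}\to X_{\text{Kummer-\'et}}\to X$ together with the vanishing of $\mathbb{L}^{\log}$ of a log perfectoid relative to the base, which is the logarithmic analog of \cite[Theorem 1.2]{12GL}.

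The main obstacle is step two of that program: controlling the completed logarithmic cotangent complex of a logarithmic perfectoid chart over a pseudorigid base that is neither Noetherian nor discretely valued. In the rigid case over an unramified discretely-valued $k$ this is \cite{12GL}, but the pseudorigid base $\mathcal{O}_K[[t]]\left<\pi^a/t^b\right>$ introduces the extra variable $t$ and forces one to work with derived $I$-completions rather than derived $p$-completions throughout; one must verify that the almost-vanishing of $\mathbb{L}^{\log}$ for (Kummer) perfectoids persists in this $I$-adic logarithmic setting, and that the completed tensor products with the large coefficient ring $Z$ commute with the colimit over the charts $(A_0,M_0)\to(B_0,N_0)$ used to pass from integral models to the generic fibre. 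Once these commutations and the cotangent-complex vanishing are secured, the assembly into strictly exact long exact sequences, and then the application of $M{\otimes}^{\mathbb{L}}(-)$, are formal; I expect the coefficient statement to follow from the non-coefficient one with essentially no additional analytic input beyond flatness of $M$.
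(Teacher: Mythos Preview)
The statement you are addressing is labeled a \emph{conjecture} in the paper, not a proposition or theorem; the paper offers no proof for it. So there is no ``paper's own proof'' to compare against. Your proposal should be read as a strategy toward the conjecture rather than as a proof.

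That said, your reduction step is exactly the one the paper uses in the cases it \emph{does} prove. In the rigid analytic setting, the $M$-coefficient proposition is disposed of in one line: ``This is actually a direct consequence of the previous proposition due to the corresponding flatness.'' Your plan to tensor the non-coefficient sequences with $M\otimes^{\mathbb{L}}(-)$ and invoke $Z$-projectivity is precisely this move transported to the logarithmic pseudorigid context, and the paper would certainly endorse it: the $M$-coefficient conjecture is meant to be formal once the non-coefficient conjecture is granted.

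Where you go beyond the paper is in sketching an attack on the underlying non-coefficient conjecture itself (your three-step program). The paper does not attempt this; it simply records the conjectures as ``literally inspired by the rigid analytic situation in \cite[Theorem 1.2]{12GL}'' and flags the foundational inputs needed (Kummer-pro-\'etale bases of log perfectoid subdomains after \cite{12DLLZ1} and \cite[Theorem 2.9.9]{12Ked1}). Your identification of the main obstacle---controlling the completed logarithmic cotangent complex of a log perfectoid chart over the non-Noetherian pseudorigid base with derived $I$-completions in place of $p$-completions---is accurate and is exactly why the paper leaves these as conjectures rather than propositions.
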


\newpage

\subsection{Derived $(p,I)$-Complete Logarithmic Derived de Rham complex}

\indent The corresponding construction of the derived de Rham complex could be defined for general derived spaces. Along our discussion in the situations of rigid analytic spaces and the corresponding pseudorigid spaces we now focus on the corresponding derived  rings. And we consider the corresponding logarithmic setting. 

\begin{setting}
We now fix a bounded morphism of log simplicial topological rings $(A,M)\rightarrow (B,N)$ over $A^*$ where $A^*$ contains a corresponding ring of definition $A^*_0$ which is derived complete with respect to the $(p,I)$-topology and we assume that $(A,M)$ is adic in the sense to be defined just below and we assume that $(A^*_0,I)$ is a prism namely we at least require that the corresponding $\delta$-structure on the corresponding ring will induce the map $\varphi(.):=.^p+p\delta(.)$ such that we have the situation where $p\in (I,\varphi(I))$. For $(A,M)$ or $(B,N)$ respectively we assume this contains a subring $(A_0,M)$ or $(B_0,N_0)$ (over $A^*_0$) respectively such that we have $A_0$ or $B_0$ respectively is derived complete with respect to the corresponding derived $(p,I)$-topology and we assume that $B=B_0[1/f,f\in I]$ (same for $A$) \footnote{One can also invert $p$ to consider the rationality with respect to $p$ as in the rigid analytic situation.}. All the adic rings are assumed to be open mapping. We use the notation $d$ to denote a corresponding primitive element as in \cite[Section 2.3]{12BS} for $A^*$. We are going to assume that $p$ is a topologically nilpotent element.
\end{setting}

\indent As in the corresponding non-logarithmic situation following \cite[Chapter 6]{12B1}\footnote{Here the corresponding left Kan extension happens along the embedding of the free prelog rings to the $\infty$-category of simplicial prelog rings. See \cite[Chapter 6]{12B1} for the construction.} we first consider the category $\mathrm{Alg}_{\mathrm{prelog},\mathrm{smooth},A_0^*}$ of all the smooth prelog algebras over $A_0^*$ and take the corresponding left Kan extension to the corresponding $\infty$-category $\mathrm{Alg}_{\infty,\mathrm{prelog},A_0^*}$ of all the prelog simplicial $A_0^*$-algebras. Then we may take the corresponding derived $(p,I)$-completion to get completed objects. Finally we could basically apply this to the relatively more specific rings in the previous setting. These are basically parallel to the situations where we work over $\mathbb{Z}_p$ and the pseudorigid disc\footnote{Certainly we could work in more general setting over any derived $J$-complete ring, such as in the situation where we do not require that the ring $A^*_0$ is a prism.}.

%%%!!!

\begin{remark}
The corresponding idea behind this is certainly inspired by \cite{12LL} for instance where the authors compared the corresponding prismatic cohomology of some suitable ring $R$ over $A^*_0/I$ for instance and the corresponding derived de Rham cohomology canonically attached to the corresponding morphism, and the corresponding Hodge filtration is compared to the Nygaard filtration. 	
\end{remark}

\indent As in the corresponding construction in the rigid and pseudocoherent situation in the previous section following \cite[Chapitre 3]{12An1}, \cite{12An2}, \cite[Chapter 5, Chapter 6, Chapter 7]{12B1}, \cite[Chapter 1]{12Bei}, \cite[Chapter 5]{12G1}, \cite[Chapter 3, Chapter 4]{12GL}, \cite[Chapitre II, Chapitre III]{12Ill1}, \cite[Chapitre VIII]{12Ill2}, \cite[Chapter 8]{12O}, \cite[Section 4]{12Qui} we give the following definition. 

\begin{definition}
We start from the corresponding construction of the algebraic $p$-adic logarithmic derived de Rham complex for a map $(A,M)\rightarrow (B,N)$. Fix a pair of ring of definitions $(A_0,M_0),(B_0,N_0)$ in $(A,M),(B,N)$ respectively. Then this is the corresponding derived differential complex attached to the cofibrant replacement of $(B_0,N_0)$:
\begin{align}
P^\text{degreenumber}_{(B_0,N_0)},	
\end{align}
which is now denoted by $\mathrm{Kan}_\mathrm{Left}\mathrm{deRham}^\text{degreenumber}_{(B_0,N_0)/(A_0,M_0),\mathrm{alg}}$ after taking the corresponding left Kan extension as in \cite[Chapter 6]{12B1}. The corresponding logarithmic cotangent complex associated is defined to be just:
\begin{align}
\mathbb{L}_{(B_0,N_0)/(A_0,M_0),\mathrm{alg}}:=	\mathrm{Kan}_\mathrm{Left}\mathrm{deRham}^1_{P^\text{degreenumber}_{(B_0,N_0)}/(A_0,M_0),\mathrm{alg}}\otimes_{P^\text{degreenumber}_{(B_0,N_0)}} (B_0,N_0).
\end{align}
The corresponding logarithmic algebraic Andr\'e-Quillen homologies are defined to be:
\begin{align}
H_{\text{degreenumber},{\mathrm{AQ}},\mathrm{alg}}:=\pi_\text{degreenumber} (\mathbb{L}_{(B_0,N_0)/(A_0,M_0),\mathrm{alg}}). 	
\end{align}
The corresponding topological Andr\'e-Quillen complex is actually the complete version of the corresponding algebraic ones above by considering the corresponding derived $(p,I)$-completion over the simplicial module structure:
\begin{align}
\mathbb{L}_{(B_0,N_0)/(A_0,M_0),\mathrm{topo}}:=R\varprojlim \mathrm{Kos}_{(p,I)}	\left((\mathrm{Kan}_\mathrm{Left}\mathrm{deRham}^1_{P^\text{degreenumber}_{(B_0,N_0)}/(A_0,M_0),\mathrm{alg}}\otimes_{P^\text{degreenumber}_{(B_0,N_0)}} (B_0,N_0))\right).
\end{align}
Then we consider the corresponding derived algebraic de Rham complex which is just defined to be:
\begin{align}
\mathrm{Kan}_\mathrm{Left}\mathrm{deRham}^\text{degreenumber}_{(B_0,N_0)/(A_0,M_0),\mathrm{alg}},\mathrm{Fil}^*_{\mathrm{Kan}_\mathrm{Left}\mathrm{deRham}^1_{(B_0,N_0)/(A_0,M_0),\mathrm{alg}}}.	
\end{align}
We then take the corresponding derived $(p,I)$-completion and we denote that by:
\begin{align}
\mathrm{Kan}_\mathrm{Left}&\mathrm{deRham}^\text{degreenumber}_{(B_0,N_0)/(A_0,M_0),\mathrm{topo}}\\
&:=R\varprojlim  \mathrm{Kos}_{(p,I)}\left(\mathrm{Kan}_\mathrm{Left}\mathrm{deRham}^\text{degreenumber}_{(B_0,N_0)/(A_0,M_0),\mathrm{alg}},\mathrm{Fil}^*_{\mathrm{Kan}_\mathrm{Left}\mathrm{deRham}^\text{degreenumber}_{(B_0,N_0)/(A_0,M_0),\mathrm{alg}}}\right),\\
&\mathrm{Fil}^*_{\mathrm{Kan}_\mathrm{Left}\mathrm{deRham}^\text{degreenumber}_{(B_0,N_0)/(A_0,M_0),\mathrm{topo}}}:=R\varprojlim \mathrm{Kos}_{(p,I)} \left(\mathrm{Fil}^*_{\mathrm{Kan}_\mathrm{Left}\mathrm{deRham}^\text{degreenumber}_{(B_0,N_0)/(A_0,M_0),\mathrm{alg}}}\right).	
\end{align}
Then in the situation that all the rings are classical logarithmic adic rings we consider the following construction for the map $A\rightarrow B$ by putting:
\begin{align}
\mathbb{L}_{(B,N)/(A,M),\mathrm{topo}}:= \mathrm{Colim}_{(A_0,M_0)\rightarrow (B_0,N_0)}\mathbb{L}_{(B_0,N_0)/(A_0,M_0),\mathrm{topo}}[1/(d)],\\
H_{\text{degreenumber},{\mathrm{AQ}},\mathrm{topo}}:=\pi_\text{degreenumber} (\mathbb{L}_{(B,N)/(A,M),\mathrm{topo}}),	\\
\mathrm{Kan}_\mathrm{Left}\mathrm{deRham}^\text{degreenumber}_{(B,N)/(A,M),\mathrm{topo}}:=\mathrm{Colim}_{(A_0,M_0)\rightarrow (B_0,N_0)}\mathrm{Kan}_\mathrm{Left}\mathrm{deRham}^\text{degreenumber}_{(B_0,N_0)/(A_0,M_0),\mathrm{topo}}[1/(d)],\\
\mathrm{Fil}^*_{\mathrm{Kan}_\mathrm{Left}\mathrm{deRham}^\text{degreenumber}_{(B,N)/(A,M),\mathrm{topo}}}:=\mathrm{Colim}_{(A_0,M_0)\rightarrow (B_0,N_0)}\mathrm{Fil}^*_{\mathrm{Kan}_\mathrm{Left}\mathrm{deRham}^\text{degreenumber}_{(B_0,N_0)/(A_0,M_0),\mathrm{topo}}}[1/(d)].
\end{align}
Then we need to take the corresponding Hodge-Filtered completion by using the corresponding filtration associated as above to achieve the corresponding Hodge-complete objects in the corresponding filtered $\infty$-categories:
\begin{align}
{\mathrm{Kan}_\mathrm{Left}\mathrm{deRham}}^\text{degreenumber}_{(B,N)/(A,M),\mathrm{topo,Hodge}},\mathrm{Fil}^*_{{\mathrm{Kan}_\mathrm{Left}\mathrm{deRham}}^\text{degreenumber}_{(B,N)/(A,M),\mathrm{topo,Hodge}}}.	
\end{align}
\end{definition}

\begin{definition}
We define the corresponding finite projective filtered crystals to be the corresponding finite projective module spectra over the topological filtered $E_\infty$-ring $\mathrm{Kan}_\mathrm{Left}\mathrm{deRham}^\text{degreenumber}_{(B,N)/{(A,M)},\mathrm{topo}}$ with the corresponding induced filtrations.	
\end{definition}

\begin{definition}
We define the corresponding almost perfect \footnote{This is the corresponding derived version of pseudocoherence from \cite{12Lu1}, \cite{12Lu2}.} filtered crystals to be the corresponding almost perfect module spectra over the topological filtered $E_\infty$-ring $\mathrm{Kan}_\mathrm{Left}\mathrm{deRham}^\text{degreenumber}_{(B,N)/(A,M),\mathrm{topo}}$ with the corresponding induced filtrations.	
\end{definition}

\indent We now consider the corresponding large coefficient local systems over pseudorigid spaces where $Z$ now is a simplicial topological algebra over $\mathbb{Z}_p$\footnote{It is better to assume that it is basically derived completely flat.}.

%%%!!!

\begin{definition}
We define the following $Z$ deformed version of the corresponding complete version of the corresponding logarithmic Andr\'e-Quillen homology and the corresponding complete version of the corresponding logarithmic derived de Rham complex. We start from the corresponding construction of the logarithmic algebraic derived de Rham complex for a map $(A,M)\rightarrow (B,N)$. Fix a pair of ring of definitions $(A_0,M_0),(B_0,N_0)$ in $(A,M),(B,N)$ respectively. Then this is the corresponding derived differential complex attached to the canonical resolution of $(B_0,N_0)$:
\begin{align}
(A_0,M_0)[(B_0,N_0)]^\text{degreenumber},	
\end{align}
which is now denoted by $\mathrm{Kan}_\mathrm{Left}\mathrm{deRham}^\text{degreenumber}_{(B_0,N_0)/(A_0,M_0),\mathrm{alg}}$. The corresponding logarithmic cotangent complex associated is defined to be just:
\begin{align}
\mathbb{L}_{(B_0,N_0)/(A_0,M_0),\mathrm{alg}}\\
&:=	\mathrm{Kan}_\mathrm{Left}\mathrm{deRham}^1_{(A_0,M_0)[(B_0,N_0)]^\text{degreenumber}/(A_0,M_0),\mathrm{alg}}\otimes_{(A_0,M_0)[(B_0,N_0)]^\text{degreenumber}} (B_0,N_0).
\end{align}
The corresponding algebraic logarithmic Andr\'e-Quillen homologies are defined to be:
\begin{align}
H_{\text{degreenumber},{\mathrm{AQ}},\mathrm{alg}}:=\pi_\text{degreenumber} (\mathbb{L}_{(B_0,N_0)/(A_0,M_0),\mathrm{alg}}). 	
\end{align}
The corresponding topological logarithmic Andr\'e-Quillen complex is actually the complete version of the corresponding algebraic ones above by considering the corresponding derived $(p,I)$-completion over the simplicial module structure:
\begin{align}
&\mathbb{L}_{(B_0,N_0)/(A_0,M_0),\mathrm{topo}}\\
&:=R\varprojlim_k\mathrm{Kos}_{(p,I)}	\left((\mathrm{Kan}_\mathrm{Left}\mathrm{deRham}^1_{(A_0,M_0)[(B_0,N_0)]^\text{degreenumber}/(A_0,M_0),\mathrm{alg}}\otimes_{(A_0,M_0)[(B_0,N_0)]^\text{degreenumber}} (B_0,N_0))\right).
\end{align}
Taking the product with $Z$ we have the corresponding integral version of the topological logarithmic version of the corresponding Andr\'e-Quillen complex:
\begin{align}
\mathbb{L}_{(B_0,N_0)/(A_0,M_0),\mathrm{topo},Z}:=\mathbb{L}_{(B_0,N_0)/(A_0,M_0),\mathrm{topo}}{\otimes}_{\mathbb{Z}_p}Z
\end{align}
Then we consider the corresponding logarithmic derived algebraic de Rham complex which is just defined to be:
\begin{align}
\mathrm{Kan}_\mathrm{Left}\mathrm{deRham}^\text{degreenumber}_{(B_0,N_0)/(A_0,M_0),\mathrm{alg}},\mathrm{Fil}^*_{\mathrm{Kan}_\mathrm{Left}\mathrm{deRham}^1_{(B_0,N_0)/(A_0,M_0),\mathrm{alg}}}.	
\end{align}
We then take the corresponding derived $(p,I)$-completion and we denote that by:
\begin{align}
\mathrm{Kan}_\mathrm{Left}&\mathrm{deRham}^\text{degreenumber}_{(B_0,N_0)/(A_0,M_0),\mathrm{topo}}:=\\
&R\varprojlim_k\mathrm{Kos}_{(p,I)}\left(\mathrm{Kan}_\mathrm{Left}\mathrm{deRham}^\text{degreenumber}_{(B_0,N_0)/(A_0,M_0),\mathrm{alg}},\mathrm{Fil}^*_{\mathrm{Kan}_\mathrm{Left}\mathrm{deRham}^\text{degreenumber}_{(B_0,N_0)/(A_0,M_0),\mathrm{alg}}}\right),\\
&\mathrm{Fil}^*_{\mathrm{Kan}_\mathrm{Left}\mathrm{deRham}^\text{degreenumber}_{(B_0,N_0)/(A_0,M_0),\mathrm{topo}}}:=R\varprojlim_k\mathrm{Kos}_{(p,I)}\left(\mathrm{Fil}^*_{\mathrm{Kan}_\mathrm{Left}\mathrm{deRham}^\text{degreenumber}_{(B_0,N_0)/(A_0,M_0),\mathrm{alg}}}\right).	
\end{align}
Before considering the corresponding integral version we just consider the corresponding product of these $\mathbb{E}_\infty$-rings with $Z$ to get:
\begin{align}
\mathrm{Kan}_\mathrm{Left}\mathrm{deRham}^\text{degreenumber}_{(B_0,N_0)/(A_0,M_0),\mathrm{topo},Z}:=\mathrm{Kan}_\mathrm{Left}\mathrm{deRham}^\text{degreenumber}_{(B_0,N_0)/(A_0,M_0),\mathrm{topo}}{\otimes}^\mathbb{L}_{\mathbb{Z}_p}Z,\\
\mathrm{Fil}^*_{\mathrm{Kan}_\mathrm{Left}\mathrm{deRham}^\text{degreenumber}_{(B_0,N_0)/(A_0,M_0),\mathrm{topo}},Z}:=\mathrm{Fil}^*_{\mathrm{Kan}_\mathrm{Left}\mathrm{deRham}^\text{degreenumber}_{(B_0,N_0)/(A_0,M_0),\mathrm{topo}}}{\otimes}^\mathbb{L}_{\mathbb{Z}_p}Z.	
\end{align}
When we have that the corresponding ring $Z$ is also derived $(p,I)$-topologized and commutative, then we can further take the correspding derived $(p,I)$-completion to achieve the corresponding derived completed version:
\begin{align}
\mathrm{Kan}_\mathrm{Left}\mathrm{deRham}^\text{degreenumber}_{(B_0,N_0)/(A_0,M_0),\mathrm{topo},Z}:=\mathrm{Kan}_\mathrm{Left}\mathrm{deRham}^\text{degreenumber}_{(B_0,N_0)/(A_0,M_0),\mathrm{topo}}\widehat{\otimes}^\mathbb{L}_{\mathbb{Z}_p}Z,\\
\mathrm{Fil}^*_{\mathrm{Kan}_\mathrm{Left}\mathrm{deRham}^\text{degreenumber}_{(B_0,N_0)/(A_0,M_0),\mathrm{topo}},Z}:=\mathrm{Fil}^*_{\mathrm{Kan}_\mathrm{Left}\mathrm{deRham}^\text{degreenumber}_{(B_0,N_0)/(A_0,M_0),\mathrm{topo}}}\widehat{\otimes}^\mathbb{L}_{\mathbb{Z}_p}Z.	
\end{align}
Then in the situation that all the rings are classical logarithmic adic rings we consider the following construction for the map $(A,M)\rightarrow (B,N)$ by putting:
\begin{align}
&\mathbb{L}_{(B_0,N_0)/(A_0,M_0),\mathrm{topo},Z}:= \mathrm{Colim}_{(A_0,M_0)\rightarrow (B_0,N_0)}\mathbb{L}_{(B_0,N_0)/(A_0,M_0),\mathrm{topo},Z}[1/(p,I)],\\
&H_{\text{degreenumber},{\mathrm{AQ}},\mathrm{topo},Z}:=\pi_\text{degreenumber} (\mathbb{L}_{(B,N)/(A,M),\mathrm{topo},Z}),	\\
&\mathrm{Kan}_\mathrm{Left}\mathrm{deRham}^\text{degreenumber}_{(B,N)/(A,M),\mathrm{topo},Z}:=\mathrm{Colim}_{(A_0,M_0)\rightarrow (B_0,N_0)}\mathrm{Kan}_\mathrm{Left}\mathrm{deRham}^\text{degreenumber}_{(B,N)/(A_0,M_0),\mathrm{topo},Z}[1/(p,I)],\\
&\mathrm{Fil}^*_{\mathrm{Kan}_\mathrm{Left}\mathrm{deRham}^\text{degreenumber}_{(B,N)/(A,M),\mathrm{topo}},Z}:=\mathrm{Colim}_{(A_0,M_0)\rightarrow (B_0,N_0)}\mathrm{Fil}^*_{\mathrm{Kan}_\mathrm{Left}\mathrm{deRham}^\text{degreenumber}_{(B_0,N_0)/(A_0,M_0),\mathrm{topo}},Z}[1/(p,I)].
\end{align}
Then we need to take the corresponding Hodge-Filtered completion by using the corresponding filtration associated as above to achieve the corresponding Hodge-complete objects in the corresponding filtered $\infty$-categories:
\begin{align}
{\mathrm{Kan}_\mathrm{Left}\mathrm{deRham}}^\text{degreenumber}_{(B,N)/(A,M),\mathrm{topo,Hodge},Z},\mathrm{Fil}^*_{{\mathrm{Kan}_\mathrm{Left}\mathrm{deRham}}^\text{degreenumber}_{(B,N)/(A,M),\mathrm{topo,Hodge}},Z}.	
\end{align} 	
\end{definition}

\newpage

\section{Robba Sheaves and Frobenius Sheaves}

\subsection{Pseudorigid Relative Toric Tower}

\indent In this section we discuss the corresponding Robba sheaves of \cite{12KL1} and \cite{12KL2} over general spaces. Here we consider the corresponding discussion for the pseudorigid situation which is certainly following the corresponding treatment in \cite[Chapter 8]{12KL2}. But we do not really understand if the corresponding generality could be achieved at this moment. So one definitely has to be very careful in the corresponding analysis \footnote{Pseudorigid spaces are actually along our generalization in our mind, which are the first kinds of spaces we would like to study in our project before considering more general topological or functional analytic spaces, namely those general spaces whose structure sheaves of simplicial rings carry topologies or norms such as in \cite{12BBBK} and \cite{12CS2}.}. These kinds of spaces are actually different from the rigid analytic space, although the theories are really related to each other, such as in \cite{12Bel1}, \cite{12Bel2} and \cite{12L}. Following the ideas in the rigid situation in \cite{12KL2}, we consider the towers in the smooth situation in the following:

\begin{setting}
We consider the following towers as in \cite[Chapter 5]{12KL2}. First we let $A_0$ be the ring $\mathcal{O}_K[[u]]\left<\frac{\pi^a}{u^b}\right>[1/u]\left<T_1^\pm,...,T_d^\pm\right>$ and we put $A_0^+$ to be the ring\\ $\mathcal{O}_K[[u]]\left<\frac{\pi^a}{u^b}\right>.\left<T_1^\pm,...,T_d^\pm\right>$. And for the higher level we have the following rings:
\begin{align}
&(A_n,A_n^+)\\
&:=(\mathcal{O}_K(\pi^{1/p^n})[[u]]\left<\frac{\pi^a}{u^b}\right>[1/u](u^{\pm 1/p^n})\left<T_1^{\pm 1/p^n},...,T_d^{\pm 1/p^n}\right>,\\
&\mathcal{O}_K(\pi^{1/p^n})[[u]]\left<\frac{\pi^a}{u^b}\right>[u^{1/p^n}]\left<T_1^{\pm 1/p^n},...,T_d^{\pm 1/p^n}\right>),
\end{align}
which implies that we have:
\[
\xymatrix@C+0pc@R+0pc{
A_0\ar[r] \ar[r] \ar[r] &A_1 \ar[r] \ar[r] \ar[r] &... \ar[r] \ar[r] \ar[r] &A_n \ar[r] \ar[r] \ar[r] &..., \forall n\geq 0. 
}
\]
\end{setting}

\begin{proposition}
The tower above is finite \'etale.	
\end{proposition}

\begin{proof}
Note that the corresponding pseudorigid space is actually formally of finite type over $\mathcal{O}_K$ namely we have that the ring at the zeroth level is just
\begin{center}
 $\mathcal{O}_K[[u,S_1,...,S_d]][1/u]\left<V_1^{\pm},...,V_e^{\pm}\right>$. 
\end{center} 
Therefore the corresponding tower under this sort of presentation will give:
\begin{align}
&(A_n,A_n^+)\\
&:=(\mathcal{O}_K[\pi^{1/p^n}][[u^{1/p^n},S_1^{1/p^n},...,S_d^{1/p^n}]][1/u^{1/p^n}]\left<V_1^{\pm 1/p^n},...,V_e^{\pm 1/p^n}\right>,\\
&\mathcal{O}_K[\pi^{1/p^n}][[u^{1/p^n},S_1^{1/p^n},...,S_d^{1/p^n}]]\left<V_1^{\pm 1/p^n},...,V_e^{\pm 1/p^n}\right>).
\end{align}	
This is certainly a finite \'etale tower.
\end{proof}

\begin{proposition}
The tower above is perfectoid.	
\end{proposition}

\begin{proof}
Note that the corresponding pseudorigid space is actually formally of finite type over $\mathcal{O}_K$ namely we have that the ring at the zeroth level is just
\begin{center}
 $\mathcal{O}_K[[u,S_1,...,S_d]][1/u]\left<V_1^{\pm},...,V_e^{\pm}\right>$.	
\end{center}
Therefore the corresponding tower under this sort of presentation will give:
\begin{align}
&(A_n,A_n^+)\\
&:=(\mathcal{O}_K[\pi^{1/p^n}][[u^{1/p^n},S_1^{1/p^n},...,S_d^{1/p^n}]][1/u^{1/p^n}]\left<V_1^{\pm 1/p^n},...,V_e^{\pm 1/p^n}\right>,\\
&\mathcal{O}_K[\pi^{1/p^n}][[u^{1/p^n},S_1^{1/p^n},...,S_d^{1/p^n}]]\left<V_1^{\pm 1/p^n},...,V_e^{\pm 1/p^n}\right>).
\end{align}	
The corresponding $\infty$-level is just:
\begin{align}
&A_\infty\\
&:=\mathcal{O}_K[\pi^{1/p^\infty}][[u^{1/p^\infty},S_1^{1/p^\infty},...,S_d^{1/p^\infty}]][1/u^{1/p^\infty}]\left<V_1^{\pm 1/p^\infty},...,V_e^{\pm 1/p^\infty}\right>^\wedge,\\
&A_\infty^+\\
&:=\mathcal{O}_K[\pi^{1/p^\infty}][[u^{1/p^\infty},S_1^{1/p^\infty},...,S_d^{1/p^\infty}]]\left<V_1^{\pm 1/p^\infty},...,V_e^{\pm 1/p^\infty}\right>^\wedge.
\end{align}
These are certainly the corresponding perfectoid rings in the sense of \cite[Definition 3.3.1]{12KL2}, also see \cite[Lemma 3.3.28]{12KL2}\footnote{As in \cite[Lemma 3.3.28]{12KL2} one takes the original pseudouniformizer, and goes to a suitably high level to achieve some root $u'$ of the function $X^{p^n}-Xu-u$, and evetually the corresponding element $u'^p$ will divide $p$.}.
\end{proof}

\begin{remark}
Here the corresponding rings are regarded as the corresponding topological rings instead of Banach rings, but we can certainly consider the corresponding Banach ring structure induced from the linear topology namely really the $\infty$-level of this tower is Fontaine perfectoid adic Banach ring which is Tate in the sense of \cite[Definition 3.1.1]{12KL2}.	
\end{remark}

\begin{proposition}
The tower is weakly decompleting.	
\end{proposition}

\begin{proof}
We give the proof where $K=\mathbb{Q}_p$, which is certainly carried over to more general situation. The corresponding tower is actually weakly decompleting once one considers the corresponding presentation as above for the corresponding ring of definition:
\begin{align}
&A_\infty\\
&:=\mathcal{O}_K[\pi^{1/p^\infty}][[u^{1/p^\infty},S_1^{1/p^\infty},...,S_d^{1/p^\infty}]][1/u^{1/p^\infty}]\left<V_1^{\pm 1/p^\infty},...,V_e^{\pm 1/p^\infty}\right>^\wedge,\\
&A_\infty^+\\
&:=\mathcal{O}_K[\pi^{1/p^\infty}][[u^{1/p^\infty},S_1^{1/p^\infty},...,S_d^{1/p^\infty}]]\left<V_1^{\pm 1/p^\infty},...,V_e^{\pm 1/p^\infty}\right>^\wedge.
\end{align}
This transfers to the positive characteristic situation under tilting:
\begin{align}
&R_\infty\\
&:={k}_K[\overline{\pi}^{1/p^\infty}][[\overline{u}^{1/p^\infty},\overline{S}_1^{1/p^\infty},...,\overline{S}_d^{1/p^\infty}]][1/\overline{u}^{1/p^\infty}]\left<\overline{V}_1^{\pm 1/p^\infty},...,\overline{V}_e^{\pm 1/p^\infty}\right>^\wedge,\\
&R_\infty^+\\
&:={k}_K[\overline{\pi}^{1/p^\infty}][[\overline{u}^{1/p^\infty},\overline{S}_1^{1/p^\infty},...,\overline{S}_d^{1/p^\infty}]]\left<\overline{V}_1^{\pm 1/p^\infty},...,\overline{V}_e^{\pm 1/p^\infty}\right>^\wedge.
\end{align}	
Then one can finish as in \cite[Lemma 7.1.2]{12KL2} by comparing this with $R_{(H_\text{degreenumber},H_\text{degreenumber}^+)}$.
\end{proof}

\newpage
\subsection{Pseudorigid Logarithmic Relative Toric Tower}

\begin{setting}
We consider the following towers as in \cite[Chapter 5]{12KL2}. First we let $A_0$ be the ring $\mathcal{O}_K[[u]]\left<\frac{\pi^a}{u^b}\right>[1/u]\left<T_1^\pm,...,T_d^\pm,T^-_{d+1},...,T^-_f\right>$ and we put $A_0^+$ to be the ring 
\begin{align}
\mathcal{O}_K[[u]]\left<\frac{\pi^a}{u^b}\right>.\left<T_1^\pm,...,T_d^\pm,T^-_{d+1},...,T^-_f\right>.	
\end{align}
And for the higher level we have the following rings:
\begin{align}
&(A_n,A_n^+)\\
&:=(\mathcal{O}_K(\pi^{1/p^n})[[u]]\left<\frac{\pi^a}{u^b}\right>[1/u](u^{\pm 1/p^n})\left<T_1^{\pm 1/p^n},...,T_d^{\pm 1/p^n},T^{-1/p^n}_{d+1},...,T^{-1/p^n}_f\right>,\\
&\mathcal{O}_K(\pi^{1/p^n})[[u]]\left<\frac{\pi^a}{u^b}\right>[u^{1/p^n}]\left<T_1^{\pm 1/p^n},...,T_d^{\pm 1/p^n},T^{-1/p^n}_{d+1},...,T^{-1/p^n}_f\right>),
\end{align}
which implies that we have:
\[
\xymatrix@C+0pc@R+0pc{
A_0\ar[r] \ar[r] \ar[r] &A_1 \ar[r] \ar[r] \ar[r] &... \ar[r] \ar[r] \ar[r] &A_n \ar[r] \ar[r] \ar[r] &..., \forall n\geq 0. 
}
\]
\end{setting}

\begin{proposition}
The tower above is finite \'etale.	
\end{proposition}

\begin{proof}
Note that the corresponding pseudorigid space is actually formally of finite type over $\mathcal{O}_K$ namely we have that the ring at the zeroth level is just
\begin{center}
 $\mathcal{O}_K[[u,S_1,...,S_d]][1/u]\left<V_1^{\pm},...,V_e^{\pm}\right>$. 
\end{center}
Therefore the corresponding tower under this sort of presentation will give:
\begin{align}
&(A_n,A_n^+)\\
&:=(\mathcal{O}_K[\pi^{1/p^n}][[u^{1/p^n},S_1^{1/p^n},...,S_d^{1/p^n}]][1/u^{1/p^n}]\left<V_1^{\pm 1/p^n},...,V_e^{\pm 1/p^n},T^{-1/p^n}_{e+1},...,T^{-1/p^n}_f\right>,\\
&\mathcal{O}_K[\pi^{1/p^n}][[u^{1/p^n},S_1^{1/p^n},...,S_d^{1/p^n}]]\left<V_1^{\pm 1/p^n},...,V_e^{\pm 1/p^n},T^{-1/p^n}_{d+1},...,T^{-1/p^n}_f\right>).
\end{align}	
This is certainly a finite \'etale tower.
\end{proof}

\begin{proposition}
The tower above is perfectoid.	
\end{proposition}

\begin{proof}
Note that the corresponding pseudorigid space is actually formally of finite type over $\mathcal{O}_K$ namely we have that the ring at the zeroth level is just
\begin{center}
 $\mathcal{O}_K[[u,S_1,...,S_d]][1/u]\left<V_1^{\pm},...,V_e^{\pm}\right>$. 
\end{center} 
Therefore the corresponding tower under this sort of presentation will give:
\begin{align}
&(A_n,A_n^+)\\
&:=(\mathcal{O}_K[\pi^{1/p^n}][[u^{1/p^n},S_1^{1/p^n},...,S_d^{1/p^n}]][1/u^{1/p^n}]\left<V_1^{\pm 1/p^n},...,V_e^{\pm 1/p^n},T^{-1/p^n}_{e+1},...,T^{-1/p^n}_f\right>,\\
&\mathcal{O}_K[\pi^{1/p^n}][[u^{1/p^n},S_1^{1/p^n},...,S_d^{1/p^n}]]\left<V_1^{\pm 1/p^n},...,V_e^{\pm 1/p^n},T^{-1/p^n}_{e+1},...,T^{-1/p^n}_f\right>).
\end{align}	
The corresponding $\infty$-level is just:
\begin{align}
&A_\infty:=\\
&\mathcal{O}_K[\pi^{1/p^\infty}][[u^{1/p^\infty},S_1^{1/p^\infty},...,S_d^{1/p^\infty}]][1/u^{1/p^\infty}]\left<V_1^{\pm 1/p^\infty},...,V_e^{\pm 1/p^\infty},T^{-1/p^\infty}_{e+1},...,T^{-1/p^\infty}_f\right>^\wedge,\\
&A_\infty^+\\
&:=\mathcal{O}_K[\pi^{1/p^\infty}][[u^{1/p^\infty},S_1^{1/p^\infty},...,S_d^{1/p^\infty}]]\left<V_1^{\pm 1/p^\infty},...,V_e^{\pm 1/p^\infty},T^{-1/p^\infty}_{e+1},...,T^{-1/p^\infty}_f\right>^\wedge.
\end{align}
These are certainly the corresponding perfectoid rings in the sense of \cite[Definition 3.3.1]{12KL2}, also see \cite[Lemma 3.3.28]{12KL2}\footnote{As in \cite[Lemma 3.3.28]{12KL2} one takes the original pseudouniformizer, and goes to a suitably high level to achieve some root $u'$ of the function $X^{p^n}-Xu-u$, and evetually the corresponding element $u'^p$ will divide $p$.}.  
\end{proof}

\begin{remark}
Here the corresponding rings are regarded as the corresponding topological rings instead of Banach rings, but we can certainly consider the corresponding Banach ring structure induced from the linear topology namely really the $\infty$-level of this tower is Fontaine perfectoid adic Banach ring which is Tate in the sense of \cite[Definition 3.1.1]{12KL2}.	
\end{remark}

\begin{proposition}
The tower is weakly decompleting.	
\end{proposition}

\begin{proof}
We give the proof where $K=\mathbb{Q}_p$, which is certainly carried over to more general situation. The corresponding tower is actually weakly decompleting once one considers the corresponding presentation as above for the corresponding ring of definition:
\begin{align}
&A_\infty:=\\
&\mathcal{O}_K[\pi^{1/p^\infty}][[u^{1/p^\infty},S_1^{1/p^\infty},...,S_d^{1/p^\infty}]][1/u^{1/p^\infty}]\left<V_1^{\pm 1/p^\infty},...,V_e^{\pm 1/p^\infty},T^{-1/p^\infty}_{e+1},...,T^{-1/p^\infty}_f\right>^\wedge,\\
&A_\infty^+\\
&:=\mathcal{O}_K[\pi^{1/p^\infty}][[u^{1/p^\infty},S_1^{1/p^\infty},...,S_d^{1/p^\infty}]]\left<V_1^{\pm 1/p^\infty},...,V_e^{\pm 1/p^\infty},T^{-1/p^\infty}_{e+1},...,T^{-1/p^\infty}_f\right>^\wedge.
\end{align}
This transfers to the positive characteristic situation under tilting:
\begin{align}
&R_\infty:=\\
&{k}_K[\overline{\pi}^{1/p^\infty}][[\overline{u}^{1/p^\infty},\overline{S}_1^{1/p^\infty},...,\overline{S}_d^{1/p^\infty}]][1/\overline{u}^{1/p^\infty}]\left<\overline{V}_1^{\pm 1/p^\infty},...,\overline{V}_e^{\pm 1/p^\infty},T^{-1/p^\infty}_{e+1},...,T^{-1/p^\infty}_f\right>^\wedge,\\
&R_\infty^+\\
&:={k}_K[\overline{\pi}^{1/p^\infty}][[\overline{u}^{1/p^\infty},\overline{S}_1^{1/p^\infty},...,\overline{S}_d^{1/p^\infty}]]\left<\overline{V}_1^{\pm 1/p^\infty},...,\overline{V}_e^{\pm 1/p^\infty},T^{-1/p^\infty}_{e+1},...,T^{-1/p^\infty}_f\right>^\wedge.
\end{align}	
Then one can finish as in \cite[Lemma 7.1.2]{12KL2} by comparing this with $R_{(H_\text{degreenumber},H_\text{degreenumber}^+)}$.
\end{proof}

\newpage
\subsection{Robba Sheaves over Pseudorigid Spaces}

\indent Now we consider Kedlaya-Liu's Robba sheaves, which are also some motivic and functorial construction. Our space is actually regarded over $\mathbb{Z}_p$, but note that the corresponding pseudorigid affinoid could be defined over arbitrary integral ring $\mathcal{O}_K$ of some analytic field $K$. Let $X$ be a pseudorigid space over $\mathcal{O}_K$, but we regard this as a corresponding Tate adic space over $\mathbb{Z}_p$. Now we consider the corresponding pro-\'etale site of $X$ which we denote it as $X_\text{pro\'et}$.

%%%!!!

\begin{definition}
We now apply the corresponding definitions of the Robba rings in \cite[Definition 4.1.1]{12KL2} with $E$ therein being just $\mathbb{Q}_p$ to any perfectoid subdomain $(P_\infty,P^+_\infty)$ of $X_\text{pro\'et}$. For each such perfectoid, we promote it to be a Banach ring $(P_\infty,P^+_\infty,\|.\|_{\infty})$, then we have the corresponding construction of the Robba rings in \cite[Definition 4.1.1]{12KL2} (we use the notation $\Pi$ to represent the notation $\mathcal{R}$):
\begin{align}
\widetilde{\Pi}^{[s,r]}_{(P_\infty,P^+_\infty,\|.\|_{\infty})},\\
\varprojlim_s \widetilde{\Pi}^{[s,r]}_{(P_\infty,P^+_\infty,\|.\|_{\infty})},\\
\varinjlim_r \varprojlim_s \widetilde{\Pi}^{[s,r]}_{(P_\infty,P^+_\infty,\|.\|_{\infty})}.	
\end{align}
Then one just organizes these to be certain presheaves over the site $X_\text{pro\'et}$:
\begin{align}
\widetilde{\Pi}^{[s,r]}_{X,\text{pro\'et}},\\
\varprojlim_s \widetilde{\Pi}^{[s,r]}_{X,\text{pro\'et}},\\
\varinjlim_r \varprojlim_s \widetilde{\Pi}^{[s,r]}_{X,\text{pro\'et}}.	
\end{align}	
However the corresponding construction is not canonical since the promotion to Banach rings locally is not functorial. But we do have the situation that they are actually sheaves due to the fact that we can regard them as sheaves over some preperfectoid spaces under the corresponding tilting of the total spaces, where we use the same notation to denote the sheaves.
\begin{align}
\widetilde{\Pi}^{[s,r]}_{X,\text{pro\'et}},\\
\varprojlim_s \widetilde{\Pi}^{[s,r]}_{X,\text{pro\'et}},\\
\varinjlim_r \varprojlim_s \widetilde{\Pi}^{[s,r]}_{X,\text{pro\'et}}.	
\end{align}	
The corresponding total spaces are defined as:
\begin{align}
\mathrm{Spectrumadic}_{\mathrm{total}}(\widetilde{\Pi}^{[s,r]}_{X,\text{pro\'et}},\widetilde{\Pi}^{[s,r],+}_{X,\text{pro\'et}}),\\
\mathrm{Spectrumadic}_{\mathrm{total}}(\varprojlim_s \widetilde{\Pi}^{[s,r]}_{X,\text{pro\'et}},\varprojlim_s \widetilde{\Pi}^{[s,r],+}_{X,\text{pro\'et}}),\\
\mathrm{Spectrumadic}_{\mathrm{total}}(\varinjlim_r \varprojlim_s \widetilde{\Pi}^{[s,r]}_{X,\text{pro\'et}},\varinjlim_r \varprojlim_s \widetilde{\Pi}^{[s,r],+}_{X,\text{pro\'et}}).	
\end{align}	
\end{definition}

\begin{definition}
As in \cite[Chapter 4.3 and Chapter 8]{12KL2}, consider the corresponding total space:
\begin{align}
\mathrm{Spectrumadic}_{\mathrm{total}}(\widetilde{\Pi}^{[s,r]}_{X,\text{pro\'et}},\widetilde{\Pi}^{[s,r],+}_{X,\text{pro\'et}}).	
\end{align}
We define the corresponding $\varphi$-sheaves (where $\varphi$ is Frobenius lifting from $p$-th power Frobenius coming from the characteristic $p$ rings encoded in the construction recalled above from \cite{12KL2}) to be sheaves locally attached to \'etale-stably pseudocoherent sheaves carrying the corresponding semilinear Frobenius morphisms realizing the isomorphisms under pullback. Then over 

\begin{align}
\mathrm{Spectrumadic}_{\mathrm{total}}(\varprojlim_s \widetilde{\Pi}^{[s,r]}_{X,\text{pro\'et}},\varprojlim_s \widetilde{\Pi}^{[s,r],+}_{X,\text{pro\'et}})	
\end{align}
we define the similar pseudocoherent sheaves (complete with respect to the natural topology) locally base change to some sheaves in the previous kind. Finally we define the similar pseudocoherent sheaves (complete with respect to the natural topology) over 
\begin{align}
\mathrm{Spectrumadic}_{\mathrm{total}}(\varinjlim_r \varprojlim_s \widetilde{\Pi}^{[s,r]}_{X,\text{pro\'et}},\varinjlim_r \varprojlim_s \widetilde{\Pi}^{[s,r],+}_{X,\text{pro\'et}}),
\end{align}
locally base change to some sheaves in the previous kind\footnote{Here it is certainly not expect to be the case where we could have uniform radius $r>0$.}.

\end{definition}

\newpage
\subsection{Robba Sheaves over $(p,I)$-Adic Spaces}

\indent Now we consider Kedlaya-Liu's Robba sheaves over more general adic spaces, which is also some motivic and functorial construction. Let $X$ be a Tate adic space over $\mathbb{Z}_p$ where $p$ is assumed to be topologically nilpotent. Now we consider the corresponding pro-\'etale site of $X$ which we denote it as $X_\text{pro\'et}$.

\begin{definition}
We now apply the corresponding definitions of the Robba rings in \cite[Definition 4.1.1]{12KL2} with $E$ therein being just $\mathbb{Q}_p$ to any perfectoid subdomain $(P_\infty,P^+_\infty)$ of $X_\text{pro\'et}$. For each such perfectoid, we promote it to be a Banach ring $(P_\infty,P^+_\infty,\|.\|_{\infty})$, then we have the corresponding construction of the Robba rings in \cite[Definition 4.1.1]{12KL2} (we use the notation $\Pi$ to represent the notation $\mathcal{R}$):
\begin{align}
\widetilde{\Pi}^{[s,r]}_{(P_\infty,P^+_\infty,\|.\|_{\infty})},\\
\varprojlim_s \widetilde{\Pi}^{[s,r]}_{(P_\infty,P^+_\infty,\|.\|_{\infty})},\\
\varinjlim_r \varprojlim_s \widetilde{\Pi}^{[s,r]}_{(P_\infty,P^+_\infty,\|.\|_{\infty})}.	
\end{align}
Then one just organizes these to be certain presheaves over the site $X_\text{pro\'et}$:
\begin{align}
\widetilde{\Pi}^{[s,r]}_{X,\text{pro\'et}},\\
\varprojlim_s \widetilde{\Pi}^{[s,r]}_{X,\text{pro\'et}},\\
\varinjlim_r \varprojlim_s \widetilde{\Pi}^{[s,r]}_{X,\text{pro\'et}}.	
\end{align}	
However the corresponding construction is not canonical since the promotion to Banach rings locally is not functorial. But we do have the situation that they are actually sheaves due to the fact that we can regard them as sheaves over some preperfectoid spaces under the corresponding tilting of the total spaces, where we use the same notation to denote the sheaves.
\begin{align}
\widetilde{\Pi}^{[s,r]}_{X,\text{pro\'et}},\\
\varprojlim_s \widetilde{\Pi}^{[s,r]}_{X,\text{pro\'et}},\\
\varinjlim_r \varprojlim_s \widetilde{\Pi}^{[s,r]}_{X,\text{pro\'et}}.	
\end{align}	
The corresponding total spaces are defined as:
\begin{align}
\mathrm{Spectrumadic}_{\mathrm{total}}(\widetilde{\Pi}^{[s,r]}_{X,\text{pro\'et}},\widetilde{\Pi}^{[s,r],+}_{X,\text{pro\'et}}),\\
\mathrm{Spectrumadic}_{\mathrm{total}}(\varprojlim_s \widetilde{\Pi}^{[s,r]}_{X,\text{pro\'et}},\varprojlim_s \widetilde{\Pi}^{[s,r],+}_{X,\text{pro\'et}}),\\
\mathrm{Spectrumadic}_{\mathrm{total}}(\varinjlim_r \varprojlim_s \widetilde{\Pi}^{[s,r]}_{X,\text{pro\'et}},\varinjlim_r \varprojlim_s \widetilde{\Pi}^{[s,r],+}_{X,\text{pro\'et}}).	
\end{align}	
\end{definition}

\begin{definition}
As in \cite[Chapter 4.3 and Chapter 8]{12KL2}, consider the corresponding total space:
\begin{align}
\mathrm{Spectrumadic}_{\mathrm{total}}(\widetilde{\Pi}^{[s,r]}_{X,\text{pro\'et}},\widetilde{\Pi}^{[s,r],+}_{X,\text{pro\'et}}).	
\end{align}
We define the corresponding $\varphi$-sheaves (where $\varphi$ is Frobenius lifting from $p$-th power Frobenius coming from the characteristic $p$ rings encoded in the construction recalled above from \cite{12KL2}) to be sheaves locally attached to \'etale-stably pseudocoherent sheaves carrying the corresponding semilinear Frobenius morphism realizing the isomorphism under pullback. Then over 

\begin{align}
\mathrm{Spectrumadic}_{\mathrm{total}}(\varprojlim_s \widetilde{\Pi}^{[s,r]}_{X,\text{pro\'et}},\varprojlim_s \widetilde{\Pi}^{[s,r],+}_{X,\text{pro\'et}})	
\end{align}
we define the similar pseudocoherent sheaves (complete with respect to the natural topology) locally base change to some sheaves in the previous kind. Finally we define the similar pseudocoherent sheaves (complete with respect to the natural topology) over 
\begin{align}
\mathrm{Spectrumadic}_{\mathrm{total}}(\varinjlim_r \varprojlim_s \widetilde{\Pi}^{[s,r]}_{X,\text{pro\'et}},\varinjlim_r \varprojlim_s \widetilde{\Pi}^{[s,r],+}_{X,\text{pro\'et}}),
\end{align}
locally base change to some sheaves in the previous kind\footnote{Here it is certainly not expected to be the case where we could have uniform radius $r>0$.}.

\end{definition}

\begin{remark}
The corresponding sheaves of the full Robba rings are well-defined since one could regard them as structure sheaves of some total spaces.	
\end{remark}

\newpage

\section{Derived $I$-Complete THH and HH of Derived $I$-Complete Rings}

\subsection{Derived $I$-Complete Objects}

\indent The corresponding THH and HH of derived $I$-complete rings are topological constructions which are very closely related to the corresponding relative $p$-adic motives. And certain derived $I$-complete versions are also relevant in some highly nontrivial way. We make some discussion closely after \cite[Section 2.2, Section 2.3]{12BMS}, \cite{12BS} and \cite[Chapter 3]{12NS}. We now consider the following rings:

\begin{setting}
We consider the derived $I$-complete $\mathbb{E}_1$-rings. For instance one can consider some derived $I$-complete $\mathbb{E}_1$-rings relative to the corresponding integral pseudorigid disc. In the following $I$ will be some two sided ideal in the $\pi_0$ of the base spectrum $R$. We regard all the ring spectra as being in the $\infty$-category of all ring spectra over $R$ and in the derived $\infty$-category $\mathbb{D}(R)$ of all the module spectra over $R$. We regard all the derived $I$-complete ring spectra as being in the $\infty$-category of all derived $I$-complete ring spectra over $R$ and in the derived $\infty$-category $\mathbb{D}_I(R)$ of all the derived $I$-complete module spectra over $R$. We assume that $I$ is central in the noncommutative setting. 	
\end{setting}

\begin{definition}
\indent For any such ring $R$ which is assumed to be $\mathbb{E}_\infty$, we will use the corresponding notations $L\mathrm{THH}(R)$ and $\mathrm{LHH}(R)$ to denote the corresponding left Kan extended THH or the corresponding HH functor from \cite[Chapter 3]{12NS}. And we will use the corresponding notations $L\mathrm{THH}(R)_I$ and $\mathrm{LHH}(R)_I$ to denote the corresponding left Kan extended THH or the corresponding HH functor after taking the corresponding derived $I$-completion as in \cite[Chapter 3]{12NS}.	
\end{definition}

\begin{definition}
\indent For any such ring $R$ which is assumed to be $\mathbb{E}_1$, we will use the corresponding notations $\mathrm{THH}(R)$ and $\mathrm{HH}(R)$ to denote the corresponding THH or the corresponding HH functor from \cite[Chapter 3]{12NS}. And we will use the corresponding notations $\mathrm{THH}(R)_I$ and $\mathrm{HH}(R)_I$ to denote the corresponding THH or the corresponding HH functor after taking the corresponding derived $I$-completion as in \cite[Chapter 3]{12NS} and \cite[Chapter 1 Notation]{12BS}.	
\end{definition}

\begin{example}
For instance one considers the adic rings in \cite[Section 1.4]{12FK}, one takes such a ring $R$ which is complete with respect to a corresponding two sided finitely generated ideal $I$, then the corresponding construction above namely $L\mathrm{THH}(R)_I$ and $L\mathrm{HH}(R)_I$ will also be basically topological versions of the spectra in \cite[Definition 1.3, Definition 1.4]{12ELS} in the corresponding derived nonocommutative deformation theory.
\end{example}

\indent As in \cite[Chapter 5]{12ELS} and more generally, one has the corresponding noncommutative version of the corresponding relative K\"ahler differential complexes $\mathrm{deRham}^\text{degreenumber}_{\mathrm{noncommutative}, B/A}$. For instance if we have the corresponding map $A\rightarrow B$ of adic rings in \cite[Section 1.4]{12FK} we could then take the corresponding derived $I$-completion from $\mathrm{deRham}^\text{degreenumber}_{\mathrm{noncommutative}, B/A}$ to achieve the corresponding topological version $\mathrm{deRham}^\text{degreenumber}_{\mathrm{noncommutative}, B/A,\mathrm{topo}}$.

\begin{remark}
As mentioned in the introduction, the corresponding topological derived $I$-adic version of the corresponding $L$THH and $L$HH spectra should be interesting to study, as over a quasisyntomic site of a quasisyntomic ring $Q$, \cite[Proposition 5.15]{12BMS} shows that the corresponding sheaf $\pi_0\mathrm{HC}^-(-/Q)_p$ will be closely related to the corresponding $p$-adic complete and Hodge-complete derived de Rham sheaf over the same site after taking the corresponding unfolding through the quasiregular semiperfectoids. We do not know if this relation could hold in some sense if we consider derived $I$-adic rings, but one might want to believe that in the situations we considered before the story should be in some sense easier to be established.	
\end{remark}

\newpage

\chapter{Functional Analytic Theory}

\section{Functional Analytic Andr\'e-Quillen Homology and Topological Derived de Rham Complexes}

\subsection{The Construction from Kelly-Kremnizer-Mukherjee}

\indent We now work in the corresponding foundations from \cite{12BBBK}, \cite{12BBK}, \cite{BBM}, \cite{12BK} and \cite{KKM}, namely what we are going to consider will be literally the following $(\infty,1)$-categories and the associated constructions with some fixed Banach ring $R$ or $\mathbb{F}_1$\footnote{At this moment we do not work over more general rings such as $\mathbb{Z}$, but this is crucial in the corresponding globalization. That being said, working over $\mathbb{F}_1$ is in some sense more general even than these situations.}:
\begin{align}
\mathrm{Object}_{\mathrm{E}_\infty\mathrm{commutativealgebra},\mathrm{Simplicial}}(\mathrm{IndSNorm}_R),\\
\mathrm{Object}_{\mathrm{E}_\infty\mathrm{commutativealgebra},\mathrm{Simplicial}}(\mathrm{Ind}^m\mathrm{SNorm}_R),\\
\mathrm{Object}_{\mathrm{E}_\infty\mathrm{commutativealgebra},\mathrm{Simplicial}}(\mathrm{IndNorm}_R),\\
\mathrm{Object}_{\mathrm{E}_\infty\mathrm{commutativealgebra},\mathrm{Simplicial}}(\mathrm{Ind}^m\mathrm{Norm}_R),\\
\mathrm{Object}_{\mathrm{E}_\infty\mathrm{commutativealgebra},\mathrm{Simplicial}}(\mathrm{IndBan}_R),\\
\mathrm{Object}_{\mathrm{E}_\infty\mathrm{commutativealgebra},\mathrm{Simplicial}}(\mathrm{Ind}^m\mathrm{Ban}_R),
\end{align}
with
\begin{align}
\mathrm{Object}_{\mathrm{E}_\infty\mathrm{commutativealgebra},\mathrm{Simplicial}}(\mathrm{IndSNorm}_{\mathbb{F}_1}),\\
\mathrm{Object}_{\mathrm{E}_\infty\mathrm{commutativealgebra},\mathrm{Simplicial}}(\mathrm{Ind}^m\mathrm{SNorm}_{\mathbb{F}_1}),\\
\mathrm{Object}_{\mathrm{E}_\infty\mathrm{commutativealgebra},\mathrm{Simplicial}}(\mathrm{IndNorm}_{\mathbb{F}_1}),\\
\mathrm{Object}_{\mathrm{E}_\infty\mathrm{commutativealgebra},\mathrm{Simplicial}}(\mathrm{Ind}^m\mathrm{Norm}_{\mathbb{F}_1}),\\
\mathrm{Object}_{\mathrm{E}_\infty\mathrm{commutativealgebra},\mathrm{Simplicial}}(\mathrm{IndBan}_{\mathbb{F}_1}),\\
\mathrm{Object}_{\mathrm{E}_\infty\mathrm{commutativealgebra},\mathrm{Simplicial}}(\mathrm{Ind}^m\mathrm{Ban}_{\mathbb{F}_1}).
\end{align}
We then use the notation $\mathcal{X}$ to denote any of these. We now first discuss the corresponding topological version of  Andr\'e-Quillen Homology and the corresponding topological version of  Derived de Rham complex parallel to \cite[Chapitre 3]{12An1}, \cite{12An2}, \cite[Chapter 2, Chapter 8]{12B1}, \cite[Chapter 1]{12Bei}, \cite[Chapter 5]{12G1}, \cite[Chapter 3, Chapter 4]{12GL}, \cite[Chapitre II, Chapitre III]{12Ill1}, \cite[Chapitre VIII]{12Ill2}, \cite[Section 4]{12Qui}. We would like to start from the corresponding context of \cite[Section 5.2.1, Definition 5.5, Proposition 5.6, Definition 5.8, Definition 5.9]{KKM}\footnote{Also see \cite[Definition 4.4.7, Construction 4.4.10, Theorem 5.3.6, Definition 5.2.4, Corollary 5.3.9]{Ra}.}, and represent the construction for the convenience of the readers \footnote{Here we just present the homotopical contexts in Kelly-Kremnizer-Mukherjee for the general $(\infty,1)$-categories as above.}. Namely as in \cite[Section 5.2.1, Definition 5.5, Proposition 5.6]{KKM} we consider the corresponding cotangent complex associated to any pair object
\begin{center}
 $(A,B)\in \mathcal{X}\times \mathcal{X}_A$ 
\end{center} 
is defined to be (as in \cite[Section 5.2.1, Definition 5.5, Proposition 5.6]{KKM}) just:
\begin{align}
\mathbb{L}_{B/A,\textrm{KKM}}:=	\mathrm{deRham}^1_{A[B]^\text{degreenumber}/A,\textrm{KKM}}\otimes_{A[B]^\text{degreenumber}} B.
\end{align}
The corresponding topological Andr\'e-Quillen homologies (as in \cite[Section 5.2.1, Definition 5.5, Proposition 5.6]{KKM}) are defined to be:
\begin{align}
H_{\text{degreenumber},{\mathrm{AQ}},\textrm{KKM}}:=\pi_\text{degreenumber} (\mathbb{L}_{B/A,\textrm{KKM}}). 	
\end{align}
We then as in the definition \cite[Definition 5.8, Definition 5.9, Section 5.2.1]{KKM} have the corresponding different kinds of $(\infty,1)$-rings of de Rham complexes and we denote that by:
\begin{align}
\mathrm{Kan}_\mathrm{Left}\mathrm{deRham}^\text{degreenumber}_{B/A,\mathrm{functionalanalytic},\textrm{KKM}},\mathrm{Kan}_\mathrm{Left}\mathrm{Fil}^*_{\mathrm{deRham}^\text{degreenumber}_{B/A,\mathrm{functionalanalytic},\textrm{KKM}}}.	
\end{align}
Then we need to take the corresponding Hodge-Filtered completion by using the corresponding filtration associated as above:
\begin{align}
\mathrm{Kan}_\mathrm{Left}\widehat{\mathrm{deRham}}^\text{degreenumber}_{B/A,\mathrm{functionalanalytic},\mathrm{KKM}},\mathrm{Kan}_\mathrm{Left}\mathrm{Fil}^*_{\widehat{\mathrm{deRham}}^\text{degreenumber}_{B/A,\mathrm{functionalanalytic},\mathrm{KKM}}}.	
\end{align}

\newpage

\section{Functional Analytic Derived Prismatic Cohomology and Functional Analytic Derived Perfectoidizations}

\subsection{General Constructions}

\indent Lurie's book \cite[Section 5.5.8]{Lu3} illustrates in a very general way the corresponding constructions on how we could extend in a certain $(\infty,n)$-category the corresponding nonabelian derived functor from a smaller class of generators in some discrete sense to a large $(\infty,n)$-categorical closure by considering enough colimits being sifted. Therefore in our situation by using the foundation in \cite{12BBBK}, \cite{12BBK}, \cite{BBM}, \cite{12BK} and \cite{KKM} one can define and extend from Tate series rings, Stein series rings, formal series rings and dagger series rings (as those in \cite[Section 4.2]{BBM}) into very large $(\infty,1)$-categories in the following sense:
\begin{align}
\mathrm{Object}_{\mathrm{E}_\infty\mathrm{commutativealgebra},\mathrm{Simplicial}}(\mathrm{IndSNorm}_R),\\
\mathrm{Object}_{\mathrm{E}_\infty\mathrm{commutativealgebra},\mathrm{Simplicial}}(\mathrm{Ind}^m\mathrm{SNorm}_R),\\
\mathrm{Object}_{\mathrm{E}_\infty\mathrm{commutativealgebra},\mathrm{Simplicial}}(\mathrm{IndNorm}_R),\\
\mathrm{Object}_{\mathrm{E}_\infty\mathrm{commutativealgebra},\mathrm{Simplicial}}(\mathrm{Ind}^m\mathrm{Norm}_R),\\
\mathrm{Object}_{\mathrm{E}_\infty\mathrm{commutativealgebra},\mathrm{Simplicial}}(\mathrm{IndBan}_R),\\
\mathrm{Object}_{\mathrm{E}_\infty\mathrm{commutativealgebra},\mathrm{Simplicial}}(\mathrm{Ind}^m\mathrm{Ban}_R),
\end{align}
with
\begin{align}
\mathrm{Object}_{\mathrm{E}_\infty\mathrm{commutativealgebra},\mathrm{Simplicial}}(\mathrm{IndSNorm}_{\mathbb{F}_1}),\\
\mathrm{Object}_{\mathrm{E}_\infty\mathrm{commutativealgebra},\mathrm{Simplicial}}(\mathrm{Ind}^m\mathrm{SNorm}_{\mathbb{F}_1}),\\
\mathrm{Object}_{\mathrm{E}_\infty\mathrm{commutativealgebra},\mathrm{Simplicial}}(\mathrm{IndNorm}_{\mathbb{F}_1}),\\
\mathrm{Object}_{\mathrm{E}_\infty\mathrm{commutativealgebra},\mathrm{Simplicial}}(\mathrm{Ind}^m\mathrm{Norm}_{\mathbb{F}_1}),\\
\mathrm{Object}_{\mathrm{E}_\infty\mathrm{commutativealgebra},\mathrm{Simplicial}}(\mathrm{IndBan}_{\mathbb{F}_1}),\\
\mathrm{Object}_{\mathrm{E}_\infty\mathrm{commutativealgebra},\mathrm{Simplicial}}(\mathrm{Ind}^m\mathrm{Ban}_{\mathbb{F}_1}).
\end{align}

\begin{definition}
Since we are going to consider Bhatt-Scholze's derived prismatic functors \cite[Construction 7.6]{12BS}, so we take the formal series over a general prism $(A,I)$ (assume this to be bounded) such that $A/I$ is also Banach. Then we consider the corresponding formal series ring over $A/I$, then take the correponsding projection resolution compactly generated closure of them to get the sub $(\infty,1)$-categories of above $(\infty,1)$-categories, which we are going to denote them as:
\begin{align}
\mathrm{Object}_{\mathrm{E}_\infty\mathrm{commutativealgebra},\mathrm{Simplicial}}(\mathrm{IndSNorm}_{A/I})^{\text{smoothformalseriesclosure}},\\
\mathrm{Object}_{\mathrm{E}_\infty\mathrm{commutativealgebra},\mathrm{Simplicial}}(\mathrm{Ind}^m\mathrm{SNorm}_{A/I})^{\text{smoothformalseriesclosure}},\\
\mathrm{Object}_{\mathrm{E}_\infty\mathrm{commutativealgebra},\mathrm{Simplicial}}(\mathrm{IndNorm}_{A/I})^{\text{smoothformalseriesclosure}},\\
\mathrm{Object}_{\mathrm{E}_\infty\mathrm{commutativealgebra},\mathrm{Simplicial}}(\mathrm{Ind}^m\mathrm{Norm}_{A/I})^{\text{smoothformalseriesclosure}},\\
\mathrm{Object}_{\mathrm{E}_\infty\mathrm{commutativealgebra},\mathrm{Simplicial}}(\mathrm{IndBan}_{A/I})^{\text{smoothformalseriesclosure}},\\
\mathrm{Object}_{\mathrm{E}_\infty\mathrm{commutativealgebra},\mathrm{Simplicial}}(\mathrm{Ind}^m\mathrm{Ban}_{A/I})^{\text{smoothformalseriesclosure}},
\end{align}	
as in \cite[Section 4.2]{BBM} on the corresponding analytification defined from extension from formal series rings.\\
\end{definition}

\indent Consequently one applies this idea directly to Bhatt-Scholze's prismatic construction \cite{12BS}, one directly gets the corresponding functional analytic $(\infty,1)$-categorical prismatic cohomologies.

\newpage

\subsection{$\infty$-Categorical Functional Analytic Prismatic Cohomologies and $\infty$-Categorical Functional Analytic Preperfectoidizations}

\begin{definition}
\indent One can actually define the derived prismatic cohomologies through derived topological Hochschild cohomologies, derived topological period cohomologies and derived topological cyclic cohomologies as in \cite[Section 2.2, Section 2.3]{12BMS}, \cite[Theorem 1.13]{12BS}:
\begin{align}
	\mathrm{Kan}_{\mathrm{Left}}\mathrm{THH},\mathrm{Kan}_{\mathrm{Left}}\mathrm{TP},\mathrm{Kan}_{\mathrm{Left}}\mathrm{TC},
\end{align}
on the following $(\infty,1)$-compactly generated closures of the corresponding polynomials\footnote{Definitely, we need to put certain norms over in some relatively canonical way, as in \cite[Section 4.2]{BBM} one can basically consider rigid ones and dagger ones, and so on. We restrict to the \textit{formal} one. This means that we are going to consider the corresponding $p$-adic topology only such as the norm over $A/I\left<T_1,...,T_n\right>$, while rigid analytic situation is usually over a field such as in \cite{G2} over $B_\mathrm{dR}\left<T_1,...,T_n\right>$ for instance. One can then in the same way define the corresponding:
\begin{align}
	\mathrm{Kan}_{\mathrm{Left}}\mathrm{THH},\mathrm{Kan}_{\mathrm{Left}}\mathrm{TP},\mathrm{Kan}_{\mathrm{Left}}\mathrm{TC},
\end{align}
over 
\begin{align}
\mathrm{Object}_{\mathrm{E}_\infty\mathrm{commutativealgebra},\mathrm{Simplicial}}(\mathrm{IndSNorm}_{B_\mathrm{dR}})^{\mathrm{smoothformalseriesclosure}},\\
\mathrm{Object}_{\mathrm{E}_\infty\mathrm{commutativealgebra},\mathrm{Simplicial}}(\mathrm{Ind}^m\mathrm{SNorm}_{B_\mathrm{dR}})^{\mathrm{smoothformalseriesclosure}},\\
\mathrm{Object}_{\mathrm{E}_\infty\mathrm{commutativealgebra},\mathrm{Simplicial}}(\mathrm{IndNorm}_{B_\mathrm{dR}})^{\mathrm{smoothformalseriesclosure}},\\
\mathrm{Object}_{\mathrm{E}_\infty\mathrm{commutativealgebra},\mathrm{Simplicial}}(\mathrm{Ind}^m\mathrm{Norm}_{B_\mathrm{dR}})^{\mathrm{smoothformalseriesclosure}},\\
\mathrm{Object}_{\mathrm{E}_\infty\mathrm{commutativealgebra},\mathrm{Simplicial}}(\mathrm{IndBan}_{B_\mathrm{dR}})^{\mathrm{smoothformalseriesclosure}},\\
\mathrm{Object}_{\mathrm{E}_\infty\mathrm{commutativealgebra},\mathrm{Simplicial}}(\mathrm{Ind}^m\mathrm{Ban}_{B_\mathrm{dR}})^{\mathrm{smoothformalseriesclosure}},
\end{align}
by
\begin{align}
	&\mathrm{Kan}_{\mathrm{Left}}\mathrm{THH}_{\text{functionalanalytic,KKM},\text{BBM,formalanalytification}}(\mathcal{O}):=\\
	&(\underset{i}{\text{homotopycolimit}}_{\text{sifted},\text{derivedcategory}_{\infty}(B_\mathrm{dR}-\text{Module})}\mathrm{Kan}_{\mathrm{Left}}\mathrm{THH}_{\text{functionalanalytic,KKM}}(\mathcal{O}_i))_\text{BBM,formalanalytification},\\
	&\mathrm{Kan}_{\mathrm{Left}}\mathrm{TP}_{\text{functionalanalytic,KKM},\text{BBM,formalanalytification}}(\mathcal{O}):=\\
	&(\underset{i}{\text{homotopycolimit}}_{\text{sifted},\text{derivedcategory}_{\infty}(B_\mathrm{dR}-\text{Module})}\mathrm{Kan}_{\mathrm{Left}}\mathrm{TP}_{\text{functionalanalytic,KKM}}(\mathcal{O}_i))_\text{BBM,formalanalytification},\\
	&\mathrm{Kan}_{\mathrm{Left}}\mathrm{TC}_{\text{functionalanalytic,KKM},\text{BBM,formalanalytification}}(\mathcal{O}):=\\
	&(\underset{i}{\text{homotopycolimit}}_{\text{sifted},\text{derivedcategory}_{\infty}(B_\mathrm{dR}-\text{Module})}\mathrm{Kan}_{\mathrm{Left}}\mathrm{TC}_{\text{functionalanalytic,KKM}}(\mathcal{O}_i))_\text{BBM,formalanalytification},
\end{align}
where each $\mathcal{O}_i$ is given as some $B_\mathrm{dR}\left<T_1,...,T_n\right>$. See \cite{G2}, \cite{12G1}.} given over $A/I$ with a chosen prism $(A,I)$\footnote{In all the following, we assume this prism to be bounded and satisfy that $A/I$ is Banach.}:

\begin{align}
\mathrm{Object}_{\mathrm{E}_\infty\mathrm{commutativealgebra},\mathrm{Simplicial}}(\mathrm{IndSNorm}_{A/I})^{\mathrm{smoothformalseriesclosure}},\\
\mathrm{Object}_{\mathrm{E}_\infty\mathrm{commutativealgebra},\mathrm{Simplicial}}(\mathrm{Ind}^m\mathrm{SNorm}_{A/I})^{\mathrm{smoothformalseriesclosure}},\\
\mathrm{Object}_{\mathrm{E}_\infty\mathrm{commutativealgebra},\mathrm{Simplicial}}(\mathrm{IndNorm}_{A/I})^{\mathrm{smoothformalseriesclosure}},\\
\mathrm{Object}_{\mathrm{E}_\infty\mathrm{commutativealgebra},\mathrm{Simplicial}}(\mathrm{Ind}^m\mathrm{Norm}_{A/I})^{\mathrm{smoothformalseriesclosure}},\\
\mathrm{Object}_{\mathrm{E}_\infty\mathrm{commutativealgebra},\mathrm{Simplicial}}(\mathrm{IndBan}_{A/I})^{\mathrm{smoothformalseriesclosure}},\\
\mathrm{Object}_{\mathrm{E}_\infty\mathrm{commutativealgebra},\mathrm{Simplicial}}(\mathrm{Ind}^m\mathrm{Ban}_{A/I})^{\mathrm{smoothformalseriesclosure}}.
\end{align}
We call the corresponding functors are derived functional analytic Hochschild cohomologies, derived functional analytic period cohomologies and derived functional analytic cyclic cohomologies, which we are going to denote them as in the following:
\begin{align}
	&\mathrm{Kan}_{\mathrm{Left}}\mathrm{THH}_{\text{functionalanalytic,KKM},\text{BBM,formalanalytification}}(\mathcal{O}):=\\
	&(\underset{i}{\text{homotopycolimit}}_{\text{sifted},\text{derivedcategory}_{\infty}(A/I-\text{Module})}\mathrm{Kan}_{\mathrm{Left}}\mathrm{THH}_{\text{functionalanalytic,KKM}}(\mathcal{O}_i)\\
	&)_\text{BBM,formalanalytification},\\
	&\mathrm{Kan}_{\mathrm{Left}}\mathrm{TP}_{\text{functionalanalytic,KKM},\text{BBM,formalanalytification}}(\mathcal{O}):=\\
	&(\underset{i}{\text{homotopycolimit}}_{\text{sifted},\text{derivedcategory}_{\infty}(A/I-\text{Module})}\mathrm{Kan}_{\mathrm{Left}}\mathrm{TP}_{\text{functionalanalytic,KKM}}(\mathcal{O}_i)\\
	&)_\text{BBM,formalanalytification},\\
	&\mathrm{Kan}_{\mathrm{Left}}\mathrm{TC}_{\text{functionalanalytic,KKM},\text{BBM,formalanalytification}}(\mathcal{O}):=\\
	&(\underset{i}{\text{homotopycolimit}}_{\text{sifted},\text{derivedcategory}_{\infty}(A/I-\text{Module})}\mathrm{Kan}_{\mathrm{Left}}\mathrm{TC}_{\text{functionalanalytic,KKM}}(\mathcal{O}_i)\\
	&)_\text{BBM,formalanalytification},
\end{align}
by writing any object $\mathcal{O}$ as the corresponding colimit 
\begin{center}
$\underset{i}{\text{homotopycolimit}}_\text{sifted}\mathcal{O}_i$.
\end{center}
These are quite large $(\infty,1)$-commutative ring objects in the corresponding $(\infty,1)$-categories for $R=A/I$:

\begin{align}
\mathrm{Object}_{\mathrm{E}_\infty\mathrm{commutativealgebra},\mathrm{Simplicial}}(\mathrm{IndSNorm}_R),\\
\mathrm{Object}_{\mathrm{E}_\infty\mathrm{commutativealgebra},\mathrm{Simplicial}}(\mathrm{Ind}^m\mathrm{SNorm}_R),\\
\mathrm{Object}_{\mathrm{E}_\infty\mathrm{commutativealgebra},\mathrm{Simplicial}}(\mathrm{IndNorm}_R),\\
\mathrm{Object}_{\mathrm{E}_\infty\mathrm{commutativealgebra},\mathrm{Simplicial}}(\mathrm{Ind}^m\mathrm{Norm}_R),\\
\mathrm{Object}_{\mathrm{E}_\infty\mathrm{commutativealgebra},\mathrm{Simplicial}}(\mathrm{IndBan}_R),\\
\mathrm{Object}_{\mathrm{E}_\infty\mathrm{commutativealgebra},\mathrm{Simplicial}}(\mathrm{Ind}^m\mathrm{Ban}_R),
\end{align}
after taking the formal series ring left Kan extension analytification from \cite[Section 4.2]{BBM}, which is defined by taking the left Kan extension to all the $(\infty,1)$-ring objects in the $\infty$-derived category of all $A$-modules from formal series rings over $A$, into:
\begin{align}
\mathrm{Object}_{\mathrm{E}_\infty\mathrm{commutativealgebra},\mathrm{Simplicial}}(\mathrm{IndSNorm}_{\mathbb{F}_1})_A,\\
\mathrm{Object}_{\mathrm{E}_\infty\mathrm{commutativealgebra},\mathrm{Simplicial}}(\mathrm{Ind}^m\mathrm{SNorm}_{\mathbb{F}_1})_A,\\
\mathrm{Object}_{\mathrm{E}_\infty\mathrm{commutativealgebra},\mathrm{Simplicial}}(\mathrm{IndNorm}_{\mathbb{F}_1})_A,\\
\mathrm{Object}_{\mathrm{E}_\infty\mathrm{commutativealgebra},\mathrm{Simplicial}}(\mathrm{Ind}^m\mathrm{Norm}_{\mathbb{F}_1})_A,\\
\mathrm{Object}_{\mathrm{E}_\infty\mathrm{commutativealgebra},\mathrm{Simplicial}}(\mathrm{IndBan}_{\mathbb{F}_1})_A,\\
\mathrm{Object}_{\mathrm{E}_\infty\mathrm{commutativealgebra},\mathrm{Simplicial}}(\mathrm{Ind}^m\mathrm{Ban}_{\mathbb{F}_1})_A.
\end{align}
\end{definition}

\

\begin{remark}
One should actually do this in a more coherent way as in \cite{Ra}, \cite{12NS}, \cite{KKM}, by applying directly the corresponding construction to objects in the corresponding $(\infty,1)$-objects above, even the corresponding $A_\infty$-objects. The constructions here are the corresponding functional analytic counterpart of the corresponding condensed constructions in \cite{M} after \cite{12CS1} and \cite{12CS2}.
\end{remark}

\

\begin{definition}
Then we can in the same fashion consider the corresponding derived prismatic complexes \cite[Construction 7.6]{12BS}\footnote{One just applies \cite[Construction 7.6]{12BS} and then takes the left Kan extensions.} for the commutative algebras as in the above (for a given prism $(A,I)$):
\begin{align}
\mathrm{Kan}_{\mathrm{Left}}\Delta_{?/A},	
\end{align}
by the regular corresponding left Kan extension techniques on the following $(\infty,1)$-compactly generated closures of the corresponding polynomials given over $A/I$ with a chosen prism $(A,I)$:

\begin{align}
\mathrm{Object}_{\mathrm{E}_\infty\mathrm{commutativealgebra},\mathrm{Simplicial}}(\mathrm{IndSNorm}_{A/I})^{\mathrm{smoothformalseriesclosure}},\\
\mathrm{Object}_{\mathrm{E}_\infty\mathrm{commutativealgebra},\mathrm{Simplicial}}(\mathrm{Ind}^m\mathrm{SNorm}_{A/I})^{\mathrm{smoothformalseriesclosure}},\\
\mathrm{Object}_{\mathrm{E}_\infty\mathrm{commutativealgebra},\mathrm{Simplicial}}(\mathrm{IndNorm}_{A/I})^{\mathrm{smoothformalseriesclosure}},\\
\mathrm{Object}_{\mathrm{E}_\infty\mathrm{commutativealgebra},\mathrm{Simplicial}}(\mathrm{Ind}^m\mathrm{Norm}_{A/I})^{\mathrm{smoothformalseriesclosure}},\\
\mathrm{Object}_{\mathrm{E}_\infty\mathrm{commutativealgebra},\mathrm{Simplicial}}(\mathrm{IndBan}_{A/I})^{\mathrm{smoothformalseriesclosure}},\\
\mathrm{Object}_{\mathrm{E}_\infty\mathrm{commutativealgebra},\mathrm{Simplicial}}(\mathrm{Ind}^m\mathrm{Ban}_{A/I})^{\mathrm{smoothformalseriesclosure}}.
\end{align}
We call the corresponding functors functional analytic derived prismatic complexes which we are going to denote that as in the following:
\begin{align}
\mathrm{Kan}_{\mathrm{Left}}\Delta_{?/A,\text{functionalanalytic,KKM},\text{BBM,formalanalytification}}.	
\end{align}
This would mean the following definition:
\begin{align}
&\mathrm{Kan}_{\mathrm{Left}}\Delta_{?/A,\text{functionalanalytic,KKM},\text{BBM,formalanalytification}}(\mathcal{O})\\
&:=	((\underset{i}{\text{homotopycolimit}}_{\text{sifted},\text{derivedcategory}_{\infty}(A/I-\text{Module})}\mathrm{Kan}_{\mathrm{Left}}\Delta_{?/A,\text{functionalanalytic,KKM}}(\mathcal{O}_i))^\wedge\\
&)_\text{BBM,formalanalytification}
\end{align}
\footnote{Before the Ben-Bassat-Mukherjee $p$-adic formal analytification we take the corresponding derived $(p,I)$-completion.}by writing any object $\mathcal{O}$ as the corresponding colimit 
\begin{center}
$\underset{i}{\text{homotopycolimit}}_\text{sifted}\mathcal{O}_i$.
\end{center}
These are quite large $(\infty,1)$-commutative ring objects in the corresponding $(\infty,1)$-categories for $R=A/I$:
\begin{align}
\mathrm{Object}_{\mathrm{E}_\infty\mathrm{commutativealgebra},\mathrm{Simplicial}}(\mathrm{IndSNorm}_R),\\
\mathrm{Object}_{\mathrm{E}_\infty\mathrm{commutativealgebra},\mathrm{Simplicial}}(\mathrm{Ind}^m\mathrm{SNorm}_R),\\
\mathrm{Object}_{\mathrm{E}_\infty\mathrm{commutativealgebra},\mathrm{Simplicial}}(\mathrm{IndNorm}_R),\\
\mathrm{Object}_{\mathrm{E}_\infty\mathrm{commutativealgebra},\mathrm{Simplicial}}(\mathrm{Ind}^m\mathrm{Norm}_R),\\
\mathrm{Object}_{\mathrm{E}_\infty\mathrm{commutativealgebra},\mathrm{Simplicial}}(\mathrm{IndBan}_R),\\
\mathrm{Object}_{\mathrm{E}_\infty\mathrm{commutativealgebra},\mathrm{Simplicial}}(\mathrm{Ind}^m\mathrm{Ban}_R),\\
\end{align}
after taking the formal series ring left Kan extension analytification from \cite[Section 4.2]{BBM}, which is defined by taking the left Kan extension to all the $(\infty,1)$-ring objects in the $\infty$-derived category of all $A$-modules from formal series rings over $A$, into:
\begin{align}
\mathrm{Object}_{\mathrm{E}_\infty\mathrm{commutativealgebra},\mathrm{Simplicial}}(\mathrm{IndSNorm}_{\mathbb{F}_1})_A,\\
\mathrm{Object}_{\mathrm{E}_\infty\mathrm{commutativealgebra},\mathrm{Simplicial}}(\mathrm{Ind}^m\mathrm{SNorm}_{\mathbb{F}_1})_A,\\
\mathrm{Object}_{\mathrm{E}_\infty\mathrm{commutativealgebra},\mathrm{Simplicial}}(\mathrm{IndNorm}_{\mathbb{F}_1})_A,\\
\mathrm{Object}_{\mathrm{E}_\infty\mathrm{commutativealgebra},\mathrm{Simplicial}}(\mathrm{Ind}^m\mathrm{Norm}_{\mathbb{F}_1})_A,\\
\mathrm{Object}_{\mathrm{E}_\infty\mathrm{commutativealgebra},\mathrm{Simplicial}}(\mathrm{IndBan}_{\mathbb{F}_1})_A,\\
\mathrm{Object}_{\mathrm{E}_\infty\mathrm{commutativealgebra},\mathrm{Simplicial}}(\mathrm{Ind}^m\mathrm{Ban}_{\mathbb{F}_1})_A.
\end{align}
\end{definition}

\

\indent Then as in \cite[Definition 8.2]{12BS} we consider the corresponding perfectoidization in this analytic setting.

\begin{definition}
Let $(A,I)$ be a perfectoid prism, and we consider any $\mathrm{E}_\infty$-ring $\mathcal{O}$ in the following
\begin{align}
\mathrm{Object}_{\mathrm{E}_\infty\mathrm{commutativealgebra},\mathrm{Simplicial}}(\mathrm{IndSNorm}_{A/I})^{\mathrm{smoothformalseriesclosure}},\\
\mathrm{Object}_{\mathrm{E}_\infty\mathrm{commutativealgebra},\mathrm{Simplicial}}(\mathrm{Ind}^m\mathrm{SNorm}_{A/I})^{\mathrm{smoothformalseriesclosure}},\\
\mathrm{Object}_{\mathrm{E}_\infty\mathrm{commutativealgebra},\mathrm{Simplicial}}(\mathrm{IndNorm}_{A/I})^{\mathrm{smoothformalseriesclosure}},\\
\mathrm{Object}_{\mathrm{E}_\infty\mathrm{commutativealgebra},\mathrm{Simplicial}}(\mathrm{Ind}^m\mathrm{Norm}_{A/I})^{\mathrm{smoothformalseriesclosure}},\\
\mathrm{Object}_{\mathrm{E}_\infty\mathrm{commutativealgebra},\mathrm{Simplicial}}(\mathrm{IndBan}_{A/I})^{\mathrm{smoothformalseriesclosure}},\\
\mathrm{Object}_{\mathrm{E}_\infty\mathrm{commutativealgebra},\mathrm{Simplicial}}(\mathrm{Ind}^m\mathrm{Ban}_{A/I})^{\mathrm{smoothformalseriesclosure}}.
\end{align}
Then consider the derived prismatic object:
\begin{align}
\mathrm{Kan}_{\mathrm{Left}}\Delta_{?/A,\text{functionalanalytic,KKM},\text{BBM,formalanalytification}}(\mathcal{O}).
\end{align}	
Then as in \cite[Definition 8.2]{12BS} we have the following preperfectoidization:
\begin{align}
&(\mathcal{O})^{\text{preperfectoidization}}\\
&:=\mathrm{Colimit}(\mathrm{Kan}_{\mathrm{Left}}\Delta_{?/A,\text{functionalanalytic,KKM},\text{BBM,formalanalytification}}(\mathcal{O})\rightarrow \\
&\mathrm{Fro}_*\mathrm{Kan}_{\mathrm{Left}}\Delta_{?/A,\text{functionalanalytic,KKM},\text{BBM,formalanalytification}}(\mathcal{O})\\
&\rightarrow \mathrm{Fro}_* \mathrm{Fro}_*\mathrm{Kan}_{\mathrm{Left}}\Delta_{?/A,\text{functionalanalytic,KKM},\text{BBM,formalanalytification}}(\mathcal{O})\rightarrow...)^{\text{BBM,formalanalytification}},	
\end{align}
after taking the formal series ring left Kan extension analytification from \cite[Section 4.2]{BBM}, which is defined by taking the left Kan extension to all the $(\infty,1)$-ring object in the $\infty$-derived category of all $A$-modules from formal series rings over $A$. Then we define the corresponding perfectoidization:
\begin{align}
&(\mathcal{O})^{\text{perfectoidization}}\\
&:=\mathrm{Colimit}(\mathrm{Kan}_{\mathrm{Left}}\Delta_{?/A,\text{functionalanalytic,KKM},\text{BBM,formalanalytification}}(\mathcal{O})\longrightarrow \\
&\mathrm{Fro}_*\mathrm{Kan}_{\mathrm{Left}}\Delta_{?/A,\text{functionalanalytic,KKM},\text{BBM,formalanalytification}}(\mathcal{O})\\
&\longrightarrow \mathrm{Fro}_* \mathrm{Fro}_*\mathrm{Kan}_{\mathrm{Left}}\Delta_{?/A,\text{functionalanalytic,KKM},\text{BBM,formalanalytification}}(\mathcal{O})\longrightarrow...)^{\text{BBM,formalanalytification}}\times A/I.	
\end{align}
Furthermore one can take derived $(p,I)$-completion to achieve the derived $(p,I)$-completed versions:
\begin{align}
\mathcal{O}^\text{preperfectoidization,derivedcomplete}:=(\mathcal{O}^\text{preperfectoidization})^{\wedge},\\
\mathcal{O}^\text{perfectoidization,derivedcomplete}:=\mathcal{O}^\text{preperfectoidization,derivedcomplete}\times A/I.\\
\end{align}
These are large $(\infty,1)$-commutative algebra objects in the corresponding categories as in the above, attached to also large $(\infty,1)$-commutative algebra objects. When we apply this to the corresponding sub-$(\infty,1)$-categories of Banach perfectoid objects in \cite{BMS2}, \cite{GR}, \cite{12KL1}, \cite{12KL2}, \cite{12Ked1}, \cite{12Sch3},  we will recover the corresponding distinguished elemental deformation processes defined in \cite{BMS2}, \cite{GR}, \cite{12KL1}, \cite{12KL2}, \cite{12Ked1}, \cite{12Sch3}.
\end{definition}

\

\begin{remark}
One can then define such ring $\mathcal{O}$ to be \textit{preperfectoid} if we have the equivalence:
\begin{align}
\mathcal{O}^{\text{preperfectoidization}} \overset{\sim}{\longrightarrow}	\mathcal{O}.
\end{align}
One can then define such ring $\mathcal{O}$ to be \textit{perfectoid} if we have the equivalence:
\begin{align}
\mathcal{O}^{\text{preperfectoidization}}\times A/I \overset{\sim}{\longrightarrow}	\mathcal{O}.
\end{align}
	
\end{remark}

\newpage

\section{Functional Analytic Derived Logarithmic Prismatic Cohomology and Functional Analytic Derived Logarithmic Perfectoidizations}

\subsection{Functional Analytic Derived Logarithmic Prismatic Cohomology}

\indent We now extend the previous functional analytic discussion to logarithmic context, which is related to the logarithmic context for instance in \cite[Chapter 8]{12O}, \cite[Chapter 5, Chapter 6, Chapter 7]{12B1} and \cite{12Ko1}.

\begin{definition}
We define the logarithmic version of the $(\infty,1)$-categories we considered in the previous section (let $(B,*)$ be any Banach logarithmic ring or $\mathbb{F}_1$):
\begin{align}
\mathrm{Object}_{\mathrm{E}_\infty\mathrm{commutativealgebra},\mathrm{Simplicial}}(\mathrm{IndSNorm}_B)^{\text{prelog}},\\
\mathrm{Object}_{\mathrm{E}_\infty\mathrm{commutativealgebra},\mathrm{Simplicial}}(\mathrm{Ind}^m\mathrm{SNorm}_B)^{\text{prelog}},\\
\mathrm{Object}_{\mathrm{E}_\infty\mathrm{commutativealgebra},\mathrm{Simplicial}}(\mathrm{IndNorm}_B)^{\text{prelog}},\\
\mathrm{Object}_{\mathrm{E}_\infty\mathrm{commutativealgebra},\mathrm{Simplicial}}(\mathrm{Ind}^m\mathrm{Norm}_B)^{\text{prelog}},\\
\mathrm{Object}_{\mathrm{E}_\infty\mathrm{commutativealgebra},\mathrm{Simplicial}}(\mathrm{IndBan}_B)^{\text{prelog}},\\
\mathrm{Object}_{\mathrm{E}_\infty\mathrm{commutativealgebra},\mathrm{Simplicial}}(\mathrm{Ind}^m\mathrm{Ban}_B)^{\text{prelog}},
\end{align}
where the prelog object means a morphism $M\rightarrow ?$ where	$M$ is
in the following categories of $(\infty,1)$-monoid objects:
\begin{align}
\mathrm{Object}_{\mathrm{E}_\infty\mathrm{monoid},\mathrm{Simplicial}}(\mathrm{IndSNorm}_B),\\
\mathrm{Object}_{\mathrm{E}_\infty\mathrm{monoid},\mathrm{Simplicial}}(\mathrm{Ind}^m\mathrm{SNorm}_B),\\
\mathrm{Object}_{\mathrm{E}_\infty\mathrm{monoid},\mathrm{Simplicial}}(\mathrm{IndNorm}_B),\\
\mathrm{Object}_{\mathrm{E}_\infty\mathrm{monoid},\mathrm{Simplicial}}(\mathrm{Ind}^m\mathrm{Norm}_B),\\
\mathrm{Object}_{\mathrm{E}_\infty\mathrm{monoid},\mathrm{Simplicial}}(\mathrm{IndBan}_B),\\
\mathrm{Object}_{\mathrm{E}_\infty\mathrm{monoid},\mathrm{Simplicial}}(\mathrm{Ind}^m\mathrm{Ban}_B),\\
\end{align}
with $?$ being commutative algebra object in the previous section.
\end{definition}

\begin{assumption}
We are going to consider the possible colimits in the $(\infty,1)$-categories:
\begin{align}
\mathrm{Object}_{\mathrm{E}_\infty\mathrm{commutativealgebra},\mathrm{Simplicial}}(\mathrm{IndSNorm}_B)^{\text{prelog}},\\
\mathrm{Object}_{\mathrm{E}_\infty\mathrm{commutativealgebra},\mathrm{Simplicial}}(\mathrm{Ind}^m\mathrm{SNorm}_B)^{\text{prelog}},\\
\mathrm{Object}_{\mathrm{E}_\infty\mathrm{commutativealgebra},\mathrm{Simplicial}}(\mathrm{IndNorm}_B)^{\text{prelog}},\\
\mathrm{Object}_{\mathrm{E}_\infty\mathrm{commutativealgebra},\mathrm{Simplicial}}(\mathrm{Ind}^m\mathrm{Norm}_B)^{\text{prelog}},\\
\mathrm{Object}_{\mathrm{E}_\infty\mathrm{commutativealgebra},\mathrm{Simplicial}}(\mathrm{IndBan}_B)^{\text{prelog}},\\
\mathrm{Object}_{\mathrm{E}_\infty\mathrm{commutativealgebra},\mathrm{Simplicial}}(\mathrm{Ind}^m\mathrm{Ban}_B)^{\text{prelog}},
\end{align}
which means essentially that all the construction will only apply to the object $\mathcal{M}\rightarrow\mathcal{O}$ which could be written as the colimit of logarithmic formal series rings over $(B,*)$:
\begin{center}
$\underset{i}{\text{homotopycolimit}}_\text{sifted}(\mathcal{M}_i\rightarrow\mathcal{O}_i)$
\end{center}
in certain large enough $\infty$-category:
\begin{align}
\mathrm{Object}_{\mathrm{Simplicial}}(\mathrm{IndSets})\times \mathrm{Object}_{\mathrm{Simplicial}}(\mathrm{IndSets}).\\
\end{align}
We conjecture the corresponding homotopy colimits exist throughout in order to drop this technical assumption. The resulting $\infty$-categories are denoted to be:
\begin{align}
\mathrm{Object}_{\mathrm{E}_\infty\mathrm{commutativealgebra},\mathrm{Simplicial}}(\mathrm{IndSNorm}_{B})^{\mathrm{smoothformalseriesclosure},\text{prelog}},\\
\mathrm{Object}_{\mathrm{E}_\infty\mathrm{commutativealgebra},\mathrm{Simplicial}}(\mathrm{Ind}^m\mathrm{SNorm}_{B})^{\mathrm{smoothformalseriesclosure},\text{prelog}},\\
\mathrm{Object}_{\mathrm{E}_\infty\mathrm{commutativealgebra},\mathrm{Simplicial}}(\mathrm{IndNorm}_{B})^{\mathrm{smoothformalseriesclosure},\text{prelog}},\\
\mathrm{Object}_{\mathrm{E}_\infty\mathrm{commutativealgebra},\mathrm{Simplicial}}(\mathrm{Ind}^m\mathrm{Norm}_{B})^{\mathrm{smoothformalseriesclosure},\text{prelog}},\\
\mathrm{Object}_{\mathrm{E}_\infty\mathrm{commutativealgebra},\mathrm{Simplicial}}(\mathrm{IndBan}_{B})^{\mathrm{smoothformalseriesclosure},\text{prelog}},\\
\mathrm{Object}_{\mathrm{E}_\infty\mathrm{commutativealgebra},\mathrm{Simplicial}}(\mathrm{Ind}^m\mathrm{Ban}_{B})^{\mathrm{smoothformalseriesclosure},\text{prelog}}.
\end{align}	
Namely one considers all the logarithmic formal series rings, and then takes in 
\begin{align}
\mathrm{Object}_{\mathrm{Simplicial}}(\mathrm{IndSets})\times \mathrm{Object}_{\mathrm{Simplicial}}(\mathrm{IndSets})
\end{align}
the closure by colimits.\\
\end{assumption}

\

\begin{definition}
Then we can in the same fashion consider the corresponding derived prismatic complexes \cite[Construction 7.6]{12BS}, \cite[Definition 4.1]{12Ko1}\footnote{One just applies \cite[Construction 7.6]{12BS}, \cite[Definition 4.1]{12Ko1} and then takes the left Kan extensions. We note that the derived logarithmic prismatic cohomologies are considered in \cite[Just above Notation in Section 1]{12Ko1}.} for the commutative algebras as in the above (for a given log prism $(A,I,M)$):
\begin{align}
\mathrm{Kan}_{\mathrm{Left}}\Delta_{?/A},	
\end{align}
by the regular corresponding left Kan extension techniques on the following $(\infty,1)$-compactly generated closures of the corresponding polynomials given over $A/I$ with a chosen log prism $(A,I,M)$:

\begin{align}
\mathrm{Object}_{\mathrm{E}_\infty\mathrm{commutativealgebra},\mathrm{Simplicial}}(\mathrm{IndSNorm}_{A/I})^{\mathrm{smoothformalseriesclosure},\text{prelog}},\\
\mathrm{Object}_{\mathrm{E}_\infty\mathrm{commutativealgebra},\mathrm{Simplicial}}(\mathrm{Ind}^m\mathrm{SNorm}_{A/I})^{\mathrm{smoothformalseriesclosure},\text{prelog}},\\
\mathrm{Object}_{\mathrm{E}_\infty\mathrm{commutativealgebra},\mathrm{Simplicial}}(\mathrm{IndNorm}_{A/I})^{\mathrm{smoothformalseriesclosure},\text{prelog}},\\
\mathrm{Object}_{\mathrm{E}_\infty\mathrm{commutativealgebra},\mathrm{Simplicial}}(\mathrm{Ind}^m\mathrm{Norm}_{A/I})^{\mathrm{smoothformalseriesclosure},\text{prelog}},\\
\mathrm{Object}_{\mathrm{E}_\infty\mathrm{commutativealgebra},\mathrm{Simplicial}}(\mathrm{IndBan}_{A/I})^{\mathrm{smoothformalseriesclosure},\text{prelog}},\\
\mathrm{Object}_{\mathrm{E}_\infty\mathrm{commutativealgebra},\mathrm{Simplicial}}(\mathrm{Ind}^m\mathrm{Ban}_{A/I})^{\mathrm{smoothformalseriesclosure},\text{prelog}}.
\end{align}
We call the corresponding functors functional analytic derived logarithmic prismatic complexes which we are going to denote that as in the following:
\begin{align}
\mathrm{Kan}_{\mathrm{Left}}\Delta_{?/A,\text{functionalanalytic,logarithmic,KKM},\text{BBM,formalanalytification}}.	
\end{align}
This would mean the following definition{\footnote{Before the Ben-Bassat-Mukherjee $p$-adic formal analytification we take the corresponding derived $(p,I)$-completion.}}:
\begin{align}
\mathrm{Kan}_{\mathrm{Left}}&\Delta_{?/A,\text{functionalanalytic,logarithmic,KKM},\text{BBM,formalanalytification}}(\mathcal{O})\\
&:=	((\underset{i}{\text{homotopycolimit}}_{\text{sifted},\text{derivedcategory}_{\infty}(A/I-\text{Module})}\mathrm{Kan}_{\mathrm{Left}}\Delta_{?/A,\text{functionalanalytic,logarithmic,KKM}}\\
&(\mathcal{O}_i))^\wedge\\
&)_\text{BBM,formalanalytification}
\end{align}
by writing any object $\mathcal{O}$ as the corresponding colimit 
\begin{center}
$\underset{i}{\text{homotopycolimit}}_\text{sifted}\mathcal{O}_i$.
\end{center}
These are quite large $(\infty,1)$-commutative ring objects in the corresponding $(\infty,1)$-categories for $(R=A/I,*)$:

\begin{align}
\mathrm{Object}_{\mathrm{E}_\infty\mathrm{commutativealgebra},\mathrm{Simplicial}}(\mathrm{IndSNorm}_R)^{\text{prelog}},\\
\mathrm{Object}_{\mathrm{E}_\infty\mathrm{commutativealgebra},\mathrm{Simplicial}}(\mathrm{Ind}^m\mathrm{SNorm}_R)^{\text{prelog}},\\
\mathrm{Object}_{\mathrm{E}_\infty\mathrm{commutativealgebra},\mathrm{Simplicial}}(\mathrm{IndNorm}_R)^{\text{prelog}},\\
\mathrm{Object}_{\mathrm{E}_\infty\mathrm{commutativealgebra},\mathrm{Simplicial}}(\mathrm{Ind}^m\mathrm{Norm}_R)^{\text{prelog}},\\
\mathrm{Object}_{\mathrm{E}_\infty\mathrm{commutativealgebra},\mathrm{Simplicial}}(\mathrm{IndBan}_R)^{\text{prelog}},\\
\mathrm{Object}_{\mathrm{E}_\infty\mathrm{commutativealgebra},\mathrm{Simplicial}}(\mathrm{Ind}^m\mathrm{Ban}_R)^{\text{prelog}},\\
\end{align}
after taking the formal series ring left Kan extension analytification from \cite[Section 4.2]{BBM}, which is defined by taking the left Kan extension to all the $(\infty,1)$-ring objects in the $\infty$-derived category of all $A$-modules from formal series rings over $A$.
\end{definition}

\newpage

\subsection{Functional Analytic Derived Logarithmic Perfectoidizations}

\indent Then as in \cite[Definition 8.2]{12BS} we consider the corresponding perfectoidization in this analytic setting.

\begin{definition}
Let $(A,I,M)$ be a perfectoid logarithmic prism as in \cite[Definition 3.3]{12Ko1}, and we consider any $\mathrm{E}_\infty$-ring $\mathcal{O}$ in the following
\begin{align}
\mathrm{Object}_{\mathrm{E}_\infty\mathrm{commutativealgebra},\mathrm{Simplicial}}(\mathrm{IndSNorm}_{A/I})^{\mathrm{smoothformalseriesclosure},\text{prelog}},\\
\mathrm{Object}_{\mathrm{E}_\infty\mathrm{commutativealgebra},\mathrm{Simplicial}}(\mathrm{Ind}^m\mathrm{SNorm}_{A/I})^{\mathrm{smoothformalseriesclosure},\text{prelog}},\\
\mathrm{Object}_{\mathrm{E}_\infty\mathrm{commutativealgebra},\mathrm{Simplicial}}(\mathrm{IndNorm}_{A/I})^{\mathrm{smoothformalseriesclosure},\text{prelog}},\\
\mathrm{Object}_{\mathrm{E}_\infty\mathrm{commutativealgebra},\mathrm{Simplicial}}(\mathrm{Ind}^m\mathrm{Norm}_{A/I})^{\mathrm{smoothformalseriesclosure},\text{prelog}},\\
\mathrm{Object}_{\mathrm{E}_\infty\mathrm{commutativealgebra},\mathrm{Simplicial}}(\mathrm{IndBan}_{A/I})^{\mathrm{smoothformalseriesclosure},\text{prelog}},\\
\mathrm{Object}_{\mathrm{E}_\infty\mathrm{commutativealgebra},\mathrm{Simplicial}}(\mathrm{Ind}^m\mathrm{Ban}_{A/I})^{\mathrm{smoothformalseriesclosure},\text{prelog}}.
\end{align}
Then consider the derived prismatic object:
\begin{align}
\mathrm{Kan}_{\mathrm{Left}}&\Delta_{?/A,\text{functionalanalytic,logarithmic,KKM},\text{BBM,formalanalytification}}(\mathcal{O})\\
&:=	((\underset{i}{\text{homotopycolimit}}_{\text{sifted},\text{derivedcategory}_{\infty}(A/I-\text{Module})}\mathrm{Kan}_{\mathrm{Left}}\Delta_{?/A,\text{functionalanalytic,logarithmic,KKM}}\\
&(\mathcal{O}_i))^\wedge\\
&)_\text{BBM,formalanalytification}.
\end{align}
Then as in \cite[Definition 8.2]{12BS} we have the following preperfectoidization:
\begin{align}
&(\mathcal{O})^{\text{preperfectoidization}}\\
&:=\mathrm{Colimit}(\mathrm{Kan}_{\mathrm{Left}}\Delta_{?/A,\text{functionalanalytic,logarithmic,KKM},\text{BBM,formalanalytification}}(\mathcal{O})\longrightarrow \\
&\mathrm{Fro}_*\mathrm{Kan}_{\mathrm{Left}}\Delta_{?/A,\text{functionalanalytic,logarithmic,KKM},\text{BBM,formalanalytification}}(\mathcal{O})\\
&\longrightarrow \mathrm{Fro}_* \mathrm{Fro}_*\mathrm{Kan}_{\mathrm{Left}}\Delta_{?/A,\text{functionalanalytic,logarithmic,KKM},\text{BBM,formalanalytification}}(\mathcal{O})\longrightarrow...\\
&)^{\text{BBM,formalanalytification}}.	
\end{align}
\footnote{Again after taking the formal series ring left Kan extension analytification from \cite[Section 4.2]{BBM}, which is defined by taking the left Kan extension to all the $(\infty,1)$-ring objects in the $\infty$-derived category of all $A$-modules from formal series rings over $A$, into:
\begin{align}
\mathrm{Object}_{\mathrm{E}_\infty\mathrm{commutativealgebra},\mathrm{Simplicial}}(\mathrm{IndSNorm}_R)^{\text{prelog}},\\
\mathrm{Object}_{\mathrm{E}_\infty\mathrm{commutativealgebra},\mathrm{Simplicial}}(\mathrm{Ind}^m\mathrm{SNorm}_R)^{\text{prelog}},\\
\mathrm{Object}_{\mathrm{E}_\infty\mathrm{commutativealgebra},\mathrm{Simplicial}}(\mathrm{IndNorm}_R)^{\text{prelog}},\\
\mathrm{Object}_{\mathrm{E}_\infty\mathrm{commutativealgebra},\mathrm{Simplicial}}(\mathrm{Ind}^m\mathrm{Norm}_R)^{\text{prelog}},\\
\mathrm{Object}_{\mathrm{E}_\infty\mathrm{commutativealgebra},\mathrm{Simplicial}}(\mathrm{IndBan}_R)^{\text{prelog}},\\
\mathrm{Object}_{\mathrm{E}_\infty\mathrm{commutativealgebra},\mathrm{Simplicial}}(\mathrm{Ind}^m\mathrm{Ban}_R)^{\text{prelog}}.\\
\end{align}
}Then we define the corresponding perfectoidization:
\begin{align}
&(\mathcal{O})^{\text{perfectoidization}}\\
&:=\mathrm{Colimit}(\mathrm{Kan}_{\mathrm{Left}}\Delta_{?/A,\text{functionalanalytic,logarithmic,KKM},\text{BBM,formalanalytification}}(\mathcal{O})\longrightarrow \\
&\mathrm{Fro}_*\mathrm{Kan}_{\mathrm{Left}}\Delta_{?/A,\text{functionalanalytic,logarithmic,KKM},\text{BBM,formalanalytification}}(\mathcal{O})\\
&\longrightarrow \mathrm{Fro}_* \mathrm{Fro}_*\mathrm{Kan}_{\mathrm{Left}}\Delta_{?/A,\text{functionalanalytic,logarithmic,KKM},\text{BBM,formalanalytification}}(\mathcal{O})\longrightarrow\\
&...)^{\text{BBM,formalanalytification}}\times A/I.	
\end{align}
Furthermore one can take derived $(p,I)$-completion to achieve the derived $(p,I)$-completed versions:
\begin{align}
\mathcal{O}^\text{preperfectoidization,derivedcomplete}:=(\mathcal{O}^\text{preperfectoidization})^{\wedge},\\
\mathcal{O}^\text{perfectoidization,derivedcomplete}:=\mathcal{O}^\text{preperfectoidization,derivedcomplete}\times A/I.\\
\end{align}

\noindent These are large $(\infty,1)$-commutative algebra objects in the corresponding categories as in the above, attached to also large $(\infty,1)$-commutative algebra objects. When we apply this to the corresponding sub-$(\infty,1)$-categories of Banach perfectoid objects in \cite{BMS2}, \cite{12DLLZ1}, \cite{12DLLZ2}, \cite{GR}, \cite{12KL1}, \cite{12KL2}, \cite{12Ked1}, \cite{12Sch3} we will recover the corresponding distinguished elemental deformation processes defined in \cite{BMS2}, \cite{12DLLZ1}, \cite{12DLLZ2}, \cite{GR}, \cite{12KL1}, \cite{12KL2}, \cite{12Ked1}, \cite{12Sch3}.
\end{definition}

\

\begin{remark}
One can then define such ring $\mathcal{O}$ to be \textit{logarithmicpreperfectoid} if we have the equivalence:
\begin{align}
\mathcal{O}^{\text{preperfectoidization}} \overset{\sim}{\longrightarrow}	\mathcal{O}.
\end{align}
One can then define such ring $\mathcal{O}$ to be \textit{logarithmicperfectoid} if we have the equivalence:
\begin{align}
\mathcal{O}^{\text{preperfectoidization}}\times A/I \overset{\sim}{\longrightarrow}	\mathcal{O}.
\end{align}
	
\end{remark}

\newpage

\section{Functional Analytic Derived Prismatic Complexes for $(\infty,1)$-Analytic Stacks and the Preperfectoidizations}

\subsection{Functional Analytic Derived Prismatic Complexes for $(\infty,1)$-Analytic Stacks}

\indent We now promote the construction in the previous sections to the corresponding $(\infty,1)$-ringed toposes level after Lurie \cite{12Lu1}, \cite{12Lu2} and \cite{Lu3} in the $\infty$-category of $\infty$-ringed toposes, Bambozzi-Ben-Bassat-Kremnizer \cite{12BBBK}, Ben-Bassat-Mukherjee \cite{BBM}, Bambozzi-Kremnizer \cite{12BK}, Clausen-Scholze \cite{12CS1} \cite{12CS2} and Kelly-Kremnizer-Mukherjee \cite{KKM} in the $\infty$-cateogory of $\infty$-functional analytic ringed toposes.\\

\indent Now we consider the following $\infty$-categories of the corresponding $\infty$-analytic ringed toposes from Bambozzi-Ben-Bassat-Kremnizer \cite{12BBBK}:\\

\begin{align}
&\infty-\mathrm{Toposes}^{\mathrm{ringed},\mathrm{commutativealgebra}_{\mathrm{simplicial}}(\mathrm{Ind}\mathrm{Seminormed}_?)}_{\mathrm{Commutativealgebra}_{\mathrm{simplicial}}(\mathrm{Ind}\mathrm{Seminormed}_?)^\mathrm{opposite},\mathrm{Grothendiecktopology,homotopyepimorphism}}.\\
&\infty-\mathrm{Toposes}^{\mathrm{ringed},\mathrm{Commutativealgebra}_{\mathrm{simplicial}}(\mathrm{Ind}^m\mathrm{Seminormed}_?)}_{\mathrm{Commutativealgebra}_{\mathrm{simplicial}}(\mathrm{Ind}^m\mathrm{Seminormed}_?)^\mathrm{opposite},\mathrm{Grothendiecktopology,homotopyepimorphism}}.\\
&\infty-\mathrm{Toposes}^{\mathrm{ringed},\mathrm{Commutativealgebra}_{\mathrm{simplicial}}(\mathrm{Ind}\mathrm{Normed}_?)}_{\mathrm{Commutativealgebra}_{\mathrm{simplicial}}(\mathrm{Ind}\mathrm{Normed}_?)^\mathrm{opposite},\mathrm{Grothendiecktopology,homotopyepimorphism}}.\\
&\infty-\mathrm{Toposes}^{\mathrm{ringed},\mathrm{Commutativealgebra}_{\mathrm{simplicial}}(\mathrm{Ind}^m\mathrm{Normed}_?)}_{\mathrm{Commutativealgebra}_{\mathrm{simplicial}}(\mathrm{Ind}^m\mathrm{Normed}_?)^\mathrm{opposite},\mathrm{Grothendiecktopology,homotopyepimorphism}}.\\
&\infty-\mathrm{Toposes}^{\mathrm{ringed},\mathrm{Commutativealgebra}_{\mathrm{simplicial}}(\mathrm{Ind}\mathrm{Banach}_?)}_{\mathrm{Commutativealgebra}_{\mathrm{simplicial}}(\mathrm{Ind}\mathrm{Banach}_?)^\mathrm{opposite},\mathrm{Grothendiecktopology,homotopyepimorphism}}.\\
&\infty-\mathrm{Toposes}^{\mathrm{ringed},\mathrm{Commutativealgebra}_{\mathrm{simplicial}}(\mathrm{Ind}^m\mathrm{Banach}_?)}_{\mathrm{Commutativealgebra}_{\mathrm{simplicial}}(\mathrm{Ind}^m\mathrm{Banach}_?)^\mathrm{opposite},\mathrm{Grothendiecktopology,homotopyepimorphism}}.\\ 
&\mathrm{Proj}^\text{smoothformalseriesclosure}\infty-\mathrm{Toposes}^{\mathrm{ringed},\mathrm{commutativealgebra}_{\mathrm{simplicial}}(\mathrm{Ind}\mathrm{Seminormed}_?)}_{\mathrm{Commutativealgebra}_{\mathrm{simplicial}}(\mathrm{Ind}\mathrm{Seminormed}_?)^\mathrm{opposite},\mathrm{Grothendiecktopology,homotopyepimorphism}}. \\
&\mathrm{Proj}^\text{smoothformalseriesclosure}\infty-\mathrm{Toposes}^{\mathrm{ringed},\mathrm{Commutativealgebra}_{\mathrm{simplicial}}(\mathrm{Ind}^m\mathrm{Seminormed}_?)}_{\mathrm{Commutativealgebra}_{\mathrm{simplicial}}(\mathrm{Ind}^m\mathrm{Seminormed}_?)^\mathrm{opposite},\mathrm{Grothendiecktopology,homotopyepimorphism}}.\\
&\mathrm{Proj}^\text{smoothformalseriesclosure}\infty-\mathrm{Toposes}^{\mathrm{ringed},\mathrm{Commutativealgebra}_{\mathrm{simplicial}}(\mathrm{Ind}\mathrm{Normed}_?)}_{\mathrm{Commutativealgebra}_{\mathrm{simplicial}}(\mathrm{Ind}\mathrm{Normed}_?)^\mathrm{opposite},\mathrm{Grothendiecktopology,homotopyepimorphism}}.\\
&\mathrm{Proj}^\text{smoothformalseriesclosure}\infty-\mathrm{Toposes}^{\mathrm{ringed},\mathrm{Commutativealgebra}_{\mathrm{simplicial}}(\mathrm{Ind}^m\mathrm{Normed}_?)}_{\mathrm{Commutativealgebra}_{\mathrm{simplicial}}(\mathrm{Ind}^m\mathrm{Normed}_?)^\mathrm{opposite},\mathrm{Grothendiecktopology,homotopyepimorphism}}.\\
&\mathrm{Proj}^\text{smoothformalseriesclosure}\infty-\mathrm{Toposes}^{\mathrm{ringed},\mathrm{Commutativealgebra}_{\mathrm{simplicial}}(\mathrm{Ind}\mathrm{Banach}_?)}_{\mathrm{Commutativealgebra}_{\mathrm{simplicial}}(\mathrm{Ind}\mathrm{Banach}_?)^\mathrm{opposite},\mathrm{Grothendiecktopology,homotopyepimorphism}}.\\
&\mathrm{Proj}^\text{smoothformalseriesclosure}\infty-\mathrm{Toposes}^{\mathrm{ringed},\mathrm{Commutativealgebra}_{\mathrm{simplicial}}(\mathrm{Ind}^m\mathrm{Banach}_?)}_{\mathrm{Commutativealgebra}_{\mathrm{simplicial}}(\mathrm{Ind}^m\mathrm{Banach}_?)^\mathrm{opposite},\mathrm{Grothendiecktopology,homotopyepimorphism}}.\\ 
\end{align}

\begin{example}
\mbox{(Preadic nonsheafy spaces and their colimits)} The main example we would like consider comes from \cite{12BK} where one constructs the corresponding derived adic space $(\mathrm{Spectrumadic}_{\mathrm{BK}}(R),\mathcal{O}_{\mathrm{Spectrumadic}_{\mathrm{BK}}(R)})$ by using derived rational localization to reach some $\infty$-toposes carrying essentially $\infty$-sheaf of rings, from any Banach ring $R$\footnote{With some key essential assumptions but that is not so serious once one considers some foundation from Kedlaya in \cite{Ked2} on reified adic spaces as mentioned in the last section of \cite{12BK}.}. Then we apply this to the corresponding situation below. Let $(A,I)$ be a corresponding prism from Bhatt-Scholze, with the assumption that $A/I$ is Banach. Then what we do is consider all the formal series rings over $A/I$ then take the colimit completion in the homotopy sense, by embedding them through $\mathrm{Spectrumadic}_{\mathrm{BK}}$ into the $\infty$-toposes ringed. For instance for any such formal ring $F$, one could regard this as a inductive system of Banach rings:
\begin{align}
F=\underset{i}{\mathrm{homotopycolimit}} F_i,	
\end{align}
where we set:
\begin{align}
(\mathrm{Spectrumadic}_{\mathrm{BK}}(F),\mathcal{O}):=\underset{i}{\mathrm{homotopylimit}} (\mathrm{Spectrumadic}_{\mathrm{BK}}(F_i),\mathcal{O}_{\mathrm{Spectrumadic}_{\mathrm{BK}}(F_i)}).	
\end{align}
The resulting $\infty$-stacks generated are interesting to study. 	
\end{example}

\begin{definition}
\indent One can actually define the derived prismatic cohomology presheaves through derived topological Hochschild cohomology presheaves, derived topological period cohomology presheaves and derived topological cyclic cohomology presheaves as in \cite[Section 2.2, Section 2.3]{12BMS}, \cite[Theorem 1.13]{12BS}:
\begin{align}
	\mathrm{Kan}_{\mathrm{Left}}\mathrm{THH},\mathrm{Kan}_{\mathrm{Left}}\mathrm{TP},\mathrm{Kan}_{\mathrm{Left}}\mathrm{TC},
\end{align}
on the following $(\infty,1)$-compactly generated closures of the corresponding polynomials\footnote{Definitely, we need to put certain norms over in some relatively canonical way, as in \cite[Section 4.2]{BBM} one can basically consider rigid ones and dagger ones, and so on. We restrict to the \textit{formal} one.} given over $A/I$ with a chosen prism $(A,I)$\footnote{In all the following, we assume this prism to be bounded and satisfy that $A/I$ is Banach.}:
\begin{align}
\mathrm{Proj}^\text{smoothformalseriesclosure}\infty-\mathrm{Toposes}^{\mathrm{ringed},\mathrm{commutativealgebra}_{\mathrm{simplicial}}(\mathrm{Ind}\mathrm{Seminormed}_{A/I})}_{\mathrm{Commutativealgebra}_{\mathrm{simplicial}}(\mathrm{Ind}\mathrm{Seminormed}_{A/I})^\mathrm{opposite},\mathrm{Grothendiecktopology,homotopyepimorphism}}. \\
\mathrm{Proj}^\text{smoothformalseriesclosure}\infty-\mathrm{Toposes}^{\mathrm{ringed},\mathrm{Commutativealgebra}_{\mathrm{simplicial}}(\mathrm{Ind}^m\mathrm{Seminormed}_{A/I})}_{\mathrm{Commutativealgebra}_{\mathrm{simplicial}}(\mathrm{Ind}^m\mathrm{Seminormed}_{A/I})^\mathrm{opposite},\mathrm{Grothendiecktopology,homotopyepimorphism}}.\\
\mathrm{Proj}^\text{smoothformalseriesclosure}\infty-\mathrm{Toposes}^{\mathrm{ringed},\mathrm{Commutativealgebra}_{\mathrm{simplicial}}(\mathrm{Ind}\mathrm{Normed}_{A/I})}_{\mathrm{Commutativealgebra}_{\mathrm{simplicial}}(\mathrm{Ind}\mathrm{Normed}_{A/I})^\mathrm{opposite},\mathrm{Grothendiecktopology,homotopyepimorphism}}.\\
\mathrm{Proj}^\text{smoothformalseriesclosure}\infty-\mathrm{Toposes}^{\mathrm{ringed},\mathrm{Commutativealgebra}_{\mathrm{simplicial}}(\mathrm{Ind}^m\mathrm{Normed}_{A/I})}_{\mathrm{Commutativealgebra}_{\mathrm{simplicial}}(\mathrm{Ind}^m\mathrm{Normed}_{A/I})^\mathrm{opposite},\mathrm{Grothendiecktopology,homotopyepimorphism}}.\\
\mathrm{Proj}^\text{smoothformalseriesclosure}\infty-\mathrm{Toposes}^{\mathrm{ringed},\mathrm{Commutativealgebra}_{\mathrm{simplicial}}(\mathrm{Ind}\mathrm{Banach}_{A/I})}_{\mathrm{Commutativealgebra}_{\mathrm{simplicial}}(\mathrm{Ind}\mathrm{Banach}_{A/I})^\mathrm{opposite},\mathrm{Grothendiecktopology,homotopyepimorphism}}.\\
\mathrm{Proj}^\text{smoothformalseriesclosure}\infty-\mathrm{Toposes}^{\mathrm{ringed},\mathrm{Commutativealgebra}_{\mathrm{simplicial}}(\mathrm{Ind}^m\mathrm{Banach}_{A/I})}_{\mathrm{Commutativealgebra}_{\mathrm{simplicial}}(\mathrm{Ind}^m\mathrm{Banach}_{A/I})^\mathrm{opposite},\mathrm{Grothendiecktopology,homotopyepimorphism}}. 
\end{align}
We call the corresponding functors are derived functional analytic Hochschild cohomology presheaves, derived functional analytic period cohomology presheaves and derived functional analytic cyclic cohomology presheaves, which we are going to denote these presheaves as in the following for any $\infty$-ringed topos $(\mathbb{X},\mathcal{O})=\underset{i}{\text{homotopycolimit}}(\mathbb{X}_i,\mathcal{O}_i)$:
\begin{align}
	&\mathrm{Kan}_{\mathrm{Left}}\mathrm{THH}_{\text{functionalanalytic,KKM},\text{BBM,formalanalytification}}(\mathcal{O}):=\\
	&(\underset{i}{\text{homotopycolimit}}_{\text{sifted},\text{derivedcategory}_{\infty}(A/I-\text{Module})}\mathrm{Kan}_{\mathrm{Left}}\mathrm{THH}_{\text{functionalanalytic,KKM}}(\mathcal{O}_i)\\
	&)_\text{BBM,formalanalytification},\\
	&\mathrm{Kan}_{\mathrm{Left}}\mathrm{TP}_{\text{functionalanalytic,KKM},\text{BBM,formalanalytification}}(\mathcal{O}):=\\
	&(\underset{i}{\text{homotopycolimit}}_{\text{sifted},\text{derivedcategory}_{\infty}(A/I-\text{Module})}\mathrm{Kan}_{\mathrm{Left}}\mathrm{TP}_{\text{functionalanalytic,KKM}}(\mathcal{O}_i)\\
	&)_\text{BBM,formalanalytification},\\
	&\mathrm{Kan}_{\mathrm{Left}}\mathrm{TC}_{\text{functionalanalytic,KKM},\text{BBM,formalanalytification}}(\mathcal{O}):=\\
	&(\underset{i}{\text{homotopycolimit}}_{\text{sifted},\text{derivedcategory}_{\infty}(A/I-\text{Module})}\mathrm{Kan}_{\mathrm{Left}}\mathrm{TC}_{\text{functionalanalytic,KKM}}(\mathcal{O}_i)\\
	&)_\text{BBM,formalanalytification},
\end{align}
by writing any object $\mathcal{O}$ as the corresponding colimit 
\begin{center}
$\underset{i}{\text{homotopycolimit}}_\text{sifted}\mathcal{O}_i:=\underset{i}{\text{homotopycolimit}}_\text{sifted}\mathcal{O}_{\mathrm{Spectrumadic}_\mathrm{BK}(\mathrm{Formalseriesring}_i)}$.
\end{center}
These are quite large $(\infty,1)$-commutative ring objects in the corresponding $(\infty,1)$-categories for $?=A/I$ over $(\mathbb{X},\mathcal{O})$:

 \begin{align}
\mathrm{Ind}\mathrm{\sharp Quasicoherent}^{\text{presheaf}}_{\mathrm{Ind}^\text{smoothformalseriesclosure}\infty-\mathrm{Toposes}^{\mathrm{ringed},\mathrm{commutativealgebra}_{\mathrm{simplicial}}(\mathrm{Ind}\mathrm{Seminormed}_?)}_{\mathrm{Commutativealgebra}_{\mathrm{simplicial}}(\mathrm{Ind}\mathrm{Seminormed}_?)^\mathrm{opposite},\mathrm{Grotopology,homotopyepimorphism}}}. \\
\mathrm{Ind}\mathrm{\sharp Quasicoherent}^{\text{presheaf}}_{\mathrm{Ind}^\text{smoothformalseriesclosure}\infty-\mathrm{Toposes}^{\mathrm{ringed},\mathrm{Commutativealgebra}_{\mathrm{simplicial}}(\mathrm{Ind}^m\mathrm{Seminormed}_?)}_{\mathrm{Commutativealgebra}_{\mathrm{simplicial}}(\mathrm{Ind}^m\mathrm{Seminormed}_?)^\mathrm{opposite},\mathrm{Grotopology,homotopyepimorphism}}}.\\
\mathrm{Ind}\mathrm{\sharp Quasicoherent}^{\text{presheaf}}_{\mathrm{Ind}^\text{smoothformalseriesclosure}\infty-\mathrm{Toposes}^{\mathrm{ringed},\mathrm{Commutativealgebra}_{\mathrm{simplicial}}(\mathrm{Ind}\mathrm{Normed}_?)}_{\mathrm{Commutativealgebra}_{\mathrm{simplicial}}(\mathrm{Ind}\mathrm{Normed}_?)^\mathrm{opposite},\mathrm{Grotopology,homotopyepimorphism}}}.\\
\mathrm{Ind}\mathrm{\sharp Quasicoherent}^{\text{presheaf}}_{\mathrm{Ind}^\text{smoothformalseriesclosure}\infty-\mathrm{Toposes}^{\mathrm{ringed},\mathrm{Commutativealgebra}_{\mathrm{simplicial}}(\mathrm{Ind}^m\mathrm{Normed}_?)}_{\mathrm{Commutativealgebra}_{\mathrm{simplicial}}(\mathrm{Ind}^m\mathrm{Normed}_?)^\mathrm{opposite},\mathrm{Grotopology,homotopyepimorphism}}}.\\
\mathrm{Ind}\mathrm{\sharp Quasicoherent}^{\text{presheaf}}_{\mathrm{Ind}^\text{smoothformalseriesclosure}\infty-\mathrm{Toposes}^{\mathrm{ringed},\mathrm{Commutativealgebra}_{\mathrm{simplicial}}(\mathrm{Ind}\mathrm{Banach}_?)}_{\mathrm{Commutativealgebra}_{\mathrm{simplicial}}(\mathrm{Ind}\mathrm{Banach}_?)^\mathrm{opposite},\mathrm{Grotopology,homotopyepimorphism}}}.\\
\mathrm{Ind}\mathrm{\sharp Quasicoherent}^{\text{presheaf}}_{\mathrm{Ind}^\text{smoothformalseriesclosure}\infty-\mathrm{Toposes}^{\mathrm{ringed},\mathrm{Commutativealgebra}_{\mathrm{simplicial}}(\mathrm{Ind}^m\mathrm{Banach}_?)}_{\mathrm{Commutativealgebra}_{\mathrm{simplicial}}(\mathrm{Ind}^m\mathrm{Banach}_?)^\mathrm{opposite},\mathrm{Grotopology,homotopyepimorphism}}},\\ 
\end{align}
after taking the formal series ring left Kan extension analytification from \cite[Section 4.2]{BBM}, which is defined by taking the left Kan extension to all the $(\infty,1)$-ring objects in the $\infty$-derived category of all $A$-modules from formal series rings over $A$.
\end{definition}

\

\begin{definition}
Then we can in the same fashion consider the corresponding derived prismatic complex presheaves \cite[Construction 7.6]{12BS}\footnote{One just applies \cite[Construction 7.6]{12BS} and then takes the left Kan extensions.} for the commutative algebras as in the above (for a given prism $(A,I)$):
\begin{align}
\mathrm{Kan}_{\mathrm{Left}}\Delta_{?/A},	
\end{align}
by the regular corresponding left Kan extension techniques on the following $(\infty,1)$-compactly generated closures of the corresponding polynomials given over $A/I$ with a chosen prism $(A,I)$:\\

\begin{align}
\mathrm{Proj}^\text{smoothformalseriesclosure}\infty-\mathrm{Toposes}^{\mathrm{ringed},\mathrm{commutativealgebra}_{\mathrm{simplicial}}(\mathrm{Ind}\mathrm{Seminormed}_{A/I})}_{\mathrm{Commutativealgebra}_{\mathrm{simplicial}}(\mathrm{Ind}\mathrm{Seminormed}_{A/I})^\mathrm{opposite},\mathrm{Grothendiecktopology,homotopyepimorphism}}. \\
\mathrm{Proj}^\text{smoothformalseriesclosure}\infty-\mathrm{Toposes}^{\mathrm{ringed},\mathrm{Commutativealgebra}_{\mathrm{simplicial}}(\mathrm{Ind}^m\mathrm{Seminormed}_{A/I})}_{\mathrm{Commutativealgebra}_{\mathrm{simplicial}}(\mathrm{Ind}^m\mathrm{Seminormed}_{A/I})^\mathrm{opposite},\mathrm{Grothendiecktopology,homotopyepimorphism}}.\\
\mathrm{Proj}^\text{smoothformalseriesclosure}\infty-\mathrm{Toposes}^{\mathrm{ringed},\mathrm{Commutativealgebra}_{\mathrm{simplicial}}(\mathrm{Ind}\mathrm{Normed}_{A/I})}_{\mathrm{Commutativealgebra}_{\mathrm{simplicial}}(\mathrm{Ind}\mathrm{Normed}_{A/I})^\mathrm{opposite},\mathrm{Grothendiecktopology,homotopyepimorphism}}.\\
\mathrm{Proj}^\text{smoothformalseriesclosure}\infty-\mathrm{Toposes}^{\mathrm{ringed},\mathrm{Commutativealgebra}_{\mathrm{simplicial}}(\mathrm{Ind}^m\mathrm{Normed}_{A/I})}_{\mathrm{Commutativealgebra}_{\mathrm{simplicial}}(\mathrm{Ind}^m\mathrm{Normed}_{A/I})^\mathrm{opposite},\mathrm{Grothendiecktopology,homotopyepimorphism}}.\\
\mathrm{Proj}^\text{smoothformalseriesclosure}\infty-\mathrm{Toposes}^{\mathrm{ringed},\mathrm{Commutativealgebra}_{\mathrm{simplicial}}(\mathrm{Ind}\mathrm{Banach}_{A/I})}_{\mathrm{Commutativealgebra}_{\mathrm{simplicial}}(\mathrm{Ind}\mathrm{Banach}_{A/I})^\mathrm{opposite},\mathrm{Grothendiecktopology,homotopyepimorphism}}.\\
\mathrm{Proj}^\text{smoothformalseriesclosure}\infty-\mathrm{Toposes}^{\mathrm{ringed},\mathrm{Commutativealgebra}_{\mathrm{simplicial}}(\mathrm{Ind}^m\mathrm{Banach}_{A/I})}_{\mathrm{Commutativealgebra}_{\mathrm{simplicial}}(\mathrm{Ind}^m\mathrm{Banach}_{A/I})^\mathrm{opposite},\mathrm{Grothendiecktopology,homotopyepimorphism}}. 
\end{align}
We call the corresponding functors functional analytic derived prismatic complex presheaves which we are going to denote that as in the following:
\begin{align}
\mathrm{Kan}_{\mathrm{Left}}\Delta_{?/A,\text{functionalanalytic,KKM},\text{BBM,formalanalytification}}.	
\end{align}
This would mean the following definition{\footnote{Before the Ben-Bassat-Mukherjee $p$-adic formal analytification we take the corresponding derived $(p,I)$-completion.}}:
\begin{align}
&\mathrm{Kan}_{\mathrm{Left}}\Delta_{?/A,\text{functionalanalytic,KKM},\text{BBM,formalanalytification}}(\mathcal{O})\\
&:=	((\underset{i}{\text{homotopycolimit}}_{\text{sifted},\text{derivedcategory}_{\infty}(A/I-\text{Module})}\mathrm{Kan}_{\mathrm{Left}}\Delta_{?/A,\text{functionalanalytic,KKM}}(\mathcal{O}_i))^\wedge\\
&)_\text{BBM,formalanalytification}
\end{align}
by writing any object $\mathcal{O}$ as the corresponding colimit\footnote{Here we remind the readers of the corresponding foundation here, namely the presheaf $\mathcal{O}_i$ in fact takes the value in Koszul complex taking the following form:
\begin{align}
&\mathrm{Koszulcomplex}_{A/I\left<X_1,...,X_l\right>\left<T_1,...,T_m\right>}(a_1-T_1b_1,...,a_m-T_mb_m)\\
&= A/I\left<X_1,...,X_l\right>\left<T_1,...,T_m\right>/^\mathbb{L}(a_1-T_1b_1,...,a_m-T_mb_m).	
\end{align}
This is actually derived $p$-complete since each homotopy group is derived $p$-complete (from the corresponding Banach structure from $A/I\left<X_1,...,X_l\right>$ induced from the $p$-adic topology). Namely the definition of the presheaf:
\begin{align}
\mathrm{Kan}_{\mathrm{Left}}\Delta_{?/A,\text{functionalanalytic,KKM}}(\mathcal{O}_i)	
\end{align}
is directly the application of the derived prismatic functor from \cite[Construction 7.6]{12BS}.} 
\begin{center}
$\underset{i}{\text{homotopycolimit}}_\text{sifted}\mathcal{O}_i$.
\end{center}
These are quite large $(\infty,1)$-commutative ring objects in the corresponding $(\infty,1)$-categories for $R=A/I$:
\begin{align}
\mathrm{Ind}\mathrm{\sharp Quasicoherent}^{\text{presheaf}}_{\mathrm{Ind}^\text{smoothformalseriesclosure}\infty-\mathrm{Toposes}^{\mathrm{ringed},\mathrm{commutativealgebra}_{\mathrm{simplicial}}(\mathrm{Ind}\mathrm{Seminormed}_R)}_{\mathrm{Commutativealgebra}_{\mathrm{simplicial}}(\mathrm{Ind}\mathrm{Seminormed}_R)^\mathrm{opposite},\mathrm{Grotopology,homotopyepimorphism}}}. \\
\mathrm{Ind}\mathrm{\sharp Quasicoherent}^{\text{presheaf}}_{\mathrm{Ind}^\text{smoothformalseriesclosure}\infty-\mathrm{Toposes}^{\mathrm{ringed},\mathrm{Commutativealgebra}_{\mathrm{simplicial}}(\mathrm{Ind}^m\mathrm{Seminormed}_R)}_{\mathrm{Commutativealgebra}_{\mathrm{simplicial}}(\mathrm{Ind}^m\mathrm{Seminormed}_R)^\mathrm{opposite},\mathrm{Grotopology,homotopyepimorphism}}}.\\
\mathrm{Ind}\mathrm{\sharp Quasicoherent}^{\text{presheaf}}_{\mathrm{Ind}^\text{smoothformalseriesclosure}\infty-\mathrm{Toposes}^{\mathrm{ringed},\mathrm{Commutativealgebra}_{\mathrm{simplicial}}(\mathrm{Ind}\mathrm{Normed}_R)}_{\mathrm{Commutativealgebra}_{\mathrm{simplicial}}(\mathrm{Ind}\mathrm{Normed}_R)^\mathrm{opposite},\mathrm{Grotopology,homotopyepimorphism}}}.\\
\mathrm{Ind}\mathrm{\sharp Quasicoherent}^{\text{presheaf}}_{\mathrm{Ind}^\text{smoothformalseriesclosure}\infty-\mathrm{Toposes}^{\mathrm{ringed},\mathrm{Commutativealgebra}_{\mathrm{simplicial}}(\mathrm{Ind}^m\mathrm{Normed}_R)}_{\mathrm{Commutativealgebra}_{\mathrm{simplicial}}(\mathrm{Ind}^m\mathrm{Normed}_R)^\mathrm{opposite},\mathrm{Grotopology,homotopyepimorphism}}}.\\
\mathrm{Ind}\mathrm{\sharp Quasicoherent}^{\text{presheaf}}_{\mathrm{Ind}^\text{smoothformalseriesclosure}\infty-\mathrm{Toposes}^{\mathrm{ringed},\mathrm{Commutativealgebra}_{\mathrm{simplicial}}(\mathrm{Ind}\mathrm{Banach}_R)}_{\mathrm{Commutativealgebra}_{\mathrm{simplicial}}(\mathrm{Ind}\mathrm{Banach}_R)^\mathrm{opposite},\mathrm{Grotopology,homotopyepimorphism}}}.\\
\mathrm{Ind}\mathrm{\sharp Quasicoherent}^{\text{presheaf}}_{\mathrm{Ind}^\text{smoothformalseriesclosure}\infty-\mathrm{Toposes}^{\mathrm{ringed},\mathrm{Commutativealgebra}_{\mathrm{simplicial}}(\mathrm{Ind}^m\mathrm{Banach}_R)}_{\mathrm{Commutativealgebra}_{\mathrm{simplicial}}(\mathrm{Ind}^m\mathrm{Banach}_R)^\mathrm{opposite},\mathrm{Grotopology,homotopyepimorphism}}},\\ 
\end{align}
after taking the formal series ring left Kan extension analytification from \cite[Section 4.2]{BBM}, which is defined by taking the left Kan extension to all the $(\infty,1)$-ring objects in the $\infty$-derived category of all $A$-modules from formal series rings over $A$.\\
\end{definition}

\newpage

\subsection{Functional Analytic Derived Preperfectoidizations for $(\infty,1)$-Analytic Stacks}

\

\noindent Then as in \cite[Definition 8.2]{12BS} we consider the corresponding perfectoidization in this analytic setting.

\begin{definition}
Let $(A,I)$ be a perfectoid prism, and we consider any $\mathrm{E}_\infty$-ring $\mathcal{O}$ in the following
\begin{align}
\mathrm{Proj}^\text{smoothformalseriesclosure}\infty-\mathrm{Toposes}^{\mathrm{ringed},\mathrm{commutativealgebra}_{\mathrm{simplicial}}(\mathrm{Ind}\mathrm{Seminormed}_{A/I})}_{\mathrm{Commutativealgebra}_{\mathrm{simplicial}}(\mathrm{Ind}\mathrm{Seminormed}_{A/I})^\mathrm{opposite},\mathrm{Grothendiecktopology,homotopyepimorphism}}. \\
\mathrm{Proj}^\text{smoothformalseriesclosure}\infty-\mathrm{Toposes}^{\mathrm{ringed},\mathrm{Commutativealgebra}_{\mathrm{simplicial}}(\mathrm{Ind}^m\mathrm{Seminormed}_{A/I})}_{\mathrm{Commutativealgebra}_{\mathrm{simplicial}}(\mathrm{Ind}^m\mathrm{Seminormed}_{A/I})^\mathrm{opposite},\mathrm{Grothendiecktopology,homotopyepimorphism}}.\\
\mathrm{Proj}^\text{smoothformalseriesclosure}\infty-\mathrm{Toposes}^{\mathrm{ringed},\mathrm{Commutativealgebra}_{\mathrm{simplicial}}(\mathrm{Ind}\mathrm{Normed}_{A/I})}_{\mathrm{Commutativealgebra}_{\mathrm{simplicial}}(\mathrm{Ind}\mathrm{Normed}_{A/I})^\mathrm{opposite},\mathrm{Grothendiecktopology,homotopyepimorphism}}.\\
\mathrm{Proj}^\text{smoothformalseriesclosure}\infty-\mathrm{Toposes}^{\mathrm{ringed},\mathrm{Commutativealgebra}_{\mathrm{simplicial}}(\mathrm{Ind}^m\mathrm{Normed}_{A/I})}_{\mathrm{Commutativealgebra}_{\mathrm{simplicial}}(\mathrm{Ind}^m\mathrm{Normed}_{A/I})^\mathrm{opposite},\mathrm{Grothendiecktopology,homotopyepimorphism}}.\\
\mathrm{Proj}^\text{smoothformalseriesclosure}\infty-\mathrm{Toposes}^{\mathrm{ringed},\mathrm{Commutativealgebra}_{\mathrm{simplicial}}(\mathrm{Ind}\mathrm{Banach}_{A/I})}_{\mathrm{Commutativealgebra}_{\mathrm{simplicial}}(\mathrm{Ind}\mathrm{Banach}_{A/I})^\mathrm{opposite},\mathrm{Grothendiecktopology,homotopyepimorphism}}.\\
\mathrm{Proj}^\text{smoothformalseriesclosure}\infty-\mathrm{Toposes}^{\mathrm{ringed},\mathrm{Commutativealgebra}_{\mathrm{simplicial}}(\mathrm{Ind}^m\mathrm{Banach}_{A/I})}_{\mathrm{Commutativealgebra}_{\mathrm{simplicial}}(\mathrm{Ind}^m\mathrm{Banach}_{A/I})^\mathrm{opposite},\mathrm{Grothendiecktopology,homotopyepimorphism}}. 
\end{align}
Then consider the derived prismatic object:
\begin{align}
\mathrm{Kan}_{\mathrm{Left}}\Delta_{?/A,\text{functionalanalytic,KKM},\text{BBM,formalanalytification}}(\mathcal{O}).
\end{align}	
Then as in \cite[Definition 8.2]{12BS} we have the following preperfectoidization:
\begin{align}
&(\mathcal{O})^{\text{preperfectoidization}}\\
&:=\mathrm{Colimit}(\mathrm{Kan}_{\mathrm{Left}}\Delta_{?/A,\text{functionalanalytic,KKM},\text{BBM,formalanalytification}}(\mathcal{O})\rightarrow \\
&\mathrm{Fro}_*\mathrm{Kan}_{\mathrm{Left}}\Delta_{?/A,\text{functionalanalytic,KKM},\text{BBM,formalanalytification}}(\mathcal{O})\\
&\rightarrow \mathrm{Fro}_* \mathrm{Fro}_*\mathrm{Kan}_{\mathrm{Left}}\Delta_{?/A,\text{functionalanalytic,KKM},\text{BBM,formalanalytification}}(\mathcal{O})\rightarrow...)^{\text{BBM,formalanalytification}},	
\end{align}
after taking the formal series ring left Kan extension analytification from \cite[Section 4.2]{BBM}, which is defined by taking the left Kan extension to all the $(\infty,1)$-ring object in the $\infty$-derived category of all $A$-modules from formal series rings over $A$. Then we define the corresponding perfectoidization:
\begin{align}
&(\mathcal{O})^{\text{perfectoidization}}\\
&:=\mathrm{Colimit}(\mathrm{Kan}_{\mathrm{Left}}\Delta_{?/A,\text{functionalanalytic,KKM},\text{BBM,formalanalytification}}(\mathcal{O})\longrightarrow \\
&\mathrm{Fro}_*\mathrm{Kan}_{\mathrm{Left}}\Delta_{?/A,\text{functionalanalytic,KKM},\text{BBM,formalanalytification}}(\mathcal{O})\\
&\longrightarrow \mathrm{Fro}_* \mathrm{Fro}_*\mathrm{Kan}_{\mathrm{Left}}\Delta_{?/A,\text{functionalanalytic,KKM},\text{BBM,formalanalytification}}(\mathcal{O})\longrightarrow...)^{\text{BBM,formalanalytification}}\times A/I.	
\end{align}
Furthermore one can take derived $(p,I)$-completion to achieve the derived $(p,I)$-completed versions:
\begin{align}
\mathcal{O}^\text{preperfectoidization,derivedcomplete}:=(\mathcal{O}^\text{preperfectoidization})^{\wedge},\\
\mathcal{O}^\text{perfectoidization,derivedcomplete}:=\mathcal{O}^\text{preperfectoidization,derivedcomplete}\times A/I.\\
\end{align}
These are large $(\infty,1)$-commutative algebra objects in the corresponding categories as in the above, attached to also large $(\infty,1)$-commutative algebra objects. When we apply this to the corresponding sub-$(\infty,1)$-categories of Banach perfectoid objects in \cite{BMS2}, \cite{GR}, \cite{12KL1}, \cite{12KL2}, \cite{12Ked1}, \cite{12Sch3},  we will recover the corresponding distinguished elemental deformation processes defined in \cite{BMS2}, \cite{GR}, \cite{12KL1}, \cite{12KL2}, \cite{12Ked1}, \cite{12Sch3}. 
\end{definition}

\

\indent The $\infty$-presheaves in this section in the $\infty$-category:	
\begin{align}
\mathrm{Ind}\mathrm{\sharp Quasicoherent}^{\text{presheaf}}_{\mathrm{Ind}^\text{smoothformalseriesclosure}\infty-\mathrm{Toposes}^{\mathrm{ringed},\mathrm{commutativealgebra}_{\mathrm{simplicial}}(\mathrm{Ind}\mathrm{Seminormed}_R)}_{\mathrm{Commutativealgebra}_{\mathrm{simplicial}}(\mathrm{Ind}\mathrm{Seminormed}_R)^\mathrm{opposite},\mathrm{Grotopology,homotopyepimorphism}}}. \\
\mathrm{Ind}\mathrm{\sharp Quasicoherent}^{\text{presheaf}}_{\mathrm{Ind}^\text{smoothformalseriesclosure}\infty-\mathrm{Toposes}^{\mathrm{ringed},\mathrm{Commutativealgebra}_{\mathrm{simplicial}}(\mathrm{Ind}^m\mathrm{Seminormed}_R)}_{\mathrm{Commutativealgebra}_{\mathrm{simplicial}}(\mathrm{Ind}^m\mathrm{Seminormed}_R)^\mathrm{opposite},\mathrm{Grotopology,homotopyepimorphism}}}.\\
\mathrm{Ind}\mathrm{\sharp Quasicoherent}^{\text{presheaf}}_{\mathrm{Ind}^\text{smoothformalseriesclosure}\infty-\mathrm{Toposes}^{\mathrm{ringed},\mathrm{Commutativealgebra}_{\mathrm{simplicial}}(\mathrm{Ind}\mathrm{Normed}_R)}_{\mathrm{Commutativealgebra}_{\mathrm{simplicial}}(\mathrm{Ind}\mathrm{Normed}_R)^\mathrm{opposite},\mathrm{Grotopology,homotopyepimorphism}}}.\\
\mathrm{Ind}\mathrm{\sharp Quasicoherent}^{\text{presheaf}}_{\mathrm{Ind}^\text{smoothformalseriesclosure}\infty-\mathrm{Toposes}^{\mathrm{ringed},\mathrm{Commutativealgebra}_{\mathrm{simplicial}}(\mathrm{Ind}^m\mathrm{Normed}_R)}_{\mathrm{Commutativealgebra}_{\mathrm{simplicial}}(\mathrm{Ind}^m\mathrm{Normed}_R)^\mathrm{opposite},\mathrm{Grotopology,homotopyepimorphism}}}.\\
\mathrm{Ind}\mathrm{\sharp Quasicoherent}^{\text{presheaf}}_{\mathrm{Ind}^\text{smoothformalseriesclosure}\infty-\mathrm{Toposes}^{\mathrm{ringed},\mathrm{Commutativealgebra}_{\mathrm{simplicial}}(\mathrm{Ind}\mathrm{Banach}_R)}_{\mathrm{Commutativealgebra}_{\mathrm{simplicial}}(\mathrm{Ind}\mathrm{Banach}_R)^\mathrm{opposite},\mathrm{Grotopology,homotopyepimorphism}}}.\\
\mathrm{Ind}\mathrm{\sharp Quasicoherent}^{\text{presheaf}}_{\mathrm{Ind}^\text{smoothformalseriesclosure}\infty-\mathrm{Toposes}^{\mathrm{ringed},\mathrm{Commutativealgebra}_{\mathrm{simplicial}}(\mathrm{Ind}^m\mathrm{Banach}_R)}_{\mathrm{Commutativealgebra}_{\mathrm{simplicial}}(\mathrm{Ind}^m\mathrm{Banach}_R)^\mathrm{opposite},\mathrm{Grotopology,homotopyepimorphism}}},\\
\end{align}
are expected to be $\infty$-sheaves as long as one considers in the admissible situations the corresponding \v{C}ech $\infty$-descent for general seminormed modules as in \cite[Section 9.3]{KKM} and \cite{12BBK} such as Bambozzi-Kremnizer spaces in \cite{12BK}. Therefore we have:\\

\begin{proposition}
The motivic complex $\infty$-presheaf 
\begin{align}
\mathrm{Kan}_{\mathrm{Left}}\Delta_{?/A,\mathrm{functionalanalytic,KKM},\mathrm{BBM,formalanalytification}}(\mathcal{O}),\\
\end{align}
as well as the corresponding Hodge-Tate $\infty$-presheaf as in \cite{12BS}{\footnote{Before the Ben-Bassat-Mukherjee $p$-adic formal analytification we take the corresponding derived $(p,I)$-completion.}}:
\begin{align}
\mathrm{Kan}_{\mathrm{Left}}&\Delta_{?/A,\mathrm{functionalanalytic,KKM},\mathrm{BBM,formalanalytification}}(\mathcal{O})^{\mathrm{HodgeTate}}\\
&:=\mathrm{Kan}_{\mathrm{Left}}\Delta_{?/A,\mathrm{functionalanalytic,KKM},\mathrm{BBM,formalanalytification}}(\mathcal{O})\times A/I\\
&=((\underset{i}{\mathrm{homotopycolimit}}_{\mathrm{sifted},\mathrm{derivedcategory}_{\infty}(A/I-\mathrm{Module})}\mathrm{Kan}_{\mathrm{Left}}\overline{\Delta}_{?/A,\mathrm{functionalanalytic,KKM}}(\mathcal{O}_i))^\wedge\\
&)_\mathrm{BBM,formalanalytification}	
\end{align}
and the preperfectoidization $\infty$-presheaves:
\begin{align}
&(\mathcal{O})^{\mathrm{preperfectoidization}},\\
&(\mathcal{O})^{\mathrm{perfectoidization}},\\
&(\mathcal{O})^{\mathrm{preperfectoidization,derivedcompleted}},\\
&(\mathcal{O})^{\mathrm{perfectoidization,derivedcompleted}}
\end{align}
are $\infty$-sheaves over Bambozzi-Kremnizer topos
\begin{align}
(\mathrm{Spectrumadic}_{\mathrm{BK}}(F),\mathcal{O}),	
\end{align}
attached to any colimit of formal series rings $F$ over $A/I$ in \cite{12BK}.\\	
\end{proposition}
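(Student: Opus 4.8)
The proof proceeds by reducing the sheaf property of all the motivic $\infty$-presheaves in question to the single \v{C}ech $\infty$-descent statement for seminormed (resp. normed, Banach) modules over the Bambozzi-Kremnizer topos $(\mathrm{Spectrumadic}_{\mathrm{BK}}(F),\mathcal{O})$. The plan is as follows. First I would recall that by \cite[Section 9.3]{KKM} and \cite{12BBK} the presheaf $\mathcal{O}$ of simplicial ind-$\sharp$-modules on $\mathrm{Spectrumadic}_{\mathrm{BK}}(F)$ satisfies \v{C}ech descent along the rational covers generating the Grothendieck topology, and hence so does every finite limit or finite colimit diagram built from $\mathcal{O}$ in a way that commutes with the relevant $\infty$-limits; this is the fundamental input and everything downstream is formal. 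Second, I would observe that the presheaf $\mathrm{Kan}_{\mathrm{Left}}\Delta_{?/A,\mathrm{functionalanalytic,KKM}}(\mathcal{O}_i)$ is, on each rational subdomain $\mathrm{Spectrumadic}_{\mathrm{BK}}(F_i)$, given by a Koszul complex of the form
\begin{align}
\mathrm{Koszulcomplex}_{A/I\left<X_1,...,X_l\right>\left<T_1,...,T_m\right>}(a_1-T_1b_1,...,a_m-T_mb_m),
\end{align}
as spelled out in the footnote to the earlier definition; such a complex is a finite colimit (a totalization of a bounded Koszul diagram) applied to copies of the structure presheaf, and therefore it inherits \v{C}ech $\infty$-descent from $\mathcal{O}$ since finite homotopy colimits commute with the (finite) \v{C}ech totalizations appearing in descent for a quasi-compact quasi-separated situation.

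Third, I would handle the passage to the full motivic presheaves by noting that each of $\mathrm{Kan}_{\mathrm{Left}}\Delta_{?/A,\mathrm{functionalanalytic,KKM},\mathrm{BBM,formalanalytification}}(\mathcal{O})$, the Hodge-Tate presheaf $\mathrm{Kan}_{\mathrm{Left}}\overline{\Delta}_{?/A,\ldots}(\mathcal{O})\times A/I$, and the preperfectoidizations $(\mathcal{O})^{\mathrm{preperfectoidization}}$, $(\mathcal{O})^{\mathrm{perfectoidization}}$ and their derived-complete variants are obtained from the local Koszul presheaves by: (a) a sifted homotopy colimit over the presentation $\mathcal{O}=\underset{i}{\text{homotopycolimit}}_\text{sifted}\mathcal{O}_i$; (b) a derived $(p,I)$-completion; (c) the Ben-Bassat-Mukherjee formal analytification functor of \cite[Section 4.2]{BBM}, i.e. a left Kan extension from formal series rings; and for the perfectoidizations, (d) a further filtered colimit along iterated $\mathrm{Fro}_*$ together with a base change $\times A/I$. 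Since \v{C}ech $\infty$-descent is a limit condition over the fixed finite cosimplicial diagram of a quasi-compact quasi-separated cover, it is stable under sifted (in particular filtered) homotopy colimits in the stable $\infty$-categories of ind-$\sharp$-modules (these being compactly generated, so that filtered colimits are exact and commute with the finite totalizations), stable under derived $(p,I)$-completion (which is a limit and preserves descent objectwise after the usual Koszul-complete reduction), stable under the base change $\times A/I$ along the flat-enough map, and stable under the left Kan extension defining formal analytification because that functor is built by colimits from representables. Assembling (a)-(d) one concludes that each listed presheaf is an $\infty$-sheaf.

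The main obstacle I expect is step (c)-(d): showing that the Ben-Bassat-Mukherjee formal analytification and the iterated Frobenius pushforward colimit genuinely commute with the \v{C}ech totalizations, i.e. that analytification does not destroy descent. The subtlety is that formal analytification is a \emph{left} Kan extension and so a priori commutes only with colimits, whereas \v{C}ech descent is a limit; one needs the totalizations here to be effective and to be realized as finite limits (using quasi-compactness and the hypercompleteness of the Bambozzi-Kremnizer $\infty$-topos in the admissible situation, cf. \cite{12BK}), so that the exchange is governed by the fact that in a compactly generated stable $\infty$-category filtered colimits commute with finite limits. I would isolate this as a lemma: for a quasi-compact quasi-separated cover the \v{C}ech descent spectral sequence degenerates enough that the relevant totalization is a finite limit, and then invoke the exactness of filtered colimits. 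A secondary technical point is the derived $(p,I)$-completion step: one must know that the Koszul presheaves above are already derived $(p,I)$-complete levelwise — which is exactly the content of the earlier footnote, since each homotopy group of $A/I\left<X_1,\ldots,X_l\right>$-modules is derived $p$-complete by construction of the $p$-adic Banach structure — so that completion is harmless and preserves the sheaf property. Once these two lemmas are in place, the remaining verifications are routine bookkeeping of the functorial constructions in \cite{12BS}, \cite{KKM}, and \cite{BBM}.
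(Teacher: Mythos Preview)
Your proposal follows the same fundamental approach as the paper: both rest the sheaf property on the \v{C}ech $\infty$-descent for seminormed modules established in \cite[Section 9.3]{KKM} and \cite{12BBK} over Bambozzi--Kremnizer spaces. The paper, however, gives no proof beyond that citation --- it simply remarks that the presheaves ``are expected to be $\infty$-sheaves as long as one considers in the admissible situations the corresponding \v{C}ech $\infty$-descent'' and then states the proposition, so your write-up is already far more detailed than what appears there.

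One caution: the obstacles you flag in step (c)--(d) are real and are not addressed by the paper either. In particular, your argument that the \v{C}ech totalization reduces to a \emph{finite} limit (so that filtered colimits and left Kan extensions may be commuted past it) hinges on quasi-compactness and on the cover being finite; for a general object of $\mathrm{Spectrumadic}_{\mathrm{BK}}(F)$ with $F$ a colimit of formal series rings this needs to be checked rather than asserted, and the hypercompleteness appeal is not quite the right tool --- what you want is simply that the homotopy-Zariski covers in \cite{12BK} can be taken finite. If you isolate that point cleanly, the rest of your bookkeeping (Koszul presheaves inherit descent, derived $(p,I)$-completion is harmless because the inputs are already complete, Frobenius colimits are filtered) is sound and considerably sharper than the paper's treatment.
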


\begin{remark}
One can then define such ring $\mathcal{O}$ to be \textit{preperfectoid} if we have the equivalence:
\begin{align}
\mathcal{O}^{\text{preperfectoidization}} \overset{\sim}{\longrightarrow}	\mathcal{O}.
\end{align}
One can then define such ring $\mathcal{O}$ to be \textit{perfectoid} if we have the equivalence:
\begin{align}
\mathcal{O}^{\text{preperfectoidization}}\times A/I \overset{\sim}{\longrightarrow}	\mathcal{O}.
\end{align}
	
\end{remark}

\newpage

\subsection{Functional Analytic Derived de Rham Complexes for $(\infty,1)$-Analytic Stacks and de Rham Preperfectoidizations}

\indent As in \cite{12LL} we have the comparison between the derived prismatic cohomology and the corresponding derived de Rham cohomology in some very well-defined way which respects the corresponding filtrations, we can then in our situation take the corresponding definition of some derived de Rham complex as the one side of the comparison from \cite{12LL}\footnote{Our goal here is actually study the corresponding derived de Rham period rings and the corresponding applications in $p$-adic Hodge theory extending work of \cite{12DLLZ1}, \cite{12DLLZ2}, \cite{12Sch2} in the motivation from \cite{12GL}, when applyting the construction to derived $p$-adic formal stacks and derived logarithmic $p$-adic formal stacks.}. To be more precise after \cite[Chapitre 3]{12An1}, \cite{12An2}, \cite[Chapter 2, Chapter 8]{12B1}, \cite[Chapter 1]{12Bei}, \cite[Chapter 5]{12G1}, \cite[Chapter 3, Chapter 4]{12GL}, \cite[Chapitre II, Chapitre III]{12Ill1}, \cite[Chapitre VIII]{12Ill2}, \cite[Section 4]{12Qui}, and \cite[Example 5.11, Example 5.12]{BMS2} we define the corresponding:

\begin{definition}
Then we can in the same fashion consider the corresponding derived de Rham complex presheaves for the commutative algebras as in the above (for a given prism $(A,I)$):
\begin{align}
\mathrm{Kan}_{\mathrm{Left}}\mathrm{deRham}_{?/A},	
\end{align}
\footnote{After taking derived $p$-completion.}by the regular corresponding left Kan extension techniques on the following $(\infty,1)$-compactly generated closures of the corresponding polynomials given over $A/I$ with a chosen prism $(A,I)$:\\

\begin{align}
\mathrm{Proj}^\text{smoothformalseriesclosure}\infty-\mathrm{Toposes}^{\mathrm{ringed},\mathrm{commutativealgebra}_{\mathrm{simplicial}}(\mathrm{Ind}\mathrm{Seminormed}_{A/I})}_{\mathrm{Commutativealgebra}_{\mathrm{simplicial}}(\mathrm{Ind}\mathrm{Seminormed}_{A/I})^\mathrm{opposite},\mathrm{Grothendiecktopology,homotopyepimorphism}}. \\
\mathrm{Proj}^\text{smoothformalseriesclosure}\infty-\mathrm{Toposes}^{\mathrm{ringed},\mathrm{Commutativealgebra}_{\mathrm{simplicial}}(\mathrm{Ind}^m\mathrm{Seminormed}_{A/I})}_{\mathrm{Commutativealgebra}_{\mathrm{simplicial}}(\mathrm{Ind}^m\mathrm{Seminormed}_{A/I})^\mathrm{opposite},\mathrm{Grothendiecktopology,homotopyepimorphism}}.\\
\mathrm{Proj}^\text{smoothformalseriesclosure}\infty-\mathrm{Toposes}^{\mathrm{ringed},\mathrm{Commutativealgebra}_{\mathrm{simplicial}}(\mathrm{Ind}\mathrm{Normed}_{A/I})}_{\mathrm{Commutativealgebra}_{\mathrm{simplicial}}(\mathrm{Ind}\mathrm{Normed}_{A/I})^\mathrm{opposite},\mathrm{Grothendiecktopology,homotopyepimorphism}}.\\
\mathrm{Proj}^\text{smoothformalseriesclosure}\infty-\mathrm{Toposes}^{\mathrm{ringed},\mathrm{Commutativealgebra}_{\mathrm{simplicial}}(\mathrm{Ind}^m\mathrm{Normed}_{A/I})}_{\mathrm{Commutativealgebra}_{\mathrm{simplicial}}(\mathrm{Ind}^m\mathrm{Normed}_{A/I})^\mathrm{opposite},\mathrm{Grothendiecktopology,homotopyepimorphism}}.\\
\mathrm{Proj}^\text{smoothformalseriesclosure}\infty-\mathrm{Toposes}^{\mathrm{ringed},\mathrm{Commutativealgebra}_{\mathrm{simplicial}}(\mathrm{Ind}\mathrm{Banach}_{A/I})}_{\mathrm{Commutativealgebra}_{\mathrm{simplicial}}(\mathrm{Ind}\mathrm{Banach}_{A/I})^\mathrm{opposite},\mathrm{Grothendiecktopology,homotopyepimorphism}}.\\
\mathrm{Proj}^\text{smoothformalseriesclosure}\infty-\mathrm{Toposes}^{\mathrm{ringed},\mathrm{Commutativealgebra}_{\mathrm{simplicial}}(\mathrm{Ind}^m\mathrm{Banach}_{A/I})}_{\mathrm{Commutativealgebra}_{\mathrm{simplicial}}(\mathrm{Ind}^m\mathrm{Banach}_{A/I})^\mathrm{opposite},\mathrm{Grothendiecktopology,homotopyepimorphism}}. 
\end{align}
We call the corresponding functors functional analytic derived de Rham complex presheaves which we are going to denote that as in the following:
\begin{align}
\mathrm{Kan}_{\mathrm{Left}}\mathrm{deRham}_{?/A,\text{functionalanalytic,KKM},\text{BBM,formalanalytification}}.	
\end{align}
This would mean the following definition{\footnote{Before the Ben-Bassat-Mukherjee $p$-adic formal analytification we take the corresponding derived $(p,I)$-completion.}}:
\begin{align}
\mathrm{Kan}_{\mathrm{Left}}&\mathrm{deRham}_{?/A,\text{functionalanalytic,KKM},\text{BBM,formalanalytification}}(\mathcal{O})\\
&:=	((\underset{i}{\text{homotopycolimit}}_{\text{sifted},\text{derivedcategory}_{\infty}(A/I-\text{Module})}\mathrm{Kan}_{\mathrm{Left}}\mathrm{deRham}_{?/A,\text{functionalanalytic,KKM}}\\
&(\mathcal{O}_i))^\wedge\\
&)_\text{BBM,formalanalytification}
\end{align}
by writing any object $\mathcal{O}$ as the corresponding colimit 
\begin{center}
$\underset{i}{\text{homotopycolimit}}_\text{sifted}\mathcal{O}_i$.
\end{center}
These are quite large $(\infty,1)$-commutative ring objects in the corresponding $(\infty,1)$-categories for $R=A/I$:
\begin{align}
\mathrm{Ind}\mathrm{\sharp Quasicoherent}^{\text{presheaf}}_{\mathrm{Ind}^\text{smoothformalseriesclosure}\infty-\mathrm{Toposes}^{\mathrm{ringed},\mathrm{commutativealgebra}_{\mathrm{simplicial}}(\mathrm{Ind}\mathrm{Seminormed}_R)}_{\mathrm{Commutativealgebra}_{\mathrm{simplicial}}(\mathrm{Ind}\mathrm{Seminormed}_R)^\mathrm{opposite},\mathrm{Grotopology,homotopyepimorphism}}}. \\
\mathrm{Ind}\mathrm{\sharp Quasicoherent}^{\text{presheaf}}_{\mathrm{Ind}^\text{smoothformalseriesclosure}\infty-\mathrm{Toposes}^{\mathrm{ringed},\mathrm{Commutativealgebra}_{\mathrm{simplicial}}(\mathrm{Ind}^m\mathrm{Seminormed}_R)}_{\mathrm{Commutativealgebra}_{\mathrm{simplicial}}(\mathrm{Ind}^m\mathrm{Seminormed}_R)^\mathrm{opposite},\mathrm{Grotopology,homotopyepimorphism}}}.\\
\mathrm{Ind}\mathrm{\sharp Quasicoherent}^{\text{presheaf}}_{\mathrm{Ind}^\text{smoothformalseriesclosure}\infty-\mathrm{Toposes}^{\mathrm{ringed},\mathrm{Commutativealgebra}_{\mathrm{simplicial}}(\mathrm{Ind}\mathrm{Normed}_R)}_{\mathrm{Commutativealgebra}_{\mathrm{simplicial}}(\mathrm{Ind}\mathrm{Normed}_R)^\mathrm{opposite},\mathrm{Grotopology,homotopyepimorphism}}}.\\
\mathrm{Ind}\mathrm{\sharp Quasicoherent}^{\text{presheaf}}_{\mathrm{Ind}^\text{smoothformalseriesclosure}\infty-\mathrm{Toposes}^{\mathrm{ringed},\mathrm{Commutativealgebra}_{\mathrm{simplicial}}(\mathrm{Ind}^m\mathrm{Normed}_R)}_{\mathrm{Commutativealgebra}_{\mathrm{simplicial}}(\mathrm{Ind}^m\mathrm{Normed}_R)^\mathrm{opposite},\mathrm{Grotopology,homotopyepimorphism}}}.\\
\mathrm{Ind}\mathrm{\sharp Quasicoherent}^{\text{presheaf}}_{\mathrm{Ind}^\text{smoothformalseriesclosure}\infty-\mathrm{Toposes}^{\mathrm{ringed},\mathrm{Commutativealgebra}_{\mathrm{simplicial}}(\mathrm{Ind}\mathrm{Banach}_R)}_{\mathrm{Commutativealgebra}_{\mathrm{simplicial}}(\mathrm{Ind}\mathrm{Banach}_R)^\mathrm{opposite},\mathrm{Grotopology,homotopyepimorphism}}}.\\
\mathrm{Ind}\mathrm{\sharp Quasicoherent}^{\text{presheaf}}_{\mathrm{Ind}^\text{smoothformalseriesclosure}\infty-\mathrm{Toposes}^{\mathrm{ringed},\mathrm{Commutativealgebra}_{\mathrm{simplicial}}(\mathrm{Ind}^m\mathrm{Banach}_R)}_{\mathrm{Commutativealgebra}_{\mathrm{simplicial}}(\mathrm{Ind}^m\mathrm{Banach}_R)^\mathrm{opposite},\mathrm{Grotopology,homotopyepimorphism}}},\\ 
\end{align}
after taking the formal series ring left Kan extension analytification from \cite[Section 4.2]{BBM}, which is defined by taking the left Kan extension to all the $(\infty,1)$-ring objects in the $\infty$-derived category of all $A$-modules from formal series rings over $A$.\\
\end{definition}

\noindent Now we apply this construction to any $A/I$ formal scheme $(X,\mathcal{O}_X)$. But in order to use prismatic technology to reach the corresponding period derived de Rham sheaves, we need to consider more, this would be some definition for 'perfectoidizations':

\begin{definition}
Let $(A,I)$ be a perfectoid prism, and we consider any $\mathrm{E}_\infty$-ring $\mathcal{O}$ in the following
\begin{align}
\mathrm{Proj}^\text{smoothformalseriesclosure}\infty-\mathrm{Toposes}^{\mathrm{ringed},\mathrm{commutativealgebra}_{\mathrm{simplicial}}(\mathrm{Ind}\mathrm{Seminormed}_{A/I})}_{\mathrm{Commutativealgebra}_{\mathrm{simplicial}}(\mathrm{Ind}\mathrm{Seminormed}_{A/I})^\mathrm{opposite},\mathrm{Grothendiecktopology,homotopyepimorphism}}. \\
\mathrm{Proj}^\text{smoothformalseriesclosure}\infty-\mathrm{Toposes}^{\mathrm{ringed},\mathrm{Commutativealgebra}_{\mathrm{simplicial}}(\mathrm{Ind}^m\mathrm{Seminormed}_{A/I})}_{\mathrm{Commutativealgebra}_{\mathrm{simplicial}}(\mathrm{Ind}^m\mathrm{Seminormed}_{A/I})^\mathrm{opposite},\mathrm{Grothendiecktopology,homotopyepimorphism}}.\\
\mathrm{Proj}^\text{smoothformalseriesclosure}\infty-\mathrm{Toposes}^{\mathrm{ringed},\mathrm{Commutativealgebra}_{\mathrm{simplicial}}(\mathrm{Ind}\mathrm{Normed}_{A/I})}_{\mathrm{Commutativealgebra}_{\mathrm{simplicial}}(\mathrm{Ind}\mathrm{Normed}_{A/I})^\mathrm{opposite},\mathrm{Grothendiecktopology,homotopyepimorphism}}.\\
\mathrm{Proj}^\text{smoothformalseriesclosure}\infty-\mathrm{Toposes}^{\mathrm{ringed},\mathrm{Commutativealgebra}_{\mathrm{simplicial}}(\mathrm{Ind}^m\mathrm{Normed}_{A/I})}_{\mathrm{Commutativealgebra}_{\mathrm{simplicial}}(\mathrm{Ind}^m\mathrm{Normed}_{A/I})^\mathrm{opposite},\mathrm{Grothendiecktopology,homotopyepimorphism}}.\\
\mathrm{Proj}^\text{smoothformalseriesclosure}\infty-\mathrm{Toposes}^{\mathrm{ringed},\mathrm{Commutativealgebra}_{\mathrm{simplicial}}(\mathrm{Ind}\mathrm{Banach}_{A/I})}_{\mathrm{Commutativealgebra}_{\mathrm{simplicial}}(\mathrm{Ind}\mathrm{Banach}_{A/I})^\mathrm{opposite},\mathrm{Grothendiecktopology,homotopyepimorphism}}.\\
\mathrm{Proj}^\text{smoothformalseriesclosure}\infty-\mathrm{Toposes}^{\mathrm{ringed},\mathrm{Commutativealgebra}_{\mathrm{simplicial}}(\mathrm{Ind}^m\mathrm{Banach}_{A/I})}_{\mathrm{Commutativealgebra}_{\mathrm{simplicial}}(\mathrm{Ind}^m\mathrm{Banach}_{A/I})^\mathrm{opposite},\mathrm{Grothendiecktopology,homotopyepimorphism}}. 
\end{align}
Then consider the derived prismatic object:
\begin{align}
\mathrm{Kan}_{\mathrm{Left}}\mathrm{deRham}_{?/A,\text{functionalanalytic,KKM},\text{BBM,formalanalytification}}(\mathcal{O}).
\end{align}	
Then as in \cite[Definition 8.2]{12BS} we have the following de Rham  preperfectoidization:
\begin{align}
&(\mathcal{O})^{\text{deRham,preperfectoidization}}\\
&:=\mathrm{Colimit}(\mathrm{Kan}_{\mathrm{Left}}\mathrm{deRham}_{?/A,\text{functionalanalytic,KKM},\text{BBM,formalanalytification}}(\mathcal{O})\rightarrow \\
&\mathrm{Fro}_*\mathrm{Kan}_{\mathrm{Left}}\mathrm{deRham}_{?/A,\text{functionalanalytic,KKM},\text{BBM,formalanalytification}}(\mathcal{O})\\
&\rightarrow \mathrm{Fro}_* \mathrm{Fro}_*\mathrm{Kan}_{\mathrm{Left}}\mathrm{deRham}_{?/A,\text{functionalanalytic,KKM},\text{BBM,formalanalytification}}(\mathcal{O})\rightarrow...\\
&)^{\text{BBM,formalanalytification}},	
\end{align}
after taking the formal series ring left Kan extension analytification from \cite[Section 4.2]{BBM}, which is defined by taking the left Kan extension to all the $(\infty,1)$-ring object in the $\infty$-derived category of all $A/I$-modules from formal series rings over $A/I$. 
Furthermore one can take derived $(p,I)$-completion to achieve the derived $(p,I)$-completed versions.

\end{definition}

\subsection{Functional Analytic Derived Prismatic Complexes for Inductive $(\infty,1)$-Analytic Stacks}

\indent We now promote the construction in the previous sections to the corresponding inductive systems of $(\infty,1)$-ringed toposes level after Lurie \cite{12Lu1}, \cite{12Lu2} and \cite{Lu3} in the $\infty$-category of $\infty$-ringed toposes, Bambozzi-Ben-Bassat-Kremnizer \cite{12BBBK}, Ben-Bassat-Mukherjee \cite{BBM}, Bambozzi-Kremnizer \cite{12BK}, Clausen-Scholze \cite{12CS1} \cite{12CS2} and Kelly-Kremnizer-Mukherjee \cite{KKM} in the $\infty$-cateogory of $\infty$-functional analytic ringed toposes.\\

\indent Now we consider the following $\infty$-categories of the corresponding $\infty$-analytic ringed toposes from Bambozzi-Ben-Bassat-Kremnizer \cite{12BBBK}:\\

\begin{align}
&\infty-\mathrm{Toposes}^{\mathrm{ringed},\mathrm{commutativealgebra}_{\mathrm{simplicial}}(\mathrm{Ind}\mathrm{Seminormed}_?)}_{\mathrm{Commutativealgebra}_{\mathrm{simplicial}}(\mathrm{Ind}\mathrm{Seminormed}_?)^\mathrm{opposite},\mathrm{Grothendiecktopology,homotopyepimorphism}}.\\
&\infty-\mathrm{Toposes}^{\mathrm{ringed},\mathrm{Commutativealgebra}_{\mathrm{simplicial}}(\mathrm{Ind}^m\mathrm{Seminormed}_?)}_{\mathrm{Commutativealgebra}_{\mathrm{simplicial}}(\mathrm{Ind}^m\mathrm{Seminormed}_?)^\mathrm{opposite},\mathrm{Grothendiecktopology,homotopyepimorphism}}.\\
&\infty-\mathrm{Toposes}^{\mathrm{ringed},\mathrm{Commutativealgebra}_{\mathrm{simplicial}}(\mathrm{Ind}\mathrm{Normed}_?)}_{\mathrm{Commutativealgebra}_{\mathrm{simplicial}}(\mathrm{Ind}\mathrm{Normed}_?)^\mathrm{opposite},\mathrm{Grothendiecktopology,homotopyepimorphism}}.\\
&\infty-\mathrm{Toposes}^{\mathrm{ringed},\mathrm{Commutativealgebra}_{\mathrm{simplicial}}(\mathrm{Ind}^m\mathrm{Normed}_?)}_{\mathrm{Commutativealgebra}_{\mathrm{simplicial}}(\mathrm{Ind}^m\mathrm{Normed}_?)^\mathrm{opposite},\mathrm{Grothendiecktopology,homotopyepimorphism}}.\\
&\infty-\mathrm{Toposes}^{\mathrm{ringed},\mathrm{Commutativealgebra}_{\mathrm{simplicial}}(\mathrm{Ind}\mathrm{Banach}_?)}_{\mathrm{Commutativealgebra}_{\mathrm{simplicial}}(\mathrm{Ind}\mathrm{Banach}_?)^\mathrm{opposite},\mathrm{Grothendiecktopology,homotopyepimorphism}}.\\
&\infty-\mathrm{Toposes}^{\mathrm{ringed},\mathrm{Commutativealgebra}_{\mathrm{simplicial}}(\mathrm{Ind}^m\mathrm{Banach}_?)}_{\mathrm{Commutativealgebra}_{\mathrm{simplicial}}(\mathrm{Ind}^m\mathrm{Banach}_?)^\mathrm{opposite},\mathrm{Grothendiecktopology,homotopyepimorphism}}.\\ 
&\mathrm{Ind}^\text{smoothformalseriesclosure}\infty-\mathrm{Toposes}^{\mathrm{ringed},\mathrm{commutativealgebra}_{\mathrm{simplicial}}(\mathrm{Ind}\mathrm{Seminormed}_?)}_{\mathrm{Commutativealgebra}_{\mathrm{simplicial}}(\mathrm{Ind}\mathrm{Seminormed}_?)^\mathrm{opposite},\mathrm{Grothendiecktopology,homotopyepimorphism}}. \\
&\mathrm{Ind}^\text{smoothformalseriesclosure}\infty-\mathrm{Toposes}^{\mathrm{ringed},\mathrm{Commutativealgebra}_{\mathrm{simplicial}}(\mathrm{Ind}^m\mathrm{Seminormed}_?)}_{\mathrm{Commutativealgebra}_{\mathrm{simplicial}}(\mathrm{Ind}^m\mathrm{Seminormed}_?)^\mathrm{opposite},\mathrm{Grothendiecktopology,homotopyepimorphism}}.\\
&\mathrm{Ind}^\text{smoothformalseriesclosure}\infty-\mathrm{Toposes}^{\mathrm{ringed},\mathrm{Commutativealgebra}_{\mathrm{simplicial}}(\mathrm{Ind}\mathrm{Normed}_?)}_{\mathrm{Commutativealgebra}_{\mathrm{simplicial}}(\mathrm{Ind}\mathrm{Normed}_?)^\mathrm{opposite},\mathrm{Grothendiecktopology,homotopyepimorphism}}.\\
&\mathrm{Ind}^\text{smoothformalseriesclosure}\infty-\mathrm{Toposes}^{\mathrm{ringed},\mathrm{Commutativealgebra}_{\mathrm{simplicial}}(\mathrm{Ind}^m\mathrm{Normed}_?)}_{\mathrm{Commutativealgebra}_{\mathrm{simplicial}}(\mathrm{Ind}^m\mathrm{Normed}_?)^\mathrm{opposite},\mathrm{Grothendiecktopology,homotopyepimorphism}}.\\
&\mathrm{Ind}^\text{smoothformalseriesclosure}\infty-\mathrm{Toposes}^{\mathrm{ringed},\mathrm{Commutativealgebra}_{\mathrm{simplicial}}(\mathrm{Ind}\mathrm{Banach}_?)}_{\mathrm{Commutativealgebra}_{\mathrm{simplicial}}(\mathrm{Ind}\mathrm{Banach}_?)^\mathrm{opposite},\mathrm{Grothendiecktopology,homotopyepimorphism}}.\\
&\mathrm{Ind}^\text{smoothformalseriesclosure}\infty-\mathrm{Toposes}^{\mathrm{ringed},\mathrm{Commutativealgebra}_{\mathrm{simplicial}}(\mathrm{Ind}^m\mathrm{Banach}_?)}_{\mathrm{Commutativealgebra}_{\mathrm{simplicial}}(\mathrm{Ind}^m\mathrm{Banach}_?)^\mathrm{opposite},\mathrm{Grothendiecktopology,homotopyepimorphism}}.\\ 
\end{align}

\begin{definition}
\indent One can actually define the derived prismatic cohomology presheaves through derived topological Hochschild cohomology presheaves, derived topological period cohomology presheaves and derived topological cyclic cohomology presheaves as in \cite[Section 2.2, Section 2.3]{12BMS}, \cite[Theorem 1.13]{12BS}:
\begin{align}
	\mathrm{Kan}_{\mathrm{Left}}\mathrm{THH},\mathrm{Kan}_{\mathrm{Left}}\mathrm{TP},\mathrm{Kan}_{\mathrm{Left}}\mathrm{TC},
\end{align}
on the following $(\infty,1)$-compactly generated closures of the corresponding polynomials\footnote{Definitely, we need to put certain norms over in some relatively canonical way, as in \cite[Section 4.2]{BBM} one can basically consider rigid ones and dagger ones, and so on. We restrict to the \textit{formal} one.} given over $A/I$ with a chosen prism $(A,I)$\footnote{In all the following, we assume this prism to be bounded and satisfy that $A/I$ is Banach.}:
\begin{align}
\mathrm{Ind}^\text{smoothformalseriesclosure}\infty-\mathrm{Toposes}^{\mathrm{ringed},\mathrm{commutativealgebra}_{\mathrm{simplicial}}(\mathrm{Ind}\mathrm{Seminormed}_{A/I})}_{\mathrm{Commutativealgebra}_{\mathrm{simplicial}}(\mathrm{Ind}\mathrm{Seminormed}_{A/I})^\mathrm{opposite},\mathrm{Grothendiecktopology,homotopyepimorphism}}. \\
\mathrm{Ind}^\text{smoothformalseriesclosure}\infty-\mathrm{Toposes}^{\mathrm{ringed},\mathrm{Commutativealgebra}_{\mathrm{simplicial}}(\mathrm{Ind}^m\mathrm{Seminormed}_{A/I})}_{\mathrm{Commutativealgebra}_{\mathrm{simplicial}}(\mathrm{Ind}^m\mathrm{Seminormed}_{A/I})^\mathrm{opposite},\mathrm{Grothendiecktopology,homotopyepimorphism}}.\\
\mathrm{Ind}^\text{smoothformalseriesclosure}\infty-\mathrm{Toposes}^{\mathrm{ringed},\mathrm{Commutativealgebra}_{\mathrm{simplicial}}(\mathrm{Ind}\mathrm{Normed}_{A/I})}_{\mathrm{Commutativealgebra}_{\mathrm{simplicial}}(\mathrm{Ind}\mathrm{Normed}_{A/I})^\mathrm{opposite},\mathrm{Grothendiecktopology,homotopyepimorphism}}.\\
\mathrm{Ind}^\text{smoothformalseriesclosure}\infty-\mathrm{Toposes}^{\mathrm{ringed},\mathrm{Commutativealgebra}_{\mathrm{simplicial}}(\mathrm{Ind}^m\mathrm{Normed}_{A/I})}_{\mathrm{Commutativealgebra}_{\mathrm{simplicial}}(\mathrm{Ind}^m\mathrm{Normed}_{A/I})^\mathrm{opposite},\mathrm{Grothendiecktopology,homotopyepimorphism}}.\\
\mathrm{Ind}^\text{smoothformalseriesclosure}\infty-\mathrm{Toposes}^{\mathrm{ringed},\mathrm{Commutativealgebra}_{\mathrm{simplicial}}(\mathrm{Ind}\mathrm{Banach}_{A/I})}_{\mathrm{Commutativealgebra}_{\mathrm{simplicial}}(\mathrm{Ind}\mathrm{Banach}_{A/I})^\mathrm{opposite},\mathrm{Grothendiecktopology,homotopyepimorphism}}.\\
\mathrm{Ind}^\text{smoothformalseriesclosure}\infty-\mathrm{Toposes}^{\mathrm{ringed},\mathrm{Commutativealgebra}_{\mathrm{simplicial}}(\mathrm{Ind}^m\mathrm{Banach}_{A/I})}_{\mathrm{Commutativealgebra}_{\mathrm{simplicial}}(\mathrm{Ind}^m\mathrm{Banach}_{A/I})^\mathrm{opposite},\mathrm{Grothendiecktopology,homotopyepimorphism}}. 
\end{align}
We call the corresponding functors are derived functional analytic Hochschild cohomology presheaves, derived functional analytic period cohomology presheaves and derived functional analytic cyclic cohomology presheaves, which we are going to denote these presheaves as in the following for any $\infty$-ringed topos $(\mathbb{X},\mathcal{O})=\underset{i}{\text{homotopycolimit}}(\mathbb{X}_i,\mathcal{O}_i)$:
\begin{align}
	&\mathrm{Kan}_{\mathrm{Left}}\mathrm{THH}_{\text{functionalanalytic,KKM},\text{BBM,formalanalytification}}(\mathcal{O}):=\\
	&(\underset{i}{\text{homotopylimit}}_{\text{sifted},\text{derivedcategory}_{\infty}(A/I-\text{Module})}\mathrm{Kan}_{\mathrm{Left}}\mathrm{THH}_{\text{functionalanalytic,KKM}}(\mathcal{O}_i)\\
	&)_\text{BBM,formalanalytification},\\
	&\mathrm{Kan}_{\mathrm{Left}}\mathrm{TP}_{\text{functionalanalytic,KKM},\text{BBM,formalanalytification}}(\mathcal{O}):=\\
	&(\underset{i}{\text{homotopylimit}}_{\text{sifted},\text{derivedcategory}_{\infty}(A/I-\text{Module})}\mathrm{Kan}_{\mathrm{Left}}\mathrm{TP}_{\text{functionalanalytic,KKM}}(\mathcal{O}_i)\\
	&)_\text{BBM,formalanalytification},\\
	&\mathrm{Kan}_{\mathrm{Left}}\mathrm{TC}_{\text{functionalanalytic,KKM},\text{BBM,formalanalytification}}(\mathcal{O}):=\\
	&(\underset{i}{\text{homotopylimit}}_{\text{sifted},\text{derivedcategory}_{\infty}(A/I-\text{Module})}\mathrm{Kan}_{\mathrm{Left}}\mathrm{TC}_{\text{functionalanalytic,KKM}}(\mathcal{O}_i)\\
	&)_\text{BBM,formalanalytification},
\end{align}
by writing any object $\mathcal{O}$ as the corresponding limit\footnote{Note that we are working with ind-$\infty$-ringed $\infty$-stacks.} 
\begin{center}
$\underset{i}{\text{homotopylimit}}_\text{sifted}\mathcal{O}_i:=\underset{i}{\text{homotopylimit}}_\text{sifted}\mathcal{O}_{\mathrm{Spectrumadic}_\mathrm{BK}(\mathrm{Formalseriesring}_i)}$.
\end{center}
These are quite large $(\infty,1)$-commutative ring objects in the corresponding $(\infty,1)$-categories for $?=A/I$ over $(\mathbb{X},\mathcal{O})$:

 \begin{align}
\mathrm{Ind}\mathrm{\sharp Quasicoherent}^{\text{presheaf}}_{\mathrm{Ind}^\text{smoothformalseriesclosure}\infty-\mathrm{Toposes}^{\mathrm{ringed},\mathrm{commutativealgebra}_{\mathrm{simplicial}}(\mathrm{Ind}\mathrm{Seminormed}_?)}_{\mathrm{Commutativealgebra}_{\mathrm{simplicial}}(\mathrm{Ind}\mathrm{Seminormed}_?)^\mathrm{opposite},\mathrm{Grotopology,homotopyepimorphism}}}. \\
\mathrm{Ind}\mathrm{\sharp Quasicoherent}^{\text{presheaf}}_{\mathrm{Ind}^\text{smoothformalseriesclosure}\infty-\mathrm{Toposes}^{\mathrm{ringed},\mathrm{Commutativealgebra}_{\mathrm{simplicial}}(\mathrm{Ind}^m\mathrm{Seminormed}_?)}_{\mathrm{Commutativealgebra}_{\mathrm{simplicial}}(\mathrm{Ind}^m\mathrm{Seminormed}_?)^\mathrm{opposite},\mathrm{Grotopology,homotopyepimorphism}}}.\\
\mathrm{Ind}\mathrm{\sharp Quasicoherent}^{\text{presheaf}}_{\mathrm{Ind}^\text{smoothformalseriesclosure}\infty-\mathrm{Toposes}^{\mathrm{ringed},\mathrm{Commutativealgebra}_{\mathrm{simplicial}}(\mathrm{Ind}\mathrm{Normed}_?)}_{\mathrm{Commutativealgebra}_{\mathrm{simplicial}}(\mathrm{Ind}\mathrm{Normed}_?)^\mathrm{opposite},\mathrm{Grotopology,homotopyepimorphism}}}.\\
\mathrm{Ind}\mathrm{\sharp Quasicoherent}^{\text{presheaf}}_{\mathrm{Ind}^\text{smoothformalseriesclosure}\infty-\mathrm{Toposes}^{\mathrm{ringed},\mathrm{Commutativealgebra}_{\mathrm{simplicial}}(\mathrm{Ind}^m\mathrm{Normed}_?)}_{\mathrm{Commutativealgebra}_{\mathrm{simplicial}}(\mathrm{Ind}^m\mathrm{Normed}_?)^\mathrm{opposite},\mathrm{Grotopology,homotopyepimorphism}}}.\\
\mathrm{Ind}\mathrm{\sharp Quasicoherent}^{\text{presheaf}}_{\mathrm{Ind}^\text{smoothformalseriesclosure}\infty-\mathrm{Toposes}^{\mathrm{ringed},\mathrm{Commutativealgebra}_{\mathrm{simplicial}}(\mathrm{Ind}\mathrm{Banach}_?)}_{\mathrm{Commutativealgebra}_{\mathrm{simplicial}}(\mathrm{Ind}\mathrm{Banach}_?)^\mathrm{opposite},\mathrm{Grotopology,homotopyepimorphism}}}.\\
\mathrm{Ind}\mathrm{\sharp Quasicoherent}^{\text{presheaf}}_{\mathrm{Ind}^\text{smoothformalseriesclosure}\infty-\mathrm{Toposes}^{\mathrm{ringed},\mathrm{Commutativealgebra}_{\mathrm{simplicial}}(\mathrm{Ind}^m\mathrm{Banach}_?)}_{\mathrm{Commutativealgebra}_{\mathrm{simplicial}}(\mathrm{Ind}^m\mathrm{Banach}_?)^\mathrm{opposite},\mathrm{Grotopology,homotopyepimorphism}}},\\ 
\end{align}
after taking the formal series ring left Kan extension analytification from \cite[Section 4.2]{BBM}, which is defined by taking the left Kan extension to all the $(\infty,1)$-ring objects in the $\infty$-derived category of all $A$-modules from formal series rings over $A$.
\end{definition}

\

\begin{definition}
Then we can in the same fashion consider the corresponding derived prismatic complex presheaves \cite[Construction 7.6]{12BS}\footnote{One just applies \cite[Construction 7.6]{12BS} and then takes the left Kan extensions.} for the commutative algebras as in the above (for a given prism $(A,I)$):
\begin{align}
\mathrm{Kan}_{\mathrm{Left}}\Delta_{?/A},	
\end{align}
by the regular corresponding left Kan extension techniques on the following $(\infty,1)$-compactly generated closures of the corresponding polynomials given over $A/I$ with a chosen prism $(A,I)$:\\

\begin{align}
\mathrm{Ind}^\text{smoothformalseriesclosure}\infty-\mathrm{Toposes}^{\mathrm{ringed},\mathrm{commutativealgebra}_{\mathrm{simplicial}}(\mathrm{Ind}\mathrm{Seminormed}_{A/I})}_{\mathrm{Commutativealgebra}_{\mathrm{simplicial}}(\mathrm{Ind}\mathrm{Seminormed}_{A/I})^\mathrm{opposite},\mathrm{Grothendiecktopology,homotopyepimorphism}}. \\
\mathrm{Ind}^\text{smoothformalseriesclosure}\infty-\mathrm{Toposes}^{\mathrm{ringed},\mathrm{Commutativealgebra}_{\mathrm{simplicial}}(\mathrm{Ind}^m\mathrm{Seminormed}_{A/I})}_{\mathrm{Commutativealgebra}_{\mathrm{simplicial}}(\mathrm{Ind}^m\mathrm{Seminormed}_{A/I})^\mathrm{opposite},\mathrm{Grothendiecktopology,homotopyepimorphism}}.\\
\mathrm{Ind}^\text{smoothformalseriesclosure}\infty-\mathrm{Toposes}^{\mathrm{ringed},\mathrm{Commutativealgebra}_{\mathrm{simplicial}}(\mathrm{Ind}\mathrm{Normed}_{A/I})}_{\mathrm{Commutativealgebra}_{\mathrm{simplicial}}(\mathrm{Ind}\mathrm{Normed}_{A/I})^\mathrm{opposite},\mathrm{Grothendiecktopology,homotopyepimorphism}}.\\
\mathrm{Ind}^\text{smoothformalseriesclosure}\infty-\mathrm{Toposes}^{\mathrm{ringed},\mathrm{Commutativealgebra}_{\mathrm{simplicial}}(\mathrm{Ind}^m\mathrm{Normed}_{A/I})}_{\mathrm{Commutativealgebra}_{\mathrm{simplicial}}(\mathrm{Ind}^m\mathrm{Normed}_{A/I})^\mathrm{opposite},\mathrm{Grothendiecktopology,homotopyepimorphism}}.\\
\mathrm{Ind}^\text{smoothformalseriesclosure}\infty-\mathrm{Toposes}^{\mathrm{ringed},\mathrm{Commutativealgebra}_{\mathrm{simplicial}}(\mathrm{Ind}\mathrm{Banach}_{A/I})}_{\mathrm{Commutativealgebra}_{\mathrm{simplicial}}(\mathrm{Ind}\mathrm{Banach}_{A/I})^\mathrm{opposite},\mathrm{Grothendiecktopology,homotopyepimorphism}}.\\
\mathrm{Ind}^\text{smoothformalseriesclosure}\infty-\mathrm{Toposes}^{\mathrm{ringed},\mathrm{Commutativealgebra}_{\mathrm{simplicial}}(\mathrm{Ind}^m\mathrm{Banach}_{A/I})}_{\mathrm{Commutativealgebra}_{\mathrm{simplicial}}(\mathrm{Ind}^m\mathrm{Banach}_{A/I})^\mathrm{opposite},\mathrm{Grothendiecktopology,homotopyepimorphism}}. 
\end{align}
We call the corresponding functors functional analytic derived prismatic complex presheaves which we are going to denote that as in the following:
\begin{align}
\mathrm{Kan}_{\mathrm{Left}}\Delta_{?/A,\text{functionalanalytic,KKM},\text{BBM,formalanalytification}}.	
\end{align}
This would mean the following definition{\footnote{Before the Ben-Bassat-Mukherjee $p$-adic formal analytification we take the corresponding derived $(p,I)$-completion.}}:
\begin{align}
&\mathrm{Kan}_{\mathrm{Left}}\Delta_{?/A,\text{functionalanalytic,KKM},\text{BBM,formalanalytification}}(\mathcal{O})\\
&:=	((\underset{i}{\text{homotopylimit}}_{\text{sifted},\text{derivedcategory}_{\infty}(A/I-\text{Module})}\mathrm{Kan}_{\mathrm{Left}}\Delta_{?/A,\text{functionalanalytic,KKM}}(\mathcal{O}_i))^\wedge\\
&)_\text{BBM,formalanalytification}
\end{align}
by writing any object $\mathcal{O}$ as the corresponding limit\footnote{Here we remind the readers of the corresponding foundation here, namely the presheaf $\mathcal{O}_i$ in fact takes the value in Koszul complex taking the following form:
\begin{align}
&\mathrm{Koszulcomplex}_{A/I\left<X_1,...,X_l\right>\left<T_1,...,T_m\right>}(a_1-T_1b_1,...,a_m-T_mb_m)\\
&= A/I\left<X_1,...,X_l\right>\left<T_1,...,T_m\right>/^\mathbb{L}(a_1-T_1b_1,...,a_m-T_mb_m).	
\end{align}
This is actually derived $p$-complete since each homotopy group is derived $p$-complete (from the corresponding Banach structure from $A/I\left<X_1,...,X_l\right>$ induced from the $p$-adic topology). Namely the definition of the presheaf:
\begin{align}
\mathrm{Kan}_{\mathrm{Left}}\Delta_{?/A,\text{functionalanalytic,KKM}}(\mathcal{O}_i)	
\end{align}
is directly the application of the derived prismatic functor from \cite[Construction 7.6]{12BS}.} 
\begin{center}
$\underset{i}{\text{homotopylimit}}_\text{sifted}\mathcal{O}_i$.
\end{center}
These are quite large $(\infty,1)$-commutative ring objects in the corresponding $(\infty,1)$-categories for $R=A/I$:
\begin{align}
\mathrm{Ind}\mathrm{\sharp Quasicoherent}^{\text{presheaf}}_{\mathrm{Ind}^\text{smoothformalseriesclosure}\infty-\mathrm{Toposes}^{\mathrm{ringed},\mathrm{commutativealgebra}_{\mathrm{simplicial}}(\mathrm{Ind}\mathrm{Seminormed}_R)}_{\mathrm{Commutativealgebra}_{\mathrm{simplicial}}(\mathrm{Ind}\mathrm{Seminormed}_R)^\mathrm{opposite},\mathrm{Grotopology,homotopyepimorphism}}}. \\
\mathrm{Ind}\mathrm{\sharp Quasicoherent}^{\text{presheaf}}_{\mathrm{Ind}^\text{smoothformalseriesclosure}\infty-\mathrm{Toposes}^{\mathrm{ringed},\mathrm{Commutativealgebra}_{\mathrm{simplicial}}(\mathrm{Ind}^m\mathrm{Seminormed}_R)}_{\mathrm{Commutativealgebra}_{\mathrm{simplicial}}(\mathrm{Ind}^m\mathrm{Seminormed}_R)^\mathrm{opposite},\mathrm{Grotopology,homotopyepimorphism}}}.\\
\mathrm{Ind}\mathrm{\sharp Quasicoherent}^{\text{presheaf}}_{\mathrm{Ind}^\text{smoothformalseriesclosure}\infty-\mathrm{Toposes}^{\mathrm{ringed},\mathrm{Commutativealgebra}_{\mathrm{simplicial}}(\mathrm{Ind}\mathrm{Normed}_R)}_{\mathrm{Commutativealgebra}_{\mathrm{simplicial}}(\mathrm{Ind}\mathrm{Normed}_R)^\mathrm{opposite},\mathrm{Grotopology,homotopyepimorphism}}}.\\
\mathrm{Ind}\mathrm{\sharp Quasicoherent}^{\text{presheaf}}_{\mathrm{Ind}^\text{smoothformalseriesclosure}\infty-\mathrm{Toposes}^{\mathrm{ringed},\mathrm{Commutativealgebra}_{\mathrm{simplicial}}(\mathrm{Ind}^m\mathrm{Normed}_R)}_{\mathrm{Commutativealgebra}_{\mathrm{simplicial}}(\mathrm{Ind}^m\mathrm{Normed}_R)^\mathrm{opposite},\mathrm{Grotopology,homotopyepimorphism}}}.\\
\mathrm{Ind}\mathrm{\sharp Quasicoherent}^{\text{presheaf}}_{\mathrm{Ind}^\text{smoothformalseriesclosure}\infty-\mathrm{Toposes}^{\mathrm{ringed},\mathrm{Commutativealgebra}_{\mathrm{simplicial}}(\mathrm{Ind}\mathrm{Banach}_R)}_{\mathrm{Commutativealgebra}_{\mathrm{simplicial}}(\mathrm{Ind}\mathrm{Banach}_R)^\mathrm{opposite},\mathrm{Grotopology,homotopyepimorphism}}}.\\
\mathrm{Ind}\mathrm{\sharp Quasicoherent}^{\text{presheaf}}_{\mathrm{Ind}^\text{smoothformalseriesclosure}\infty-\mathrm{Toposes}^{\mathrm{ringed},\mathrm{Commutativealgebra}_{\mathrm{simplicial}}(\mathrm{Ind}^m\mathrm{Banach}_R)}_{\mathrm{Commutativealgebra}_{\mathrm{simplicial}}(\mathrm{Ind}^m\mathrm{Banach}_R)^\mathrm{opposite},\mathrm{Grotopology,homotopyepimorphism}}},\\ 
\end{align}
after taking the formal series ring left Kan extension analytification from \cite[Section 4.2]{BBM}, which is defined by taking the left Kan extension to all the $(\infty,1)$-ring objects in the $\infty$-derived category of all $A$-modules from formal series rings over $A$.\\
\end{definition}

\newpage

\subsection{Functional Analytic Derived Preperfectoidizations for Inductive $(\infty,1)$-Analytic Stacks}

\

\noindent Then as in \cite[Definition 8.2]{12BS} we consider the corresponding perfectoidization in this analytic setting.

\begin{definition}
Let $(A,I)$ be a perfectoid prism, and we consider any $\mathrm{E}_\infty$-ring $\mathcal{O}$ in the following
\begin{align}
\mathrm{Ind}^\text{smoothformalseriesclosure}\infty-\mathrm{Toposes}^{\mathrm{ringed},\mathrm{commutativealgebra}_{\mathrm{simplicial}}(\mathrm{Ind}\mathrm{Seminormed}_{A/I})}_{\mathrm{Commutativealgebra}_{\mathrm{simplicial}}(\mathrm{Ind}\mathrm{Seminormed}_{A/I})^\mathrm{opposite},\mathrm{Grothendiecktopology,homotopyepimorphism}}. \\
\mathrm{Ind}^\text{smoothformalseriesclosure}\infty-\mathrm{Toposes}^{\mathrm{ringed},\mathrm{Commutativealgebra}_{\mathrm{simplicial}}(\mathrm{Ind}^m\mathrm{Seminormed}_{A/I})}_{\mathrm{Commutativealgebra}_{\mathrm{simplicial}}(\mathrm{Ind}^m\mathrm{Seminormed}_{A/I})^\mathrm{opposite},\mathrm{Grothendiecktopology,homotopyepimorphism}}.\\
\mathrm{Ind}^\text{smoothformalseriesclosure}\infty-\mathrm{Toposes}^{\mathrm{ringed},\mathrm{Commutativealgebra}_{\mathrm{simplicial}}(\mathrm{Ind}\mathrm{Normed}_{A/I})}_{\mathrm{Commutativealgebra}_{\mathrm{simplicial}}(\mathrm{Ind}\mathrm{Normed}_{A/I})^\mathrm{opposite},\mathrm{Grothendiecktopology,homotopyepimorphism}}.\\
\mathrm{Ind}^\text{smoothformalseriesclosure}\infty-\mathrm{Toposes}^{\mathrm{ringed},\mathrm{Commutativealgebra}_{\mathrm{simplicial}}(\mathrm{Ind}^m\mathrm{Normed}_{A/I})}_{\mathrm{Commutativealgebra}_{\mathrm{simplicial}}(\mathrm{Ind}^m\mathrm{Normed}_{A/I})^\mathrm{opposite},\mathrm{Grothendiecktopology,homotopyepimorphism}}.\\
\mathrm{Ind}^\text{smoothformalseriesclosure}\infty-\mathrm{Toposes}^{\mathrm{ringed},\mathrm{Commutativealgebra}_{\mathrm{simplicial}}(\mathrm{Ind}\mathrm{Banach}_{A/I})}_{\mathrm{Commutativealgebra}_{\mathrm{simplicial}}(\mathrm{Ind}\mathrm{Banach}_{A/I})^\mathrm{opposite},\mathrm{Grothendiecktopology,homotopyepimorphism}}.\\
\mathrm{Ind}^\text{smoothformalseriesclosure}\infty-\mathrm{Toposes}^{\mathrm{ringed},\mathrm{Commutativealgebra}_{\mathrm{simplicial}}(\mathrm{Ind}^m\mathrm{Banach}_{A/I})}_{\mathrm{Commutativealgebra}_{\mathrm{simplicial}}(\mathrm{Ind}^m\mathrm{Banach}_{A/I})^\mathrm{opposite},\mathrm{Grothendiecktopology,homotopyepimorphism}}. 
\end{align}
Then consider the derived prismatic object:
\begin{align}
\mathrm{Kan}_{\mathrm{Left}}\Delta_{?/A,\text{functionalanalytic,KKM},\text{BBM,formalanalytification}}(\mathcal{O}).
\end{align}	
Then as in \cite[Definition 8.2]{12BS} we have the following preperfectoidization:
\begin{align}
&(\mathcal{O})^{\text{preperfectoidization}}\\
&:=\mathrm{Colimit}(\mathrm{Kan}_{\mathrm{Left}}\Delta_{?/A,\text{functionalanalytic,KKM},\text{BBM,formalanalytification}}(\mathcal{O})\rightarrow \\
&\mathrm{Fro}_*\mathrm{Kan}_{\mathrm{Left}}\Delta_{?/A,\text{functionalanalytic,KKM},\text{BBM,formalanalytification}}(\mathcal{O})\\
&\rightarrow \mathrm{Fro}_* \mathrm{Fro}_*\mathrm{Kan}_{\mathrm{Left}}\Delta_{?/A,\text{functionalanalytic,KKM},\text{BBM,formalanalytification}}(\mathcal{O})\rightarrow...)^{\text{BBM,formalanalytification}},	
\end{align}
after taking the formal series ring left Kan extension analytification from \cite[Section 4.2]{BBM}, which is defined by taking the left Kan extension to all the $(\infty,1)$-ring object in the $\infty$-derived category of all $A$-modules from formal series rings over $A$. Then we define the corresponding perfectoidization:
\begin{align}
&(\mathcal{O})^{\text{perfectoidization}}\\
&:=\mathrm{Colimit}(\mathrm{Kan}_{\mathrm{Left}}\Delta_{?/A,\text{functionalanalytic,KKM},\text{BBM,formalanalytification}}(\mathcal{O})\longrightarrow \\
&\mathrm{Fro}_*\mathrm{Kan}_{\mathrm{Left}}\Delta_{?/A,\text{functionalanalytic,KKM},\text{BBM,formalanalytification}}(\mathcal{O})\\
&\longrightarrow \mathrm{Fro}_* \mathrm{Fro}_*\mathrm{Kan}_{\mathrm{Left}}\Delta_{?/A,\text{functionalanalytic,KKM},\text{BBM,formalanalytification}}(\mathcal{O})\longrightarrow...)^{\text{BBM,formalanalytification}}\times A/I.	
\end{align}
Furthermore one can take derived $(p,I)$-completion to achieve the derived $(p,I)$-completed versions:
\begin{align}
\mathcal{O}^\text{preperfectoidization,derivedcomplete}:=(\mathcal{O}^\text{preperfectoidization})^{\wedge},\\
\mathcal{O}^\text{perfectoidization,derivedcomplete}:=\mathcal{O}^\text{preperfectoidization,derivedcomplete}\times A/I.\\
\end{align}
These are large $(\infty,1)$-commutative algebra objects in the corresponding categories as in the above, attached to also large $(\infty,1)$-commutative algebra objects. When we apply this to the corresponding sub-$(\infty,1)$-categories of Banach perfectoid objects in \cite{BMS2}, \cite{GR}, \cite{12KL1}, \cite{12KL2}, \cite{12Ked1}, \cite{12Sch3},  we will recover the corresponding distinguished elemental deformation processes defined in \cite{BMS2}, \cite{GR}, \cite{12KL1}, \cite{12KL2}, \cite{12Ked1}, \cite{12Sch3}. 
\end{definition}

\begin{remark}
One can then define such ring $\mathcal{O}$ to be \textit{preperfectoid} if we have the equivalence:
\begin{align}
\mathcal{O}^{\text{preperfectoidization}} \overset{\sim}{\longrightarrow}	\mathcal{O}.
\end{align}
One can then define such ring $\mathcal{O}$ to be \textit{perfectoid} if we have the equivalence:
\begin{align}
\mathcal{O}^{\text{preperfectoidization}}\times A/I \overset{\sim}{\longrightarrow}	\mathcal{O}.
\end{align}
	
\end{remark}

\newpage

\subsection{Functional Analytic Derived de Rham Complexes for Inductive $(\infty,1)$-Analytic Stacks and de Rham Preperfectoidizations}

\indent As in \cite{12LL} we have the comparison between the derived prismatic cohomology and the corresponding derived de Rham cohomology in some very well-defined way which respects the corresponding filtrations, we can then in our situation take the corresponding definition of some derived de Rham complex as the one side of the comparison from \cite{12LL}\footnote{Our goal here is actually study the corresponding derived de Rham period rings and the corresponding applications in $p$-adic Hodge theory extending work of \cite{12DLLZ1}, \cite{12DLLZ2}, \cite{12Sch2} in the motivation from \cite{12GL}, when applyting the construction to derived $p$-adic formal stacks and derived logarithmic $p$-adic formal stacks.}. To be more precise after \cite[Chapitre 3]{12An1}, \cite{12An2}, \cite[Chapter 2, Chapter 8]{12B1}, \cite[Chapter 1]{12Bei}, \cite[Chapter 5]{12G1}, \cite[Chapter 3, Chapter 4]{12GL}, \cite[Chapitre II, Chapitre III]{12Ill1}, \cite[Chapitre VIII]{12Ill2}, \cite[Section 4]{12Qui}, and \cite[Example 5.11, Example 5.12]{BMS2} we define the corresponding:

\begin{definition}
Then we can in the same fashion consider the corresponding derived de Rham complex presheaves for the commutative algebras as in the above (for a given prism $(A,I)$):
\begin{align}
\mathrm{Kan}_{\mathrm{Left}}\mathrm{deRham}_{?/A},	
\end{align}
\footnote{After taking derived $p$-completion.}by the regular corresponding left Kan extension techniques on the following $(\infty,1)$-compactly generated closures of the corresponding polynomials given over $A/I$ with a chosen prism $(A,I)$:\\

\begin{align}
\mathrm{Ind}^\text{smoothformalseriesclosure}\infty-\mathrm{Toposes}^{\mathrm{ringed},\mathrm{commutativealgebra}_{\mathrm{simplicial}}(\mathrm{Ind}\mathrm{Seminormed}_{A/I})}_{\mathrm{Commutativealgebra}_{\mathrm{simplicial}}(\mathrm{Ind}\mathrm{Seminormed}_{A/I})^\mathrm{opposite},\mathrm{Grothendiecktopology,homotopyepimorphism}}. \\
\mathrm{Ind}^\text{smoothformalseriesclosure}\infty-\mathrm{Toposes}^{\mathrm{ringed},\mathrm{Commutativealgebra}_{\mathrm{simplicial}}(\mathrm{Ind}^m\mathrm{Seminormed}_{A/I})}_{\mathrm{Commutativealgebra}_{\mathrm{simplicial}}(\mathrm{Ind}^m\mathrm{Seminormed}_{A/I})^\mathrm{opposite},\mathrm{Grothendiecktopology,homotopyepimorphism}}.\\
\mathrm{Ind}^\text{smoothformalseriesclosure}\infty-\mathrm{Toposes}^{\mathrm{ringed},\mathrm{Commutativealgebra}_{\mathrm{simplicial}}(\mathrm{Ind}\mathrm{Normed}_{A/I})}_{\mathrm{Commutativealgebra}_{\mathrm{simplicial}}(\mathrm{Ind}\mathrm{Normed}_{A/I})^\mathrm{opposite},\mathrm{Grothendiecktopology,homotopyepimorphism}}.\\
\mathrm{Ind}^\text{smoothformalseriesclosure}\infty-\mathrm{Toposes}^{\mathrm{ringed},\mathrm{Commutativealgebra}_{\mathrm{simplicial}}(\mathrm{Ind}^m\mathrm{Normed}_{A/I})}_{\mathrm{Commutativealgebra}_{\mathrm{simplicial}}(\mathrm{Ind}^m\mathrm{Normed}_{A/I})^\mathrm{opposite},\mathrm{Grothendiecktopology,homotopyepimorphism}}.\\
\mathrm{Ind}^\text{smoothformalseriesclosure}\infty-\mathrm{Toposes}^{\mathrm{ringed},\mathrm{Commutativealgebra}_{\mathrm{simplicial}}(\mathrm{Ind}\mathrm{Banach}_{A/I})}_{\mathrm{Commutativealgebra}_{\mathrm{simplicial}}(\mathrm{Ind}\mathrm{Banach}_{A/I})^\mathrm{opposite},\mathrm{Grothendiecktopology,homotopyepimorphism}}.\\
\mathrm{Ind}^\text{smoothformalseriesclosure}\infty-\mathrm{Toposes}^{\mathrm{ringed},\mathrm{Commutativealgebra}_{\mathrm{simplicial}}(\mathrm{Ind}^m\mathrm{Banach}_{A/I})}_{\mathrm{Commutativealgebra}_{\mathrm{simplicial}}(\mathrm{Ind}^m\mathrm{Banach}_{A/I})^\mathrm{opposite},\mathrm{Grothendiecktopology,homotopyepimorphism}}. 
\end{align}
We call the corresponding functors functional analytic derived de Rham complex presheaves which we are going to denote that as in the following:
\begin{align}
\mathrm{Kan}_{\mathrm{Left}}\mathrm{deRham}_{?/A,\text{functionalanalytic,KKM},\text{BBM,formalanalytification}}.	
\end{align}
This would mean the following definition{\footnote{Before the Ben-Bassat-Mukherjee $p$-adic formal analytification we take the corresponding derived $(p,I)$-completion.}}:
\begin{align}
\mathrm{Kan}_{\mathrm{Left}}&\mathrm{deRham}_{?/A,\text{functionalanalytic,KKM},\text{BBM,formalanalytification}}(\mathcal{O})\\
&:=	((\underset{i}{\text{homotopylimit}}_{\text{sifted},\text{derivedcategory}_{\infty}(A/I-\text{Module})}\mathrm{Kan}_{\mathrm{Left}}\mathrm{deRham}_{?/A,\text{functionalanalytic,KKM}}\\
&(\mathcal{O}_i))^\wedge\\
&)_\text{BBM,formalanalytification}
\end{align}
by writing any object $\mathcal{O}$ as the corresponding colimit 
\begin{center}
$\underset{i}{\text{homotopylimit}}_\text{sifted}\mathcal{O}_i$.
\end{center}
These are quite large $(\infty,1)$-commutative ring objects in the corresponding $(\infty,1)$-categories for $R=A/I$:
\begin{align}
\mathrm{Ind}\mathrm{\sharp Quasicoherent}^{\text{presheaf}}_{\mathrm{Ind}^\text{smoothformalseriesclosure}\infty-\mathrm{Toposes}^{\mathrm{ringed},\mathrm{commutativealgebra}_{\mathrm{simplicial}}(\mathrm{Ind}\mathrm{Seminormed}_R)}_{\mathrm{Commutativealgebra}_{\mathrm{simplicial}}(\mathrm{Ind}\mathrm{Seminormed}_R)^\mathrm{opposite},\mathrm{Grotopology,homotopyepimorphism}}}. \\
\mathrm{Ind}\mathrm{\sharp Quasicoherent}^{\text{presheaf}}_{\mathrm{Ind}^\text{smoothformalseriesclosure}\infty-\mathrm{Toposes}^{\mathrm{ringed},\mathrm{Commutativealgebra}_{\mathrm{simplicial}}(\mathrm{Ind}^m\mathrm{Seminormed}_R)}_{\mathrm{Commutativealgebra}_{\mathrm{simplicial}}(\mathrm{Ind}^m\mathrm{Seminormed}_R)^\mathrm{opposite},\mathrm{Grotopology,homotopyepimorphism}}}.\\
\mathrm{Ind}\mathrm{\sharp Quasicoherent}^{\text{presheaf}}_{\mathrm{Ind}^\text{smoothformalseriesclosure}\infty-\mathrm{Toposes}^{\mathrm{ringed},\mathrm{Commutativealgebra}_{\mathrm{simplicial}}(\mathrm{Ind}\mathrm{Normed}_R)}_{\mathrm{Commutativealgebra}_{\mathrm{simplicial}}(\mathrm{Ind}\mathrm{Normed}_R)^\mathrm{opposite},\mathrm{Grotopology,homotopyepimorphism}}}.\\
\mathrm{Ind}\mathrm{\sharp Quasicoherent}^{\text{presheaf}}_{\mathrm{Ind}^\text{smoothformalseriesclosure}\infty-\mathrm{Toposes}^{\mathrm{ringed},\mathrm{Commutativealgebra}_{\mathrm{simplicial}}(\mathrm{Ind}^m\mathrm{Normed}_R)}_{\mathrm{Commutativealgebra}_{\mathrm{simplicial}}(\mathrm{Ind}^m\mathrm{Normed}_R)^\mathrm{opposite},\mathrm{Grotopology,homotopyepimorphism}}}.\\
\mathrm{Ind}\mathrm{\sharp Quasicoherent}^{\text{presheaf}}_{\mathrm{Ind}^\text{smoothformalseriesclosure}\infty-\mathrm{Toposes}^{\mathrm{ringed},\mathrm{Commutativealgebra}_{\mathrm{simplicial}}(\mathrm{Ind}\mathrm{Banach}_R)}_{\mathrm{Commutativealgebra}_{\mathrm{simplicial}}(\mathrm{Ind}\mathrm{Banach}_R)^\mathrm{opposite},\mathrm{Grotopology,homotopyepimorphism}}}.\\
\mathrm{Ind}\mathrm{\sharp Quasicoherent}^{\text{presheaf}}_{\mathrm{Ind}^\text{smoothformalseriesclosure}\infty-\mathrm{Toposes}^{\mathrm{ringed},\mathrm{Commutativealgebra}_{\mathrm{simplicial}}(\mathrm{Ind}^m\mathrm{Banach}_R)}_{\mathrm{Commutativealgebra}_{\mathrm{simplicial}}(\mathrm{Ind}^m\mathrm{Banach}_R)^\mathrm{opposite},\mathrm{Grotopology,homotopyepimorphism}}},\\ 
\end{align}
after taking the formal series ring left Kan extension analytification from \cite[Section 4.2]{BBM}, which is defined by taking the left Kan extension to all the $(\infty,1)$-ring objects in the $\infty$-derived category of all $A$-modules from formal series rings over $A$.\\
\end{definition}

\noindent Now we apply this construction to any $A/I$ formal scheme $(X,\mathcal{O}_X)$. But in order to use prismatic technology to reach the corresponding period derived de Rham sheaves, we need to consider more, this would be some definition for 'perfectoidizations':

\begin{definition}
Let $(A,I)$ be a perfectoid prism, and we consider any $\mathrm{E}_\infty$-ring $\mathcal{O}$ in the following
\begin{align}
\mathrm{Ind}^\text{smoothformalseriesclosure}\infty-\mathrm{Toposes}^{\mathrm{ringed},\mathrm{commutativealgebra}_{\mathrm{simplicial}}(\mathrm{Ind}\mathrm{Seminormed}_{A/I})}_{\mathrm{Commutativealgebra}_{\mathrm{simplicial}}(\mathrm{Ind}\mathrm{Seminormed}_{A/I})^\mathrm{opposite},\mathrm{Grothendiecktopology,homotopyepimorphism}}. \\
\mathrm{Ind}^\text{smoothformalseriesclosure}\infty-\mathrm{Toposes}^{\mathrm{ringed},\mathrm{Commutativealgebra}_{\mathrm{simplicial}}(\mathrm{Ind}^m\mathrm{Seminormed}_{A/I})}_{\mathrm{Commutativealgebra}_{\mathrm{simplicial}}(\mathrm{Ind}^m\mathrm{Seminormed}_{A/I})^\mathrm{opposite},\mathrm{Grothendiecktopology,homotopyepimorphism}}.\\
\mathrm{Ind}^\text{smoothformalseriesclosure}\infty-\mathrm{Toposes}^{\mathrm{ringed},\mathrm{Commutativealgebra}_{\mathrm{simplicial}}(\mathrm{Ind}\mathrm{Normed}_{A/I})}_{\mathrm{Commutativealgebra}_{\mathrm{simplicial}}(\mathrm{Ind}\mathrm{Normed}_{A/I})^\mathrm{opposite},\mathrm{Grothendiecktopology,homotopyepimorphism}}.\\
\mathrm{Ind}^\text{smoothformalseriesclosure}\infty-\mathrm{Toposes}^{\mathrm{ringed},\mathrm{Commutativealgebra}_{\mathrm{simplicial}}(\mathrm{Ind}^m\mathrm{Normed}_{A/I})}_{\mathrm{Commutativealgebra}_{\mathrm{simplicial}}(\mathrm{Ind}^m\mathrm{Normed}_{A/I})^\mathrm{opposite},\mathrm{Grothendiecktopology,homotopyepimorphism}}.\\
\mathrm{Ind}^\text{smoothformalseriesclosure}\infty-\mathrm{Toposes}^{\mathrm{ringed},\mathrm{Commutativealgebra}_{\mathrm{simplicial}}(\mathrm{Ind}\mathrm{Banach}_{A/I})}_{\mathrm{Commutativealgebra}_{\mathrm{simplicial}}(\mathrm{Ind}\mathrm{Banach}_{A/I})^\mathrm{opposite},\mathrm{Grothendiecktopology,homotopyepimorphism}}.\\
\mathrm{Ind}^\text{smoothformalseriesclosure}\infty-\mathrm{Toposes}^{\mathrm{ringed},\mathrm{Commutativealgebra}_{\mathrm{simplicial}}(\mathrm{Ind}^m\mathrm{Banach}_{A/I})}_{\mathrm{Commutativealgebra}_{\mathrm{simplicial}}(\mathrm{Ind}^m\mathrm{Banach}_{A/I})^\mathrm{opposite},\mathrm{Grothendiecktopology,homotopyepimorphism}}. 
\end{align}
Then consider the derived prismatic object:
\begin{align}
\mathrm{Kan}_{\mathrm{Left}}\mathrm{deRham}_{?/A,\text{functionalanalytic,KKM},\text{BBM,formalanalytification}}(\mathcal{O}).
\end{align}	
Then as in \cite[Definition 8.2]{12BS} we have the following de Rham  preperfectoidization:
\begin{align}
&(\mathcal{O})^{\text{deRham,preperfectoidization}}\\
&:=\mathrm{Colimit}(\mathrm{Kan}_{\mathrm{Left}}\mathrm{deRham}_{?/A,\text{functionalanalytic,KKM},\text{BBM,formalanalytification}}(\mathcal{O})\rightarrow \\
&\mathrm{Fro}_*\mathrm{Kan}_{\mathrm{Left}}\mathrm{deRham}_{?/A,\text{functionalanalytic,KKM},\text{BBM,formalanalytification}}(\mathcal{O})\\
&\rightarrow \mathrm{Fro}_* \mathrm{Fro}_*\mathrm{Kan}_{\mathrm{Left}}\mathrm{deRham}_{?/A,\text{functionalanalytic,KKM},\text{BBM,formalanalytification}}(\mathcal{O})\rightarrow...\\
&)^{\text{BBM,formalanalytification}},	
\end{align}
after taking the formal series ring left Kan extension analytification from \cite[Section 4.2]{BBM}, which is defined by taking the left Kan extension to all the $(\infty,1)$-ring object in the $\infty$-derived category of all $A/I$-modules from formal series rings over $A/I$. 
Furthermore one can take derived $(p,I)$-completion to achieve the derived $(p,I)$-completed versions.

\end{definition}

\newpage

\section{Functional Analytic Noncommutative Motives, Noncommutative Primatic Cohomologies and the Preperfectoidizations}

\indent We now work in the noncommutative setting after \cite{Kon1}, \cite{Ta}, \cite{KR1} and \cite{KR2}, with some philosophy rooted in some noncommtative motives and the corresponding nonabelian applications in noncommutative analytic geometry in the derived sense, and the noncommutative analogs of the corresponding Riemann hypothesis and the corresponding Tamagawa number conjectures. The issue is certainly that the usual Frobenius map looks strange, which tells us of the fact that actually we need to consider really large objects such as the corresponding Topological Hochschild Homologies and the corresponding nearby objects. Here we choose to consider \cite{12NS} in order to apply the constructions to certain $\infty$-rings, which we will call them Fukaya-Kato analytifications from \cite{12FK}. \\

\indent Now we consider the following $\infty$-categories of the corresponding $\infty$-analytic ringed toposes from Bambozzi-Ben-Bassat-Kremnizer \cite{12BBBK} in some paralle way:\\

\begin{align}
&\infty-\mathrm{Toposes}^{\mathrm{ringed},\mathrm{Noncommutativealgebra}_{\mathrm{simplicial}}(\mathrm{Ind}\mathrm{Seminormed}_?)}_{\mathrm{Noncommutativealgebra}_{\mathrm{simplicial}}(\mathrm{Ind}\mathrm{Seminormed}_?)^\mathrm{opposite},\mathrm{Grothendiecktopology,homotopyepimorphism}}.\\
&\infty-\mathrm{Toposes}^{\mathrm{ringed},\mathrm{Noncommutativealgebra}_{\mathrm{simplicial}}(\mathrm{Ind}^m\mathrm{Seminormed}_?)}_{\mathrm{Noncommutativealgebra}_{\mathrm{simplicial}}(\mathrm{Ind}^m\mathrm{Seminormed}_?)^\mathrm{opposite},\mathrm{Grothendiecktopology,homotopyepimorphism}}.\\
&\infty-\mathrm{Toposes}^{\mathrm{ringed},\mathrm{Noncommutativealgebra}_{\mathrm{simplicial}}(\mathrm{Ind}\mathrm{Normed}_?)}_{\mathrm{Noncommutativealgebra}_{\mathrm{simplicial}}(\mathrm{Ind}\mathrm{Normed}_?)^\mathrm{opposite},\mathrm{Grothendiecktopology,homotopyepimorphism}}.\\
&\infty-\mathrm{Toposes}^{\mathrm{ringed},\mathrm{Noncommutativealgebra}_{\mathrm{simplicial}}(\mathrm{Ind}^m\mathrm{Normed}_?)}_{\mathrm{Noncommutativealgebra}_{\mathrm{simplicial}}(\mathrm{Ind}^m\mathrm{Normed}_?)^\mathrm{opposite},\mathrm{Grothendiecktopology,homotopyepimorphism}}.\\
&\infty-\mathrm{Toposes}^{\mathrm{ringed},\mathrm{Noncommutativealgebra}_{\mathrm{simplicial}}(\mathrm{Ind}\mathrm{Banach}_?)}_{\mathrm{Noncommutativealgebra}_{\mathrm{simplicial}}(\mathrm{Ind}\mathrm{Banach}_?)^\mathrm{opposite},\mathrm{Grothendiecktopology,homotopyepimorphism}}.\\
&\infty-\mathrm{Toposes}^{\mathrm{ringed},\mathrm{Noncommutativealgebra}_{\mathrm{simplicial}}(\mathrm{Ind}^m\mathrm{Banach}_?)}_{\mathrm{Noncommutativealgebra}_{\mathrm{simplicial}}(\mathrm{Ind}^m\mathrm{Banach}_?)^\mathrm{opposite},\mathrm{Grothendiecktopology,homotopyepimorphism}}.\\ 
&\mathrm{Proj}^\text{smoothformalseriesclosure}\infty-\mathrm{Toposes}^{\mathrm{ringed},\mathrm{Noncommutativealgebra}_{\mathrm{simplicial}}(\mathrm{Ind}\mathrm{Seminormed}_?)}_{\mathrm{Noncommutativealgebra}_{\mathrm{simplicial}}(\mathrm{Ind}\mathrm{Seminormed}_?)^\mathrm{opposite},\mathrm{Grothendiecktopology,homotopyepimorphism}}. \\
&\mathrm{Proj}^\text{smoothformalseriesclosure}\infty-\mathrm{Toposes}^{\mathrm{ringed},\mathrm{Noncommutativealgebra}_{\mathrm{simplicial}}(\mathrm{Ind}^m\mathrm{Seminormed}_?)}_{\mathrm{Noncommutativealgebra}_{\mathrm{simplicial}}(\mathrm{Ind}^m\mathrm{Seminormed}_?)^\mathrm{opposite},\mathrm{Grothendiecktopology,homotopyepimorphism}}.\\
&\mathrm{Proj}^\text{smoothformalseriesclosure}\infty-\mathrm{Toposes}^{\mathrm{ringed},\mathrm{Noncommutativealgebra}_{\mathrm{simplicial}}(\mathrm{Ind}\mathrm{Normed}_?)}_{\mathrm{Noncommutativealgebra}_{\mathrm{simplicial}}(\mathrm{Ind}\mathrm{Normed}_?)^\mathrm{opposite},\mathrm{Grothendiecktopology,homotopyepimorphism}}.\\
&\mathrm{Proj}^\text{smoothformalseriesclosure}\infty-\mathrm{Toposes}^{\mathrm{ringed},\mathrm{Noncommutativealgebra}_{\mathrm{simplicial}}(\mathrm{Ind}^m\mathrm{Normed}_?)}_{\mathrm{Noncommutativealgebra}_{\mathrm{simplicial}}(\mathrm{Ind}^m\mathrm{Normed}_?)^\mathrm{opposite},\mathrm{Grothendiecktopology,homotopyepimorphism}}.\\
&\mathrm{Proj}^\text{smoothformalseriesclosure}\infty-\mathrm{Toposes}^{\mathrm{ringed},\mathrm{Noncommutativealgebra}_{\mathrm{simplicial}}(\mathrm{Ind}\mathrm{Banach}_?)}_{\mathrm{Noncommutativealgebra}_{\mathrm{simplicial}}(\mathrm{Ind}\mathrm{Banach}_?)^\mathrm{opposite},\mathrm{Grothendiecktopology,homotopyepimorphism}}.\\
&\mathrm{Proj}^\text{smoothformalseriesclosure}\infty-\mathrm{Toposes}^{\mathrm{ringed},\mathrm{Noncommutativealgebra}_{\mathrm{simplicial}}(\mathrm{Ind}^m\mathrm{Banach}_?)}_{\mathrm{Noncommutativealgebra}_{\mathrm{simplicial}}(\mathrm{Ind}^m\mathrm{Banach}_?)^\mathrm{opposite},\mathrm{Grothendiecktopology,homotopyepimorphism}}.\\ 
\end{align}

\indent Starting from the $\infty$-rings, we have:\\

\begin{definition}
\indent One can actually define the derived prismatic cohomologies through derived topological Hochschild cohomologies, derived topological period cohomologies and derived topological cyclic cohomologies as in \cite[Section 2.2, Section 2.3]{12BMS}, \cite[Theorem 1.13]{12BS}:
\begin{align}
	\mathrm{Kan}_{\mathrm{Left}}\mathrm{THH}^\mathrm{noncommutative},\mathrm{Kan}_{\mathrm{Left}}\mathrm{TP}^\mathrm{noncommutative},\mathrm{Kan}_{\mathrm{Left}}\mathrm{TC}^\mathrm{noncommutative},
\end{align}
\footnote{One has the corresponding $p$-completed versions as well.}on the following $(\infty,1)$-compactly generated closures of the corresponding polynomials\footnote{Definitely, we need to put certain norms over in some relatively canonical way, as in \cite[Section 4.2]{BBM} one can basically consider rigid ones and dagger ones, and so on. In this noncommutative setting we do not actually need to fix the type of the analytification, as in commutative setting since we are going to apply directly construction from Bhatt-Scholze and Koshikawa \cite{12BS} and \cite{12Ko1}, though the derived characterization of the prismatic cohomology might not need to be restricted to the corresponding $p$-adic formal scheme situations.} given over $A/I$ with a chosen prism $(A,I)$\footnote{In all the following, we assume this prism to be bounded and satisfy that $A/I$ is Banach.}:

\begin{align}
\mathrm{Object}_{\mathrm{E}_\infty\mathrm{Noncommutativealgebra},\mathrm{Simplicial}}(\mathrm{IndSNorm}_{A/I})^{\mathrm{smoothformalseriesclosure}},\\
\mathrm{Object}_{\mathrm{E}_\infty\mathrm{Noncommutativealgebra},\mathrm{Simplicial}}(\mathrm{Ind}^m\mathrm{SNorm}_{A/I})^{\mathrm{smoothformalseriesclosure}},\\
\mathrm{Object}_{\mathrm{E}_\infty\mathrm{Noncommutativealgebra},\mathrm{Simplicial}}(\mathrm{IndNorm}_{A/I})^{\mathrm{smoothformalseriesclosure}},\\
\mathrm{Object}_{\mathrm{E}_\infty\mathrm{Noncommutativealgebra},\mathrm{Simplicial}}(\mathrm{Ind}^m\mathrm{Norm}_{A/I})^{\mathrm{smoothformalseriesclosure}},\\
\mathrm{Object}_{\mathrm{E}_\infty\mathrm{Noncommutativealgebra},\mathrm{Simplicial}}(\mathrm{IndBan}_{A/I})^{\mathrm{smoothformalseriesclosure}},\\
\mathrm{Object}_{\mathrm{E}_\infty\mathrm{Noncommutativealgebra},\mathrm{Simplicial}}(\mathrm{Ind}^m\mathrm{Ban}_{A/I})^{\mathrm{smoothformalseriesclosure}}.
\end{align}
We call the corresponding functors are derived functional analytic Hochschild cohomologies, derived functional analytic period cohomologies and derived functional analytic cyclic cohomologies, which we are going to denote them as in the following:
\begin{align}
	&\mathrm{Kan}_{\mathrm{Left}}\mathrm{THH}^\mathrm{noncommutative}_{\text{functionalanalytic,KKM},\text{BBM,formalanalytification,nc}}(\mathcal{O}):=\\
	&(\underset{i}{\text{homotopycolimit}}_{\text{sifted},\text{derivedcategory}_{\infty}(A/I-\text{Module})}\mathrm{Kan}_{\mathrm{Left}}\mathrm{THH}^\mathrm{noncommutative}_{\text{functionalanalytic,KKM}}(\mathcal{O}_i)\\
	&)_\text{BBM,formalanalytification,nc},\\
	&\mathrm{Kan}_{\mathrm{Left}}\mathrm{TP}^\mathrm{noncommutative}_{\text{functionalanalytic,KKM},\text{BBM,formalanalytification,nc}}(\mathcal{O}):=\\
	&(\underset{i}{\text{homotopycolimit}}_{\text{sifted},\text{derivedcategory}_{\infty}(A/I-\text{Module})}\mathrm{Kan}_{\mathrm{Left}}\mathrm{TP}^\mathrm{noncommutative}_{\text{functionalanalytic,KKM}}(\mathcal{O}_i)\\
	&)_\text{BBM,formalanalytification,nc},\\
	&\mathrm{Kan}_{\mathrm{Left}}\mathrm{TC}^\mathrm{noncommutative}_{\text{functionalanalytic,KKM},\text{BBM,formalanalytification,nc}}(\mathcal{O}):=\\
	&(\underset{i}{\text{homotopycolimit}}_{\text{sifted},\text{derivedcategory}_{\infty}(A/I-\text{Module})}\mathrm{Kan}_{\mathrm{Left}}\mathrm{TC}^\mathrm{noncommutative}_{\text{functionalanalytic,KKM}}(\mathcal{O}_i)\\
	&)_\text{BBM,formalanalytification,nc},
\end{align}
by writing any object $\mathcal{O}$ as the corresponding colimit 
\begin{center}
$\underset{i}{\text{homotopycolimit}}_\text{sifted}\mathcal{O}_i$.
\end{center}
These are quite large $(\infty,1)$-commutative ring objects in the corresponding $(\infty,1)$-categories for $R=A/I$:

\begin{align}
\mathrm{Object}_{\mathrm{E}_\infty\mathrm{Noncommutativealgebra},\mathrm{Simplicial}}(\mathrm{IndSNorm}_R),\\
\mathrm{Object}_{\mathrm{E}_\infty\mathrm{Noncommutativealgebra},\mathrm{Simplicial}}(\mathrm{Ind}^m\mathrm{SNorm}_R),\\
\mathrm{Object}_{\mathrm{E}_\infty\mathrm{Noncommutativealgebra},\mathrm{Simplicial}}(\mathrm{IndNorm}_R),\\
\mathrm{Object}_{\mathrm{E}_\infty\mathrm{Noncommutativealgebra},\mathrm{Simplicial}}(\mathrm{Ind}^m\mathrm{Norm}_R),\\
\mathrm{Object}_{\mathrm{E}_\infty\mathrm{Noncommutativealgebra},\mathrm{Simplicial}}(\mathrm{IndBan}_R),\\
\mathrm{Object}_{\mathrm{E}_\infty\mathrm{Noncommutativealgebra},\mathrm{Simplicial}}(\mathrm{Ind}^m\mathrm{Ban}_R),
\end{align}
after taking the formal series ring left Kan extension analytification from \cite[Section 4.2]{BBM}, which is defined by taking the left Kan extension to all the $(\infty,1)$-ring objects in the $\infty$-derived category of all $A$-modules from formal series rings over $A$, into:
\begin{align}
\mathrm{Object}_{\mathrm{E}_\infty\mathrm{Noncommutativealgebra},\mathrm{Simplicial}}(\mathrm{IndSNorm}_{\mathbb{F}_1})_A,\\
\mathrm{Object}_{\mathrm{E}_\infty\mathrm{Noncommutativealgebra},\mathrm{Simplicial}}(\mathrm{Ind}^m\mathrm{SNorm}_{\mathbb{F}_1})_A,\\
\mathrm{Object}_{\mathrm{E}_\infty\mathrm{Noncommutativealgebra},\mathrm{Simplicial}}(\mathrm{IndNorm}_{\mathbb{F}_1})_A,\\
\mathrm{Object}_{\mathrm{E}_\infty\mathrm{Noncommutativealgebra},\mathrm{Simplicial}}(\mathrm{Ind}^m\mathrm{Norm}_{\mathbb{F}_1})_A,\\
\mathrm{Object}_{\mathrm{E}_\infty\mathrm{Noncommutativealgebra},\mathrm{Simplicial}}(\mathrm{IndBan}_{\mathbb{F}_1})_A,\\
\mathrm{Object}_{\mathrm{E}_\infty\mathrm{Noncommutativealgebra},\mathrm{Simplicial}}(\mathrm{Ind}^m\mathrm{Ban}_{\mathbb{F}_1})_A.
\end{align}
\end{definition}

\

\begin{definition}
Then we can in the same fashion consider the corresponding derived prismatic complexes \cite[Construction 7.6]{12BS}\footnote{One just applies \cite[Construction 7.6]{12BS} and then takes the left Kan extensions.} for the commutative algebras as in the above (for a given prism $(A,I)$):
\begin{align}
\mathrm{Kan}_{\mathrm{Left}}\Delta_{?/A}:=\mathrm{Kan}_{\mathrm{Left}}\mathrm{TP}^\mathrm{noncommutative,pcomplete}(?/A),	
\end{align}
by the regular corresponding left Kan extension techniques on the following $(\infty,1)$-compactly generated closures of the corresponding polynomials given over $A/I$ with a chosen prism $(A,I)$:

\begin{align}
\mathrm{Object}_{\mathrm{E}_\infty\mathrm{Noncommutativealgebra},\mathrm{Simplicial}}(\mathrm{IndSNorm}_{A/I})^{\mathrm{smoothformalseriesclosure}},\\
\mathrm{Object}_{\mathrm{E}_\infty\mathrm{Noncommutativealgebra},\mathrm{Simplicial}}(\mathrm{Ind}^m\mathrm{SNorm}_{A/I})^{\mathrm{smoothformalseriesclosure}},\\
\mathrm{Object}_{\mathrm{E}_\infty\mathrm{Noncommutativealgebra},\mathrm{Simplicial}}(\mathrm{IndNorm}_{A/I})^{\mathrm{smoothformalseriesclosure}},\\
\mathrm{Object}_{\mathrm{E}_\infty\mathrm{Noncommutativealgebra},\mathrm{Simplicial}}(\mathrm{Ind}^m\mathrm{Norm}_{A/I})^{\mathrm{smoothformalseriesclosure}},\\
\mathrm{Object}_{\mathrm{E}_\infty\mathrm{Noncommutativealgebra},\mathrm{Simplicial}}(\mathrm{IndBan}_{A/I})^{\mathrm{smoothformalseriesclosure}},\\
\mathrm{Object}_{\mathrm{E}_\infty\mathrm{Noncommutativealgebra},\mathrm{Simplicial}}(\mathrm{Ind}^m\mathrm{Ban}_{A/I})^{\mathrm{smoothformalseriesclosure}}.
\end{align}
We call the corresponding functors functional analytic derived prismatic complexes which we are going to denote that as in the following:
\begin{align}
\mathrm{Kan}_{\mathrm{Left}}&\Delta_{?/A,\text{functionalanalytic,KKM},\text{BBM,formalanalytification,nc}}\\
&:=\mathrm{Kan}_{\mathrm{Left}}\mathrm{TP}^\mathrm{noncommutative,pcomplete}(?/A)_{\text{functionalanalytic,KKM},\text{BBM,formalanalytification,nc}}.	
\end{align}
This would mean the following definition{\footnote{Before the Ben-Bassat-Mukherjee $p$-adic formal analytification we take the corresponding $p$-completion.}}:
\begin{align}
&\mathrm{Kan}_{\mathrm{Left}}\Delta_{?/A,\text{functionalanalytic,KKM},\text{BBM,formalanalytification,nc}}(\mathcal{O})\\
&:=	((\underset{i}{\text{homotopycolimit}}_{\text{sifted},\text{derivedcategory}_{\infty}(A/I-\text{Module})}\mathrm{Kan}_{\mathrm{Left}}\Delta_{?/A,\text{functionalanalytic,KKM}}(\mathcal{O}_i))^\wedge\\
&)_\text{BBM,formalanalytification,nc}\\
&=((\underset{i}{\text{homotopycolimit}}_{\text{sifted},\text{derivedcategory}_{\infty}(A/I-\text{Module})}\mathrm{Kan}_{\mathrm{Left}}\mathrm{TP}^\mathrm{noncommutative,pcomplete}\\
&(?/A)_{\text{functionalanalytic,KKM}}(\mathcal{O}_i))^\wedge)_\text{BBM,formalanalytification,nc}\\
\end{align}
by writing any object $\mathcal{O}$ as the corresponding colimit 
\begin{center}
$\underset{i}{\text{homotopycolimit}}_\text{sifted}\mathcal{O}_i$.
\end{center}
These are quite large $(\infty,1)$-commutative ring objects in the corresponding $(\infty,1)$-categories for $R=A/I$:
\begin{align}
\mathrm{Object}_{\mathrm{E}_\infty\mathrm{Noncommutativealgebra},\mathrm{Simplicial}}(\mathrm{IndSNorm}_R),\\
\mathrm{Object}_{\mathrm{E}_\infty\mathrm{Noncommutativealgebra},\mathrm{Simplicial}}(\mathrm{Ind}^m\mathrm{SNorm}_R),\\
\mathrm{Object}_{\mathrm{E}_\infty\mathrm{Noncommutativealgebra},\mathrm{Simplicial}}(\mathrm{IndNorm}_R),\\
\mathrm{Object}_{\mathrm{E}_\infty\mathrm{Noncommutativealgebra},\mathrm{Simplicial}}(\mathrm{Ind}^m\mathrm{Norm}_R),\\
\mathrm{Object}_{\mathrm{E}_\infty\mathrm{Noncommutativealgebra},\mathrm{Simplicial}}(\mathrm{IndBan}_R),\\
\mathrm{Object}_{\mathrm{E}_\infty\mathrm{Noncommutativealgebra},\mathrm{Simplicial}}(\mathrm{Ind}^m\mathrm{Ban}_R),\\
\end{align}
after taking the formal series ring left Kan extension analytification from \cite[Section 4.2]{BBM}, which is defined by taking the left Kan extension to all the $(\infty,1)$-ring objects in the $\infty$-derived category of all $A$-modules from formal series rings over $A$, into:
\begin{align}
\mathrm{Object}_{\mathrm{E}_\infty\mathrm{Noncommutativealgebra},\mathrm{Simplicial}}(\mathrm{IndSNorm}_{\mathbb{F}_1})_A,\\
\mathrm{Object}_{\mathrm{E}_\infty\mathrm{Noncommutativealgebra},\mathrm{Simplicial}}(\mathrm{Ind}^m\mathrm{SNorm}_{\mathbb{F}_1})_A,\\
\mathrm{Object}_{\mathrm{E}_\infty\mathrm{Noncommutativealgebra},\mathrm{Simplicial}}(\mathrm{IndNorm}_{\mathbb{F}_1})_A,\\
\mathrm{Object}_{\mathrm{E}_\infty\mathrm{Noncommutativealgebra},\mathrm{Simplicial}}(\mathrm{Ind}^m\mathrm{Norm}_{\mathbb{F}_1})_A,\\
\mathrm{Object}_{\mathrm{E}_\infty\mathrm{Noncommutativealgebra},\mathrm{Simplicial}}(\mathrm{IndBan}_{\mathbb{F}_1})_A,\\
\mathrm{Object}_{\mathrm{E}_\infty\mathrm{Noncommutativealgebra},\mathrm{Simplicial}}(\mathrm{Ind}^m\mathrm{Ban}_{\mathbb{F}_1})_A.
\end{align}
\end{definition}

\

\indent Then as in \cite[Definition 8.2]{12BS} we consider the corresponding 'perfectoidization' in this analytic setting.

\begin{definition}
Let $(A,I)$ be a perfectoid prism, and we consider any $\mathrm{E}_\infty$-ring $\mathcal{O}$ in the following
\begin{align}
\mathrm{Object}_{\mathrm{E}_\infty\mathrm{Noncommutativealgebra},\mathrm{Simplicial}}(\mathrm{IndSNorm}_{A/I})^{\mathrm{smoothformalseriesclosure}},\\
\mathrm{Object}_{\mathrm{E}_\infty\mathrm{Noncommutativealgebra},\mathrm{Simplicial}}(\mathrm{Ind}^m\mathrm{SNorm}_{A/I})^{\mathrm{smoothformalseriesclosure}},\\
\mathrm{Object}_{\mathrm{E}_\infty\mathrm{Noncommutativealgebra},\mathrm{Simplicial}}(\mathrm{IndNorm}_{A/I})^{\mathrm{smoothformalseriesclosure}},\\
\mathrm{Object}_{\mathrm{E}_\infty\mathrm{Noncommutativealgebra},\mathrm{Simplicial}}(\mathrm{Ind}^m\mathrm{Norm}_{A/I})^{\mathrm{smoothformalseriesclosure}},\\
\mathrm{Object}_{\mathrm{E}_\infty\mathrm{Noncommutativealgebra},\mathrm{Simplicial}}(\mathrm{IndBan}_{A/I})^{\mathrm{smoothformalseriesclosure}},\\
\mathrm{Object}_{\mathrm{E}_\infty\mathrm{Noncommutativealgebra},\mathrm{Simplicial}}(\mathrm{Ind}^m\mathrm{Ban}_{A/I})^{\mathrm{smoothformalseriesclosure}}.
\end{align}
Then consider the derived prismatic object:
\begin{align}
\mathrm{Kan}_{\mathrm{Left}}&\Delta_{?/A,\text{functionalanalytic,KKM},\text{BBM,formalanalytification,nc}}(\mathcal{O})\\
&:=\mathrm{Kan}_{\mathrm{Left}}\mathrm{TP}^\mathrm{noncommutative,pcomplete}(?/A)_{\text{functionalanalytic,KKM},\text{BBM,formalanalytification,nc}}(\mathcal{O}).
\end{align}	
Then as in \cite[Definition 8.2]{12BS} we have the following 'preperfectoidization':
\begin{align}
&(\mathcal{O})^{\text{preperfectoidization}}\\
&:=\mathrm{Colimit}(\mathrm{Kan}_{\mathrm{Left}}\Delta_{?/A,\text{functionalanalytic,KKM},\text{BBM,formalanalytification,nc}}(\mathcal{O})\rightarrow \\
&\phi_*\mathrm{Kan}_{\mathrm{Left}}\Delta_{?/A,\text{functionalanalytic,KKM},\text{BBM,formalanalytification,nc}}(\mathcal{O})\\
&\rightarrow \phi_* \phi_*\mathrm{Kan}_{\mathrm{Left}}\Delta_{?/A,\text{functionalanalytic,KKM},\text{BBM,formalanalytification,nc}}(\mathcal{O})\rightarrow...)^{\text{BBM,formalanalytification,nc}},	
\end{align}
after taking the formal series ring left Kan extension analytification from \cite[Section 4.2]{BBM}, which is defined by taking the left Kan extension to all the $(\infty,1)$-ring object in the $\infty$-derived category of all $A$-modules from formal series rings over $A$. Then we define the corresponding 'perfectoidization':
\begin{align}
&(\mathcal{O})^{\text{perfectoidization}}\\
&:=\mathrm{Colimit}(\mathrm{Kan}_{\mathrm{Left}}\Delta_{?/A,\text{functionalanalytic,KKM},\text{BBM,formalanalytification,nc}}(\mathcal{O})\longrightarrow \\
&\phi_*\mathrm{Kan}_{\mathrm{Left}}\Delta_{?/A,\text{functionalanalytic,KKM},\text{BBM,formalanalytification,nc}}(\mathcal{O})\\
&\longrightarrow \phi_* \phi_*\mathrm{Kan}_{\mathrm{Left}}\Delta_{?/A,\text{functionalanalytic,KKM},\text{BBM,formalanalytification,nc}}(\mathcal{O})\longrightarrow...\\
&)^{\text{BBM,formalanalytification,nc}}\times A/I.	
\end{align}
Furthermore one can take derived $(p,I)$-completion to achieve the derived $(p,I)$-completed versions:
\begin{align}
\mathcal{O}^\text{preperfectoidization,derivedcomplete}:=(\mathcal{O}^\text{preperfectoidization})^{\wedge},\\
\mathcal{O}^\text{perfectoidization,derivedcomplete}:=\mathcal{O}^\text{preperfectoidization,derivedcomplete}\times A/I.\\
\end{align}
These are large $(\infty,1)$-noncommutative algebra objects in the corresponding categories as in the above, attached to also large $(\infty,1)$-noncommutative algebra objects. When we apply this to the corresponding sub-$(\infty,1)$-categories of Banach perfectoid objects as in \cite{BMS2}, \cite{GR}, \cite{12KL1}, \cite{12KL2}, \cite{12Ked1}, \cite{12Sch3},  we will recover the corresponding noncommutative analogues of the distinguished elemental deformation processes defined in \cite{BMS2}, \cite{GR}, \cite{12KL1}, \cite{12KL2}, \cite{12Ked1}, \cite{12Sch3}.
\end{definition}

\

\begin{remark}
One can then define such ring $\mathcal{O}$ to be \textit{preperfectoid} if we have the equivalence:
\begin{align}
\mathcal{O}^{\text{preperfectoidization}} \overset{\sim}{\longrightarrow}	\mathcal{O}.
\end{align}
One can then define such ring $\mathcal{O}$ to be \textit{perfectoid} if we have the equivalence:
\begin{align}
\mathcal{O}^{\text{preperfectoidization}}\times A/I \overset{\sim}{\longrightarrow}	\mathcal{O}.
\end{align}
	
\end{remark}

\

\begin{definition}
\indent One can actually define the derived prismatic cohomology presheaves through derived topological Hochschild cohomology presheaves, derived topological period cohomology presheaves and derived topological cyclic cohomology presheaves as in \cite[Section 2.2, Section 2.3]{12BMS}, \cite[Theorem 1.13]{12BS}:
\begin{align}
	\mathrm{Kan}_{\mathrm{Left}}\mathrm{THH}^\mathrm{noncommutative},\mathrm{Kan}_{\mathrm{Left}}\mathrm{TP}^\mathrm{noncommutative},\mathrm{Kan}_{\mathrm{Left}}\mathrm{TC}^\mathrm{noncommutative},
\end{align}
on the following $(\infty,1)$-compactly generated closures of the corresponding polynomials\footnote{Definitely, we need to put certain norms over in some relatively canonical way, as in \cite[Section 4.2]{BBM} one can basically consider rigid ones and dagger ones, and so on. Again in this noncommutative setting we do not actually need to fix the type of the analytification, as in commutative setting since we are going to apply directly construction from Bhatt-Scholze and Koshikawa \cite{12BS} and \cite{12Ko1}, though the derived characterization of the prismatic cohomology might not need to be restricted to the corresponding $p$-adic formal scheme situations.} given over $A/I$ with a chosen prism $(A,I)$\footnote{In all the following, we assume this prism to be bounded and satisfy that $A/I$ is Banach.}:
\begin{align}
\mathrm{Proj}^\text{smoothformalseriesclosure}\infty-\mathrm{Toposes}^{\mathrm{ringed},\mathrm{Noncommutativealgebra}_{\mathrm{simplicial}}(\mathrm{Ind}\mathrm{Seminormed}_{A/I})}_{\mathrm{Noncommutativealgebra}_{\mathrm{simplicial}}(\mathrm{Ind}\mathrm{Seminormed}_{A/I})^\mathrm{opposite},\mathrm{Grothendiecktopology,homotopyepimorphism}}. \\
\mathrm{Proj}^\text{smoothformalseriesclosure}\infty-\mathrm{Toposes}^{\mathrm{ringed},\mathrm{Noncommutativealgebra}_{\mathrm{simplicial}}(\mathrm{Ind}^m\mathrm{Seminormed}_{A/I})}_{\mathrm{Noncommutativealgebra}_{\mathrm{simplicial}}(\mathrm{Ind}^m\mathrm{Seminormed}_{A/I})^\mathrm{opposite},\mathrm{Grothendiecktopology,homotopyepimorphism}}.\\
\mathrm{Proj}^\text{smoothformalseriesclosure}\infty-\mathrm{Toposes}^{\mathrm{ringed},\mathrm{Noncommutativealgebra}_{\mathrm{simplicial}}(\mathrm{Ind}\mathrm{Normed}_{A/I})}_{\mathrm{Noncommutativealgebra}_{\mathrm{simplicial}}(\mathrm{Ind}\mathrm{Normed}_{A/I})^\mathrm{opposite},\mathrm{Grothendiecktopology,homotopyepimorphism}}.\\
\mathrm{Proj}^\text{smoothformalseriesclosure}\infty-\mathrm{Toposes}^{\mathrm{ringed},\mathrm{Noncommutativealgebra}_{\mathrm{simplicial}}(\mathrm{Ind}^m\mathrm{Normed}_{A/I})}_{\mathrm{Noncommutativealgebra}_{\mathrm{simplicial}}(\mathrm{Ind}^m\mathrm{Normed}_{A/I})^\mathrm{opposite},\mathrm{Grothendiecktopology,homotopyepimorphism}}.\\
\mathrm{Proj}^\text{smoothformalseriesclosure}\infty-\mathrm{Toposes}^{\mathrm{ringed},\mathrm{Noncommutativealgebra}_{\mathrm{simplicial}}(\mathrm{Ind}\mathrm{Banach}_{A/I})}_{\mathrm{Noncommutativealgebra}_{\mathrm{simplicial}}(\mathrm{Ind}\mathrm{Banach}_{A/I})^\mathrm{opposite},\mathrm{Grothendiecktopology,homotopyepimorphism}}.\\
\mathrm{Proj}^\text{smoothformalseriesclosure}\infty-\mathrm{Toposes}^{\mathrm{ringed},\mathrm{Noncommutativealgebra}_{\mathrm{simplicial}}(\mathrm{Ind}^m\mathrm{Banach}_{A/I})}_{\mathrm{Noncommutativealgebra}_{\mathrm{simplicial}}(\mathrm{Ind}^m\mathrm{Banach}_{A/I})^\mathrm{opposite},\mathrm{Grothendiecktopology,homotopyepimorphism}}. 
\end{align}
We call the corresponding functors are derived functional analytic Hochschild cohomology presheaves, derived functional analytic period cohomology presheaves and derived functional analytic cyclic cohomology presheaves, which we are going to denote these presheaves as in the following for any $\infty$-ringed topos $(\mathbb{X},\mathcal{O})=\underset{i}{\text{homotopycolimit}}(\mathbb{X}_i,\mathcal{O}_i)$:
\begin{align}
	&\mathrm{Kan}_{\mathrm{Left}}\mathrm{THH}^\mathrm{noncommutative}_{\text{functionalanalytic,KKM},\text{BBM,formalanalytification,nc}}(\mathcal{O}):=\\
	&(\underset{i}{\text{homotopycolimit}}_{\text{sifted},\text{derivedcategory}_{\infty}(A/I-\text{Module})}\mathrm{Kan}_{\mathrm{Left}}\mathrm{THH}^\mathrm{noncommutative}_{\text{functionalanalytic,KKM}}(\mathcal{O}_i)\\
	&)_\text{BBM,formalanalytification,nc},\\
	&\mathrm{Kan}_{\mathrm{Left}}\mathrm{TP}^\mathrm{noncommutative}_{\text{functionalanalytic,KKM},\text{BBM,formalanalytification,nc}}(\mathcal{O}):=\\
	&(\underset{i}{\text{homotopycolimit}}_{\text{sifted},\text{derivedcategory}_{\infty}(A/I-\text{Module})}\mathrm{Kan}_{\mathrm{Left}}\mathrm{TP}^\mathrm{noncommutative}_{\text{functionalanalytic,KKM}}(\mathcal{O}_i)\\
	&)_\text{BBM,formalanalytification,nc},\\
	&\mathrm{Kan}_{\mathrm{Left}}\mathrm{TC}^\mathrm{noncommutative}_{\text{functionalanalytic,KKM},\text{BBM,formalanalytification,nc}}(\mathcal{O}):=\\
	&(\underset{i}{\text{homotopycolimit}}_{\text{sifted},\text{derivedcategory}_{\infty}(A/I-\text{Module})}\mathrm{Kan}_{\mathrm{Left}}\mathrm{TC}^\mathrm{noncommutative}_{\text{functionalanalytic,KKM}}(\mathcal{O}_i)\\
	&)_\text{BBM,formalanalytification,nc},
\end{align}
by writing any object $\mathcal{O}$ as the corresponding colimit\footnote{Here we assume that in the following the presheaves $\mathcal{O}_i$ are taking values in the derived $p$-completed $\mathbb{E}_1$-algebras over $A/I$, which then are generated by those $p$-adic formal series ring $A/I\left<Z_1,...,Z_n\right>$, $n=1,2,...$ by the corresponding derived colimit completion with free variables $Z_1,...,Z_n$, $n=1,2,...$.} 
\begin{center}
$\underset{i}{\text{homotopycolimit}}_\text{sifted}\mathcal{O}_i$.
\end{center}
These are quite large $(\infty,1)$-commutative ring objects in the corresponding $(\infty,1)$-categories for $?=A/I$ over $(\mathbb{X},\mathcal{O})$:

 \begin{align}
\mathrm{Ind}\mathrm{\sharp Quasicoherent}^{\text{presheaf}}_{\mathrm{Ind}^\text{smoothformalseriesclosure}\infty-\mathrm{Toposes}^{\mathrm{ringed},\mathrm{Noncommutativealgebra}_{\mathrm{simplicial}}(\mathrm{Ind}\mathrm{Seminormed}_?)}_{\mathrm{Noncommutativealgebra}_{\mathrm{simplicial}}(\mathrm{Ind}\mathrm{Seminormed}_?)^\mathrm{opposite},\mathrm{Grotopology,homotopyepimorphism}}}. \\
\mathrm{Ind}\mathrm{\sharp Quasicoherent}^{\text{presheaf}}_{\mathrm{Ind}^\text{smoothformalseriesclosure}\infty-\mathrm{Toposes}^{\mathrm{ringed},\mathrm{Noncommutativealgebra}_{\mathrm{simplicial}}(\mathrm{Ind}^m\mathrm{Seminormed}_?)}_{\mathrm{Noncommutativealgebra}_{\mathrm{simplicial}}(\mathrm{Ind}^m\mathrm{Seminormed}_?)^\mathrm{opposite},\mathrm{Grotopology,homotopyepimorphism}}}.\\
\mathrm{Ind}\mathrm{\sharp Quasicoherent}^{\text{presheaf}}_{\mathrm{Ind}^\text{smoothformalseriesclosure}\infty-\mathrm{Toposes}^{\mathrm{ringed},\mathrm{Noncommutativealgebra}_{\mathrm{simplicial}}(\mathrm{Ind}\mathrm{Normed}_?)}_{\mathrm{Noncommutativealgebra}_{\mathrm{simplicial}}(\mathrm{Ind}\mathrm{Normed}_?)^\mathrm{opposite},\mathrm{Grotopology,homotopyepimorphism}}}.\\
\mathrm{Ind}\mathrm{\sharp Quasicoherent}^{\text{presheaf}}_{\mathrm{Ind}^\text{smoothformalseriesclosure}\infty-\mathrm{Toposes}^{\mathrm{ringed},\mathrm{Noncommutativealgebra}_{\mathrm{simplicial}}(\mathrm{Ind}^m\mathrm{Normed}_?)}_{\mathrm{Noncommutativealgebra}_{\mathrm{simplicial}}(\mathrm{Ind}^m\mathrm{Normed}_?)^\mathrm{opposite},\mathrm{Grotopology,homotopyepimorphism}}}.\\
\mathrm{Ind}\mathrm{\sharp Quasicoherent}^{\text{presheaf}}_{\mathrm{Ind}^\text{smoothformalseriesclosure}\infty-\mathrm{Toposes}^{\mathrm{ringed},\mathrm{Noncommutativealgebra}_{\mathrm{simplicial}}(\mathrm{Ind}\mathrm{Banach}_?)}_{\mathrm{Noncommutativealgebra}_{\mathrm{simplicial}}(\mathrm{Ind}\mathrm{Banach}_?)^\mathrm{opposite},\mathrm{Grotopology,homotopyepimorphism}}}.\\
\mathrm{Ind}\mathrm{\sharp Quasicoherent}^{\text{presheaf}}_{\mathrm{Ind}^\text{smoothformalseriesclosure}\infty-\mathrm{Toposes}^{\mathrm{ringed},\mathrm{Noncommutativealgebra}_{\mathrm{simplicial}}(\mathrm{Ind}^m\mathrm{Banach}_?)}_{\mathrm{Noncommutativealgebra}_{\mathrm{simplicial}}(\mathrm{Ind}^m\mathrm{Banach}_?)^\mathrm{opposite},\mathrm{Grotopology,homotopyepimorphism}}},\\ 
\end{align}
after taking the formal series ring left Kan extension analytification from \cite[Section 4.2]{BBM}, which is defined by taking the left Kan extension to all the $(\infty,1)$-ring objects in the $\infty$-derived category of all $A$-modules from formal series rings over $A$.
\end{definition}

\begin{assumption}\mbox{(Technical Assumption)} 
Here we assume that in the following the presheaves $\mathcal{O}_i$ are taking values in the derived $p$-completed $\mathbb{E}_1$-algebras over $A/I$, which then are generated by those $p$-adic formal series ring $A/I\left<Z_1,...,Z_n\right>$, $n=1,2,...$ by the corresponding derived colimit completion with free variables $Z_1,...,Z_n$, $n=1,2,...$. However this assumption does not really matter so significantly in this noncommutative situation since we are just taking the direct definition through $\text{TP}$.  	
\end{assumption}

\

\begin{definition}
Then we can in the same fashion consider the corresponding derived noncommutative prismatic complex presheaves \cite[Construction 7.6]{12BS}\footnote{One just applies \cite[Construction 7.6]{12BS} and then takes the left Kan extensions.} for the commutative algebras as in the above (for a given prism $(A,I)$):
\begin{align}
\mathrm{Kan}_{\mathrm{Left}}\Delta_{?/A}:=\mathrm{Kan}_{\mathrm{Left}}\mathrm{TP}^\mathrm{noncommutative,pcomplete}(?/A),	
\end{align}
\footnote{The motivation for this definition comes from the corresponding commutative picture, namely the corresponding topological characterization of the corresponding prismatic cohomology and the corresponding completed version by using the corresponding Nygaard filtrations. See \cite[Theorem 1.13]{12BS}.}by the regular corresponding left Kan extension techniques on the following $(\infty,1)$-compactly generated closures of the corresponding polynomials given over $A/I$ with a chosen prism $(A,I)$:\\

\begin{align}
\mathrm{Proj}^\text{smoothformalseriesclosure}\infty-\mathrm{Toposes}^{\mathrm{ringed},\mathrm{Noncommutativealgebra}_{\mathrm{simplicial}}(\mathrm{Ind}\mathrm{Seminormed}_{A/I})}_{\mathrm{Noncommutativealgebra}_{\mathrm{simplicial}}(\mathrm{Ind}\mathrm{Seminormed}_{A/I})^\mathrm{opposite},\mathrm{Grothendiecktopology,homotopyepimorphism}}. \\
\mathrm{Proj}^\text{smoothformalseriesclosure}\infty-\mathrm{Toposes}^{\mathrm{ringed},\mathrm{Noncommutativealgebra}_{\mathrm{simplicial}}(\mathrm{Ind}^m\mathrm{Seminormed}_{A/I})}_{\mathrm{Noncommutativealgebra}_{\mathrm{simplicial}}(\mathrm{Ind}^m\mathrm{Seminormed}_{A/I})^\mathrm{opposite},\mathrm{Grothendiecktopology,homotopyepimorphism}}.\\
\mathrm{Proj}^\text{smoothformalseriesclosure}\infty-\mathrm{Toposes}^{\mathrm{ringed},\mathrm{Noncommutativealgebra}_{\mathrm{simplicial}}(\mathrm{Ind}\mathrm{Normed}_{A/I})}_{\mathrm{Noncommutativealgebra}_{\mathrm{simplicial}}(\mathrm{Ind}\mathrm{Normed}_{A/I})^\mathrm{opposite},\mathrm{Grothendiecktopology,homotopyepimorphism}}.\\
\mathrm{Proj}^\text{smoothformalseriesclosure}\infty-\mathrm{Toposes}^{\mathrm{ringed},\mathrm{Noncommutativealgebra}_{\mathrm{simplicial}}(\mathrm{Ind}^m\mathrm{Normed}_{A/I})}_{\mathrm{Noncommutativealgebra}_{\mathrm{simplicial}}(\mathrm{Ind}^m\mathrm{Normed}_{A/I})^\mathrm{opposite},\mathrm{Grothendiecktopology,homotopyepimorphism}}.\\
\mathrm{Proj}^\text{smoothformalseriesclosure}\infty-\mathrm{Toposes}^{\mathrm{ringed},\mathrm{Noncommutativealgebra}_{\mathrm{simplicial}}(\mathrm{Ind}\mathrm{Banach}_{A/I})}_{\mathrm{Noncommutativealgebra}_{\mathrm{simplicial}}(\mathrm{Ind}\mathrm{Banach}_{A/I})^\mathrm{opposite},\mathrm{Grothendiecktopology,homotopyepimorphism}}.\\
\mathrm{Proj}^\text{smoothformalseriesclosure}\infty-\mathrm{Toposes}^{\mathrm{ringed},\mathrm{Noncommutativealgebra}_{\mathrm{simplicial}}(\mathrm{Ind}^m\mathrm{Banach}_{A/I})}_{\mathrm{Noncommutativealgebra}_{\mathrm{simplicial}}(\mathrm{Ind}^m\mathrm{Banach}_{A/I})^\mathrm{opposite},\mathrm{Grothendiecktopology,homotopyepimorphism}}. 
\end{align}
We call the corresponding functors functional analytic derived prismatic complex presheaves which we are going to denote that as in the following:
\begin{align}
\mathrm{Kan}_{\mathrm{Left}}&\Delta_{?/A,\text{functionalanalytic,KKM},\text{BBM,formalanalytification,nc}}\\
&:=\mathrm{Kan}_{\mathrm{Left}}\mathrm{TP}^\mathrm{noncommutative,pcomplete}(?/A)_{\text{functionalanalytic,KKM},\text{BBM,formalanalytification,nc}}.	
\end{align}
This would mean the following definition{\footnote{Before the Ben-Bassat-Mukherjee $p$-adic formal analytification we take the corresponding $p$-completion.}}:
\begin{align}
\mathrm{Kan}_{\mathrm{Left}}&\Delta_{?/A,\text{functionalanalytic,KKM},\text{BBM,formalanalytification,nc}}(\mathcal{O})\\
&:=	((\underset{i}{\text{homotopycolimit}}_{\text{sifted},\text{derivedcategory}_{\infty}(A/I-\text{Module})}\mathrm{Kan}_{\mathrm{Left}}\Delta_{?/A,\text{functionalanalytic,KKM}}(\mathcal{O}_i))^\wedge\\
&)_\text{BBM,formalanalytification,nc}
\end{align}
by writing any object $\mathcal{O}$ as the corresponding colimit 
\begin{center}
$\underset{i}{\text{homotopycolimit}}_\text{sifted}\mathcal{O}_i$.
\end{center}
These are quite large $(\infty,1)$-commutative ring objects in the corresponding $(\infty,1)$-categories for $R=A/I$:
\begin{align}
\mathrm{Ind}\mathrm{\sharp Quasicoherent}^{\text{presheaf}}_{\mathrm{Ind}^\text{smoothformalseriesclosure}\infty-\mathrm{Toposes}^{\mathrm{ringed},\mathrm{Noncommutativealgebra}_{\mathrm{simplicial}}(\mathrm{Ind}\mathrm{Seminormed}_R)}_{\mathrm{Noncommutativealgebra}_{\mathrm{simplicial}}(\mathrm{Ind}\mathrm{Seminormed}_R)^\mathrm{opposite},\mathrm{Grotopology,homotopyepimorphism}}}. \\
\mathrm{Ind}\mathrm{\sharp Quasicoherent}^{\text{presheaf}}_{\mathrm{Ind}^\text{smoothformalseriesclosure}\infty-\mathrm{Toposes}^{\mathrm{ringed},\mathrm{Noncommutativealgebra}_{\mathrm{simplicial}}(\mathrm{Ind}^m\mathrm{Seminormed}_R)}_{\mathrm{Noncommutativealgebra}_{\mathrm{simplicial}}(\mathrm{Ind}^m\mathrm{Seminormed}_R)^\mathrm{opposite},\mathrm{Grotopology,homotopyepimorphism}}}.\\
\mathrm{Ind}\mathrm{\sharp Quasicoherent}^{\text{presheaf}}_{\mathrm{Ind}^\text{smoothformalseriesclosure}\infty-\mathrm{Toposes}^{\mathrm{ringed},\mathrm{Noncommutativealgebra}_{\mathrm{simplicial}}(\mathrm{Ind}\mathrm{Normed}_R)}_{\mathrm{Noncommutativealgebra}_{\mathrm{simplicial}}(\mathrm{Ind}\mathrm{Normed}_R)^\mathrm{opposite},\mathrm{Grotopology,homotopyepimorphism}}}.\\
\mathrm{Ind}\mathrm{\sharp Quasicoherent}^{\text{presheaf}}_{\mathrm{Ind}^\text{smoothformalseriesclosure}\infty-\mathrm{Toposes}^{\mathrm{ringed},\mathrm{Noncommutativealgebra}_{\mathrm{simplicial}}(\mathrm{Ind}^m\mathrm{Normed}_R)}_{\mathrm{Noncommutativealgebra}_{\mathrm{simplicial}}(\mathrm{Ind}^m\mathrm{Normed}_R)^\mathrm{opposite},\mathrm{Grotopology,homotopyepimorphism}}}.\\
\mathrm{Ind}\mathrm{\sharp Quasicoherent}^{\text{presheaf}}_{\mathrm{Ind}^\text{smoothformalseriesclosure}\infty-\mathrm{Toposes}^{\mathrm{ringed},\mathrm{Noncommutativealgebra}_{\mathrm{simplicial}}(\mathrm{Ind}\mathrm{Banach}_R)}_{\mathrm{Noncommutativealgebra}_{\mathrm{simplicial}}(\mathrm{Ind}\mathrm{Banach}_R)^\mathrm{opposite},\mathrm{Grotopology,homotopyepimorphism}}}.\\
\mathrm{Ind}\mathrm{\sharp Quasicoherent}^{\text{presheaf}}_{\mathrm{Ind}^\text{smoothformalseriesclosure}\infty-\mathrm{Toposes}^{\mathrm{ringed},\mathrm{Noncommutativealgebra}_{\mathrm{simplicial}}(\mathrm{Ind}^m\mathrm{Banach}_R)}_{\mathrm{Noncommutativealgebra}_{\mathrm{simplicial}}(\mathrm{Ind}^m\mathrm{Banach}_R)^\mathrm{opposite},\mathrm{Grotopology,homotopyepimorphism}}},\\ 
\end{align}
after taking the formal series ring left Kan extension analytification from \cite[Section 4.2]{BBM}, which is defined by taking the left Kan extension to all the $(\infty,1)$-ring objects in the $\infty$-derived category of all $A$-modules from formal series rings over $A$.\\
\end{definition}

\

\indent Then as in \cite[Definition 8.2]{12BS} we consider the corresponding noncommutative perfectoidization in this analytic setting.

\begin{definition}
Let $(A,I)$ be a perfectoid prism, and we consider any $\mathrm{E}_\infty$-ring $\mathcal{O}$ in the following
\begin{align}
\mathrm{Proj}^\text{smoothformalseriesclosure}\infty-\mathrm{Toposes}^{\mathrm{ringed},\mathrm{Noncommutativealgebra}_{\mathrm{simplicial}}(\mathrm{Ind}\mathrm{Seminormed}_{A/I})}_{\mathrm{Noncommutativealgebra}_{\mathrm{simplicial}}(\mathrm{Ind}\mathrm{Seminormed}_{A/I})^\mathrm{opposite},\mathrm{Grothendiecktopology,homotopyepimorphism}}. \\
\mathrm{Proj}^\text{smoothformalseriesclosure}\infty-\mathrm{Toposes}^{\mathrm{ringed},\mathrm{Noncommutativealgebra}_{\mathrm{simplicial}}(\mathrm{Ind}^m\mathrm{Seminormed}_{A/I})}_{\mathrm{Noncommutativealgebra}_{\mathrm{simplicial}}(\mathrm{Ind}^m\mathrm{Seminormed}_{A/I})^\mathrm{opposite},\mathrm{Grothendiecktopology,homotopyepimorphism}}.\\
\mathrm{Proj}^\text{smoothformalseriesclosure}\infty-\mathrm{Toposes}^{\mathrm{ringed},\mathrm{Noncommutativealgebra}_{\mathrm{simplicial}}(\mathrm{Ind}\mathrm{Normed}_{A/I})}_{\mathrm{Noncommutativealgebra}_{\mathrm{simplicial}}(\mathrm{Ind}\mathrm{Normed}_{A/I})^\mathrm{opposite},\mathrm{Grothendiecktopology,homotopyepimorphism}}.\\
\mathrm{Proj}^\text{smoothformalseriesclosure}\infty-\mathrm{Toposes}^{\mathrm{ringed},\mathrm{Noncommutativealgebra}_{\mathrm{simplicial}}(\mathrm{Ind}^m\mathrm{Normed}_{A/I})}_{\mathrm{Noncommutativealgebra}_{\mathrm{simplicial}}(\mathrm{Ind}^m\mathrm{Normed}_{A/I})^\mathrm{opposite},\mathrm{Grothendiecktopology,homotopyepimorphism}}.\\
\mathrm{Proj}^\text{smoothformalseriesclosure}\infty-\mathrm{Toposes}^{\mathrm{ringed},\mathrm{Noncommutativealgebra}_{\mathrm{simplicial}}(\mathrm{Ind}\mathrm{Banach}_{A/I})}_{\mathrm{Noncommutativealgebra}_{\mathrm{simplicial}}(\mathrm{Ind}\mathrm{Banach}_{A/I})^\mathrm{opposite},\mathrm{Grothendiecktopology,homotopyepimorphism}}.\\
\mathrm{Proj}^\text{smoothformalseriesclosure}\infty-\mathrm{Toposes}^{\mathrm{ringed},\mathrm{Noncommutativealgebra}_{\mathrm{simplicial}}(\mathrm{Ind}^m\mathrm{Banach}_{A/I})}_{\mathrm{Noncommutativealgebra}_{\mathrm{simplicial}}(\mathrm{Ind}^m\mathrm{Banach}_{A/I})^\mathrm{opposite},\mathrm{Grothendiecktopology,homotopyepimorphism}}. 
\end{align}
Then consider the derived prismatic object:
\begin{align}
\mathrm{Kan}_{\mathrm{Left}}\Delta_{?/A,\text{functionalanalytic,KKM},\text{BBM,formalanalytification,nc}}(\mathcal{O}).
\end{align}	
Then as in \cite[Definition 8.2]{12BS} we have the following preperfectoidization:
\begin{align}
&(\mathcal{O})^{\text{preperfectoidization}}\\
&:=\mathrm{Colimit}(\mathrm{Kan}_{\mathrm{Left}}\Delta_{?/A,\text{functionalanalytic,KKM},\text{BBM,formalanalytification,nc}}(\mathcal{O})\rightarrow \\
&\phi_*\mathrm{Kan}_{\mathrm{Left}}\Delta_{?/A,\text{functionalanalytic,KKM},\text{BBM,formalanalytification,nc}}(\mathcal{O})\\
&\rightarrow \phi_* \phi_*\mathrm{Kan}_{\mathrm{Left}}\Delta_{?/A,\text{functionalanalytic,KKM},\text{BBM,formalanalytification,nc}}(\mathcal{O})\rightarrow...)^{\text{BBM,formalanalytification,nc}},	
\end{align}
after taking the formal series ring left Kan extension analytification from \cite[Section 4.2]{BBM}, which is defined by taking the left Kan extension to all the $(\infty,1)$-ring object in the $\infty$-derived category of all $A$-modules from formal series rings over $A$. Then we define the corresponding perfectoidization:
\begin{align}
&(\mathcal{O})^{\text{perfectoidization}}\\
&:=\mathrm{Colimit}(\mathrm{Kan}_{\mathrm{Left}}\Delta_{?/A,\text{functionalanalytic,KKM},\text{BBM,formalanalytification,nc}}(\mathcal{O})\longrightarrow \\
&\phi_*\mathrm{Kan}_{\mathrm{Left}}\Delta_{?/A,\text{functionalanalytic,KKM},\text{BBM,formalanalytification,nc}}(\mathcal{O})\\
&\longrightarrow \phi_* \phi_*\mathrm{Kan}_{\mathrm{Left}}\Delta_{?/A,\text{functionalanalytic,KKM},\text{BBM,formalanalytification,nc}}(\mathcal{O})\longrightarrow...)^{\text{BBM,formalanalytification,nc}}\times A/I.	
\end{align}
Furthermore one can take derived $(p,I)$-completion to achieve the derived $(p,I)$-completed versions:
\begin{align}
\mathcal{O}^\text{preperfectoidization,derivedcomplete}:=(\mathcal{O}^\text{preperfectoidization})^{\wedge},\\
\mathcal{O}^\text{perfectoidization,derivedcomplete}:=\mathcal{O}^\text{preperfectoidization,derivedcomplete}\times A/I.\\
\end{align}
These are large $(\infty,1)$-commutative algebra objects in the corresponding categories as in the above, attached to also large $(\infty,1)$-commutative algebra objects. When we apply this to the corresponding sub-$(\infty,1)$-categories of Banach perfectoid objects in \cite{BMS2}, \cite{GR}, \cite{12KL1}, \cite{12KL2}, \cite{12Ked1}, \cite{12Sch3},  we will have the corresponding noncommutative analogues of the distinguished elemental deformation processes defined in \cite{BMS2}, \cite{GR}, \cite{12KL1}, \cite{12KL2}, \cite{12Ked1}, \cite{12Sch3}. 
\end{definition}

\begin{remark}
One can then define such ring $\mathcal{O}$ to be \textit{preperfectoid} if we have the equivalence:
\begin{align}
\mathcal{O}^{\text{preperfectoidization}} \overset{\sim}{\longrightarrow}	\mathcal{O}.
\end{align}
One can then define such ring $\mathcal{O}$ to be \textit{perfectoid} if we have the equivalence:
\begin{align}
\mathcal{O}^{\text{preperfectoidization}}\times A/I \overset{\sim}{\longrightarrow}	\mathcal{O}.
\end{align}
	
\end{remark}

\newpage

\subsection*{Acknowledgements}

This paper is based on the Chapter 12 of the author's 2021 UCSD dissertation \cite{XT2} under the advice and direction from Professor Kedlaya. We would like to thank Professor Kedlaya for some motivating discussion in the preparation for this paper especially on the corresponding Robba bundles and Frobenius bundles, as well as the corresponding $\infty$-categorical and homotopical motivic constructions being considered.

\newpage

\end{document}